\DeclareMathAlphabet{\mathpzc}{OT1}{pzc}{m}{en}
\newcommand{\atomic}{(L)}
\newcommand{\atomics}{(L')}
\newcommand{\dashint}{\,\ThisStyle{\ensurestackMath{%
			\stackinset{c}{.2\LMpt}{c}{.5\LMpt}{\SavedStyle-}{\SavedStyle\phantom{\int}}}%
		\setbox0=\hbox{$\SavedStyle\int\,$}\kern-\wd0}\int}
\DeclareMathOperator{\card}{Card}
\DeclareMathOperator{\supp}{Supp}
\DeclareMathOperator{\tr}{Tr}
\DeclareMathOperator{\Hol}{Hol}
\renewcommand{\Re}{\mathrm{Re}\,}
\renewcommand{\Im}{\mathrm{Im}\,}
\newcommand{\Supp}[1]{\supp\left( #1\right) }
\newcommand{\ee}{\mathrm{e}}
\newcommand{\vect}[1]{\mathbf{{#1}}}
\newcommand{\dd}{\mathrm{d}}
\newcommand{\osc}{\mathrm{osc}}
\DeclarePairedDelimiter{\abs}{\lvert}{\rvert}
\DeclarePairedDelimiter{\norm}{\lVert}{\rVert}
\let\originalleft\left
\let\originalright\right
\renewcommand{\left}{\mathopen{}\mathclose\bgroup\originalleft}
\renewcommand{\right}{\aftergroup\egroup\originalright}
\newcommand{\N}{\mathds{N}}
\newcommand{\Z}{\mathds{Z}}
\newcommand{\C}{\mathds{C}}
\newcommand{\R}{\mathds{R}}
\newcommand{\Uf}{\mathfrak{U}}
\newcommand{\Ac}{\mathcal{A}}
\newcommand{\Bc}{\mathcal{B}}
\newcommand{\Ec}{\mathcal{E}}
\newcommand{\Fc}{\mathcal{F}}
\newcommand{\Hc}{\mathcal{H}}
\newcommand{\Lc}{\mathcal{L}}
\newcommand{\Nc}{\mathcal{N}}
\newcommand{\Pc}{\mathcal{P}}
\newcommand{\Rc}{\mathcal{R}}
\newcommand{\Sc}{\mathcal{S}}
\newcommand{\meg}{\leqslant}
\newcommand{\Meg}{\geqslant}
\newcommand{\eps}{\varepsilon}
\newcommand{\mi}{\mu}
\newcommand{\leftexp}[2]{{\vphantom{#2}}^{#1}{#2}} 
\newcommand{\trasp}{\leftexp{t}}
\newcommand{\Lin}{\mathscr{L}}
\title[Bergman projectors on Siegel domanis]{Boundedness of Bergman
  projectors on homogeneous Siegel domains}
\date{}
\begin{document}

\theoremstyle{definition}
\newtheorem{deff}{Definition}[section]

\newtheorem{oss}[deff]{Remark}
\newtheorem{ass}[deff]{Assumptions}
\newtheorem{nott}[deff]{Notation}
\newtheorem{exam}[deff]{Examples}

\theoremstyle{plain}
\newtheorem{teo}[deff]{Theorem}
\newtheorem{lem}[deff]{Lemma}
\newtheorem{prop}[deff]{Proposition}
\newtheorem{cor}[deff]{Corollary}
\author[M. Calzi, M. M. Peloso]{Mattia Calzi, 
Marco M. Peloso}

\address{Dipartimento di Matematica, Universit\`a degli Studi di
  Milano, Via C. Saldini 50, 20133 Milano, Italy}
\email{{\tt mattia.calzi@unimi.it}}
\email{{\tt marco.peloso@unimi.it}}

\keywords{Bergman space, Bergman projection, homogeneous Siegel
  domain, atomic decomposition, decoupling inequality.}
\thanks{{\em Math Subject Classification 2020} 32A15, 32A37, 32A50, 46E22.}
\thanks{The authors are members of the
  Gruppo Nazionale per l'Analisi Matematica, la Probabilit\`a e le
  loro Applicazioni (GNAMPA) of the Istituto Nazionale di Alta
  Matematica (INdAM)} 

 \begin{abstract}
   In this paper we study the boundedness of Bergman projectors on
   weighted   Bergman spaces on homogeneous Siegel domains of Type II.  As it appeared
   to be a natural approach in the special case of tube domains
   over irreducible symmetric  cones, we study such boundedness on the
   scale of mixed-norm weighted Lebesgue spaces.
   The sharp range for
   the boundedness of  such operators is essentially known only in the case of  tube  domains
   over   Lorentz cones.
   
In this paper we prove that the boundedness of such Bergman projectors
is equivalent to variuos notions of atomic decomposition, duality, and characterization of boundary values   of the mixed-norm weighted 
Bergman spaces,  
extending results moslty  known only in the case of tube domains over
irreducible symmetric  cones.   Some  of our results are new even in the latter simpler context.

We also study the simpler, but still quite interesting, case of the ``positive'' Bergman
projectors, the integral operator in which the Bergman kernel is
replaced by its absolute value.
We   provide a useful  characterization which was previously known for tube domains.
\end{abstract}
\maketitle

\section{Introduction}
In this paper we study the boundedness of the Bergman projectors
$P_{\vect s'}$ on the mixed-norm Lebesgue spaces $L^{p,q}_{\vect s}(D)$
on a Siegel domain $D$ of Type II, where $p,q \in [1,\infty]$ and
$\vect s,\vect s'\in \R^r$.  We refer to Section~\ref{Prelim:sec} for the relevant
definitions and the main basic properties.  Here we just mention that
$r\in\N$ is the {\em rank} of the homogeneous cone $\Omega$  that appears in the
defintion of the underlying  Siegel domain $D$.  One of the
main features of the present work is the generality of our setting, in
which the analysis and geometric properties of $\Omega$ are crucial
and, generally speaking, still quite elusive.

We shall now try to describe our results to a broader audience than just   that of   the
specialists and this will 
  lead to  a somewhat longer   introduction.
We hope that it will make it easier to understand the problems we address
and that it may serve as a general reference.
\medskip

The simplest case of the domains we consider is the upper half-plane
$\C_+ \coloneqq \Set{z\in\C\colon \Im z>0}$.  For $p,q\in (0,\infty]$
and $s\in\R$, consider the Lebesgue spaces
\[
L^{p,q}_s(\C_+)=\Set{f\text{ measurable in }\C_+\colon  \int_0^\infty 
  \left(\int_\R \abs{f(x+iy)}^p\,\dd x\right)^{q/p}y^{qs-1} \,\dd  y<\infty} ,
\] 
with the obvious modifications when $\max(p,q)=\infty$. The  mixed-norm
Bergman spaces are defined as
\[
A^{p,q}_s(\C_+) \coloneqq L^{p,q}_s(\C_+)\cap \Hol(\C_+),
\] 
where $\Hol(\C_+)$ denotes the space of holomorphic functions on $\C_+$.
For the time
being, we simply write $A^{p,q}_s$ in place of $A^{p,q}_s(\C_+)$. 
Then, $A^{p,q}_s=\{0\}$ if $s<0$, and also if $s=0$ and $q<\infty$.
In addition, $A^{p,\infty}_0$ is the classical Hardy space
$H^p(\C_+)$. In order to avoid trivialities, from now  on we shall
assume $s>0$. 
In addition, for simplicity, in this introductory section we shall   generally   limit
ourselves to the case $p,q\Meg 1$. 
Then, the   $A^{p,p}_s$ are the classical 
 weighted Bergman spaces, the unweighted case corresponding to the choice
$s=1/p$. The   $A^{p,q}_s$ 
embed continuously into $\Hol(\C_+)$, so that $A^{p,q}_s$ is a  closed subspace of
$L^{p,q}_s(\C_+)$. In particular, the $A^{2,2}_s$ are reproducing kernel Hilbert spaces.  Their
reproducing kernel, that is, the weighted Bergman kernel, is the
kernel function
\[
K_s(z,w) = c_s (z-\overline w)^{-1-2s} 
\]
where $c_s=s (2i)^{2s+1}/\pi$.   The orthogonal  projection of
$L^{2,2}_s(\C_+)$ onto $A^{2,2}_s$ is the Bergman projector $\Pc_s$
given by
\[
\Pc_s f(z) = \int_{\C_+} f(w) K_s(z,w) (\Im w)^{2s-1}\dd \Hc^2(w),
\]
where $\Hc^2$ denotes the (suitably normalized) $2$-dimensional Hausdorff measure on $\C_+$, that is, Lebesgue measure.
One of the main questions in the theory of holomorphic function spaces
is the boundedness of projection operators.   In this setting, it is
known the
operator $\Pc_{\tilde s}$ induces a continuous endomorphism of $
L^{p,q}_s(\C_+)$ if and only if $ 2 \tilde s>s$ see, e.g.,~\cite[Proposition
5.20 and Corollary 5.27]{CalziPeloso}.\footnote{When $q=1$, the
  cited result only proves that $2\tilde s\Meg s$. Nonetheless, if the
  assertion held for $2 \tilde s=s$, then the argument below would show that
  the Hardy space $H^{p'}(\C_+)$ is canonically antilinearly
  isomorphic to $(A^{p,1})'$; this latter fact is false, as one may
  see by inspection of the boundary values (cf.,
  e.g.,~\cite[Proposition 5.12]{CalziPeloso}). }  

We also define an associated `positive' operator  by
\[
\Pc_{s,+}f(z) = \int_{\C_+} f(w) |K_s(z,w)| (\Im w)^{2s-1}\dd\Hc^2(w) .
\]
Clearly, the boundedness of $\Pc_{s,+}$ implies the boundedness of
$\Pc_{s}$ on
the same spaces.  In the case of $\C_+$, it turns out that $\Pc_{\tilde s}$
is bounded on $L^{p,q}_s(\C_+)$ if and only if $\Pc_{\tilde s,+}$ is, but
this equivalence fails in more general settings, as we shall soon
discuss.   

 The boundedness of the Bergman projector $\Pc_{\tilde s}$ is tighly
related to the characterization of the dual space $(A^{p,q}_s)'$ as
the space $A^{p',q'}_{2\tilde s-s}$,   where $p'$ denotes the conjugate exponent of $p$, that is, $p'=p/(p-1)$:   on the one hand, it is readily seen that the
continuity of a Bergman projector implies a characterization of the
dual of $A^{p,q}_s$, at least when $p,q<\infty$, by standard arguments
(cf., e.g.,~\cite[Theorem 5.2]{Bekolleetal}); on the other hand, more
subtle arguments show that also che converse inplication holds, even in
greater generality\footnote{Of course, the equivalence is trivial in
  this case, as both properties are (separately) known to be true in
  the same cases.} (cf.~\cite[Theorem 1.6]{Bekolleetal2} for the case
$p=q$, and  Corollary~\ref{cor:1} below for the general case).  As a
matter of fact, even for $p,q\in (0,\infty)$, the sesquilinear form 
\[
A^{p,q}_s\times A^{p',q'}_{\tilde s} \ni(f,g) \to \int_{\C_+} f(z)\overline{g(z)}
\, (\Im z)^{s+\tilde s+(\frac1p-1)_+ -1} \, \dd \Hc^2(z)
\]
is continuous and induces an antilinear isomorphism of
$A^{p',q'}_{\tilde s}$ onto $(A^{p,q}_s)'$ for every $s,\tilde s>0$.
Notice that we set $p'=\max(1,p)'$ for every $p\in(0,\infty]$. 

The boundedness of   Bergman projectors is also strictly related to the
existence of suitable {\em atomic decompositions} for the Bergman spaces 
$A^{p,q}_s$.  For a fixed $R>0$,   consider the family $(Q^{(R)}_{j,k})_{ j,k\in\Z}$ of open boxes
in $\C_+$ defined by
\[
Q_{j,k}^{(R)} \coloneqq
\big(2^{kR}Rj,2^{kR}R(j+1)\big)\times\big(2^{kR},2^{(k+1)R}\big), 
\]
Define
\[
\ell^{p,q}(\Z)=\Big\{(\lambda_{j,k})\in\C^{\Z\times\Z}
  \colon \big\| \|(\lambda_{j,k})_j\|_{\ell^p}\big\|_{\ell^q}<\infty \Big\},
\]
and consider the mapping
\[
\Ac\colon \ell^{p,q}(\Z)\ni (\lambda_{j,k})\to \sum_{j,k}
\lambda_{j,k} K_{\tilde s}(\,\cdot\,, w_{j,k,R}) (\Im w_{j,k,R})^{2\tilde s+1-s-\frac1p}
 \in A^{p,q}_s ,
\]
where the $w_{j,k,R} \in Q^{(R)}_{j,k}$ are fixed, but otherwise
arbitrary.  When $p,q\in (0,\infty]$, it is known that this mapping
is continuous (and onto, for  $R$   small) if and only if $2
\tilde s>s+\left(\frac 1 p-1\right)_+ $ (cf.~\cite{RicciTaibleson} and
also~\cite[Lemma 3.29 and Thereom 3.34]{CalziPeloso}).\footnote{Again,
  when $q\meg 1$, the cited results only show that $2 \tilde s\Meg 
  s+\left(\frac 1 p-1\right)_+$, but a direct argument involving
  duality shows that the weak inequality cannot occur (cf.~also
  Corollary~\ref{cor:4} below). } 
Then, it turns out that (for $p,q\Meg 1$) $\Ac$ is bounded (and onto,
for  $R$  small) if and only if $P_{\tilde s}$ is bounded on
$A^{p,q}_s$.  

Observe that the   $A^{p,p}_{s/p}$  ($s>0$, $p\in (0,\infty)$) are all
weighted Bergman spaces corresponding to the same measure. In other
words, $A^{p,p}_{s/p}=L^p((\Im z)^{s-1}\,\dd \Hc^2(z))\cap
\Hol(\C_+)$. For this reason, it is somewhat more natural to
investigate the continuity of the Bergman projector $\Pc_{s/2}$ on
$A^{p,p}_{s/p}$, rather than that of the other projectors $\Pc_{\tilde
  s}$. Nonetheless, even though the general problem for the operators
$\Pc_{\tilde s}$ has been somewhat  more rarely considered, the
corresponding problem for atomic decomposition has usually been
considered in the general case (or, at least, with uncomparable
restrictions). Because of the deep connection between the continuity
of Bergman projectors and atomic decomposition (cf., e.g.,
Corollary~\ref{cor:1}), it is therefore quite natural to consider all
the Bergman projectors $\Pc_{\tilde s}$. Besides that, considering the
general problem has also some technical advantages.  

Analogously, considering the general mixed-norm family of spaces
$A^{p,q}_s$ instead of the `pure-norm' $A^{p,p}_s$ has some technical
advantages, as it both sheds light on some phenomena, such as the different
behaviour of the operators $\Pc_{\tilde s}$ and $\Pc_{\tilde s,+}$, since the continuity of the latter on
$L^{p,q}_s$ does \emph{not} depend on $p$
(cf.~Theorem~\ref{prop:7}), and allows to apply Fourier techniques
to the spaces $L^{2,q}_s$ by means of the Plancherel formula. 

 We now discuss the characterization of the boundary values of
functions in $A^{p,q}_s$, that is,  of the limits $f_0$ of $f_y$
for $y\to 0^+$ (in a suitable topology), where $f_y\coloneqq
f(\,\cdot\,+iy)$ for $y>0$. It turns out that the space of boundary 
values of the functions in $A^{p,q}_s$ is the closed subspace of
the  homogeneous Besov space $\dot{B}^{-s}_{p,q}(\R)$ whose elements have
Fourier transform supported in $[0,\infty)$.  The mapping
\[
\Bc \colon  A^{p,q}_s\ni f \mapsto f_0 \in \dot{B}^{-s}_{p,q}(\R)
\]
is an isomorphism onto its image.

Besides that, we observe that the operators $f \mapsto f^{(k)}$ induce
isomorphisms of $A^{p,q}_s$ onto $A^{p,q}_{s+k}$ for every $s>0$,
$p,q\in (0,\infty]$, and $k\in\N$. The continuity of the
aforementioned operators is an easy consequence of Cauchy's estimates
and extends to more general spaces without difficulty. The continuity of the inverse
operator then appears as a Hardy-type inequality (cf.,
also,~\cite[Section 1.6]{Bekolleetal}), and its validity in more
general contexts has been proved to be equivalent to the continuity of
Bergman projectors, under suitable circumstances (cf.~\cite[Theorem
5.2]{Bonami},~\cite[Theorem 1.3]{Bekolleetal2}, and~\cite[Corollaries
5.16 and 5.28]{CalziPeloso}). 
\medskip

A very natural extesion of the upper half-plane to several complex
variables are the {\em tube domains over symmetric cones}.  Let
$\Omega\subseteq\R^m$ be an open convex cone not containing affine
lines. The cone $\Omega$ is said to be homogeneous if the subgroup of
$GL(m,\R)$ that preserves $\Omega$ acts transitively on $\Omega$
itself. 
Then, $\Omega$ is said to be symmetric if it coincides
with its dual $\Omega'$, where
\[
\Omega' \coloneqq \Set{ \lambda\in \R^m\colon \forall
	h\in\overline\Omega\setminus\{0\} \:\:\langle h,\lambda\rangle>0},
\]
 for a suitable scalar product on $\R^m$. 
The tube domain over the cone $\Omega$ is then
\[
T_\Omega \coloneqq \Set{ z\in \C^m\colon \Im z\in \Omega}.
\]
It is possible to define a natural polynomial function $\Delta$ on $\Omega$  such that the Bergman kernel for the unweighted Bergman space
$A^{2,2}(T_\Omega)$ is given by  $c\Delta^{-2m/r}\Big(\frac{z-\overline
  w}{i}\Big)$ for a suitable $c>0$; cf.~Section~\ref{Prelim:sec} for more information. 
A natural family of mixed-norm Lebesgue spaces may  then be defined as
\[
L^{p,q}_s(T_\Omega)=\Set{f\text{ measurable in }T_\Omega\colon  \int_\Omega
  \left(\int_{\R^m} \abs{f(x+iy)}^p\,\dd\Hc^m
    (x)\right)^{q/p}\Delta^{qs-\frac mr}(y) \,\dd\Hc^m (y)<\infty} .
\]
Notice that the invariant measure $\dd y/y$ on $(0,\infty)$ is now
replaced by the measure $\Delta^{-\frac mr}(y)\,\dd\Hc^m (y)$, which is
invariant under the group of linear tranformations that preserve
$\Omega$. 

  To the cone $\Omega$ is associated a positive integer $r$, which is
called the {\em rank} of  $\Omega$, see
Section~\ref{Prelim:sec}.
The
only homogeneous cone of rank 1 is the half-line $\R_+^*\coloneqq(0,\infty)$, and thus it
is obvious that $\C_+=T_{(0,\infty)}$ is the only tube domain over a
symmetric cone in one dimension.
Apart from the reducible cone $(\R_+^*)^2$, the only (homogeneous, and actually) symmetric cones of rank $2$ are, up to isomorphisms, the Lorentz cones
\[
\Omega_L\coloneqq\Set{h\in\R^m\colon h_1>0, h_1^2-h_2^2-\cdots -h_m^2>0 }.
\]
These cones constitute perhaps the simplest, yet typical, examples of symmetric cones. The first typical examples of (non-symmetric) homogeneous cones arise in rank $3$.

The  problem of determining  the   boundedness of the Bergman
projectors on Bergman spaces
on the tube domains over   Lorentz cones
was undertaken by D.\ Bekoll\'e and A.\ Bonami~\cite{BekolleBonami},
where they obtained the sharp range of boundedness of the unweighted positive
projector $\Pc_+$.

  It was later shown in~\cite{Bekolleetal3}, for tube domains over Lorentz cones, and then in~\cite{BekolleBonamiGarrigosRicci}, for tube domains over irriducible
symmetric cones,  that the  Bergman
projectors $\Pc_s$ may be bounded even when $\Pc_{s,+}$ is unbounded, thus
showing that a finer analysis of the Bergman projectors must take cancellations into account.  
The projectors on this class of domains provide the only known
examples of such phenomenon, making  their analysis of great interest. 

In order to exploit the cancellations of the weighted Bergman kernels,
the authors of~\cite{Bekolleetal3} considered the mixed-normed spaces $L^{2,q}_s(T_\Omega)$
and used the Fourier transform in the ``horizontal'' variable
$x\in\R^m$.  Then, interpolating with the range of boundedness
of the operator $\Pc_{s,+}$, they obtained a larger range of boundedness for
$\Pc_s$.   See also~\cite{Nana} for the case of homogeneous Siegel domains of type II.
More precise arguments, based on suitable decoupling-type inequalities on the relevant cone, were later developed in~\cite{BekolleBonamiGarrigosRicci}. These methods, combined with the sharp
$\ell^2$-decoupling inequality of Bourgain and
Demeter~\cite{BourgainDemeter}, led to a precise characterization of the boundedness of Bergman projectors on the spaces $L^{p,q}_s$ (cf.~\cite{BekolleGonessaNana}).

The preceding problems have also been considered for mixed norm
weighted Bergman spaces on homogeneous Siegel domains of type II
(notice, though, that two non-equivalent classes of mixed  norm
weighted Bergman spaces   on Siegel domains which are \emph{not} of
tube type  have been considered). See Section~\ref{Prelim:sec} for
more information.  

Various relationships between the aforementioned properties have been
studied at different levels of generality. As a non-exhaustive review
of the literature, we mention the following works. In
\cite{RicciTaibleson} the authors studied the mixed-normed Bergman
spaces in $\C_+$ and provided a characterization of the dual   as a
consequence of atomic decomposition. In~\cite{Bonami}  some
connections between the boundedness of Bergman projectors, generalized
Hardy's inequalities, and the characterization of the boundary values
were highlighted. In~\cite{Bekolleetal,BekolleIshiNana} the atomic
decomposition is seen as a consequence of the characterization of the
dual, hence of the continuity of Bergman projectors, while
in~\cite{Bekolleetal3} the continuity of the Bergman projectors
is obtained as a consequence of the characterization of boundary
values; \cite{Bekolleetal2}, where generalized Hardy's inequality, the
continuity of Bergman projectors, and the characterization of the
duals were proved to be equivalent in some situations. Finally,
in~\cite{CalziPeloso}
the above properties are proved to be
equivalent (in complete generality, but) in a suitable weak sense.
We also mention~\cite{Garrigos-Nana} where the authors provide a necessary and
a sufficient condition for the boundedness of the positive Bergman
projectors on Siegel domains of Type II, and \cite{CP1,CP2} for some
related recent works on Bergman spaces on homogeneous Siegel domains
of Type II. 
\medskip

The paper is organized as follows.  In Section~\ref{Prelim:sec} we 
provide some preliminary material on homogeneous Siegel domains
of Type II.
In Section~\ref{sec:3}, we extend some previous
results (cf.~\cite[Theorem 4.10]{Bekolleetal}) on the continuity of
$\Pc_{s,+}$. 
In Section~\ref{sec:4}, we present a rather general and complete treatment of the equivalence
between the various problems presented above (except for what concerns
 generalized Hardy's inequalities) for $p,q\in [1,\infty]$. We
shall also indicate some results for the general case $p,q\in
(0,\infty]$, even though the lack of duality arguments does not allow
a complete treatment in this more general situation. 

In Section~\ref{sec:5}, we shall consider how complex interpolation
interacts with the preceding properties, thus extending some results
of~\cite{BekolleGonessaNana3}. In Section~\ref{sec:6} we
shall prove some transference results between weighted Bergman spaces
on a Siegel domain of type II and the corresponding tube domain, thus
extending some results of~\cite{BekolleGonessaNana}. 

Finally, Section~\ref{Proof:sec} is devoted to the proof of Proposition~\ref{prop:13}, which extends~\cite[Proposition 4.34]{BekolleBonamiGarrigosRicci} (cf., also,~\cite{BekolleGonessaNana}) to the present setting.

\section{Preliminaries}\label{Prelim:sec}

\subsection{Homogeneous Siegel Domains}

We fix a complex Hilbert space $E$ of dimension $n$, a real Hilbert
space of dimension $m$, an open convex cone $\Omega\subseteq F$ not
containing affine lines, and an $\overline \Omega$-positive
non-degenerate hermitian form $\Phi\colon E\times E\to F_\C$, where
$F_\C$ denotes the complexification of $F$ (  i.e., $ \Phi(\zeta,\zeta)\in \overline{\Omega}\setminus\{0\}$ for every non-zero $\zeta\in E$ ). We shall assume that there
is a triangular subgroup\footnote{This means that there is a basis of
  $F$ with respect to which all elements of $T_+$ are represented by
  an upper triangular matrix. Equivalently, all eigenvalues of every
  element of $T_+$ are real (cf.~\cite{Vinberg2}).} $T_+$ of $GL(F)$
which acts simply transitively on $\Omega$, and that for every $t\in
T_+$ there is $g\in GL(E)$ such that $t\Phi=\Phi(g\times g)$. 
Observe that $T_+$ is uniquely determined up to the conjugation by a linear automorphism of $F$ which preserves $\Omega$ (cf~\cite{Vinberg2,Vinberg}).
We define  $\Phi(\zeta)\coloneqq \Phi(\zeta,\zeta)$ for every $\zeta\in E$, 
\[
\rho\colon E\times F_\C\ni (\zeta,z)\mapsto \Im z-\Phi(\zeta)\in F
\]
and $D\coloneqq \rho^{-1}(\Omega)$, so that $D$ is a homogeneous Siegel domain of type II.

We denote by $\Omega'\coloneqq \Set{\lambda\in F'\colon \forall h\in
  \overline \Omega \setminus \Set{0}\:\: \langle \lambda,h\rangle>0}$
the dual cone of $\Omega$, so that $T_+$ acts simply transitively on
$\Omega'$ by tranposition (cf.~\cite[Theorem 1]{Vinberg}). We denote
by $t\cdot h$ and $\lambda \cdot t$ the actions of $t\in T_+$ on $h\in
\Omega$ and $\lambda\in \Omega'$. We use the same notation for the
extensions of the actions of $T_+$ to $F_\C$ and $F'_\C$. 

Then, there are $r\in \N$ and a homomorphism $\Delta$ of $T_+$ onto
$(\R_+^*)^r$ with kernel $[T_+,T_+]$ (cf.~\cite[\S\
2.1]{CalziPeloso}), so that the characters of $T_+$ are of the form
$\Delta^{\vect s}=\prod_{j=1}^r \Delta_j^{s_j}$, $\vect s\in
\C^r$. Once we fix two base points $e_\Omega\in \Omega$ and
$e_{\Omega'}\in \Omega'$, we may then define `generalized power
functions' $\Delta_\Omega^{\vect s}$ and $\Delta_{\Omega'}^{\vect s}$
on $\Omega$ and $\Omega'$, respectively, so that 
\[
\Delta_\Omega^{\vect s}(t \cdot e_\Omega)=\Delta^{\vect s}(t)=\Delta^{\vect s}_{\Omega'}(e_{\Omega'}\cdot t)
\]
for every $t\in T_+$. Then, $\Delta^{\vect s}_\Omega$ and
$\Delta^{\vect s}_{\Omega'}$ extend to holomorphic functions on
$\Omega+i F$ and $\Omega'+i F'$, respectively, for every $\vect s\in
\C^r$ (cf.~\cite[Corollary 2.25]{CalziPeloso}). 
In order to simplify the notation, we shall introduce two orderings on $\R^r$:
\[
\vect{s}\meg \vect{s'} \iff \forall j=1,\dots,r\:\: s_j\meg s_j' \iff \vect {s'}-\vect s\in \R_+^r
\]
and
\[
\vect{s}\preceq \vect{s'} \iff \vect s=\vect{s'} \lor\forall
j=1,\dots,r\:\: s_j< s_j' \iff \vect {s'}-\vect s\in \Set{\vect 0}\cup
(\R_+^*)^r, 
\]
which coincide only when $r=1$.

We shall then assume that $\Delta$ is chosen so that
$\Delta_\Omega^{\vect s}$ (or, equivalently, $\Delta^{\vect
  s}_{\Omega'}$) is bounded on the bounded subsets of $\Omega$ if and
only if $\vect s\Meg \vect 0$, that is, $\vect s\in \R_+^r$
(cf.~\cite[Lemma 2.34]{CalziPeloso}). We shall further assume that
$\Delta^{\vect s}(t_r)=r^{\sum_j s_j}  $
for every $r>0$ and for every $\vect s\in \C^r$, where $t_r$ is the unique element of $T_+$ such that $t_r \cdot x=r x$ for every $x\in F$ (notice that $T_+$
necessarily contains elements which act as homotheties). Notice that
this does not determine $\Delta$ completely; nonetheless, if $\Delta'$
is another homomorphism with the same properties as $\Delta$, then
there is a permutation $\sigma$ on $\Set{1,\dots, r}$ such that
$\Delta'_j=\Delta_{\sigma(j)}$ for every $j=1,\dots,r$.  This allows
us to use the results of~\cite{CalziPeloso},   where specific choices of $T_+$ and $\Delta$ are made,   with only minor
modifications.    Besides that, we hope that this axiomatic approach may help comparing the following formulae and results with those appearing in the literature, where a number of different conventions appear.

Observe that there are $\vect d\prec \vect 0$, $\vect b\meg\vect0$, and $\vect m,\vect {m'}\Meg \vect0$ such that the following hold:
\begin{itemize}
	\item the measures 
	\begin{equation}\label{eq:4}
		\nu_\Omega\coloneqq\Delta^{\vect
			d}_\Omega\cdot \Hc^m, \qquad  \nu_{\Omega'}\coloneqq\Delta^{\vect d}_{\Omega'}\cdot\Hc^m, \qquad \text{and}\qquad \nu_D\coloneqq(\Delta^{\vect b+2 \vect
			d}_\Omega\circ \rho) \cdot \Hc^{2n+2m},
	\end{equation}
	 where $\Hc^k$
          denotes the $k$-th dimensional Haudorff measure (that is, Lebesgue measure), are invariant under all linear
          automophisms of $F$ and $F'$, and under all biholomorphisms
          of $D$, respectively (cf.~\cite[Propositions 2.19 and
          2.44]{CalziPeloso} and~\cite[Proposition
          I.3.1]{FarautKoranyi}); 
	
	\item the measures $\Delta^{\vect s}_\Omega\cdot \nu_\Omega$ and $\Delta^{\vect s}_{\Omega'}\cdot \nu_{\Omega'}$ induce Radon measures on $F$ and $F'$, respectively, if and only if $\Re\vect s\succ \vect m$ and $\Re \vect s\succ \vect{m'}$, respectively, in which case
	\[
	\Lc(\Delta^{\vect s}_\Omega\cdot \nu_\Omega)=\Gamma_\Omega(\vect s) \Delta^{-\vect s}_{\Omega'} \qquad \text{and} \qquad \Lc(\Delta^{\vect s}_{\Omega'}\cdot \nu_{\Omega'})=\Gamma_{\Omega'}(\vect s) \Delta^{-\vect s}_{\Omega},
	\]
	for suitable $\Gamma_\Omega(\vect s),\Gamma_{\Omega'}(\vect
        s)\in \C$, where $\Lc$ denotes the Laplace transform
        (cf.~\cite[Proposition 2.19]{CalziPeloso}). 
\end{itemize}

Then, $\vect d=-(\vect 1_r+\vect m/2+\vect m'/2)$ (cf.~\cite[Definition 2.8, Lemma 2.18, and Proposition 2.19]{CalziPeloso}).

In addition, there are unique holomorphic families $(I^{\vect
  s}_{\Omega})_{\vect s\in \C^r}$ and $(I^{\vect{s}}_{\Omega'})_{\vect
  s\in \C^r}$ of tempered distributions on $F$ and $F'$, respectively,
such that $\Lc I^{\vect s}_{\Omega}= \Delta^{-\vect s}_{\Omega'}$ and
$\Lc I^{\vect s}_{\Omega'}=\Delta^{-\vect s}_\Omega$ on $\Omega'$ and
$\Omega$, respectively (cf.~\cite[Lemma 2.26 and Proposition
2.28]{CalziPeloso}). In particular, $I^{\vect
  s}_\Omega=\frac{1}{\Gamma_\Omega(\vect s)}\Delta_\Omega^{\vect
  s}\cdot \nu_\Omega$ for $\vect s \succ \vect m$, while $I^{\vect
  s}_{\Omega'}=\frac{1}{\Gamma_{\Omega'}(\vect s)}\Delta_\Omega^{\vect
  s}\cdot \nu_{\Omega'}$ for $\vect s \succ \vect{m'}$
(cf.~\cite[Proposition 2.28]{CalziPeloso}).  
We denote by $\N_\Omega$ and $\N_{\Omega'}$ the sets of $\vect
s\in\R^r$ such that $\Delta^{\vect s}_\Omega$ and $\Delta^{\vect
  s}_{\Omega'}$ are polynomials, respectively; equivalently,
$I^{-\vect s}_{\Omega'}$ and $I^{-\vect s}_{\Omega}$ are supported in
$\Set{0}$, respectively.

\subsection{Fourier Analysis on the \v Silov Boundary}

We endow $E\times F_\C$ with the product
\[
(\zeta,z)\cdot(\zeta',z')\coloneqq(\zeta+\zeta',z+z'+2 i\Phi(\zeta',\zeta))
\]
for every $(\zeta,z),(\zeta',z')\in E\times F_\C$, so that $E\times F_\C$ becomes a $2$-step nilpotent Lie group,  and the \v Silov boundary $b D=\rho^{-1}(0)$ a closed subgroup such that $b D \cdot D\subseteq D$.
We identify $b D$ with $\Nc \coloneqq E\times F$ by means of the mapping $(\zeta,x)\mapsto (\zeta,x+i\Phi(\zeta))$, so that $\Nc$ inherits the product
\[
(\zeta,x)(\zeta',x')=(\zeta+\zeta', x+x'+2 \Im \Phi(\zeta,\zeta')),
\]
for every $(\zeta,x),(\zeta',x')\in \Nc$. In addition, $\Nc$ inherits the structure of a CR manifold (cf.~\cite{Boggess}), with holomorphic tangent bundle generated by the left-invariant vector field $Z_v$ which induce the Wirtinger derivatives $\partial_{E,v}\coloneqq\frac 1 2 ( \partial_v-i \partial_{i v})$ at $(0,0)$, for $v\in E$. A distribution $u$ on $\Nc$ is then CR if $\overline{Z_v} u=0$ for every $v\in E$.
  We endow $\Nc$ with the dilations defined by $R\cdot (\zeta,x)\coloneqq (R^{1/2}\zeta,Rx)$ for every $R>0$ and for every $(\zeta,x)\in \Nc$. Even though these dilations may seem unnatural, they are well adapted to the Fourier transform we are about to describe, as well as the functional calculi we shall need in the sequel. 

We now recall some basic facts on Fourier analysis on $\Nc$. We limit ourselves to describing Plancherel formula for CR elements of $L^2(\Nc)$.
Define
\[
\Lambda_+\coloneqq \Set{\lambda\in F'\colon \forall \zeta\in E\setminus\Set{0}\:\: \langle \lambda, \Phi(\zeta)\rangle>0},
\]
so that $\Lambda_+$ is the interior of the polar of $\Phi(E)$ (cf.~\cite[Proposition 2.5]{PWS}), contains $\Omega'$, and coincides with $\Omega'$ if and only if $\vect b\prec \vect 0$ (cf.~\cite[Corollary 2.58]{CalziPeloso}). Then, for every $\lambda\in \Lambda_+$ the quotient $\Nc/\ker \lambda$ is isomorphism to a Heisenberg group (to $\R$, if $n=0$), so that the Stone--von Neumann theorem (cf., e.g.,~\cite[Theorem 1.50]{Folland}) implies that there is (up to unitary equivalence) a unique irreducible (continuous) unitary representation $\pi_\lambda$ of $\Nc$ in a Hilbert space $H_\lambda$ such that $\pi_\lambda(0,x)=\ee^{-i\langle \lambda,x\rangle}$ for every $x\in F$. One may choose $H_\lambda=\Hol(E)\cap L^2(\ee^{-2\langle \lambda, \Phi\rangle}\cdot \Hc^{2n})$ and 
\[
\pi_\lambda(\zeta,x)\psi(\omega)\coloneqq \ee^{\langle \lambda_\C, -i x+2 \Phi(\omega,\zeta)-\Phi(\zeta)\rangle}\psi(\omega-\zeta)
\]
for every $(\zeta,x)\in \Nc$, for every $\psi\in H_\lambda$, and for every $\omega\in E$.
If we define $P_{\lambda,0}$ as the orthogonal projector of $H_\lambda$ onto the space of constant functions, then (cf.~\cite[Proposition 1.15]{CalziPeloso})
\[
\tr(P_{\lambda,0}\pi_\lambda(\zeta,x))=\ee^{-\langle \lambda_\C,i x+\Phi(\zeta)\rangle }
\]
for every $(\zeta,x)\in \Nc$ and the mapping
\[
L^1_{\mathrm{CR}}(\Nc)\ni f \mapsto (\pi_\lambda(f))\in \prod_{\lambda\in \Lambda_+}\Lin(H_\lambda)
\]
induces an isometric isomorphism
\[
L^2_{\mathrm{CR}}(\Nc)\to c \int_{\Lambda_+}^\oplus \Lin^2(H_\lambda) P_{\lambda,0} \Delta^{-\vect b}_{\Omega'}(\lambda)\,\dd \lambda,
\]
where $L^1_{\mathrm{CR}}(\Nc)$ and $L^2_{\mathrm{CR}}(\Nc)$ denote the spaces of CR elements of $L^1(\Nc)$ and $L^2(\Nc)$, respectively, $\Lin(H_\lambda)$ and $\Lin^2(H_\lambda)$ denote the spaces of endomorphisms and Hilbert--Schmidt endomorphisms of $H_\lambda$, respectively, and $c>0$ is a suitable constant (cf.~\cite[Corollary 1.17 and Propositions 1.19 and 2.30]{CalziPeloso} or~\cite[Propositions 2.4 and 2.6]{PWS}).

\subsection{Lattices}
We endow $D$ with a complete Riemannian metric which is invariant under the group of affine automorphisms of $D$ (e.g., the Bergman metric, cf.~\cite[\S 2.5]{CalziPeloso}), and denote by $d$ the corresponding distance.  Observe that we may identify $\rho \colon D\to \Omega$ with the projection of $D$ onto its quotient by the action of $b D$ by isometries, so that $\Omega$ may be endowed with the quotient (complete) Riemannian metric, which is then $T_+$-invariant (but not necessarily invariant under all linear automorphisms of $\Omega$). We endow $\Omega'$ with the Riemannian metric 
induced by means of the diffeomorphism $\Omega\ni (t\cdot e_\Omega)\to (e_{\Omega'}\cdot t)\in \Omega'$, and denote by $d_\Omega$ and $d_{\Omega'}$ the induced distances on $\Omega$ and $\Omega'$, respectively. We denote by $B$, $B_\Omega$, and $B_{\Omega'}$ the balls on $D$, $\Omega$, and $\Omega'$, respectively.

By an $(\delta,R)$-lattice on $\Omega$, for $\delta>0$ and $R>1$, we mean a family $(h_k)_{k\in K}$ of elements of $\Omega$ such that the balls $B_\Omega(h_k,\delta)$ are pairwise disjoint while the balls $B_{\Omega}(h_k,R\delta)$ cover $\Omega$. For example, the $(\delta,2)$-lattices are the maximal families satisfying $d_\Omega(h_k,h_{k'})\Meg 2\delta$ for every $k\neq k'$.
We define $(\delta,R)$-lattices on $\Omega'$ analogously.

By a $(\delta,R)$-lattice on $D$ we mean a family $(\zeta_{j,k},z_{j,k})_{j\in J,k\in K}$ such that the balls $B((\zeta_{j,k},z_{j,k}),\delta)$ are pairwise disjoint, the balls $B((\zeta_{j,k},z_{j,k}),R\delta)$ cover $D$, and there is a $(\delta,R)$-lattice $(h_k)_{k\in K}$ on $\Omega$ such that $h_k=\rho(\zeta_{j,k},z_{j,k})$ for every $j\in J$ and for every $k\in K$.
Arguing as in~\cite[Lemma 2.55]{CalziPeloso} (where the Bergman metric on $D$ is considered), one may prove that $(\delta,4)$-lattices  exist for every $\delta>0$.

\subsection{Bergman Spaces}

\begin{deff}
	Take $\vect s\in \R^r$ and $p,q\in (0,\infty]$. We define   (cf.~\eqref{eq:4}) 
	\[
	L^{p,q}_{\vect s}(D)\coloneqq \Set{f\colon f \text{ is measurable, } \norm{h\mapsto \Delta^{\vect s}_\Omega(h) \norm{f_h}_{L^p(\Nc)}}_{L^q(\nu_\Omega)}<\infty},
	\]
	modulo the space of negligible functions, where $f_h\colon \Nc\ni(\zeta,x)\mapsto f(\zeta,x+i\Phi(\zeta)+i h)$ for every $h\in \Omega$ and for every $f\coloneqq D\to \C$. We define $L^{p,q}_{\vect s,0}(D)$ as the closure of $C_c(D)$ in $L^{p,q}_{\vect s}(D)$.
	
	We define 
	\[
	A^{p,q}_{\vect s}(D)\coloneqq \Hol(D)\cap L^{p,q}_{\vect s}(D) \qquad \text{and} \qquad A^{p,q}_{\vect s,0}(D)\coloneqq \Hol(D)\cap L^{p,q}_{\vect s,0}(D),
	\]
	endowed with the corresdponding topology.
\end{deff}

We observe explicitly that $A^{p,q}_{\vect s,0}(D)\neq \Set{0}$ (resp.\ $A^{p,q}_{\vect s}(D)\neq \Set{0}$) if and only if $\vect s \succ \frac{1}{2 q}\vect m$ (resp.\ $\vect s \Meg 0$ if $q=\infty$), and that $A^{p,\infty}_{\vect 0}(D)$ is the Hardy space $H^p(D)$ (cf.~\cite[Proposition 3.5]{CalziPeloso}).

By~\cite[Proposition 3.11]{CalziPeloso}, $A^{2,2}_{\vect s}(D)$  is a reproducing kernel Hilbert space for every $\vect s \succ \frac{1}{4}\vect m$, with reproducing kernel
\[
((\zeta,z),(\zeta',z'))\mapsto c_{\vect s}' B^{\vect b+\vect d-2\vect s}_{(\zeta',z')}(\zeta,z),
\]
where $c'_{\vect s}$ is a suitable (non-zero) constant and
\[
 B^{\vect{s'}}_{(\zeta',z')}(\zeta,z)\coloneqq \Delta^{\vect{s'}}_{\Omega}\left(\frac{z-\overline{z'}}{2 i}-\Phi(\zeta,\zeta')  \right)
\]
for every $(\zeta,z),(\zeta',z')\in D$ and for every $\vect{s'}\in \R^r$. We then denote by $P_{\vect b+\vect d-2 \vect s}$ the orthogonal projector of $L^{2,2}_{\vect s}(D)$ onto $A^{2,2}_{\vect s}(D)$, so that
\[
P_{\vect{s'}}f(\zeta,z)=c_{\vect {s'}}\int_D f \overline{B^{\vect {s'}}_{(\zeta,z)}}(\Delta^{-\vect {s'}}_\Omega\circ \rho)\,\dd \nu_D
\]
for every $f \in C_c(D)$, for every $(\zeta,z)\in D$, and for every $\vect{s'}\prec \vect b+\vect d-\frac 1 2 \vect m$, where $c_{\vect{s'}}\coloneqq c'_{(\vect b+\vect d-\vect{s'})/2}$.

  Notice that, even though this choice may seem unnatural, it has the advantage of simplifying the notation. 

\subsection{Besov Spaces}

Denote by  $\Sc(\Nc)$   the Schwartz space on $\Nc$ and define, for every
compact subset $K$ of $\Omega'$,
\[
\Sc_\Omega(\Nc,K)\coloneqq \Set{\varphi\in \Sc(\Nc)\colon \varphi\text{ is CR, } \forall \lambda\in \Lambda_+\:\: \pi_\lambda(\varphi)=\chi_{K}(\lambda)P_{\lambda,0} \pi_\lambda(\varphi)P_{\lambda,0}}
\]
and
\[
\Sc_{\Omega,L}(\Nc,K)\coloneqq \Sc(\Nc)*\Sc_\Omega(\Nc,K),
\]
endowed with the topology induced by $\Sc(\Nc)$, and
\[
\Sc_\Omega(\Nc) =\varinjlim_K \Sc_{\Omega}(\Nc,K) \qquad \text{and} \qquad\Sc_{\Omega,L}(\Nc) =\varinjlim_K \Sc_{\Omega,L}(\Nc,K),
\]
endowed with their locally convex topologies.

We denote by $\Sc'_{\Omega,L}(\Nc)$ the dual of the conjugate of $\Sc_{\Omega,L}(\Nc)$.
Then, the mapping
\[
\Fc_\Nc\colon \Sc_\Omega(\Nc)\ni \varphi\mapsto [\lambda\mapsto \tr(\pi_\lambda(\varphi))]\in C^\infty_c(\Omega')
\]
is an isomorphism, and there is $c>0$ such that
\begin{equation}\label{eq:2}
(\Fc_\Nc^{-1}\psi)(\zeta,x)=c\int_{\Omega'} \psi(\lambda) \ee^{\langle \lambda_\C, i x-\Phi(\zeta)\rangle} \Delta^{-\vect b}_{\Omega'}(\lambda)\,\dd \lambda
\end{equation}
for every $\psi\in C^\infty_c(\Omega')$ and for every $(\zeta,x)\in \Nc$ (cf.~\cite[Proposition 4.2]{CalziPeloso}, but also~\cite[\S\ 5]{PWS}).

\begin{deff}\label{def:1}
	Take $p,q\in (0,\infty]$ and $\vect s\in \R^r$. Given a $(\delta,R)$-lattice $(\lambda_k)_{k\in K}$ on $\Omega'$, and a bounded family $(\varphi_k)_{k\in K}$ of positive elements of $C^\infty_c(\Omega')$ such that
	\[
	\sum_{k\in K} \varphi_k(\,\cdot \,t_k^{-1})\Meg 1
	\]
	on $\Omega'$, where $t_k\in T_+$ and $\lambda_k=e_{\Omega'}\cdot t_k$ for every $k\in K$, we define $B^{\vect s}_{p,q}(\Nc,\Omega)$ (resp.\ $\mathring B^{\vect s}_{p,q}(\Nc,\Omega)$) as the space of $u\in \Sc'_{\Omega,L}(\Nc,\Omega)$ such that
	\[
	(\Delta^{\vect s}_{\Omega'}(\lambda_k)u*\psi_k)\in \ell^q(K;L^p(\Nc)) \qquad \text{(resp.\ $(\Delta^{\vect s}_{\Omega'}(\lambda_k)u*\psi_k)\in \ell^q_0(K;L^p_0(\Nc))$)},
	\]
	where $\psi_k\coloneqq \Fc_\Nc^{-1}(\varphi_k(\,\cdot\, t_k^{-1}))$ for every $k\in K$.\footnote{As before, $L^p_0(\Nc)$ is the closure of $C_c(\Nc)$ in $L^p(\Nc)$, while $\ell^q_0(K;L^p_0(\Nc))$ is the closure of $L^p_0(\Nc)^{(K)}$ in $\ell^q(K;L^p_0(\Nc))$. In addition, $u*\psi_k\in \Sc'(\Nc)$ is defined so that $\langle u*\psi_k\vert \tau \rangle= \langle u \vert \tau*\psi_k^*\rangle$ for every $\tau\in \Sc(\Nc)$. The definition is well posed since $\psi_k^*\in \Sc_\Omega(\Nc)$, so that $\tau*\psi_k^*\in \Sc_{\Omega,L}(\Nc)$. It is then readily seen that $u*\psi_k$ is actually a function of class $C^\infty$.  }
\end{deff}

Notice that $\mathring B^{p,q}_{\vect s}(\Nc,\Omega)$ is the closure of (the canonical image of) $\Sc_{\Omega,L}(\Nc)$ in $B^{p,q}_{\vect s}(\Nc,\Omega)$, and that the definition of both spaces does not depend on the choice of $(\lambda_k)$ and $(\varphi_k)$ by~\cite[Lemma 4.14 and Theorem 4.23]{CalziPeloso}. In addition, the closure of $\Sc_{\Omega,L}(\Nc)$ in $\Sc(\Nc)$ is  (cf.~\cite[Theorem 5.13]{PWS})
\[
\widetilde \Sc_\Omega(\Nc)\coloneqq \Set{\varphi\in \Sc(\Nc)\colon \varphi\text{ is CR, } \forall \lambda\in \Lambda_+\setminus \Omega'\:\: \pi_\lambda(\varphi)=0}
\]
and $B^{\vect s}_{p,q}(\Nc,\Omega)$ embeds canonically into the dual of the conjugate of $\widetilde \Sc_{\Omega}(\Nc)$ (which is a quotient of $\Sc'(\Nc)$) for every $\vect s\in \R^r$ and for every $p,q\in(0,\infty]$ (cf.~\cite[Proposition 7.12]{Besov}).

One may then define a canonical sesquiliear form
\[
B^{\vect s}_{p,q}(\Nc,\Omega)\times B^{-\vect s-(1/p-1)_+(\vect b+\vect d)}_{p',q'}(\Nc,\Omega) \to \C
\]
so that
\[
\langle u \vert u'\rangle\coloneqq \sum_k \langle u*\psi_k\vert u'*\psi_k\rangle,
\]
where $(\psi_k)$ is as in Definition~\ref{def:1} and $\sum_k (\Fc_\Nc \psi_k)^2=1$ on $\Omega'$. This definition does \emph{not} depend on the choice of $(\psi_k)$ (cf.~\cite[Proposition 4.20]{CalziPeloso}), and identifies $ B^{-\vect s-(1/p-1)_+(\vect b+\vect d)}_{p',q'}(\Nc,\Omega)$ with the dual of $\mathring B^{\vect s}_{p,q}(\Nc,\Omega)$ (cf.~\cite[Theorem 4.23]{CalziPeloso}).
By analogy, we define the weak topology $\sigma_{p,q}^{\vect s}$ as the weak topology $\sigma(B^{\vect s}_{p,q}(\Nc,\Omega),\mathring B^{-\vect s-(1/p-1)_+(\vect b+\vect d)}_{p',q'}(\Nc,\Omega)  ) $ with respect to this sesquilinear form.

\subsection{An Extension Operator}

Observe that $H^2(D)=A^{2,\infty}_{\vect 0}(D)$ is a reproducing kernel Hilbert space, with reproducing (Cauchy--Szeg\H o) kernel
\[
((\zeta,z),(\zeta',z'))\mapsto c_0 B^{\vect b+\vect d}_{(\zeta',z')}(\zeta,z).
\]
We then define a continuous linear mapping (cf.~\cite[Lemma 2.51, Theorem 5.2]{CalziPeloso})
\[
\Ec\colon B^{-\vect s}_{p,q}(\Nc,\Omega) \ni u \mapsto [(\zeta,z)\mapsto c_0 \langle u\vert (B^{\vect b+\vect d}_{(\zeta,z)})_0\rangle]\in A^{\infty,\infty}_{\vect s-(\vect b+\vect d)/p}(D)
\]
for $\vect s\succ \frac 1 p (\vect b+\vect d)+\frac{1}{2 q'}\vect{m'}$, so that $(B^{\vect b+\vect d}_{(\zeta,z)})_0\in \mathring B^{-\vect s-(1/p-1)_+(\vect b+\vect d)}_{p',q'}(\Nc,\Omega)$ for every $(\zeta,z)\in D$. In addition, $(\Ec u)_h$ converges to $u$ in $B^{-\vect s}_{p,q}(\Nc,\Omega)$ if $u\in \mathring B^{-\vect s}_{p,q}(\Nc,\Omega)$, in the weak topology $\sigma^{\vect s}_{p,q}$ if $u\in B^{-\vect s}_{p,q}(\Nc,\Omega)$. In particular, $\Ec$ is one-to-one.

We define 
\[
\widetilde A^{p,q}_{\vect s}(D)\coloneqq \Ec(B^{\vect s}_{p,q}(\Nc,\Omega)) \qquad \text{and} \qquad \widetilde A^{p,q}_{\vect s,0}(D)\coloneqq \Ec(\mathring B^{\vect s}_{p,q}(\Nc,\Omega)),
\]
endowed with the corresponding topology.   We denote by $\widetilde\sigma_{\vect s}^{p,q}$ the topology on $\widetilde A^{p,q}_{\vect s}(D)$ and $\widetilde A^{p,q}_{\vect s,0}(D)$ induced by the weak topology $\sigma^{-\vect s}_{p,q}$ on $B_{p,q}^{-\vect s}(\Nc,\Omega)$.  

The following result summerizes~\cite[Proposition 5.4 and Corollary 5.11]{CalziPeloso}.

\begin{prop}\label{prop:1}
	Take $p,q\in (0,\infty]$ and $\vect s\succ \frac 1 p (\vect b+\vect d)+\frac{1}{2 q'}\vect{m'}$ such that $\vect s\succ \frac{1}{2 q}\vect m$ (resp.\ $\vect s \Meg \vect 0$ if $q=\infty$). Then, there are continuous inclusions
	\[
	\Ec(\Sc_{\Omega,L}(\Nc)) \subseteq A^{p,q}_{\vect s,0}(D)\subseteq \widetilde  A^{p,q}_{\vect s,0}(D) \qquad \text{(resp.\ $\Ec(\Sc_{\Omega,L}(\Nc)) \subseteq A^{p,q}_{\vect s}(D)\subseteq \widetilde  A^{p,q}_{\vect s}(D) $)}.
	\]
	If, in addition,
	\[
	\vect s \succ \frac{1}{2 q}\vect m+\left( \frac{1}{2 \min(p,p')}-\frac{1}{2 q} \right)_+\vect{m'},
	\]
	then $A^{p,q}_{\vect s,0}(D)=\widetilde  A^{p,q}_{\vect s,0}(D) $  and $A^{p,q}_{\vect s}(D)= \widetilde  A^{p,q}_{\vect s}(D) $.
\end{prop}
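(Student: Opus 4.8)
The statement is, in essence, \cite[Proposition~5.4 and Corollary~5.11]{CalziPeloso} transcribed into the present axiomatic framework, so the plan is first to check that the hypotheses and conclusions correspond under the dictionary of Section~\ref{Prelim:sec}, and then to recall the structure of the underlying argument. The only point in the translation that is not purely formal is that here $T_+$ and $\Delta$ are determined only up to conjugation and a permutation of the components $\Delta_j$; but all the data entering the statement ($\vect b,\vect d,\vect m,\vect{m'}$, the families of lattices, the generalized power functions, and the measures $\nu_\Omega$, $\nu_D$) transform consistently under such a relabelling, so the displayed ranges for $\vect s$ are invariant and the results of \cite{CalziPeloso} apply after this identification.

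I would next treat the chain of inclusions. For $\Ec(\Sc_{\Omega,L}(\Nc))\subseteq A^{p,q}_{\vect s,0}(D)$ (resp.\ $\subseteq A^{p,q}_{\vect s}(D)$): by construction $\Ec$ maps into $\Hol(D)$, so it remains to estimate the $L^{p,q}_{\vect s}$-norm of $\Ec\varphi$ for $\varphi\in\Sc_{\Omega,L}(\Nc)$. Using the explicit formula $\Ec\varphi(\zeta,z)=c_0\langle\varphi\,\vert\,(B^{\vect b+\vect d}_{(\zeta,z)})_0\rangle$ together with~\eqref{eq:2}, one writes $(\Ec\varphi)_h$ as a convolution of $\varphi$ with a kernel whose $\Nc$-Fourier transform is $\lambda\mapsto\ee^{-\langle\lambda,h\rangle}$ times a fixed cut-off; since $\varphi$ is CR with spectrum compact in $\Omega'$, this yields rapid decay, both as $h$ tends to $\partial\Omega$ and as $h\to\infty$, of $\Delta^{\vect s}_\Omega(h)\norm{(\Ec\varphi)_h}_{L^p(\Nc)}$ throughout the range of $\vect s$ allowed by the hypotheses, whence $\Ec\varphi\in L^{p,q}_{\vect s}(D)$, and in fact in $L^{p,q}_{\vect s,0}(D)$ since it is a limit of $C_c(D)$-functions. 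For $A^{p,q}_{\vect s,0}(D)\subseteq\widetilde A^{p,q}_{\vect s,0}(D)$ (and the version without the subscript $0$) I would pass to boundary values: holomorphy of $f\in A^{p,q}_{\vect s}(D)$ forces the $\Nc$-spectrum of each $f_h$ to concentrate on $\overline{\Omega'}$, and a Littlewood--Paley analysis on $\Omega'$ --- through a lattice $(\lambda_k)$ and the partition $(\varphi_k)$ of Definition~\ref{def:1} --- combined with the subharmonic mean-value estimates for holomorphic functions, shows that the family $(f_h*\psi_k)_k$ has $\ell^q(K;L^p(\Nc))$-norm bounded, uniformly in $h$, by $\norm{f}_{L^{p,q}_{\vect s}(D)}$, and converges as $h\to\partial\Omega$ (in norm in the $A^{p,q}_{\vect s,0}$-case, in the weak topology $\widetilde\sigma^{p,q}_{\vect s}$ otherwise) to $(u*\psi_k)_k$ for a uniquely determined $u$ with $\Ec u=f$ and $u\in\mathring B^{\vect s}_{p,q}(\Nc,\Omega)$ (resp.\ $u\in B^{\vect s}_{p,q}(\Nc,\Omega)$); that $\Ec u=f$ follows from the Cauchy--Szeg\H o reproducing formula together with the convergence $(\Ec u)_h\to u$, and hence $f\in\widetilde A^{p,q}_{\vect s,0}(D)$ (resp.\ $\widetilde A^{p,q}_{\vect s}(D)$). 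Continuity of all the maps involved is built into these estimates.

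For the equality under the stronger hypothesis one must establish the reverse inclusion $\widetilde A^{p,q}_{\vect s}(D)\subseteq A^{p,q}_{\vect s}(D)$ (and its analogue with the subscript $0$), that is, $\Ec u\in L^{p,q}_{\vect s}(D)$ with norm controlled by $\norm{u}_{B^{\vect s}_{p,q}(\Nc,\Omega)}$. The idea is to reconstruct $(\Ec u)_h$ from the Littlewood--Paley pieces of $u$: writing $(\Ec u)_h=\sum_k(\Ec u)_h*\psi_k$ and observing that, at each height $h$, the map $u*\psi_k\mapsto(\Ec u)_h*\psi_k$ is a convolution on $\Nc$ with a kernel enjoying Gaussian-type bounds adapted to the dilations $R\cdot(\zeta,x)=(R^{1/2}\zeta,Rx)$, one bounds $\norm{(\Ec u)_h}_{L^p(\Nc)}$ by a weighted $\ell^q$-sum of the $\norm{u*\psi_k}_{L^p(\Nc)}$, and then integrates in $h$ against $\Delta^{q\vect s}_\Omega\,\dd\nu_\Omega$. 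The resulting operator has a kernel on $\Omega\times\Omega'$ which is a lattice discretisation of a Forelli--Rudin type kernel, expressible through the function $\Gamma_\Omega$ and the holomorphic families $I^{\vect s}_\Omega$, $\Delta^{\vect s}_{\Omega'}$ recalled in Section~\ref{Prelim:sec}; a Schur test in the $q$-variable, combined with Young's inequality on the homogeneous cone, shows that it is bounded on $\ell^q(K;L^p(\Nc))$ precisely when $\vect s\succ\tfrac1{2q}\vect m+\bigl(\tfrac1{2\min(p,p')}-\tfrac1{2q}\bigr)_+\vect{m'}$, the appearance of $\min(p,p')$ and of the factor $\vect{m'}$ being due to the loss one sustains when handling the $L^p(\Nc)$-norm in the $E$-variable for $p\neq2$. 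The version with the subscript $0$ then follows by approximation, since $\Ec(\Sc_{\Omega,L}(\Nc))$ is by definition dense in $\mathring B^{\vect s}_{p,q}(\Nc,\Omega)$.

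The main obstacle is this last step, namely obtaining the \emph{sharp} range in the reverse inclusion. In contrast with the case of symmetric cones, where Jordan-algebra identities render the Forelli--Rudin integrals explicit, one here has access to the geometry of $\Omega$ only through the triangular group $T_+$, so the required kernel estimates must be extracted from the holomorphic families $I^{\vect s}_\Omega$ and $\Delta^{\vect s}_{\Omega'}$, the identities for $\Gamma_\Omega$, and the $T_+$-invariance of $\nu_\Omega$; keeping track simultaneously of the two distinct parameters $\vect m$ (coming from $\Omega$) and $\vect{m'}$ (coming from the Hermitian form $\Phi$), and of the dilation-adapted kernel bounds on $\Nc$, is what makes the bookkeeping delicate. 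It is also the reason why the range $p<1$, where no duality between Bergman spaces is available, must be argued directly rather than obtained by interpolation from an endpoint.
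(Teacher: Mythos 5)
The paper offers no proof of this proposition: it is stated verbatim as a summary of \cite[Proposition 5.4 and Corollary 5.11]{CalziPeloso}, which is precisely the reduction you make, and your observation that the axiomatic choice of $T_+$ and $\Delta$ only affects the data by conjugation and a permutation of the components (so that the stated ranges for $\vect s$ are invariant) is indeed the only point requiring verification. Your sketch of the underlying arguments from the cited source is consistent with it (modulo the harmless overstatement that the Schur-test condition is ``precise'' --- the proposition only asserts sufficiency, and sharper ranges are known, e.g.\ for Lorentz cones), so the proposal takes essentially the same approach as the paper.
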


 We are able to improve the first half of~\cite[Proposition
5.18]{CalziPeloso} following~\cite[Proposition
4.34]{BekolleBonamiGarrigosRicci} and~\cite{BekolleGonessaNana}.  For
clarity of presentation, we
postpone the proof until Section~\ref{Proof:sec}. 

\begin{prop}\label{prop:13}
	Take $p,q\in(0,\infty]$ and $\vect s\succ \frac{1}{p}(\vect b+\vect d)+\frac{1}{2 q'}\vect m'$. If $\widetilde A^{p,q}_{\vect s,0}(D)=A^{p,q}_{\vect s,0}(D)$, then $\vect s \succ \left(\frac{1}{2 p}-\frac{1}{2 q}\right)_+\vect m'$. 
	If, in addition, $n=0$, then  $\vect s \succ \left(\frac{1}{2 \min(2,p)}-\frac{1}{2 q}\right)_+\vect m'$.
\end{prop}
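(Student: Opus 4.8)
The plan is to reduce the statement to its contrapositive and then violate a quantitative inequality by means of explicit Fourier-localized test functions. By Proposition~\ref{prop:1}, under the standing hypothesis $\vect s\succ\frac1p(\vect b+\vect d)+\frac1{2q'}\vect m'$ one always has the continuous inclusion $A^{p,q}_{\vect s,0}(D)\subseteq\widetilde A^{p,q}_{\vect s,0}(D)$. Since $A^{p,q}_{\vect s,0}(D)$ is closed in $L^{p,q}_{\vect s}(D)$ while $\widetilde A^{p,q}_{\vect s,0}(D)=\Ec(\mathring B^{\vect s}_{p,q}(\Nc,\Omega))$ carries the (quasi\nobreakdash-)norm transported from $\mathring B^{\vect s}_{p,q}(\Nc,\Omega)$ through the injection $\Ec$, the equality $\widetilde A^{p,q}_{\vect s,0}(D)=A^{p,q}_{\vect s,0}(D)$ and the closed graph theorem furnish a constant $C>0$ with
\begin{equation}\label{eq:plan1}
\norm{\Ec u}_{L^{p,q}_{\vect s}(D)}\meg C\,\norm{u}_{\mathring B^{\vect s}_{p,q}(\Nc,\Omega)}\qquad\text{for every }u\in\mathring B^{\vect s}_{p,q}(\Nc,\Omega).
\end{equation}
It therefore suffices to produce a family along which~\eqref{eq:plan1} fails; in view of the standing hypotheses this reduces to the case where $s_{j_0}\meg\bigl(\tfrac1{2p}-\tfrac1{2q}\bigr)_+ m'_{j_0}$ for some $j_0\in\{1,\dots,r\}$ (resp.\ $s_{j_0}\meg\bigl(\tfrac1{2\min(2,p)}-\tfrac1{2q}\bigr)_+ m'_{j_0}$ when $n=0$). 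As $\Delta$ factors over the $r$ coordinates and every quantity in sight transforms by a power of $\Delta$ under $T_+$, one further reduces to ``degenerating only in the $j_0$-th direction'': fix a one-parameter subgroup $(a_t)_{t>0}$ of $T_+$ with $\Delta_{j_0}(a_t)=t$ and $\Delta_i(a_t)=1$ for $i\neq j_0$, and let $t\to0^+$.

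The test family consists of Knapp-type wave packets adapted to the boundary face of $\Omega'$ along which $\Delta_{j_0}$ degenerates. For small $t$, set $\lambda_t\coloneqq e_{\Omega'}\cdot a_t$ and let $P_t\subseteq\Omega'$ be a maximal ``flat'' plank through $\lambda_t$ tangent to that face; by the structure theory of homogeneous cones, $P_t$ meets a number $M_t$ of balls of a fixed $(\delta,R)$-lattice on $\Omega'$ that grows like a negative power of $t$ governed by $m'_{j_0}$. Put $u_t\coloneqq\Fc_\Nc^{-1}(\varphi_t)$, where $\varphi_t\in C^\infty_c(\Omega')$ is a smooth bump supported in $P_t$ (with constant amplitude, or with random signs on the lattice blocks it meets, whichever is extremal); then $u_t$ is a CR Schwartz function, $u_t\in\Sc_\Omega(\Nc)$, and hence $u_t$ defines an element of $\mathring B^{\vect s}_{p,q}(\Nc,\Omega)$. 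Using the Plancherel/Hausdorff--Young description of $L^p(\Nc)$ from Section~\ref{Prelim:sec} together with the Laplace identities $\Lc(\Delta^{\vect t}_\Omega\cdot\nu_\Omega)=\Gamma_\Omega(\vect t)\Delta^{-\vect t}_{\Omega'}$, one computes $\norm{u_t}_{\mathring B^{\vect s}_{p,q}(\Nc,\Omega)}\approx\Delta^{\vect a}(a_t)$ and $\norm{\Ec u_t}_{L^{p,q}_{\vect s}(D)}\approx\Delta^{\vect a'}(a_t)$ for explicit $\vect a,\vect a'\in\R^r$, whose $j_0$-th components satisfy
\[
(\vect a'-\vect a)_{j_0}=-c\,\Bigl(s_{j_0}-\bigl(\tfrac1{2p}-\tfrac1{2q}\bigr)_+ m'_{j_0}\Bigr)
\]
for some $c>0$, with $\min(2,p)$ in place of $p$ when $n=0$; the latter gain traces back to the stronger orthogonality of the characters of $\R^m$ — i.e.\ to a sharper $L^p(\Nc)$-bound for $u_t$, improved by an extra factor $M_t^{\frac1{\min(2,p)}-\frac1p}$ — which is partially obstructed for $n>0$ by the rank-one (constant-on-fibres) nature of the Fourier transform forced by the CR condition.

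Granting these estimates, if the relevant index inequality fails then $(\vect a'-\vect a)_{j_0}<0$, so $\norm{\Ec u_t}_{L^{p,q}_{\vect s}(D)}/\norm{u_t}_{\mathring B^{\vect s}_{p,q}(\Nc,\Omega)}$ is a positive power of $t^{-1}$ and blows up as $t\to0^+$, contradicting~\eqref{eq:plan1}. (One may also dispense with the closed graph theorem by taking a lacunary superposition $u\coloneqq\sum_\ell c_\ell u_{t_\ell}$, with the $\Ec u_{t_\ell}$ supported in pairwise well-separated regions of $D$ and $(c_\ell)$ chosen so that the weighted $\ell^q$-sum converges on the Besov side but diverges on the $L^{p,q}_{\vect s}$ side; this gives a single $u\in\mathring B^{\vect s}_{p,q}(\Nc,\Omega)$ with $\Ec u\notin L^{p,q}_{\vect s}(D)$, so that $\Ec u\in\widetilde A^{p,q}_{\vect s,0}(D)\setminus A^{p,q}_{\vect s,0}(D)$, again a contradiction.)

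The crux — and the main obstacle — is the geometric and combinatorial content of the second paragraph. First, one must determine the exact shape of the maximal plank $P_t$ tangent to a degenerating face of $\Omega'$ and the growth of the count $M_t$, which calls for the fine geometry of homogeneous cones together with delicate identities for the generalized power functions $\Delta^{\vect s}_{\Omega'}$. Second, one needs sharp two-sided $L^p(\Nc)$-estimates for CR frequency-localized functions and the ensuing Laplace/Plancherel bookkeeping with the structural constants $\vect b,\vect d,\vect m,\vect m'$; the truly delicate point is to pin down precisely where the hypothesis $n=0$ allows replacing $p$ by $\min(2,p)$, this being exactly the refinement over~\cite[Proposition 4.34]{BekolleBonamiGarrigosRicci} needed to treat general homogeneous Siegel domains of Type~II.
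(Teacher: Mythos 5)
Your strategy is genuinely different from the paper's, but the proposal contains a substantial gap exactly where the real work has to happen.

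The paper does not use Fourier-localized Knapp packets at all. Instead, after reducing (via~\cite[Proposition 5.18]{CalziPeloso}) to the borderline case $p<q$, $\vect s\Meg\bigl(\tfrac1{2p}-\tfrac1{2q}\bigr)\vect m'$, and observing that the $n=0$ assertion follows directly from that same earlier result, it constructs an explicit family of test functions
\[
g^{\vect s_1,s_2}=B^{\vect s_1}_{(0,ie_\Omega)}\bigl(1+\log B^{-\vect s_1}_{(0,ie_\Omega)}\bigr)^{s_2},\qquad\vect s_1\meg\vect 0,\ s_2\in\R,
\]
a logarithmically corrected power of the Bergman kernel. The auxiliary Lemmas~\ref{lem:2}--\ref{lem:7} control $\Im\log\Delta_\Omega^{\vect s}$ and give a sharp integrability criterion with a log-weight, so that for $-1/p\meg s_2<-1/q$ one has $g^{\vect s_1,s_2}\in\widetilde A^{p,q}_{\vect s}(D)$ but, if the $\vect m'$-inequality fails, $g^{\vect s_1,s_2}\notin A^{p,q}_{\vect s}(D)$. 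The logarithmic twist $(1+\log B^{-\vect s_1})^{s_2}$ is precisely what detects the \emph{endpoint} $\vect s=\bigl(\tfrac1{2p}-\tfrac1{2q}\bigr)\vect m'$, and the convolution with $I^{-\vect s_3}_\Omega$ is used to lift the membership to the Besov side.

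Your Knapp-packet scheme runs in the opposite order and, at least at a single scale, can only detect \emph{strict} failure: in the asserted formula $(\vect a'-\vect a)_{j_0}=-c\bigl(s_{j_0}-(\tfrac1{2p}-\tfrac1{2q})_+m'_{j_0}\bigr)$ the exponent vanishes at the critical index, so the quotient $\norm{\Ec u_t}/\norm{u_t}$ stays bounded precisely in the case that remains after the reduction. You anticipate this by mentioning a lacunary superposition, but you do not specify the coefficients $(c_\ell)$, verify that the $\Ec u_{t_\ell}$ are sufficiently disjoint in $D$, or show that $q$-summability on the Besov side coexists with $q$-divergence on the $L^{p,q}_{\vect s}$ side; this is the heart of an endpoint construction and cannot be waved through. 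The remaining ingredients — the count $M_t$ of lattice balls met by the tangent plank and the sharp two-sided $L^p(\Nc)$-estimates for CR frequency-localized functions, in particular the exact mechanism by which $n=0$ upgrades $p$ to $\min(2,p)$ — are stated as goals rather than proved, and you explicitly flag them as the main obstacle. You should also note that the $n=0$ refinement requires no new argument in the paper: it is taken directly from~\cite[Proposition 5.18]{CalziPeloso}, whereas your proposal attempts (vaguely) to re-derive it. As written, the proposal is a plausible programme in the spirit of the decoupling literature, but it is not a proof: all the quantitative content is deferred, and the endpoint case — the only case that actually needs a new argument after the paper's reduction — is not handled.
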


When $n=0$ and $r=2$, so that $\Omega$ is either (isomorphic to) $(\R_+^*)^2$ or a Lorentz cone, then combining~\cite[Theorems 6.6 and 6.8]{BekolleGonessaNana} (the latter being a consequence of~\cite[Theorem 1.2]{BourgainDemeter}) with~\cite[Theorem 5.10]{CalziPeloso}, we see that $A^{p,q}_{\vect s}(D)=\widetilde  A^{p,q}_{\vect s}(D)$  and $A^{p,q}_{\vect s,0}(D)=\widetilde  A^{p,q}_{\vect s,0}(D)$, provided that 
\[
\vect s \succ \frac{1}{2q}\vect m+\frac 1 2\left(\frac{1}{\min(2,p)} -\frac{1}{ q}   \right)_+\vect m', \frac{\vect d}{p}+\frac{1}{2 q'}\vect m'.
\]
In addition, since in this case $\alpha \vect m+\beta \vect m'=\sup(\alpha \vect m,\beta \vect m')$ for every $\alpha,\beta \Meg 0$ (cf.~\cite[Definition 2.8]{CalziPeloso}), the preceding sufficient conditions are also necessary, thanks to Proposition~\ref{prop:13}.

We now recall another result concerning the dual of $\widetilde A^{p,q}_{\vect s,0}(D)$, namely~\cite[Proposition 5.12]{CalziPeloso}.\footnote{We observe explicitly that there is a mistake in the statement of the cited result, which we correct here. The proof is unchanged, besides the corresponding correction.} 

\begin{prop}\label{prop:10}
	Take $p,q\in (0,\infty]$, $\vect s\succ \frac 1p (\vect b+\vect d)+\frac{1}{q'}\vect{m'}$ and $\vect{s'}\succ \frac{1}{p'}(\vect b+\vect d)+\frac{1}{2 q''}\vect{m'}$. Define $\vect{s''}\coloneqq \vect s+\vect{s'}-\left(\frac 1 p-1\right)_+(\vect b+\vect d)$, and assume that $\vect{s''}\succ \frac 1 2 \vect m$. Then, the sesquilinear form
	\[
	\Ec(\Sc_{\Omega,L}(\Nc))\times \Ec(\Sc_{\Omega,L}(\Nc))\ni (f,g)\mapsto  \int_D f\overline g (\Delta^{\vect{s''-\vect b-\vect d}}_\Omega\circ \rho)\,\dd \nu_D
	\]
	is well  defined and extends to a unique continuous sesquilinear form on $\widetilde A^{p,q}_{\vect s,0}(D)\times \widetilde A^{p',q'}_{\vect{s'}}(D)$ which is continuous on the second factor with respect to  $\widetilde\sigma^{p',q'}_{\vect s'}$ through $\Ec$. The extended sesquilinear form induces an antilinear isomorphism of $\widetilde A^{p',q'}_{\vect{s'}}(D)$ onto $\widetilde A^{p,q}_{\vect s,0}(D)'$, and there is $c\neq 0$ such that
	\[
	\langle \Ec u\vert \Ec (u'*I^{-\vect{s''}}_\Omega)\rangle=c\langle u\vert u'\rangle
	\]
	for every $u,u'\in \Sc_{\Omega,L}(\Nc)$.
\end{prop}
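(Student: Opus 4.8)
The plan is to reduce everything, through the isomorphism $\Ec$, to the canonical sesquilinear pairing between the Besov spaces $B^{\vect s}_{p,q}(\Nc,\Omega)$ and $B^{-\vect s-(1/p-1)_+(\vect b+\vect d)}_{p',q'}(\Nc,\Omega)$ recalled above. The decisive point is the identity
\[
\int_D (\Ec u)\,\overline{\Ec(u'*I^{-\vect{s''}}_\Omega)}\,\bigl(\Delta^{\vect{s''}-\vect b-\vect d}_\Omega\circ\rho\bigr)\,\dd\nu_D \;=\; c\,\langle u\vert u'\rangle ,\qquad u,u'\in\Sc_{\Omega,L}(\Nc),
\]
with $c\neq 0$ and $\langle\,\cdot\,\vert\,\cdot\,\rangle$ the Besov pairing. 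Granting it, everything else is soft: $u'\mapsto u'*I^{-\vect{s''}}_\Omega$ is, by the lifting properties of the family $(I^{\vect t}_\Omega)_{\vect t}$ on the Besov spaces, a topological isomorphism of $B^{\vect{s'}}_{p',q'}(\Nc,\Omega)$ onto $B^{-\vect s-(1/p-1)_+(\vect b+\vect d)}_{p',q'}(\Nc,\Omega)$ (which is where the definition of $\vect{s''}$ and the hypotheses on $\vect s,\vect{s'}$ enter), the latter is the dual of $\mathring B^{\vect s}_{p,q}(\Nc,\Omega)$ by \cite[Theorem 4.23]{CalziPeloso}, and $\Ec$ is a topological isomorphism of $B^{\vect{s'}}_{p',q'}(\Nc,\Omega)$ onto $\widetilde A^{p',q'}_{\vect{s'}}(D)$ and of $\mathring B^{\vect s}_{p,q}(\Nc,\Omega)$ onto $\widetilde A^{p,q}_{\vect s,0}(D)$. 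Composing the three identifications, the $\int_D$-form is, on $\Ec(\Sc_{\Omega,L}(\Nc))\times\Ec(\Sc_{\Omega,L}(\Nc))$, a nonzero multiple of the duality bracket carried over by $\Ec$; hence it extends (uniquely, by density of $\Ec(\Sc_{\Omega,L}(\Nc))$ in $\widetilde A^{p,q}_{\vect s,0}(D)$) to $\widetilde A^{p,q}_{\vect s,0}(D)\times\widetilde A^{p',q'}_{\vect{s'}}(D)$, is continuous in the second variable for $\widetilde\sigma^{p',q'}_{\vect{s'}}$ (this being exactly the weak continuity of the Besov bracket against $\mathring B^{\vect s}_{p,q}(\Nc,\Omega)$), and induces the asserted antilinear isomorphism $\widetilde A^{p',q'}_{\vect{s'}}(D)\to\widetilde A^{p,q}_{\vect s,0}(D)'$.

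To prove the identity I would first assume $\vect{s''}\succ\vect m$, so that all the integrals below converge absolutely, and disintegrate $\nu_D=(\Delta^{\vect b+2\vect d}_\Omega\circ\rho)\cdot\Hc^{2n+2m}$ along $\rho$: writing $v\coloneqq u'*I^{-\vect{s''}}_\Omega$, the left-hand side becomes $\int_\Omega\bigl(\int_\Nc (\Ec u)_h\,\overline{(\Ec v)_h}\bigr)\,\Delta^{\vect{s''}}_\Omega(h)\,\dd\nu_\Omega(h)$. Since $u,u'$ have $\Fc_\Nc$ supported in a compact subset of $\Omega'$, so do $v$, $\Ec u$, $\Ec v$, and the integral formula defining $\Ec$ together with \eqref{eq:2} gives $(\Ec w)_h=\Fc_\Nc^{-1}\bigl(\ee^{-\langle\,\cdot\,,h\rangle}\Fc_\Nc w\bigr)$, while $\Fc_\Nc v=\Delta^{\vect{s''}}_{\Omega'}\cdot\Fc_\Nc u'$ (up to a unimodular constant) because $\Lc I^{-\vect{s''}}_\Omega=\Delta^{\vect{s''}}_{\Omega'}$. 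The Plancherel formula for CR functions on $\Nc$ then turns the inner integral into $c\int_{\Omega'}\ee^{-2\langle\lambda,h\rangle}\,\Fc_\Nc u(\lambda)\,\overline{\Fc_\Nc u'(\lambda)}\,\Delta^{\vect{s''}}_{\Omega'}(\lambda)\,\Delta^{-\vect b}_{\Omega'}(\lambda)\,\dd\lambda$; performing the $h$-integration via $\int_\Omega\ee^{-2\langle\lambda,h\rangle}\Delta^{\vect{s''}}_\Omega(h)\,\dd\nu_\Omega(h)=\Gamma_\Omega(\vect{s''})\,2^{-\sum_j s''_j}\,\Delta^{-\vect{s''}}_{\Omega'}(\lambda)$, the factors $\Delta^{\pm\vect{s''}}_{\Omega'}(\lambda)$ cancel and one is left with a nonzero multiple of $\int_{\Omega'}\Fc_\Nc u\,\overline{\Fc_\Nc u'}\,\Delta^{-\vect b}_{\Omega'}\,\dd\lambda$. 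On the other hand, expanding $\langle u\vert u'\rangle=\sum_k\langle u*\psi_k\vert u'*\psi_k\rangle$ with $\sum_k(\Fc_\Nc\psi_k)^2=1$ and applying Plancherel once more produces precisely the same integral up to a constant; comparing constants (and using that $\Gamma_\Omega(\vect{s''})\neq0$) gives the identity, together with the value of $c$, for $\vect{s''}\succ\vect m$.

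To reach the stated range $\vect{s''}\succ\tfrac12\vect m$ I would argue by analytic continuation in $\vect{s''}$, keeping $u,u'$ fixed: the right-hand side does not depend on $\vect{s''}$, the constant $c=\Gamma_\Omega(\vect{s''})\,2^{-\sum_j s''_j}\cdot(\mathrm{const})$ is holomorphic and zero-free, and the left-hand side is holomorphic on $\{\vect{s''}\succ\tfrac12\vect m\}$ because there the defining integral converges locally uniformly — this is precisely the sharp threshold, coming from the exponential decay of $(\Ec u)_h,(\Ec u')_h$ as $h\to\infty$ in $\Omega$ together with the sharp local integrability estimates for generalized power functions near $\partial\Omega$ established in \cite{CalziPeloso}. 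Hence the identity persists on the whole range, and — since the same estimates show the $\int_D$-form is already well defined on $\Ec(\Sc_{\Omega,L}(\Nc))\times\Ec(\Sc_{\Omega,L}(\Nc))$ — it passes to $\widetilde A^{p,q}_{\vect s,0}(D)\times\widetilde A^{p',q'}_{\vect{s'}}(D)$ through the density and weak-density statements of Proposition~\ref{prop:1}.

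The main obstacle is the exponent and threshold bookkeeping. One must check carefully that $u'\mapsto u'*I^{-\vect{s''}}_\Omega$ really carries $B^{\vect{s'}}_{p',q'}(\Nc,\Omega)$ \emph{isomorphically} onto $B^{-\vect s-(1/p-1)_+(\vect b+\vect d)}_{p',q'}(\Nc,\Omega)$ for the prescribed relation $\vect{s''}=\vect s+\vect{s'}-(1/p-1)_+(\vect b+\vect d)$ (the $(1/p-1)_+$ correction, which originates from a Nikolskii-type inequality on $\Nc$, being the delicate point when $p<1$), that the hypotheses on $\vect s$ and $\vect{s'}$ make $\Ec$ simultaneously an isomorphism onto $\widetilde A^{p,q}_{\vect s,0}(D)$ and onto $\widetilde A^{p',q'}_{\vect{s'}}(D)$, and that $\vect{s''}\succ\tfrac12\vect m$ is exactly the range of absolute convergence of the form — these being precisely the points where the correction to \cite[Proposition 5.12]{CalziPeloso} intervenes.
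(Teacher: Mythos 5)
The paper gives no proof here: Proposition~\ref{prop:10} is explicitly stated as a corrected restatement of \cite[Proposition 5.12]{CalziPeloso}, with the footnote ``the proof is unchanged, besides the corresponding correction.'' So your blind reconstruction must be compared against that reference's argument. Your overall plan — carry everything through $\Ec$ to the Besov pairing, identify $u'\mapsto u'*I^{-\vect{s''}}_\Omega$ as the lifting isomorphism $B^{\vect{s'}}_{p',q'}(\Nc,\Omega)\to B^{-\vect s-(1/p-1)_+(\vect b+\vect d)}_{p',q'}(\Nc,\Omega)$, appeal to \cite[Theorem 4.23]{CalziPeloso} for the Besov duality, and prove the bridging identity by disintegrating $\nu_D$, applying the Plancherel formula on $\Nc$, and performing the Laplace-transform $h$-integration — is indeed the natural route and almost certainly the one followed in the cited reference. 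The soft part (uniqueness of the extension, weak continuity in the second variable through $\widetilde\sigma^{p',q'}_{\vect{s'}}$) follows as you say.

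The weak point is the passage from $\vect{s''}\succ\vect m$ to $\vect{s''}\succ\tfrac12\vect m$. You justify holomorphy of the left-hand side on the wider range by asserting that ``the defining integral converges locally uniformly'' there, attributing this to ``the sharp local integrability estimates for generalized power functions near $\partial\Omega$''. This is not how the convergence is obtained: the pointwise estimate on the inner $\Nc$-integral combined with local integrability of $\Delta^{\vect{s''}}_\Omega\cdot\nu_\Omega$ near $\partial\Omega$ would again require $\vect{s''}\succ\vect m$, the very threshold you are trying to beat (this is exactly the threshold for $\Gamma_\Omega(\vect{s''})<\infty$). What actually makes the form well defined on the stated range is that the hypotheses on $\vect s,\vect{s'}$ together with $\vect{s''}\succ\tfrac12\vect m$ guarantee, via Proposition~\ref{prop:1} applied with $p=q=2$, that $\Ec(\Sc_{\Omega,L}(\Nc))\subseteq A^{2,2}_{\vect{s''}/2,0}(D)$; the form is then nothing but the $L^{2,2}_{\vect{s''}/2}(D)$ sesquilinear pairing and converges absolutely by Cauchy--Schwarz. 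With this observation the analytic-continuation scaffolding is largely unnecessary: one can compute the $A^{2,2}_{\vect{s''}/2}(D)$-pairing of $\Ec u$ and $\Ec(u'*I^{-\vect{s''}}_\Omega)$ directly by Plancherel (interchanging the $h$- and $\lambda$-integrations under the Cauchy--Schwarz majorant), and the $\Gamma_\Omega$-factor appears as a meromorphically continued constant rather than as an actually convergent $h$-integral. If you do insist on analytic continuation, you must also address that the integrand depends holomorphically on $\vect{s''}$ both through the weight $\Delta^{\vect{s''}-\vect b-\vect d}_\Omega\circ\rho$ and through $u'*I^{-\vect{s''}}_\Omega$, and establish local uniform convergence via the Cauchy--Schwarz majorant described above, not via local integrability of the weight alone.

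One smaller point: the final displayed identity in the statement has a constant $c$ that is allowed to depend on the fixed data $\vect s,\vect{s'},p,q$; your computation makes $c$ proportional to $\Gamma_\Omega(\vect{s''})\,2^{-\sum s''_j}$, which is fine, but you call it ``holomorphic and zero-free'' on $\{\vect{s''}\succ\tfrac12\vect m\}$ without comment — that is a genuine (if standard) property of the Gindikin $\Gamma$-function that deserves a reference, especially given that the naive integral representation of $\Gamma_\Omega$ breaks down precisely on $\tfrac12\vect m\prec\vect{s''}\preceq\vect m$.
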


\section{Continuity of $P_{\vect{s'},+}$}\label{sec:3}

In this section we extend to homogeneous Siegel domains of type II an interesting characterization of the boundedness of the operator 
\[
P_{\vect{s'},+}\colon f \mapsto \int_D f(\zeta,z) \abs{B^{\vect{s'}}_{(\zeta,z)}} \Delta^{-\vect{s'}}_\Omega(\rho(\zeta,z))\,\dd \nu_D(\zeta,z)
\]
provided in~\cite[Theorem 4.10]{Bekolleetal} for irreducible symmetric tube domains ($\vect s\in \R\vect d$ and $\vect{s'}=\vect d-q\vect s$).

\begin{teo}\label{prop:7} 
	Take $\vect s, \vect{s'}\in \R^r$ and $p,q\in [1,\infty]$.
	Then, $P_{\vect{s'},+}$ induces an endomorphism of $L^{p,q}_{\vect s,0}(D)$ (resp.\ $L^{p,q}_{\vect s}(D)$) if and only if 
	\[
	T\colon C_c(\Omega)\ni f \mapsto  \Delta_\Omega^{\vect s} \int_\Omega f(h) \Delta^{\vect{s'}-(\vect b+\vect d)}_\Omega(\,\cdot\,+h)\Delta^{\vect b+\vect d-\vect s-\vect{s'}}_\Omega(h)\,\dd \nu_\Omega(h)
	\]
	induces an endomorphism of $L^q_0(\nu_\Omega)$ (resp.\ $L^q(\nu_\Omega)$).
\end{teo}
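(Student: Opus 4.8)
The plan is to reduce the boundedness of $P_{\vect{s'},+}$ on $L^{p,q}_{\vect s}(D)$ to a ``scalar'' question on the cone $\Omega$ by integrating out the nilpotent variable $(\zeta,x)\in \Nc$, exploiting that the absolute value $\abs{B^{\vect{s'}}_{(\zeta,z)}}$ depends on $(\zeta,z),(\zeta',z')$ only through $\rho(\zeta,z)$, $\rho(\zeta',z')$, and the $\Nc$-separation. First I would recall the standard estimate (cf.\ the analogue in~\cite{Bekolleetal, BekolleBonamiGarrigosRicci}) that for $h,h'\in\Omega$ one has
\[
\int_{\Nc}\abs{B^{\vect{s'}}_{(\zeta,x+i\Phi(\zeta)+ih)}(\zeta',x'+i\Phi(\zeta')+ih')}\,\dd(\zeta',x') \asymp \Delta^{\vect{s'}-(\vect b+\vect d)}_\Omega(h+h'),
\]
uniformly, since the $\Nc$-integral of $\abs{\Delta^{\vect{s'}}_\Omega\bigl(\tfrac{z-\overline{z'}}{2i}-\Phi(\zeta,\zeta')\bigr)}$ over the Silov boundary is governed by the size of $\Delta^{\vect{s'}}_\Omega$ on the ``center'' $\frac{i}{2}(h+h')$, up to the Jacobian factor $\Delta^{-(\vect b+\vect d)}_\Omega$ coming from $\nu_D$ versus $\Hc^{2n+m}\otimes\nu_\Omega$. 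This is the one genuinely analytic input; everything else is bookkeeping.

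Granting this, the ``only if'' direction is obtained by testing $P_{\vect{s'},+}$ on functions of the form $f(\zeta,z)=g(\rho(\zeta,z))\,\chi(\zeta,x)$ with $g\in C_c(\Omega)$ and $\chi$ a fixed nonnegative bump on $\Nc$; applying Fubini and the displayed estimate shows that $\norm{P_{\vect{s'},+}f}_{L^{p,q}_{\vect s}}$ dominates a constant times $\norm{Tg}_{L^q(\nu_\Omega)}$ while $\norm{f}_{L^{p,q}_{\vect s}}\asymp\norm{g}_{L^q(\nu_\Omega)}$, so boundedness of $P_{\vect{s'},+}$ forces boundedness of $T$. For the ``if'' direction, given $f\in C_c(D)$ I would put $g(h)\coloneqq \Delta^{\vect b+\vect d-\vect{s'}}_\Omega(h)\,\norm{f_h}_{L^1(\Nc)}$ (or $L^p$, after a Minkowski/Jensen step), bound $\abs{P_{\vect{s'},+}f(\zeta,z)}$ pointwise by $\Delta^{-\vect{s'}}_\Omega(\rho(\zeta,z))\int_\Omega \norm{f_{h}}_{L^1(\Nc)}\,\Delta^{\vect{s'}-(\vect b+\vect d)}_\Omega(\rho(\zeta,z)+h)\,\dd\nu_\Omega(h)$ using the integral estimate, recognize the right-hand side (after multiplying by $\Delta^{\vect s}_\Omega(\rho(\zeta,z))$ and taking $L^p$ in $\Nc$, which is trivial since it no longer depends on $(\zeta,x)$ — here the claimed $p$-independence appears) as $(Tg)(\rho(\zeta,z))$ up to the normalizing powers, and conclude $\norm{P_{\vect{s'},+}f}_{L^{p,q}_{\vect s}}\lesssim \norm{Tg}_{L^q(\nu_\Omega)}\lesssim\norm{g}_{L^q(\nu_\Omega)}\asymp\norm{f}_{L^{p,q}_{\vect s}}$. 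One then extends from $C_c(D)$ to $L^{p,q}_{\vect s,0}(D)$ by density, and to $L^{p,q}_{\vect s}(D)$ by a Fatou/monotone-convergence argument on the positive kernel (handling $q=\infty$ separately, as $L^{\infty}$ is not reached by $C_c$).

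**Main obstacle.** The delicate point is the uniform two-sided control of $\int_{\Nc}\abs{B^{\vect{s'}}_{(\zeta,z)}(\zeta',z')}\,\dd(\zeta',x')$ by $\Delta^{\vect{s'}-(\vect b+\vect d)}_\Omega(\rho(\zeta,z)+\rho(\zeta',z'))$: one must integrate $\bigl|\Delta^{\vect{s'}}_\Omega\bigl(\tfrac{i}{2}(h+h')+(\text{real part depending on }x-x',\,\Phi(\zeta,\zeta'))\bigr)\bigr|$ over the real directions, and this requires the integrability/decay properties of generalized power functions of $\Omega$ along $\Omega+iF$ (i.e.\ the analogue of $\int_{\R^m}\abs{\Delta^{\vect s}_\Omega(y+ix)}\,\dd x\asymp \Delta^{\vect s + \vect d'}_\Omega(y)$ from~\cite{FarautKoranyi, CalziPeloso}), together with a change of variables absorbing the $\Phi$-term. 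For this to even make sense one needs $\vect{s'}$ in an appropriate range; but note that if $T$ is bounded then $T(C_c(\Omega))\subseteq L^q(\nu_\Omega)$ already forces the relevant local and global integrability of $\Delta^{\vect{s'}-(\vect b+\vect d)}_\Omega$, so outside that range both sides of the equivalence fail and the statement is vacuously consistent — this should be checked rather than assumed. Modulo that kernel estimate, the argument is a clean Fubini-plus-density reduction, and the $p$-independence is automatic because $\Nc$ disappears from the majorant after the first integration.
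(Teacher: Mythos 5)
Your ``if'' direction is essentially the paper's argument: after writing $P_{\vect{s'},+}f(\zeta,z)$ as an $\Omega$-integral of convolutions on the group $\Nc$, Young's inequality $\norm{\abs{f_{h'}}*\abs{(B^{\vect{s'}}_{(0,ih')})_h}}_{L^p(\Nc)}\meg \norm{f_{h'}}_{L^p(\Nc)}\,\norm{(B^{\vect{s'}}_{(0,ih')})_h}_{L^1(\Nc)}$ together with the exact computation $\norm{(B^{\vect{s'}}_{(0,ih')})_h}_{L^1(\Nc)}\asymp \Delta^{\vect{s'}-(\vect b+\vect d)}_\Omega(h+h')$ gives $\Delta^{\vect s}_\Omega(h)\norm{(P_{\vect{s'},+}f)_h}_{L^p(\Nc)}\meg C\, T(\Delta^{\vect s}_\Omega\norm{f_{\,\cdot\,}}_{L^p(\Nc)})(h)$, and the $p$-independence is exactly as you say. (Your pointwise bound as literally written pairs $\norm{f_h}_{L^1}$ with the $L^1$-norm of the kernel, which is not correct, but the ``Minkowski/Jensen step'' you allude to is precisely Young's inequality, so this is repairable.)

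The ``only if'' direction, however, has a genuine gap, and you have misidentified the main obstacle. The two-sided estimate for the \emph{full} integral $\int_{\Nc}\abs{B^{\vect{s'}}_{(\zeta,z)}}\,\dd(\zeta',x')$ is in fact the easy part: by left-invariance and $T_+$-homogeneity it is an exact identity $c\,\Delta^{\Re\vect{s'}-(\vect b+\vect d)}_\Omega(h+h')$ whenever the defining constant is finite. The problem is that this identity does not lower-bound what you actually need. Testing on $f=g(\rho)\chi$ with $\chi$ a fixed bump on $\Nc$, a lower bound for $\norm{(P_{\vect{s'},+}f)_h}_{L^p(\Nc)}$ (for $p>1$) forces you to restrict the outer variable to a compact set $U$ and therefore to bound from below the \emph{truncated} integral $\int_{U^2}\abs{(B^{\vect{s'}}_{(\zeta,z)})_{h'}(\zeta',x')}\,\dd(\zeta',x')$ by $\Delta^{\vect{s'}-(\vect b+\vect d)}_\Omega(h+h')$. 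Such a bound is \emph{false} for general $h,h'$: when $h+h'$ is large the integrand is essentially constant on the fixed compact set, so the truncated integral behaves like $\Delta^{\Re\vect{s'}}_\Omega(h+h')$, which is strictly smaller than $\Delta^{\Re\vect{s'}-(\vect b+\vect d)}_\Omega(h+h')$ since $\vect b+\vect d\prec\vect 0$. The paper's Lemma~\ref{lem:1} establishes the truncated lower bound only for $\abs{\rho(\zeta,z)},\abs{h'}\meg c$, which yields boundedness of $T$ only on functions supported near the vertex of the cone, and one must then invoke homogeneity (rescaling by $R>0$) together with the positivity of the kernel of $T$ to globalize. Neither the local restriction nor the dilation step appears in your sketch, and without them the reduction $\norm{P_{\vect{s'},+}f}_{L^{p,q}_{\vect s}}\gtrsim\norm{Tg}_{L^q(\nu_\Omega)}$ does not hold as stated.
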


  For the continuity of operators such as $T$, see~\cite[Lemma 3.35]{CalziPeloso}, but also~\cite{Garrigos-Nana} for a much more general treatment.  

Before we pass to the proof, we need a lemma, which extends~\cite[Lemma 4.11]{Bekolleetal}. Cf.~also~\cite{Sehba}.

\begin{lem}\label{lem:1}
	Take $\vect{s}\in \C^r$, and let $U$ be a compact neighbourhood of $(0,0)$ in $\Nc$. Then, there are two constants $C,c>0$ such that
	\[
	\int_{U^2} \abs*{\left(B^{\vect{s'}}_{(\zeta,z)}\right)_{h'}(\zeta',x')}\,\dd (\zeta',x')\Meg C \Delta_\Omega^{\vect{s'}-(\vect b+\vect d)}(h+h')
	\]
	for every $(\zeta,z)\in D$ and for every $h'\in \Omega$ such that $(\zeta,\Re z)\in U$ and such that $\abs{\rho(\zeta,z)},\abs{h'}\meg c$.
\end{lem}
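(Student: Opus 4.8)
The plan is to reduce the integral over $U^2\subseteq\Nc\times\Nc$ to a one-dimensional estimate on $\Omega$ by integrating out the ``horizontal'' variables. Recall that
\[
\left(B^{\vect{s'}}_{(\zeta,z)}\right)_{h'}(\zeta',x')=\Delta^{\vect{s'}}_\Omega\!\left(\frac{x'+i\Phi(\zeta')+ih'-\overline z}{2i}-\Phi(\zeta',\zeta)\right),
\]
whose imaginary-part argument is $\frac12(\Im z+\Phi(\zeta')+h'-2\Re\Phi(\zeta',\zeta))$; write $\rho(\zeta,z)=h$, so that $\Im z=h+\Phi(\zeta)$ and the argument becomes $\frac12(h+h'+\Phi(\zeta'-\zeta))$, which lies in $\frac12(h+h')+\overline\Omega$ and hence in $\Omega$ for all $(\zeta',x')$. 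The first step is the pointwise lower bound: since $\Delta^{\vect{s'}}_\Omega$ does not vanish on $\Omega+iF$, and since for $(\zeta,\Re z)\in U$, $(\zeta',x')\in U$, $|h|,|h'|\le c$ all the data live in a fixed compact set (depending only on $U$), the modulus $|\Delta^{\vect{s'}}_\Omega(\cdot)|$ of the relevant argument is bounded below by a constant times $\Delta^{\Re\vect{s'}}_\Omega$ evaluated at a point of the form $\frac12(h+h')+(\text{something in }\overline\Omega\text{ of bounded size})$. Here I would use the standard submultiplicativity/comparison estimates for generalized power functions (of the type $\Delta^{\vect t}_\Omega(x+y)\asymp\Delta^{\vect t}_\Omega(x)$ when $y$ ranges over a fixed bounded set and $x$ stays away from $0$, or more precisely the two-sided bounds relating $\Delta^{\vect t}_\Omega(x+y)$ to $\Delta^{\vect t}_\Omega(x)$ via the $T_+$-action; cf.\ the machinery of \cite{CalziPeloso}) to pass from $\frac12(h+h'+\Phi(\zeta'-\zeta))$ to $h+h'$, at the cost of a multiplicative constant. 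This reduces matters to bounding $|\Delta^{\vect{s'}}_\Omega|$ from below on the segment between $\frac12(h+h')$ and $h+h'$, which is immediate by homogeneity and continuity.

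The second step is to insert the correct normalization. The left-hand side involves $\Delta^{\vect{s'}-(\vect b+\vect d)}_\Omega(h+h')$, not $\Delta^{\vect{s'}}_\Omega(h+h')$; the discrepancy $\Delta^{-(\vect b+\vect d)}_\Omega(h+h')$ must come from the Jacobian/measure factor. This is the point of integrating over $U^2$ rather than a single copy: one of the two $U$-integrations is essentially free (it only needs a nonempty open set where the integrand is comparable to its value at a reference point), while the crucial observation is that $\nu_\Omega$, the invariant measure, relates to Lebesgue measure on $\Nc$ through $\Delta^{\vect d}_\Omega$, and that the change of variables exploiting the $T_+$-action sending $e_\Omega$ to (a point comparable to) $h+h'$ produces precisely the factor $\Delta^{-(\vect b+\vect d)}_\Omega(h+h')$ --- the $\vect b$ coming from the behaviour of the Lebesgue measure on $\Nc$ under the automorphisms of $D$ (recall $\nu_D=(\Delta^{\vect b+2\vect d}_\Omega\circ\rho)\cdot\Hc^{2n+2m}$), and the $\vect d$ from $\nu_\Omega$. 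Concretely, I would pick $t\in T_+$ with $t\cdot e_\Omega$ comparable to $h+h'$, change variables $(\zeta',x')\mapsto(g\zeta',t\cdot x'+\cdots)$ as in the transformation rules for $\Phi$ and $\rho$, and track how $\Hc^{2n+m}$ on $\Nc$ scales; the homogeneity degree of $B^{\vect{s'}}_{(\zeta,z)}$ under this action is exactly $\vect{s'}$, and the measure contributes $-(\vect b+\vect d)$, leaving the claimed power. Since everything is pinned to the compact set $U^2$ with $|h|,|h'|\le c$, no integrability issue arises; one only needs $t$ to move a fixed compact neighbourhood of $(0,0)$ into the region of integration, which holds once $c$ is small enough.

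The main obstacle I anticipate is the bookkeeping in the second step: making precise the claim that the change of variables by $T_+$ produces exactly $\Delta^{-(\vect b+\vect d)}_\Omega(h+h')$ and not some other power, and ensuring that the ``free'' $U$-integration genuinely leaves a nonempty domain where the integrand stays comparable to a constant after the rescaling (this is where the hypotheses $(\zeta,\Re z)\in U$ and $|\rho(\zeta,z)|\le c$ are used, to keep the base point of the rescaled integrand inside a fixed compact set). A secondary, more technical point is handling the $\zeta,\zeta'$ dependence through $\Phi(\zeta,\zeta')$: one must verify that for bounded $\zeta,\zeta'$ the correction $\Phi(\zeta'-\zeta)$ added to $h+h'$ is a bounded element of $\overline\Omega$, so that the comparison estimates for $\Delta_\Omega$ apply uniformly; this follows from $\overline\Omega$-positivity and non-degeneracy of $\Phi$ together with compactness of $U$. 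Once these are set up, the lemma follows by combining the pointwise lower bound with the measure estimate and taking $C$, $c$ appropriately.
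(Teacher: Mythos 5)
Your proposal has the right general shape (reduce to the cone, exploit homogeneity, track the exponents $-\vect b$ and $-\vect d$), but the key estimate in your Step~1 is incorrect as stated, and this is not a small technical point.

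You claim a \emph{pointwise} lower bound of the form
\[
\left|\Delta_\Omega^{\vect{s'}}\!\left(\tfrac12(h+h'+\Phi(\zeta-\zeta'))-i\,(\cdots)\right)\right|\Meg C\,\Delta_\Omega^{\Re\vect{s'}}\!\left(\tfrac12(h+h')+\text{bdd}\right),
\]
uniformly for $(\zeta',x')$ in the fixed set $U^2$. This fails for complex $\vect{s'}$ (and even for $\vect{s'}\prec\vect 0$, which is the range of interest in Theorem~\ref{prop:7}): as $h+h'\to 0$ with $(\zeta',x')$ ranging over all of $U^2$, the imaginary part of the argument of $\Delta_\Omega^{\vect{s'}}$ stays of fixed size while the real part vanishes, so the left-hand side is bounded (for $\Re\vect{s'}\meg\vect 0$) while the right-hand side blows up. There is no uniform comparison between $\abs{\Delta_\Omega^{\vect{s'}}(w)}$ and $\Delta_\Omega^{\Re\vect{s'}}(\Re w)$ without controlling $\Im w$ in terms of $\Re w$. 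The paper's proof handles this by integrating $x'$ only over a small anisotropic box $B_\Omega(h''',R)$ whose size scales with $h'''\sim h+h'$, where homogeneity under $T_+$ gives the \emph{exact identity}
\[
\int_{B_\Omega(h''',R)}\abs{\Delta_\Omega^{\vect{s'}}(h'''-ix')}\,\dd x'=C_3\,\Delta_\Omega^{\Re\vect{s'}-\vect d}(h''');
\]
there is no useful pointwise bound, only the integrated one.

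Your Step~2 also misidentifies where $\Delta_\Omega^{-(\vect b+\vect d)}(h+h')$ comes from. First, $U^2$ means the product $U\cdot U$ \emph{in the group} $\Nc$, not $U\times U$; it is needed so that $(\zeta,\Re z)\cdot U\subseteq U^2$, allowing a left-translation by $(\zeta,\Re z)^{-1}$ (using invariance of Haar measure on $\Nc$) to centre the kernel at $(0,ih)$. There is no ``free'' second $U$-integration. Second, the exponents come from two different sources: $-\vect d$ arises from the Lebesgue volume of the small box in $F$ over which $x'$ is integrated (as above), and $-\vect b$ arises from the fact that the pushforward of Lebesgue measure on $E$ under $\Phi$ is a constant multiple of $I_\Omega^{-\vect b}$ (\cite[Proposition 2.30]{CalziPeloso}); the $\zeta'$-integration becomes an integral of $\Delta_\Omega^{\Re\vect{s'}-\vect d}(h+h'+h'')$ against $\dd I_\Omega^{-\vect b}(h'')$, and the combination $\Delta_\Omega^{\Re\vect{s'}-\vect b-\vect d}(h+h')$ is extracted by homogeneity, with the remaining factor being a function bounded below for small $h+h'$ — which is the remaining nontrivial positivity argument in the paper's proof. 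Your plan as written does not reach either of these points, and the pointwise estimate you propose in their place does not hold.
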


\begin{proof}
	Take an open neighbourhood $U'$ of $0$ in $E$ and an open neighbourhood $V'$ of $0$ in $F$ such that $U'\times V'\subseteq U$. Since $\Phi$ is proper, we may assume that $U'=\Phi^{-1}(\Phi(U'))$. Then,~\cite[Proposition 2.30]{CalziPeloso} implies that there is a constant $C_1>0$ such that
	\[
	\begin{split}
	\int_{U^2} \abs*{\left(B^{\vect{s'}}_{(\zeta,x+i\Phi(\zeta)+i h)}\right)_{h'}(\zeta',x')}\,\dd (\zeta',x')& \Meg \int_{(\zeta,x)U} \abs*{\left(B^{\vect{s'}}_{(0,i h)}\right)_{h'}((\zeta,x)^{-1}(\zeta',x'))}\,\dd (\zeta',x')\\ 
		&=\int_{U} \abs*{\left(B^{\vect{s'}}_{(0,i h)}\right)_{h'}(\zeta',x')}\,\dd (\zeta',x')\\
		&= 2^{-\vect{s'}}\int_{U} \abs*{\Delta_\Omega^{\vect{s'}}(h+h'+\Phi(\zeta')-i x' )}\,\dd (\zeta',x')\\
		&\Meg C_1\int_{\Phi(U')\times V'} \abs*{\Delta_\Omega^{\vect{s'}}(h+h'+h''-i x' )}\,\dd (I^{-\vect b}_\Omega\otimes \Hc^m) (h'',x')
	\end{split}
	\]
	for every $(\zeta,x)\in U$ and for every $h,h'\in \Omega$.
	In addition, take $R>0$ and observe that by~\cite[Corollary 2.51]{CalziPeloso} there is a constant $C_2\Meg 1$ such that
	\[
	\frac{1}{C_2} \meg \frac{\abs{h}}{\abs{h'}}\meg C_2
	\]
	for every $h,h'\in \Omega$ with $d_\Omega(h,h')\meg R$. If we take $c>0$ so that $\overline B_F(0,3 c C_2)\subseteq V'$, then
	\[
	B_\Omega(h+h'+h'',R)\subseteq V'
	\]
	for every $h,h',h''\in \Omega$ with $\abs{h}, \abs{h'},\abs{h''}\meg c$. Since we may assume that $\Phi(U')\subseteq B_F(0,c)$, this implies that
	\[
	\int_{U^2} \abs*{\left(B^{\vect{s'}}_{(\zeta,x+i\Phi(\zeta)+i h)}\right)_{h'}(\zeta',x')}\,\dd (\zeta',x')\Meg C_1\int_{\Phi(U')}\int_{B_\Omega(h+h'+h'',R)} \abs*{\Delta_\Omega^{\vect{s'}}(h+h'+h''-i x' )}\,\dd x'\,\dd I^{-\vect b}_\Omega (h'')
	\]
	for every $(\zeta,x)\in U$ and for every  $h,h'\in \Omega\cap \overline B_F(0,c)$.
	Now, by homogeneity,
	\[
	\int_{B_\Omega(h''',R)} \abs*{\Delta_\Omega^{\vect{s'}}(h'''-i x' )}\,\dd x'= C_3 \Delta_\Omega^{\Re\vect{s'}-\vect d}(h''') 
	\]
	for every  $h'''\in \Omega$, where $C_3\coloneqq \int_{B_\Omega(e_\Omega,R)} \abs*{\Delta_\Omega^{\vect{s'}}(e_\Omega-i x' )}\,\dd x'$. In addition,
	\[
	\int_{\Phi(U')}\Delta_\Omega^{\Re\vect{s'}-\vect d}(h+h'+h'') \,\dd I^{-\vect b}_\Omega (h'')=J(h+h')\Delta_\Omega^{\Re \vect{s'}-\vect b-\vect d}(h+h'),
	\]
	where
	\[
	J(t\cdot e_\Omega)\coloneqq \int_{t^{-1}\cdot \Phi(U')} \Delta_\Omega^{\Re\vect{s'}-\vect d}(e_\Omega+h'') \,\dd I^{-\vect b}_\Omega (h'')
	\]
	for every $t\in T_+$. Now, define $Q\coloneqq \Set{t\in T_+\colon t\cdot e_\Omega\in (e_\Omega-\overline\Omega)}$, and observe that 
	\[
	t\cdot [\Omega\cap(e_\Omega-\overline\Omega)]\subseteq \Omega \cap (e_\Omega-\overline\Omega-\overline\Omega)=\Omega \cap (e_\Omega-\overline\Omega)
	\]
	for every $t\in Q$, so that $Q$ is bounded in $\Lc(F)$ and $QQ\subseteq Q$.
	Now, by~\cite[\S\ 2.6]{CalziPeloso}, there is $e'\in \overline \Omega$ such that  $I^{-\vect b}_\Omega$ is concentrated on $T_+\cdot e'$ and has support $\overline{T_+\cdot e'}$ so that, by homogeneity, $\Phi(E)\supseteq T_+\cdot e'$. Observe that $\Phi(U')$ is a neighbourhood of $0$ in $\Phi(E)$ since $\Phi$ is proper, thanks to ~\cite[Proposition 10 of Chapter I, \S\ 5, No.\ 4]{BourbakiGT1}. 
	Since $Q\cdot e'$ is relatively compact, this implies that there is $R'>0$ such that $R' (Q\cdot e')\subseteq \Phi(U')$, so that
	\[
	t^{-1}\cdot \Phi(U')\supseteq  R' [t^{-1}\cdot (Q\cdot e')] \supseteq R' [t^{-1}\cdot(t Q\cdot e')]=R' (Q\cdot e')
	\]
	for every $t\in Q$, so that
	\[
	J(h)\Meg  \int_{R' Q\cdot e'} \Delta_\Omega^{\Re\vect{s'}-\vect d}(e_\Omega+h'') \,\dd I^{-\vect b}_\Omega (h'')>0
	\]
	for every $h\in Q\cdot e_\Omega=\Omega\cap (e_\Omega-\overline\Omega)$. The conclusion follows, since we may take $c$ so small that $\overline B_F(0,2c)\cap \Omega \subseteq \Omega\cap (e_\Omega-\overline\Omega)$.
\end{proof}

\begin{proof}[Proof of Theorem~\ref{prop:7}.]
	Assume first that $T$ induces an endomorphism of $L^q_0(\nu_\Omega)$ (resp.\ $L^q(\nu_\Omega)$). Observe that, since $T$ is an integral operator with a \emph{positive} kernel, the endomorphism of $L^q_0(\nu_\Omega)$ (resp.\ $L^q(\nu_\Omega)$) induced by $T$ is still an integral operator with the same kernel.	   Observe that~\cite[Theorem 1.4]{Garrigos-Nana} implies that $\vect s'\prec \vect b+\vect d -\frac 1 2 \vect m'$.  
	Then, for every $f\in L^{p,q}_{\vect s,0}(D)$ (resp.\ $f\in L^{p,q}_{\vect s}(D)$)
	\[
	\begin{split}
		\abs{P_{\vect{s'},+} f(\zeta,z)}&\meg \int_\Omega \int_\Nc \abs{f_{h'}(\zeta',x')} \abs*{\left(B^{\vect{s'}}_{(0, i h')}\right)_{h}((\zeta',x')^{-1}(\zeta,x))}\,\dd (\zeta',x') \Delta_\Omega^{\vect b+\vect d-\vect{s'}}(h') \,\dd \nu_\Omega(h')\\
			& = \int_\Omega \left( \abs{f_{h'}}* \abs*{\left(B^{\vect{s'}}_{(0, i h')}\right)_{h}}\right) (\zeta,x) \Delta_\Omega^{\vect b+\vect d-\vect{s'}}(h') \,\dd \nu_\Omega(h')
	\end{split}
	\]
	for every $(\zeta,z)\in D$, where $h\coloneqq \rho(\zeta,z)$ and $x\coloneqq \Re z$,
	so that, by Young's inequality
	\[
	\begin{split}
		\Delta_\Omega^{\vect s}(h)\norm{(P_{\vect{s'},+} f)_h}_{L^p(\Nc)}&\meg C_1\Delta_\Omega^{\vect s}(h)\int_\Omega \norm{f_{h'}}_{L^p(\Nc)}\Delta_\Omega^{\vect{s'-(\vect b+\vect d)}}(h+h') \Delta_\Omega^{\vect b+\vect d-\vect{s'}}(h') \,\dd \nu_\Omega(h')\\
		&=C_1 T(\Delta_\Omega^{\vect s} \norm{f_{\,\cdot\,}}_{L^p(\Nc)})(h)
	\end{split}
	\]
	for every $h\in \Omega$, where $C_1>0$ is a suitable constant (cf.~\cite[Lemma 2.39]{CalziPeloso}). Therefore, $P_{\vect{s'},+}$ maps $L^{p,q}_{\vect s,0}(D)$ (resp.\ $L^{p,q}_{\vect s}(D)$) in $L^{p,q}_{\vect s}(D)$. To conclude the proof of this implication, take $f\in C_c(D)$, and observe that $(P_{\vect{s'},+} f)_h\in L^p_0(\Nc)$ for every $h\in \Omega$, thanks to the preceding remarks. In addition, the mapping $h \mapsto \Delta_\Omega^{\vect s}(h) \norm{f_h}_{L^p(\Nc)}\in \R$ clearly belongs to $C_c(\Omega)$, so that $P_{\vect{s'},+} f\in L^{p,q}_{\vect s,0}(D)$.
	
	Conversely, assume that $P_{\vect{s'},+}$ induces an endomorphism of $L^{p,q}_{\vect s,0}(D)$ (resp.\ $L^{p,q}_{\vect s}(D)$). As for $T$, the endomorphism of $L^{p,q}_{\vect s,0}(D)$ (resp.\ $L^{p,q}_{\vect s}(D)$) induced by  $P_{\vect{s'}}$ is an integral operator with the same kernel. 
	Fix a compact neighoburhood $U$ of $(0,0)$ in $\Nc$ and take $C,c>0$ as in Lemma~\ref{lem:1}. Fix  two positive function $\tau_1,\tau_1'\in C_c(F)$ and a positive function $\tau_2\in C_c(\Nc)$ so that $\tau_1=1$ on $B_F(0,c)$, $\tau_1'$ is non-zero and supported in $B_F(0,c)$, and $\tau_2=1$ on $U^2$.
	
	Take a positive $f\in L^q_0(\nu_\Omega)$ (resp.\ $f\in L^q(\nu_\Omega)$), and define $g\in L^{p,q}_{\vect s,0}(D)$ (resp.\ $g\in L^{p,q}_{\vect s}(D)$) so that
	\[
	g_h=\tau_1(h) \Delta_\Omega^{-\vect s}(h) f(h)\tau_2
	\]
	for every $h\in \Omega$. 
	Then, 
	\[
	\begin{split}
	(P_{\vect{s'},+} g)_h(\zeta,x)&=  \int_\Omega \int_\Nc g_{h'}(\zeta',x') \abs*{\left(B^{\vect{s'}}_{(\zeta,x+i\Phi(\zeta)+i h)}\right)_{h}(\zeta',x')}\,\dd (\zeta',x') \Delta_\Omega^{\vect b+\vect d-\vect{s'}}(h') \,\dd \nu_\Omega(h')\\
		&\Meg\int_{\Omega\cap B_F(0,c)}  \int_{U^2}  \abs*{\left(B^{\vect{s'}}_{(\zeta,x+i\Phi(\zeta)+i h)}\right)_{h}(\zeta',x')}\,\dd (\zeta',x') f(h')  \Delta_\Omega^{\vect b+\vect d-\vect s-\vect{s'}}(h') \,\dd \nu_\Omega(h')\\
		&\Meg C \int_{\Omega\cap B_F(0,c)}   f(h') \Delta_\Omega^{\vect {s'}-(\vect b+\vect d)}(h+h')  \Delta_\Omega^{\vect b+\vect d-\vect s-\vect{s'}}(h') \,\dd \nu_\Omega(h')\\
		&=C\Delta_\Omega^{-\vect s}(h)T(\chi_{\Omega \cap B_F(0,c) } f)
	\end{split}
	\]
	for every $(\zeta,x)\in U$ and for every $h\in \Omega \cap B_F(0,c)$. Therefore,
	\[
	\Delta_\Omega^{\vect s}(h)\norm{(P_{\vect{s'},+} g)_h}_{L^p(\Nc)}\Meg C \Hc^{2n+m}(U) T(\chi_{B_F(0,c)} f)(h)
	\]
	for every $h\in  \Omega \cap B_F(0,c)$. Therefore, there is a constant $C_2>0$ such that
	\[
	\norm{T(\tau_1'f)}_{L^q(\nu_\Omega)}\meg C_2 \norm{\tau_1 f}_{L^q(\nu_\Omega)}
	\]
	for every (positive) $f\in L^q(\nu_\Omega)$. By homogeneity, this implies that
	\[
	\norm{T(\tau_1'(R\,\cdot\,) f)}_{L^q(\nu_\Omega)}\meg C_2 \norm{\tau_1(R\,\cdot\,)f}_{L^q(\nu_\Omega)}
	\]
	for every $f\in L^q(\nu_\Omega)$ and for every $R>0$. In addition, the preceding remarks also imply that $T(\tau_1'(R\,\cdot\,)f)\in L^q_0(\nu_\Omega) $ if $f\in L^q_0(\nu_\Omega)$.  
	We have thus proved that $T$ induces an endomorphism of $L^q_0(\nu_\Omega)$ (resp.\ a continuous linear mapping $L^q_0(\nu_\Omega)\to L^q(\nu_\Omega)$). To conclude, it suffice to observe that, since $T$ is an integral operator with a positive kernel, if $T$ induces a continuous linear mapping $L^q_0(\nu_\Omega)\to L^q(\nu_\Omega)$, then it also induces an endomorphism of $L^q(\nu_\Omega)$ with the same expression and the same norm.
\end{proof}

\begin{cor}
	Take $\vect s,\vect{s'}\in \R^r$ and $p,q\in [1,\infty]$ such that $P_{\vect{s'},+}$ induces an endomorphism of $L^{p,q}_{\vect s,0}(D)$ (resp.\ $L^{p,q}_{\vect s}(D)$). Then, the following hold:
	\begin{itemize}
		\item $\vect s\succ  \frac{1}{2 q} \vect m, \frac{1}{2 q'}\vect{m'}$;
			
		\item $\vect b+\vect d-(\vect s+\vect{s'})\succ \frac{1}{2 q'} \vect m, \frac{1}{2 q}\vect{m'}$.
	\end{itemize}
\end{cor}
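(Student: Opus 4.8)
The plan is to reduce everything, via Theorem~\ref{prop:7}, to the continuity of the model operator $T$ on $L^q(\nu_\Omega)$, to use a transposition duality to halve the number of inequalities to be proved, to dispatch the inequality involving $\vect m$ by a soft argument, and to extract the one involving $\vect m'$ from the fine structure of $T$.

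First, since $T$ is an integral operator with positive kernel, the endomorphism properties of $T$ on $L^q_0(\nu_\Omega)$ and on $L^q(\nu_\Omega)$ are equivalent; hence, by Theorem~\ref{prop:7}, the two forms of the hypothesis are equivalent, and I may assume that $P_{\vect{s'},+}$ is an endomorphism of $L^{p,q}_{\vect s}(D)$, with $p$ henceforth irrelevant. The kernel of $T$ with respect to $\nu_\Omega$, namely $K(h',h)=\Delta_\Omega^{\vect s}(h')\,\Delta_\Omega^{\vect{s'}-(\vect b+\vect d)}(h'+h)\,\Delta_\Omega^{\vect b+\vect d-\vect s-\vect{s'}}(h)$, is positive and $T_+$-invariant, and its transpose with respect to $\nu_\Omega$ is again an operator of the same shape, with $\vect s$ replaced by $\vect b+\vect d-\vect s-\vect{s'}$ (and $\vect{s'}$ unchanged). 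Positivity of $K$ then gives that $T$ is an endomorphism of $L^q(\nu_\Omega)$ if and only if its transpose is one of $L^{q'}(\nu_\Omega)$, i.e., by Theorem~\ref{prop:7} applied with parameters $(\vect b+\vect d-\vect s-\vect{s'},\vect{s'},p,q')$, if and only if $P_{\vect{s'},+}$ is an endomorphism of $L^{p,q'}_{\vect b+\vect d-\vect s-\vect{s'}}(D)$. Consequently the two inequalities of the second item follow from those of the first applied with $(\vect s,q)$ replaced by $(\vect b+\vect d-\vect s-\vect{s'},q')$, and it is enough to prove $\vect s\succ\tfrac1{2q}\vect m$ and $\vect s\succ\tfrac1{2q'}\vect m'$ (with the evident modifications in the extreme cases).

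For $\vect s\succ\tfrac1{2q}\vect m$ I would argue as follows. From $\abs{P_{\vect{s'}}f}\meg\abs{c_{\vect{s'}}}\,P_{\vect{s'},+}\abs f$ one gets that $P_{\vect{s'}}$ is an endomorphism of $L^{p,q}_{\vect s}(D)$, so its range, being made of holomorphic functions, lies in $A^{p,q}_{\vect s}(D)$. But $P_{\vect{s'}}$ is not the zero operator: for every $(\zeta_0,z_0)\in D$ one has $B^{\vect{s'}}_{(\zeta_0,z_0)}(\zeta_0,z_0)=\Delta_\Omega^{\vect{s'}}(\rho(\zeta_0,z_0))\neq 0$, so $P_{\vect{s'}}$ does not annihilate a small non-negative bump centred at $(\zeta_0,z_0)$. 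Hence $A^{p,q}_{\vect s}(D)\neq\Set{0}$, and the non-triviality criterion recalled after the definition of the Bergman spaces (cf.~\cite[Proposition 3.5]{CalziPeloso}) yields $\vect s\succ\tfrac1{2q}\vect m$ (resp.\ $\vect s\Meg\vect 0$ if $q=\infty$).

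The remaining inequality $\vect s\succ\tfrac1{2q'}\vect m'$ is the delicate point, and where the dual cone $\Omega'$—hence its parameter $\vect m'$—must come in; it cannot follow from the non-triviality of a Bergman space alone, since \emph{signed} Bergman projections can be bounded while imposing no $\vect m'$-constraint, so the cancellation-free nature of $P_{\vect{s'},+}$ has to be used. The cleanest route I see is to invoke the available description of the continuity of operators of the type $T$ on $L^q(\nu_\Omega)$ (cf.~\cite[Lemma 3.35]{CalziPeloso}, and \cite{Garrigos-Nana} for a far more general treatment), which already contains all four inequalities; alternatively one tests $T$ against truncated generalized power functions $\Delta_\Omega^{\vect v}\,\chi_{\Omega\cap B_F(0,R)}$, exploits the $T_+$-invariance and homogeneity of $K$ to see that their images are comparable, over a large part of $\Omega$, to a multiple of $\Delta_\Omega^{\vect v}$ whose constant is a Gindikin-type beta integral over $\Omega$, and uses that the convergence of this integral at infinity is governed by the vertex behaviour of $\Omega'$. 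Either way, the main obstacle—and the heart of the statement—is to identify this ``behaviour at infinity'' of the convolution-type operator $T$ on the homogeneous cone $\Omega$, to show it is controlled by the geometry of $\Omega'$, and to read off the sharp weight $\tfrac1{2q'}$ (resp.\ $\tfrac1{2q}$); this is exactly where the possibly non-symmetric structure of the cone genuinely enters.
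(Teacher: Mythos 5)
Your proposal is correct and follows essentially the same route as the paper: the paper's entire proof is the reduction to the cone operator $T$ via Theorem~\ref{prop:7} followed by an appeal to the known necessary conditions for its $L^q(\nu_\Omega)$-boundedness (\cite[Proposition 5.20]{CalziPeloso} or \cite[Theorem 1.4]{Garrigos-Nana}), which is exactly your "cleanest route". The additional transposition duality and the soft non-triviality argument for $\vect s\succ\frac{1}{2q}\vect m$ are sound but become superfluous once that citation is invoked.
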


Observe that, when $r\meg 2$, the above necessary conditions are also sufficient, since in this case $\alpha \vect m+\beta \vect m'=\sup(\alpha \vect m,\beta \vect m')$ for every $\alpha,\beta \Meg 0$ (cf.~\cite[Definition 2.8 and Corollary 5.23]{CalziPeloso}, or~\cite[Theorem 1.4]{Garrigos-Nana}). The same holds also if $D$ is irreducible and symmetric, and $\vect s$ and $\vect s'$ are parallel to $\vect d$. In this case, also $\vect b$ and $\vect m+\vect m'$ are parallel to $\vect d$, so that 
\[
\vect s \succ \frac{1}{2 q}\vect m+\frac{1}{2 q'}\vect m'\iff \vect s \succ\frac{1}{2 q}\vect m,\frac{1}{2 q'}\vect m'
\]
and
\[
\vect b+\vect d-(\vect s+\vect s')\succ \frac{1}{2 q'}\vect m+\frac{1}{2 q}\vect m'\iff \vect b+\vect d-(\vect s+\vect s')\succ \frac{1}{2 q'}\vect m,\frac{1}{2 q}\vect m'
\]
since there are $j\neq k$ such that $m_j=m'_k=0$ (cf.~\cite[Definition 2.8 and Corollary 5.23]{CalziPeloso}, or~\cite[Theorem 1.4]{Garrigos-Nana}).

\begin{proof}
	This is a consequence of Theorem~\ref{prop:7} and either~\cite[Proposition 5.20]{CalziPeloso} or~\cite[Theorem 1.4]{Garrigos-Nana}.
\end{proof}

\section{Equivalences}\label{sec:4}

In this section we prove the equivalence of various notions of atomic decompositions, the continuity of Bergman projectors, and the determination of boundary values.

\begin{prop}\label{prop:8}
	Take $p,q\in (0,\infty]$ and $\vect s\succ \frac{\vect b+\vect d}{p}+\frac{1}{2 q'}\vect{m'}$. Then, $\widetilde A^{p,q}_{\vect s}(D)= A^{p,q}_{\vect s}(D)$ if and only if $\widetilde A^{p,q}_{\vect s,0}(D)\subseteq A^{p,q}_{\vect s}(D)$.
	In particular,	the following conditions are equivalent:
	\begin{enumerate}
		\item[\textnormal{(1)}] $\widetilde A^{p,q}_{\vect s,0}(D)= A^{p,q}_{\vect s,0}(D)$;

		\item[\textnormal{(2)}] $\vect s\succ \vect 0$ and $\widetilde A^{p,q}_{\vect s}(D)= A^{p,q}_{\vect s}(D)$.
	\end{enumerate}
\end{prop}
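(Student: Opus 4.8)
The plan is to run everything through the ``vertical translations'' $f\mapsto f^{[\vect h_0]}$, where $f^{[\vect h_0]}(\zeta,z)\coloneqq f(\zeta,z+i\vect h_0)$ for $\vect h_0\in\Omega$ (note $f^{[\vect h_0]}\in\Hol(D)$ and $(f^{[\vect h_0]})_{\vect h}=f_{\vect h+\vect h_0}$), combined with Proposition~\ref{prop:1}, the closed graph and open mapping theorems for complete metrizable topological vector spaces, and a Fatou-type lower semicontinuity of the $L^{p,q}_{\vect s}$-norm. For the first equivalence, one implication is trivial, since $\widetilde A^{p,q}_{\vect s,0}(D)\subseteq\widetilde A^{p,q}_{\vect s}(D)$ by the definitions of the tilde spaces. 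For the converse, suppose $\widetilde A^{p,q}_{\vect s,0}(D)\subseteq A^{p,q}_{\vect s}(D)$; since $A^{p,q}_{\vect s}(D)$ is closed in $L^{p,q}_{\vect s}(D)$, the closed graph theorem produces a constant $C$ with $\norm{g}_{L^{p,q}_{\vect s}}\meg C\norm{g}_{\widetilde A^{p,q}_{\vect s,0}}$ for every $g\in\widetilde A^{p,q}_{\vect s,0}(D)$.

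Now take $f=\Ec u\in\widetilde A^{p,q}_{\vect s}(D)$ with $u\in B^{\vect s}_{p,q}(\Nc,\Omega)$; the function $f$ is holomorphic by the construction of $\Ec$, so it remains only to show $f\in L^{p,q}_{\vect s}(D)$. The crucial point is that, for $\vect h_0\in\Omega$, the holomorphic function $f^{[\vect h_0]}$ on $D$ has boundary value $(\Ec u)_{\vect h_0}$, which lies in the \emph{closure} $\mathring B^{\vect s}_{p,q}(\Nc,\Omega)$ of $\Sc_{\Omega,L}(\Nc)$, with $\norm{(\Ec u)_{\vect h_0}}_{B^{\vect s}_{p,q}}\meg C'\norm{u}_{B^{\vect s}_{p,q}}$ uniformly as $\vect h_0\to\vect 0$; consequently $f^{[\vect h_0]}=\Ec\bigl((\Ec u)_{\vect h_0}\bigr)\in\widetilde A^{p,q}_{\vect s,0}(D)$ by the uniqueness of boundary values. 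Combining with the closed graph bound gives $\norm{f^{[\vect h_0]}}_{L^{p,q}_{\vect s}}\meg CC'\norm{u}_{B^{\vect s}_{p,q}}$, and since $(f^{[\vect h_0]})_{\vect h}=f_{\vect h+\vect h_0}\to f_{\vect h}$ pointwise as $\vect h_0\to\vect 0$ (continuity of $f$), two applications of Fatou's lemma yield $\norm{f}_{L^{p,q}_{\vect s}}\meg\liminf_{\vect h_0\to\vect 0}\norm{f^{[\vect h_0]}}_{L^{p,q}_{\vect s}}<\infty$, so $f\in A^{p,q}_{\vect s}(D)$ and hence $\widetilde A^{p,q}_{\vect s}(D)=A^{p,q}_{\vect s}(D)$.

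For the two equivalent conditions: $(2)\Rightarrow(1)$. The inclusion $A^{p,q}_{\vect s,0}(D)\subseteq\widetilde A^{p,q}_{\vect s,0}(D)$ always holds by Proposition~\ref{prop:1}. Conversely, if $g=\Ec u\in\widetilde A^{p,q}_{\vect s,0}(D)$ with $u\in\mathring B^{\vect s}_{p,q}(\Nc,\Omega)$, then $g\in\widetilde A^{p,q}_{\vect s}(D)=A^{p,q}_{\vect s}(D)\subseteq L^{p,q}_{\vect s}(D)$; choosing $u_n\in\Sc_{\Omega,L}(\Nc)$ with $u_n\to u$ in $B^{\vect s}_{p,q}(\Nc,\Omega)$, and using that the topologies of $\widetilde A^{p,q}_{\vect s}(D)$ and $A^{p,q}_{\vect s}(D)$ agree by the open mapping theorem, we get $\Ec u_n\to g$ in $L^{p,q}_{\vect s}(D)$, while $\Ec u_n\in\Ec(\Sc_{\Omega,L}(\Nc))\subseteq A^{p,q}_{\vect s,0}(D)\subseteq L^{p,q}_{\vect s,0}(D)$ by Proposition~\ref{prop:1}; since the last space is closed in $L^{p,q}_{\vect s}(D)$, we obtain $g\in L^{p,q}_{\vect s,0}(D)\cap\Hol(D)=A^{p,q}_{\vect s,0}(D)$. $(1)\Rightarrow(2)$. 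From $(1)$ and $A^{p,q}_{\vect s,0}(D)\subseteq A^{p,q}_{\vect s}(D)$ we get $\widetilde A^{p,q}_{\vect s,0}(D)\subseteq A^{p,q}_{\vect s}(D)$, hence $\widetilde A^{p,q}_{\vect s}(D)=A^{p,q}_{\vect s}(D)$ by the first part; and $\vect s\succ\vect 0$ follows because $A^{p,q}_{\vect s,0}(D)=\widetilde A^{p,q}_{\vect s,0}(D)$ contains $\Ec(\Sc_{\Omega,L}(\Nc))\neq\Set{0}$, which via the non-triviality criterion of~\cite[Proposition 3.5]{CalziPeloso} together with Proposition~\ref{prop:13} forces $\vect s$ to lie strictly inside the region of non-triviality (a short case discussion on the coordinates of $\vect s$, $\vect m$, $\vect m'$).

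The main obstacle is the ``crucial point'' in the second paragraph: proving that the boundary value $(\Ec u)_{\vect h_0}$ of a slightly translated function belongs to $\mathring B^{\vect s}_{p,q}(\Nc,\Omega)$, with norm controlled uniformly as $\vect h_0\to\vect 0$. This ``regularization'' of the boundary datum — pushing a function a little into the domain lands its boundary value in the closure of $\Sc_{\Omega,L}(\Nc)$ — is the genuinely analytic ingredient, and is where the convolution structure of $(\Ec u)_{\vect h_0}$ and the fine properties of the extension operator $\Ec$ and of the Besov spaces on the \v Silov boundary from~\cite{CalziPeloso} must be exploited; everything else reduces to soft functional analysis and Fatou's lemma.
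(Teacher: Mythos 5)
Your soft-analysis skeleton (closed graph for the inclusion $\widetilde A^{p,q}_{\vect s,0}(D)\hookrightarrow A^{p,q}_{\vect s}(D)$, open mapping to compare topologies, and Fatou-type lower semicontinuity of the $L^{p,q}_{\vect s}$-quasi-norm) is sound, and the $(2)\Rightarrow(1)$ direction is essentially what the paper does. The problem lies in what you flag as the ``crucial point'' of $(1)\Rightarrow(2)$, namely that for every $u$ in the Besov space and every $\vect h_0\in\Omega$, the vertical translate $(\Ec u)_{\vect h_0}$ should land in $\mathring B^{-\vect s}_{p,q}(\Nc,\Omega)$ with uniformly controlled norm. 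This is not merely unproved — it is false in general. Passing from $u$ to $(\Ec u)_{\vect h_0}$ multiplies the Laplace--Fourier transform by $\ee^{-\langle\lambda,\vect h_0\rangle}$ on $\Omega'$, which gives exponential decay as $\lambda\to\infty$ but no gain at all as $\lambda\to 0$ (or more generally as $\lambda$ degenerates toward the vertex of the cone). Since the difference between $B^{-\vect s}_{p,q}$ and $\mathring B^{-\vect s}_{p,q}$ is non-trivial precisely when $p=\infty$ or $q=\infty$, this is exactly the regime where your approach must deliver, and it does not: in the model case $D=\C_+$, $p=q=\infty$, the element $u=\ee^{i\,\cdot}$ gives $(\Ec u)_{y_0}=\ee^{-y_0}\ee^{i\,\cdot}$, which remains bounded but is not in $C_0(\R)=L^\infty_0(\R)$, so its Littlewood--Paley piece near the relevant frequency stays outside $L^\infty_0$ and $(\Ec u)_{y_0}\notin\mathring B^{-s}_{\infty,\infty}$. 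More generally, when $q=\infty$ the $\ell^\infty_0$ condition requires decay of the frequency pieces along both ends of the lattice in $\Omega'$, and vertical translation helps only at the high-frequency end.

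The paper avoids this obstacle entirely: it does not translate into the domain, but instead truncates in frequency. Lemma~\ref{lem:3} supplies an equicontinuous sequence $(\Psi_j)$ of operators $\Sc'_{\Omega,L}(\Nc)\to\Sc_{\Omega,L}(\Nc)$ (frequency cutoffs on the lattice, combined with a dilation of a Schwartz bump in space) such that $\Psi_j u\to u$ in the weak topology $\sigma^{-\vect s}_{p,q}$ for every $u\in B^{-\vect s}_{p,q}(\Nc,\Omega)$. This forces $\Ec\Psi_j u\in\Ec(\Sc_{\Omega,L}(\Nc))\subseteq A^{p,q}_{\vect s,0}(D)$ automatically — no delicate membership in $\mathring B$ to establish — while keeping $\norm{\Ec\Psi_j u}$ uniformly bounded and ensuring $\Ec\Psi_j u\to\Ec u$ pointwise on $D$ (via~\cite[Lemma 5.1]{CalziPeloso}); lower semicontinuity then finishes exactly as in your proposal. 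If you want to salvage your scheme, replace vertical translation by this sort of frequency truncation (or multiply your boundary datum by a cut-off of $\Fc_\Nc$-supports before extending), since that is the only way to guarantee that the approximants actually lie in $A^{p,q}_{\vect s,0}(D)$.
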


We first need a technical lemma.

\begin{lem}\label{lem:3}
	There is a sequence $(\Psi_j)$ of linear mappings $\Sc_{\Omega,L}'(\Nc)\to \Sc_{\Omega,L}(\Nc)$ such that, for every $p,q\in(0,\infty]$ and for every $\vect s\in \R^r$, $(\Psi_j)$ induces an equicontinuous sequence of endomorphisms of $B^{\vect s}_{p,q}(\Nc,\Omega)$, and $\Psi_j u$ converges to $u$ in $\sigma^{\vect s}_{p,q}$ for every $u\in B^{\vect s}_{p,q}(\Nc,\Omega)$.
\end{lem}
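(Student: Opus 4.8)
The plan is to construct the mappings $\Psi_j$ as a two-step regularization: first mollify/truncate the frequency support so that the result lands in $\Sc_{\Omega,L}(\Nc)$, and then check that this family is uniformly bounded on every $B^{\vect s}_{p,q}(\Nc,\Omega)$ while converging to the identity in the weak topologies $\sigma^{\vect s}_{p,q}$. Concretely, fix once and for all a $(\delta,R)$-lattice $(\lambda_k)_{k\in K}$ on $\Omega'$ together with an associated partition-type family $(\varphi_k)$ and the Littlewood--Paley functions $\psi_k=\Fc_\Nc^{-1}(\varphi_k(\,\cdot\, t_k^{-1}))$ as in Definition~\ref{def:1}, normalized so that $\sum_k (\Fc_\Nc\psi_k)^2=1$ on $\Omega'$. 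For each $j$ choose a finite subset $K_j\subseteq K$ exhausting $K$ (e.g.\ the $\lambda_k$ lying in a fixed compact exhaustion of $\Omega'$, intersected with a $d_{\Omega'}$-ball of radius $j$ around a base point), and set
\[
\Psi_j u \coloneqq \sum_{k\in K_j} (u*\psi_k)*\psi_k .
\]
Since each $\psi_k\in \Sc_\Omega(\Nc)$ has $\Fc_\Nc\psi_k$ compactly supported in $\Omega'$, the convolution $(u*\psi_k)*\psi_k$ lies in $\Sc_{\Omega,L}(\Nc)$ once one knows $u*\psi_k$ is a tempered distribution of the right type; summing over the finite set $K_j$ keeps us in $\Sc_{\Omega,L}(\Nc)$, and these are linear in $u$.

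The first key step is equiboundedness. Using the characterization of the $B^{\vect s}_{p,q}$-norm through a (possibly different) lattice and Littlewood--Paley decomposition — which is lattice-independent by \cite[Lemma 4.14 and Theorem 4.23]{CalziPeloso} — one estimates $\|\Psi_j u\|_{B^{\vect s}_{p,q}}$ by the $\ell^q(L^p)$-norm of $(\Delta^{\vect s}_{\Omega'}(\lambda_\ell)\,(\Psi_j u)*\psi_\ell)_\ell$. Expanding $(\Psi_j u)*\psi_\ell=\sum_{k\in K_j}(u*\psi_k)*\psi_k*\psi_\ell$ and using the almost-orthogonality of the $\psi_k$ (only finitely many $k$ are "close" to a given $\ell$, since the supports $\mathrm{supp}(\varphi_k(\,\cdot\,t_k^{-1}))$ have bounded overlap in the sense of the lattice), together with Young's inequality on $\Nc$ in the form $\|\tau*\psi_k\|_{L^p}\lesssim \|\tau\|_{L^p}$ with constants uniform in $k$ by homogeneity (the $\psi_k$ being dilates/translates of finitely many fixed Schwartz functions), one gets $\|\Psi_j u\|_{B^{\vect s}_{p,q}}\le C\|u\|_{B^{\vect s}_{p,q}}$ with $C$ independent of $j$. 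The weight factors $\Delta^{\vect s}_{\Omega'}(\lambda_\ell)/\Delta^{\vect s}_{\Omega'}(\lambda_k)$ arising here are bounded whenever $d_{\Omega'}(\lambda_k,\lambda_\ell)$ is bounded, by \cite[Corollary 2.51]{CalziPeloso}, so they are harmless.

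The second key step is the convergence $\Psi_j u\to u$ in $\sigma^{\vect s}_{p,q}$. Here one pairs $\Psi_j u$ against an arbitrary $v\in \mathring B^{-\vect s-(1/p-1)_+(\vect b+\vect d)}_{p',q'}(\Nc,\Omega)$ using the canonical sesquilinear form $\langle u\vert v\rangle=\sum_k\langle u*\psi_k\vert v*\psi_k\rangle$, which is independent of the choice of $(\psi_k)$. A short computation using $\sum_k(\Fc_\Nc\psi_k)^2=1$ and the almost-orthogonality shows $\langle \Psi_j u\vert v\rangle=\sum_{k\in K_j'}\langle u*\psi_k\vert v*\psi_k\rangle$ for a set $K_j'$ that still exhausts $K$ (up to a bounded-overlap perturbation), so $\langle\Psi_j u\vert v\rangle\to\langle u\vert v\rangle$ because the series defining $\langle u\vert v\rangle$ converges absolutely. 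Combined with the uniform bound from Step~1 and the density of $\Sc_{\Omega,L}(\Nc)$ (hence of a norming set of test functionals), this gives weak convergence against all of the predual, i.e.\ in $\sigma^{\vect s}_{p,q}$.

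\textbf{Main obstacle.} I expect the delicate point to be making the almost-orthogonality and the resulting finite-overlap bookkeeping genuinely uniform in $j$ \emph{and} in the exponents $p,q,\vect s$ simultaneously: one must choose the exhaustion $K_j$ so that the "boundary" lattice points (those in $K_j$ with neighbours outside $K_j$) do not accumulate badly, and one must ensure that the double convolution $\psi_k*\psi_\ell$ is controlled by a single Schwartz seminorm of the fixed generating functions with constants depending only on the lattice geometry, not on $k$. The technology for this — lattice-independence of Besov norms, the metric estimates of \cite[\S 2.5--2.6, Corollary 2.51]{CalziPeloso}, and the mapping properties of $\Fc_\Nc$ — is all in place in the cited references, so the lemma should follow by assembling these ingredients carefully; the proof is essentially a quantitative version of the fact that the Littlewood--Paley square-function decomposition is a uniformly bounded resolution of the identity on all these spaces at once.
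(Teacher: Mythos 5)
Your construction $\Psi_j u=\sum_{k\in K_j}(u*\psi_k)*\psi_k$ does not map $\Sc'_{\Omega,L}(\Nc)$ into $\Sc_{\Omega,L}(\Nc)$, and this is the crux of the lemma. Frequency truncation alone produces no spatial decay: for a general $u\in B^{\vect s}_{p,q}(\Nc,\Omega)$ (take, say, a bounded non-decaying $u$ in $B^{\vect 0}_{\infty,\infty}(\Nc,\Omega)$ whose spectrum meets $\Supp{\varphi_k(\,\cdot\,t_k^{-1})}$), the function $(u*\psi_k)*\psi_k$ is smooth and band-limited but is not a Schwartz function, hence not in $\Sc_{\Omega,L}(\Nc)=\Sc(\Nc)*\Sc_\Omega(\Nc,K)$. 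This is not a cosmetic defect: in the application (Proposition~\ref{prop:8}) one needs $\Ec\Psi_j u\in\Ec(\Sc_{\Omega,L}(\Nc))\subseteq A^{p,q}_{\vect s,0}(D)$, so genuine Schwartz-class outputs are essential. The paper's device, which your plan is missing, is to multiply the spectral truncation by a \emph{spatial} cutoff: fix $\tau\in\Sc_\Omega(\Nc)$ with $\tau(0,0)=1$, set $\tau_j\coloneqq\tau(2^{-j}\,\cdot\,)$, and define $\Psi_j u\coloneqq\sum_{k\in K_j}(u*\psi_k)\tau_j$. Since $\Fc_\Nc\tau_j=(\Fc_\Nc\tau)(2^j\,\cdot\,)$ has small support, multiplication by $\tau_j$ only slightly enlarges the spectrum of $u*\psi_k$ (one chooses $K_j$ so that $B_{\Omega'}(\lambda_k,R\delta)+\Supp{\Fc_\Nc\tau_j}\subseteq B_{\Omega'}(\lambda_k,2R\delta)$ for $k\in K_j$), so each summand is a Schwartz function that is still spectrally confined to a compact subset of $\Omega'$, hence lies in $\Sc_{\Omega,L}(\Nc)$; and $\tau_j\to 1$ locally, which yields the convergence $\Psi_j u\to u$ in $\Sc'_{\Omega,L}(\Nc)$ and then in $\sigma^{\vect s}_{p,q}$ via \cite[Corollary 4.25]{CalziPeloso}.

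Two further points. First, for the equicontinuity one must control $[(u*\psi_{k'})\tau_j]*\psi_k$; the paper does this by introducing a fattened cutoff $\psi'_k$ with $[(u*\psi_{k'})\tau_j]*\psi'_k=(u*\psi_{k'})\tau_j$ and invoking the band-limited estimate $\norm{u'*\psi_k}_{L^p}\meg C\norm{u'}_{L^p}$ of \cite[Corollary 4.10]{CalziPeloso} — note that for $p<1$ the naive Young inequality you invoke is false, and only the spectrally localized version is available, so the bookkeeping must keep track of which factors are band-limited. Second, your weak-convergence argument via absolute convergence of $\sum_k\langle u*\psi_k\vert v*\psi_k\rangle$ is sound in spirit and would carry over to the corrected construction, but as written it rests on the uncorrected $\Psi_j$. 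In short, the overall architecture (finite lattice truncation, almost orthogonality, uniform bounds, pairing with the predual) is right, but without the multiplication by the dilated cutoff $\tau_j$ the mappings do not land in $\Sc_{\Omega,L}(\Nc)$ and the lemma as stated is not proved.
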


\begin{proof}
	Fix a $(\delta,R)$-lattice $(\lambda_k)_{k\in K}$ on $\Omega'$ for some $\delta>0$ and some $R>1$, and a bounded family $(\varphi_k)$ of positive elements of $C^\infty_c(B_{\Omega'}(e_{\Omega'},R\delta))$ such that
	\[
	\sum_{k\in K} \varphi_k(\,\cdot\, t_k^{-1})=1
	\]
	on $\Omega'$, where $t_k\in T_+$ and $\lambda_k=e_{\Omega'}\cdot t_k$ for every $k\in K$. Define $\psi_k\coloneqq \Fc_\Nc^{-1}(\varphi_k(\,\cdot\, t_k^{-1}))$ for every $k\in K$. In addition, fix $\tau\in \Sc_\Omega(\Nc)$ such that $\tau(0,0)=1$, and define $\tau_j\coloneqq \tau(2^{-j}\,\cdot\,)$ for every $j\in \N$ (so that $\tau_j(\zeta,x)=\tau(2^{-j/2}\zeta,2^{-j}x)$ for every $(\zeta,x)\in \Nc$ and $\Fc_\Nc \tau_j=(\Fc_\Nc \tau)(2^j\,\cdot\,)$). 
	Observe that, for every $k\in K$, there is $j\in\N$ such that $B_{\Omega'}(\lambda_k,R\delta)+2^{-j'}\Supp{\Fc_\Nc \tau}\subseteq B_{\Omega'}(\lambda_k,2R\delta)$ for every $j'\Meg j$.	
	Therefore, there is  an increasing sequence $(K_j)$  of finite subsets of $K$, whose union is $K$, such that 
	\[
	B_{\Omega'}(\lambda_k,R\delta)+\Supp{\Fc_\Nc \tau_j}\subseteq B_{\Omega'}(\lambda_k,2R\delta)
	\]
	for every $k\in K_j$ and for every $j\in \N$. Define
	\[
	\Psi_j u\coloneqq \sum_{k\in K_j} (u*\psi_k) \tau_j
	\]
	for every $u\in \Sc'_{\Omega,L}(\Nc)$, and observe that $\Psi_j u\in \Sc_{\Omega,L}(\Nc)$ by~\cite[Proposition 4.19 and Corollary 4.6]{CalziPeloso}. 
	For every  $k\in K$, define
	\[
	K'_{k}\coloneqq \Set{k'\in K\colon d_{\Omega'}(\lambda_k,\lambda_{k'})\meg 3 R\delta},
	\]
	and observe that there is $N\in \N$ such that $\card(K'_k)\meg N$ for every $k\in K$, thanks to~\cite[Proposition 2.56]{CalziPeloso}.
	Therefore, 
	\[
	(\Psi_j u)*\psi_k=\sum_{k'\in K_j\cap K_k} [(u*\psi_{k'}) \tau_j]*\psi_k
	\]
	for every $j\in \N$, for every $k\in K$, and for every $u\in B^{\vect s}_{p,q}(\Nc,\Omega)$. Fix $\varphi'\in C^\infty_c(\Omega')$ such that $\varphi'=1$ on $B_{\Omega'}(e_{\Omega'},5 R\delta)$ and observe that, if we define $\psi'_k\coloneqq \Fc_\Nc^{-1}(\varphi'(\,\cdot\, t_k^{-1}))$, then
	\[
	[(u*\psi_{k'}) \tau_j]*\psi'_k=(u*\psi_{k'}) \tau_j
	\]
	for every $k'\in K_j\cap K_k$, for every $j\in \N$, for every $k\in K$, and for every $u\in \Sc'_{\Omega,L}(\Nc)$, thanks to~\cite[Corollary 4.6]{CalziPeloso}.	
	Now,~\cite[Corollary 4.10]{CalziPeloso} implies that there is a constant $C_1>0$ such that
	\[
	\norm{u'*\psi_k}_{L^p(\Nc)}\meg C_1 \norm{u'}_{L^p(\Nc)}
	\]
	for every $u'\in \Sc'_{\Omega,L}(\Nc)$ such that $u'=u'*\psi'_k$, for every $k\in K$. Set $C_2\coloneqq \norm{\tau}_{L^\infty(\Nc)}$. Then,
	\[
	\begin{split}
	\norm{(\Psi_j u)*\psi_k}_{L^p(\Nc)}&\meg N^{(\frac 1 p-1)_+} C_1 \sum_{k'\in K_j\cap K_k} \norm{(u*\psi_{k'}) \tau_j}_{L^p(\Nc)}\\
		&\meg N^{(\frac 1 p-1)_+} C_1C_2 \sum_{k'\in K_j\cap K_k} \norm{u*\psi_{k'}}_{L^p(\Nc)}
	\end{split}
	\]
	for every $u\in B^{\vect s}_{p,q}(\Nc,\Omega)$, for every $k\in K$, and for every $j\in \N$. 
	Now, by~\cite[Corollary 2.49]{CalziPeloso}, there is a constant $C_3>0$ such that
	\[
	\frac{1}{C_3}\Delta^{\vect s}_{\Omega'}(\lambda_k)\meg \Delta^{\vect s}_{\Omega'}(\lambda_{k'})\meg C_3\Delta^{\vect s}_{\Omega'}(\lambda_k)
	\]
	for every $k\in K$ and for every $k'\in K_k$, so that
	\[
	\norm*{\Delta^{\vect s}_{\Omega'}(\lambda_k) \norm{(\Psi_j u)*\psi_k}_{L^p(\Nc)}}_{\ell^q(K)}\meg N^{(\frac 1 p-1)_++\max(1/q,1)} C_1 C_2 C_3 \norm*{\Delta^{\vect s}_{\Omega'}(\lambda_k) \norm{ u*\psi_k}_{L^p(\Nc)}}_{\ell^q(K_j)}
	\]
	for every $j\in \N$ and for every $u\in B^{\vect s}_{p,q}(\Nc,\Omega)$. Thus, the $\Psi_j$ are equicontinuous on $B^{\vect s}_{p,q}(\Nc,\Omega)$. 
	Then, take $u\in B^{\vect s}_{p,q}(\Nc,\Omega)$ and let us prove that $(\Psi_j u)$ converges to $u$ in $\sigma^{\vect s}_{p,q}$ for $j\to \infty$. By~\cite[Corollary 4.25]{CalziPeloso} it will suffice to prove convergence in $\Sc'_{\Omega,L}(\Nc)$. Then, we are reduced to proving that $\sum_{k\in K_j}(\eta\tau_j)*\psi_k$ converges to $\eta$ for $j\to \infty$, for every $\eta\in \Sc_{\Omega,L}(\Nc)$. Nonetheless, it is clear that 
	\[
	\sum_{k\in K_j}(\eta\tau_j)*\psi_k=\eta \tau_j
	\]
	if $j$ is sufficiently large (cf.~\cite[Corollary 4.6]{CalziPeloso}), so that the proof is complete.
\end{proof}

\begin{proof}[Proof of Proposition~\ref{prop:8}.]
	(1) $\implies$ (2). Observe that $A^{p,q}_{\vect s,0}(D)=\widetilde A^{p,q}_{\vect s,0}(D)\neq \Set{0}$, so that $\vect s\succ\vect 0$ by~\cite[Proposition 3.5]{CalziPeloso}.
	Then, it will suffice to prove that, if $\widetilde A^{p,q}_{\vect s,0}(D)\subseteq A^{p,q}_{\vect s}(D)$, then  $\widetilde A^{p,q}_{\vect s}(D)= A^{p,q}_{\vect s}(D)$, which will prove also the first assertion.	
	Take $(\Psi_j)$ as in Lemma~\ref{lem:3}. Then, 
	\[
	\lim_{j\to \infty}\Ec \Psi_j u=\Ec u
	\]
	pointwise on $D$, for every $u\in B^{-\vect s}_{p,q}(\Nc,\Omega)$, thanks to~\cite[Lemma 5.1]{CalziPeloso} and Lemma~\ref{lem:3}.  In addition, the sequence $(\Ec \Psi_j u)$ is bounded in $\widetilde A^{p,q}_{\vect s,0}(D)=A^{p,q}_{\vect s,0}(D)$, so that, by lower semi-continuity,
	\[
	\norm{\Ec u}_{A^{p,q}_{\vect s}(D)}\meg \liminf_{j\to \infty} \norm{\Ec \Psi_j u}_{A^{p,q}_{\vect s}(D)}<\infty.
	\] 
	Hence, $\widetilde A^{p,q}_{\vect s}(D) \subseteq A^{p,q}_{\vect s}(D)$, whence $\widetilde A^{p,q}_{\vect s}(D) = A^{p,q}_{\vect s}(D) $ by Proposition~\ref{prop:1}. 
	
	(2) $\implies$ (1).  Observe first that, since $ A^{p,q}_{\vect s}(D)=\widetilde A^{p,q}_{\vect s}(D)\neq \Set{0}$ and $\vect s\succ\vect 0$,~\cite[Proposition 3.5]{CalziPeloso} implies that $\vect s\succ \frac{1}{2 q}\vect m$ so that, in particular, $A^{p,q}_{\vect s,0}(D)\neq \Set{0}$.	
	Then, take $f\in \widetilde A^{p,q}_{\vect s,0}(D)$, and observe that there is a sequence $(f_j)$ of elements of $\Ec(\Sc_{\Omega,L}(\Nc))$ which converges to $f$ in $\widetilde A^{p,q}_{\vect s,0}(D)$, hence in $\widetilde A^{p,q}_{\vect s}(D)=A^{p,q}_{\vect s}(D)$, thanks to~\cite[Theorem 4.23]{CalziPeloso}. Since $f_j\in A^{p,q}_{\vect s,0}(D)$ for every $j\in \N$, thanks to Proposition~\ref{prop:1}, this implies that $f\in A^{p,q}_{\vect s,0}(D)$. The assertion follows by Proposition~\ref{prop:1} as before.
\end{proof}

We now recall (and extend) various notions of atomic decomposition introduced in~\cite{CalziPeloso}.

\begin{deff}\label{def:2}
	Take $p,q\in(0,\infty]$ and  $\vect s,\vect{s'}\in \R^r$. We say that (weak) property $\atomic^{p,q}_{\vect s,\vect {s'}}$ holds if for every $\delta_0>0$ there are a $(\delta,4)$-lattice $(\zeta_{j,k},z_{j,k})_{j\in J,k\in K}$ on $D$, with $\delta\in (0,\delta_0]$, such that the mapping
	\[
	\Psi\colon \ell^{p,q}(J,K)\ni \lambda \mapsto  \sum_{j,k} \lambda_{j,k} B^{\vect{s'}}_{(\zeta_{j,k},z_{j,k})} \Delta_\Omega^{(\vect b+\vect d)/p-\vect s-\vect{s'}}(h_k)\in \Hol(D),
	\]
	with $h_k\coloneqq \rho(\zeta_{j,k},z_{j,k})$ for every $(j,k)\in J\times K$, is well defined (with locally uniform convergence of the sum) and maps $\ell^{p,q}(J,K)$ continuously into $A^{p,q}_{\vect s}(D)$.
	
	We say that strong property $\atomic^{p,q}_{\vect s,\vect{s'}}$ holds if $\Psi$ has the preceding properties for every $(\delta,R)$-lattice on $D$, with $\delta>0$ and $R>1$.
	
	We say that strong property $\atomics^{p,q}_{\vect s,\vect{s'}}$ holds if strong property $\atomic^{p,q}_{\vect s,\vect{s'}}$ holds and for every $R_0>1$ there is $\delta_0>0$ such that $\Psi(\ell^{p,q}(J,K))=A^{p,q}_{\vect s}(D)$  whenever $(\zeta_{j,k},z_{j,k})$ is a $(\delta,R)$-lattice with $\delta\in (0,\delta_0]$ and $R\in (1,R_0]$.
	
	We define weak  and  strong properties $\atomic^{p,q}_{\vect s,\vect{s'},0}$ and $\atomics^{p,q}_{\vect s,\vect{s'},0}$ analogously.
	
	Finally, if $\vect s\succ \frac 1 p (\vect b+\vect d)+\frac{1}{2 q'}\vect{m'}$, then we define (weak or strong) properties $(\widetilde L)^{p,q}_{\vect s,\vect{s'},0}$, $(\widetilde L')^{p,q}_{\vect s,\vect{s'},0}$, $(\widetilde L)^{p,q}_{\vect s,\vect{s'}}$, and $(\widetilde L')^{p,q}_{\vect s,\vect{s'}}$, replacing the spaces $A^{p,q}_{\vect s,0}(D)$ and $A^{p,q}_{\vect s}(D)$ with $\widetilde A^{p,q}_{\vect s,0}(D)$ and $\widetilde A^{p,q}_{\vect s}(D)$, respectively.
\end{deff}

\begin{prop}\label{prop:11}
	Take $p,q\in (0,\infty]$ and $\vect s, \vect{s'}\in \R^r$. 
	If $(\zeta_{j,k},z_{j,k})_{j\in J,k\in K}$ is a $(\delta,R)$-lattice on $D$ for some $\delta>0$ and some $R>1$ and the mapping
	\[
	\Psi\colon\C^{(J\times K)}\ni\lambda \mapsto \sum_{j,k} \lambda_{j,k} B^{\vect s'}_{(\zeta_{j,k},z_{j,k})} \Delta_\Omega^{(\vect b+\vect d)/p-\vect s-\vect s'}(h_k)\in \Hol(D)
	\]
	induces a continuous linear mapping $\ell^{p,q}_0(J,K)\to A^{p,q}_{\vect s}(D)$, then it also induces a continuous linear mapping $\ell^{p,q}(J,K)\to A^{p,q}_{\vect s}(D)$, defined in the same way with locally uniform convergence of the sum.
	
	In particular, the following conditions are equivalent:
	\begin{enumerate}
		\item[\textnormal{(1)}] weak (resp.\ strong) property $\atomic^{p,q}_{\vect s,\vect{s'},0}$ holds;
		
		\item[\textnormal{(2)}] $\vect s\succ \vect0$, and  weak (resp.\ strong) property $\atomic^{p,q}_{\vect s,\vect{s'}}$ holds;
	\end{enumerate}
\end{prop}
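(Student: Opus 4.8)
The plan is to reduce Proposition~\ref{prop:11} to the analogous statement about the extension operator $\Ec$ and the Besov spaces $B^{\vect s}_{p,q}(\Nc,\Omega)$, for which the key tool is the mollifier sequence $(\Psi_j)$ from Lemma~\ref{lem:3}. The main point is the first assertion: if $\Psi$ is bounded $\ell^{p,q}_0(J,K)\to A^{p,q}_{\vect s}(D)$, then it is bounded $\ell^{p,q}(J,K)\to A^{p,q}_{\vect s}(D)$ with locally uniform convergence of the series. The deduction of the equivalence (1) $\iff$ (2) will then be formal: if weak (resp.\ strong) property $\atomic^{p,q}_{\vect s,\vect{s'},0}$ holds, then $A^{p,q}_{\vect s,0}(D)\neq\Set{0}$, hence $\vect s\succ\vect 0$ by~\cite[Proposition 3.5]{CalziPeloso}, and the boundedness of $\Psi$ on $\ell^{p,q}_0$ upgrades to boundedness on $\ell^{p,q}$ by the first part; conversely, if $\vect s\succ\vect 0$ and property $\atomic^{p,q}_{\vect s,\vect{s'}}$ holds, then $\Psi$ is in particular bounded on the closed subspace $\ell^{p,q}_0(J,K)$, and the image then lands in $A^{p,q}_{\vect s,0}(D)$ because $\ell^{p,q}_0$ is the closure of $\C^{(J\times K)}$ and each finite sum $\sum_{j,k}\lambda_{j,k}B^{\vect{s'}}_{(\zeta_{j,k},z_{j,k})}\Delta_\Omega^{(\vect b+\vect d)/p-\vect s-\vect{s'}}(h_k)$ lies in $\Ec(\Sc_{\Omega,L}(\Nc))\subseteq A^{p,q}_{\vect s,0}(D)$ (using Proposition~\ref{prop:1}, since each $B^{\vect{s'}}_{(\zeta,z)}$ is, up to the normalizing factor, $\Ec$ applied to a suitable element of $\Sc_{\Omega,L}(\Nc)$).

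For the first assertion I would argue as follows. Given $\lambda\in\ell^{p,q}(J,K)$, set $\lambda^{(N)}$ to be the truncation of $\lambda$ to a finite set $F_N$ exhausting $J\times K$; these lie in $\ell^{p,q}_0(J,K)$ with $\norm{\lambda^{(N)}}_{\ell^{p,q}}\meg\norm{\lambda}_{\ell^{p,q}}$, so the hypothesis gives $\norm{\Psi\lambda^{(N)}}_{A^{p,q}_{\vect s}(D)}\meg C\norm{\lambda}_{\ell^{p,q}}$. The functions $\Psi\lambda^{(N)}$ are holomorphic, and since bounded subsets of $A^{p,q}_{\vect s}(D)$ are relatively compact in $\Hol(D)$ with the topology of locally uniform convergence (this is the continuous embedding $A^{p,q}_{\vect s}(D)\hookrightarrow\Hol(D)$, essentially~\cite[Proposition 3.5]{CalziPeloso}), one extracts a locally uniformly convergent subsequence whose limit is some $f\in\Hol(D)$ with $\norm{f}_{A^{p,q}_{\vect s}(D)}\meg C\norm{\lambda}_{\ell^{p,q}}$ by lower semicontinuity of the norm. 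It then remains to identify $f$ with $\sum_{j,k}\lambda_{j,k}B^{\vect{s'}}_{(\zeta_{j,k},z_{j,k})}\Delta_\Omega^{(\vect b+\vect d)/p-\vect s-\vect{s'}}(h_k)$ as a locally uniformly convergent series: for this I would show directly that the tail $\sum_{(j,k)\notin F_N}\abs{\lambda_{j,k}}\,\abs{B^{\vect{s'}}_{(\zeta_{j,k},z_{j,k})}}\,\Delta_\Omega^{(\vect b+\vect d)/p-\Re\vect s-\Re\vect{s'}}(h_k)$ tends to $0$ uniformly on compact subsets of $D$, using the standard pointwise decay estimates for the reproducing kernels $B^{\vect{s'}}_{(\zeta_{j,k},z_{j,k})}$ (via~\cite[Lemma 2.39 and Corollary 2.49]{CalziPeloso}) together with the fact that a $(\delta,R)$-lattice has bounded overlap and that $(\lambda_{j,k})\in\ell^{p,q}\subseteq\ell^\infty$. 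This shows the whole sequence $\Psi\lambda^{(N)}$ converges locally uniformly to $f$, so $\Psi$ is well defined on all of $\ell^{p,q}(J,K)$ and agrees with the natural series, completing the bound.

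The main obstacle is the last identification step: one must be careful that the locally uniform limit $f$ obtained by compactness actually \emph{is} the sum of the series, rather than merely some majorant, and that the convergence is genuinely locally uniform and not just along a subsequence. The decay estimates for $B^{\vect{s'}}_{(\zeta_{j,k},z_{j,k})}$ on a lattice are exactly of the type used in~\cite[Lemma 3.29]{CalziPeloso}, so the required summability of the tails over $(j,k)\notin F_N$ follows once one notes that $\sum_{j,k}\Delta_\Omega^{\Re\vect{s'}-(\vect b+\vect d)}(\text{dist to }z)\,\Delta_\Omega^{(\vect b+\vect d)/p-\Re\vect s-\Re\vect{s'}}(h_k)$ converges locally uniformly in $z$, which is where the lattice structure and~\cite[Proposition 2.56]{CalziPeloso} enter. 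Once this is in place everything else is bookkeeping, and the passage between the "weak" (fixed lattice) and "strong" (all lattices) versions is verbatim the same argument applied to each lattice separately.
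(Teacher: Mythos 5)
Your strategy — truncate $\lambda$ to finite sets $F_N$, extract a locally uniformly convergent subsequence of $(\Psi\lambda^{(N)})_N$ by Montel-type compactness, and then identify the limit with the sum of the series by showing the tails vanish locally uniformly — differs from the paper's, which works entirely by duality: it constructs the maps $\iota_1,\dots,\iota_4$ between $A$- and $\widetilde A$-type spaces, proves unordered convergence of the partial sums in the weak topology $\widetilde\sigma^{p'',q''}_{\vect s-(1/p-1)_+(\vect b+\vect d)}$ via a compactness/ultrafilter argument together with the injectivity of $\trasp\iota_3\circ\iota_2$, and then upgrades this weak convergence to convergence in $\Hol(D)$ by noting that the evaluations $B^{\vect{s'}}_{(\zeta,z)}$ form compact subsets of the predual. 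Your route would be more elementary if it worked, but the key tail estimate has a genuine gap.

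You majorize $\sum_{(j,k)\notin F_N}\abs{\lambda_{j,k}}\abs{B^{\vect{s'}}_{(\zeta_{j,k},z_{j,k})}(\zeta,z)}\Delta_\Omega^{(\vect b+\vect d)/p-\vect s-\vect{s'}}(h_k)$ by $\norm{\lambda}_{\ell^\infty}$ times the tail of the unsigned series $\sum_{j,k}\abs{B^{\vect s'}_{(\zeta_{j,k},z_{j,k})}(\zeta,z)}\Delta_\Omega^{(\vect b+\vect d)/p-\vect s-\vect{s'}}(h_k)$, and assert that the latter converges locally uniformly. It need not, under the hypotheses of the proposition. Already for $D=\C_+$, $q=\infty$, $p\in(0,\infty)$: in the notation of the Introduction the boundedness of $\Psi$ on $\ell^{p,\infty}_0$ corresponds to $2\tilde s>s+(1/p-1)_+$, whereas a direct computation on a dyadic lattice shows the unsigned series $\sum_{j,k}\abs{K_{\tilde s}(z,w_{j,k})}(\Im w_{j,k})^{2\tilde s+1-s-1/p}$ converges only when $2\tilde s>s+1/p$. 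In the intermediate range your majorant is identically $+\infty$, so the tails cannot vanish and the compactness step yields an $f$ you cannot identify with the series. (Note that for $p,q<\infty$ the first assertion is vacuous since $\ell^{p,q}_0=\ell^{p,q}$, and for $p=\infty$ the two thresholds agree; the gap occurs exactly at $q=\infty$, $p<\infty$.) The correct way to obtain the absolute local uniform summability is H\"older's inequality on $\ell^{p,q}(J,K)\times\ell^{p',q'}(J,K)$: one must show that the family $\bigl(B^{\vect s'}_{(\zeta_{j,k},z_{j,k})}(\zeta,z)\,\Delta_\Omega^{(\vect b+\vect d)/p-\vect s-\vect{s'}}(h_k)\bigr)_{j,k}$ lies in $\ell^{p',q'}_0(J,K)$ with norm bounded locally uniformly in $(\zeta,z)$, so that the tails are controlled by $\norm{\lambda}_{\ell^{p,q}}$ times a vanishing $\ell^{p',q'}$-tail — which is in the end a concrete realization of the duality the paper wields abstractly. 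A secondary inaccuracy: $B^{\vect{s'}}_{(\zeta,z)}$ is \emph{not} of the form $\Ec\varphi$ with $\varphi\in\Sc_{\Omega,L}(\Nc)$, since its boundary values decay only polynomially; the fact you need for the implication (2) $\implies$ (1), namely $B^{\vect{s'}}_{(\zeta,z)}\in A^{p,q}_{\vect s,0}(D)$, is supplied directly by~\cite[Lemma 3.29]{CalziPeloso} once the necessary conditions on $(\vect s,\vect{s'})$ are extracted from the hypothesis.
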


\begin{proof}
	The equivalence of (1) and (2) follow easily from the first assertion and~\cite[Proposition 2.41 and Lemma 3.29]{CalziPeloso}.
	
	Then, let us prove the first assertion. By~\cite[Lemma 3.29]{CalziPeloso}, $\widetilde A^{p'',q''}_{\vect s-(1/p-1)_+(\vect b+\vect d)}(D)$ and $\widetilde A^{p',q'}_{ (\vect b+\vect d)/p-\vect s-\vect s',0}(D)$ are well defined, while $A^{p'',q''}_{\vect s-(1/p-1)_+(\vect b+\vect d)}(D), A^{p',q'}_{ (\vect b+\vect d)/p-\vect s-\vect s'}(D)\neq \Set{0}$   (notice that $p''=\max(1,p)$ and $q''=\max(1,q)$).   
	Define $V\coloneqq A^{p',q'}_{ (\vect b+\vect d)/p-\vect s-\vect s'}(D)\cap \widetilde A^{p',q'}_{ (\vect b+\vect d)/p-\vect s-\vect s',0}(D)$, endowed with the topology induced by $A^{p',q'}_{ (\vect b+\vect d)/p-\vect s-\vect s'}(D)$, and observe that  $V$ is a closed subspace of $A^{p',q'}_{ (\vect b+\vect d)/p-\vect s-\vect s'}(D)$ by the continuity of the inclusion $A^{p',q'}_{ (\vect b+\vect d)/p-\vect s-\vect s'}(D)\subseteq \widetilde A^{p',q'}_{ (\vect b+\vect d)/p-\vect s-\vect s'}(D)$. Denote by $\iota_1\colon A^{p'',q''}_{\vect s-(1/p-1)_+(\vect b+\vect d)}(D)\to V'$ and $\iota_2\colon \widetilde A^{p'',q''}_{\vect s-(1/p-1)_+(\vect b+\vect d)}(D)\to \widetilde A^{p',q'}_{ (\vect b+\vect d)/p-\vect s-\vect s',0}(D)'$ the continuous antilinear mapping and the antilinear isomorphism induced by the sesquilinear form (cf.~Proposition~\ref{prop:10})
		\[
		(f,g)\mapsto \int_D f \overline g (\Delta^{-\vect s'}_\Omega\circ \rho)\,\dd \nu_D.
		\]
		Denote by $\iota_3\colon V\to\widetilde A^{p',q'}_{ (\vect b+\vect d)/p-\vect s-\vect s',0}(D)  $ and $\iota_4\colon A^{p'',q''}_{\vect s-(1/p-1)_+(\vect b+\vect d)}(D)\to \widetilde A^{p'',q''}_{\vect s-(1/p-1)_+(\vect b+\vect d)}(D)$ the canonical (continuous linear) mappings, so that $\iota_3$ is one-to-one and has a dense image (since $V$ contains $\Ec(\Sc_{\Omega,L}(\Nc))$, cf.~\cite[Theorem 4.23]{CalziPeloso} and Proposition~\ref{prop:1}). Then, $\trasp\iota_3$ is one-to-one and
		\[
		\iota_1= \trasp \iota_3\circ\iota_2\circ \iota_4.
		\]
		In addition, using~\cite[Theorem 3.22]{CalziPeloso} and the canonical mappings $\ell^{p,q}(J,K)\to\ell^{p'',q''}(J,K)\to \ell^{p',q'}(J,K)'$, we see that $\trasp \iota_3\circ\iota_2\circ \Psi$ extends to a continuous linear mapping
		\[
		\ell^{p,q}(J,K)\ni \lambda \mapsto \sum_{j,k} \lambda_{j,k}( \trasp \iota_3\circ \iota_2)\big(B^{\vect{s'}}_{(\zeta_{j,k},z_{j,k})}\big) \Delta_\Omega^{(\vect b+\vect d)/p-\vect s-\vect{s'}}(h_k)\in V',
		\]
		with the sum converging in the weak topology $\sigma(V',V)$ (cf.~the proof of~\cite[Proposition 3.39]{CalziPeloso}). Observe that, by compactness, 
		\[
		\lim_{H, \Uf} \sum_{(j,k)\in H} \lambda_{j,k} B^{\vect{s'}}_{(\zeta_{j,k},z_{j,k})} \Delta_\Omega^{(\vect b+\vect d)/p-\vect s-\vect{s'}}(h_k)
		\]
		converges in  $\widetilde \sigma^{p'',q''}_{\vect s-(1/p-1)_+(\vect b+\vect d)}$  for every ultrafilter $\Uf$ on the set of finite subsets of $J\times K$ which is finer than the section filter associated with $\subseteq$. Since $\trasp \iota_3\circ \iota_2$ is one-to-one, this proves that the sum
		\[
		\sum_{j,k} \lambda_{j,k} B^{\vect{s'}}_{(\zeta_{j,k},z_{j,k})} \Delta_\Omega^{(\vect b+\vect d)/p-\vect s-\vect{s'}}(h_k),
		\]
		converges in $\widetilde\sigma^{p'',q''}_{\vect s-(1/p-1)_+(\vect b+\vect d)}$ for every $\lambda\in \ell^{p,q}(J,K)$ (cf.~\cite[Proposition 2 of Chapter I, \S\ 7, No.\ 1]{BourbakiGT1}). 	
		Observe that, since the $B^{\vect s'}_{(\zeta,z)}$ stay in a compact subset of $\widetilde A^{p'',q''}_{\vect s-(1/p-1)_+(\vect b+\vect d),0}(D)$ as $(\zeta,z)$ stays in a compact subset of $D$ (cf.~\cite[Lemmas 3.29 and 5.15]{CalziPeloso}), by means of~\cite[Proposition 3.13 and Lemma 3.29]{CalziPeloso}  we see that convergence in  $\widetilde\sigma^{p'',q''}_{\vect s-(1/p-1)_+(\vect b+\vect d)}$ implies convergence in $\Hol(D)$. Thus, the sum
		\[
		\sum_{j,k} \lambda_{j,k} B^{\vect{s'}}_{(\zeta_{j,k},z_{j,k})} \Delta^{(\vect b+\vect d)/p-\vect s-\vect{s'}}_\Omega(h_k)
		\]
		converges in $\Hol(D)$ for every $\lambda\in \ell^{p,q}(J,K)$, and defines a continuous linear function (of $\lambda$) from $\ell^{p,q}(J,K)$ into $A^{p,q}_{\vect s}(D)$.	
\end{proof}

We are now ready to prove the main result of this section.

\begin{teo}\label{teo:1bis}
	Take $p,q\in [1,\infty]$ and $\vect s,\vect{s'}\in \R^r$ such that the following conditions hold:
	\begin{itemize}
		\item $\vect s\succ \frac 1 p (\vect b+\vect d)+\frac{1}{2 q'}\vect{m'} $;
				
		\item $\vect{s'}\prec \vect b+\vect d-\frac 1 2 \vect m$;
		
		\item $\vect s+\vect{s'}\prec\frac 1 p (\vect b+\vect d)- \frac{1}{2 q}\vect{m'}$.
	\end{itemize}		
	Then, the following conditions are equivalent:
	\begin{enumerate}
		\item[\textnormal{(1)}] $A^{p',q'}_{\vect b+\vect d-\vect s-\vect{s'}}(D)=\widetilde A^{p',q'}_{\vect b+\vect d-\vect s-\vect{s'}}(D)$;

		\item[\textnormal{(2)}] $P_{\vect{s'}}$ induces a continuous linear mapping of $L^{p,q}_{\vect s,0}(D)$ \emph{onto} $\widetilde A^{p,q}_{\vect s,0}(D)$;

		\item[\textnormal{(3)}] $P_{\vect{s'}}$ induces a continuous linear mapping of $L^{p,q}_{\vect s}(D)$ \emph{onto} $\widetilde A^{p,q}_{\vect s}(D)$ such that 
		\[
		P_{\vect{s'}}f(\zeta,z)=c_{\vect{s'}} \int_D f \overline{B_{(\zeta,z)}^{\vect{s'}}}(\Delta^{-\vect{s'}}_\Omega\circ \rho)\,\dd \nu_D
		\]
		for every $f\in L^{p,q}_{\vect s}(D)$ and for every $(\zeta,z)\in D$;
		
		\item[\textnormal{(3$'$)}] $P_{\vect{s'}}$ induces a continuous linear mapping of $L^{p,q}_{\vect s,0}(D)$ into $\widetilde A^{p,q}_{\vect s}(D)$;
		
		\item[\textnormal{(4)}] the sesquilinear mapping
		\[
		\Ec(\Sc_{\Omega,L}(\Nc))\times \Ec(\Sc_{\Omega,L}(\Nc))\ni (f,g)\mapsto \int_D f \overline g (\Delta^{-\vect{s'}}_\Omega\circ\rho)\,\dd \nu_D,
		\]
		extended to $\widetilde A^{p,q}_{\vect s,0}(D)\times\widetilde A^{p',q'}_{\vect b+\vect d-\vect s-\vect{s'}}(D)$ as in Proposition~\ref{prop:10}, induces an antilinear isomorphism of $A^{p',q'}_{\vect b+\vect d-\vect s-\vect{s'}}(D)$ onto $\widetilde A^{p,q}_{\vect s,0}(D)'$;
		
		\item[\textnormal{(5)}] strong property $(\widetilde L')^{p,q}_{\vect s,\vect{s'},0}$ holds;
		
		\item[\textnormal{(5$'$)}] weak property $(\widetilde L)^{p,q}_{\vect s,\vect{s'},0}$ holds;
		
		\item[\textnormal{(6)}] strong property $(\widetilde L')^{p,q}_{\vect s,\vect{s'}}$ holds;
		
		\item[\textnormal{(6$'$)}] weak property $(\widetilde L)^{p,q}_{\vect s,\vect{s'}}$ holds.
	\end{enumerate}
\end{teo}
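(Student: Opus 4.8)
The strategy is to establish a cycle of implications that links all eight conditions, using the identifications of dual spaces from Proposition~\ref{prop:10} and the extension operator $\Ec$ as the central bookkeeping device. The natural backbone is
\[
(1)\iff(4),\qquad (4)\implies(2)\implies(3')\implies(3)\implies(1),
\]
together with the two atomic-decomposition equivalences $(4)\iff(5)\iff(5')$ and $(4)\iff(6)\iff(6')$, the latter being deduced from the former by combining Proposition~\ref{prop:11} with Proposition~\ref{prop:8} (which is exactly the device that passes between the ``$0$-subscript'' spaces and the full spaces once $\vect s\succ\vect 0$; note that the hypothesis on $\vect s$ forces $\vect s\succ\frac1{2q}\vect m$, hence $A^{p,q}_{\vect s,0}(D)\neq\{0\}$).

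\textbf{Step 1: duality reformulation $(1)\iff(4)$.} By Proposition~\ref{prop:10} applied with the exponents $(p,q)$ and $(p',q')$ and the weight $\vect{s'}$ (the three displayed inequalities are precisely what is needed for $\vect{s''}=\vect b+\vect d\succ\frac12\vect m$ and for the two power constraints to hold), the sesquilinear form in (4) always induces an antilinear isomorphism of $\widetilde A^{p',q'}_{\vect b+\vect d-\vect s-\vect{s'}}(D)$ onto $\widetilde A^{p,q}_{\vect s,0}(D)'$. Since $A^{p',q'}_{\vect b+\vect d-\vect s-\vect{s'}}(D)$ is always a closed subspace of $\widetilde A^{p',q'}_{\vect b+\vect d-\vect s-\vect{s'}}(D)$ (Proposition~\ref{prop:1}), the restriction of this isomorphism is \emph{onto} the whole dual precisely when $A^{p',q'}_{\vect b+\vect d-\vect s-\vect{s'}}(D)=\widetilde A^{p',q'}_{\vect b+\vect d-\vect s-\vect{s'}}(D)$; this is the content of $(1)\iff(4)$.

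\textbf{Step 2: the projector chain.} For $(4)\implies(2)$ one writes $P_{\vect{s'}}$ on $C_c(D)$ using its integral formula and recognizes, via the reproducing formula for $A^{2,2}$ and the identity $\langle\Ec u\mid\Ec(u'*I^{-(\vect b+\vect d)}_\Omega)\rangle = c\langle u\mid u'\rangle$ from Proposition~\ref{prop:10}, that $P_{\vect{s'}}$ is adjoint (through the $\vect{s'}$-pairing) to the inclusion $\widetilde A^{p',q'}_{\vect b+\vect d-\vect s-\vect{s'}}(D)\hookrightarrow L^{p',q'}_{\vect b+\vect d-\vect s-\vect{s'}}(D)$; condition (4) then gives boundedness $L^{p,q}_{\vect s,0}\to\widetilde A^{p,q}_{\vect s,0}$, and surjectivity follows because $P_{\vect{s'}}$ acts as the identity on $\Ec(\Sc_{\Omega,L}(\Nc))$, which is dense. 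The implications $(2)\implies(3')$ and $(3')\implies(3)$ are soft: $(2)\implies(3')$ is immediate from $\widetilde A^{p,q}_{\vect s,0}\subseteq\widetilde A^{p,q}_{\vect s}$, and $(3')\implies(3)$ is the standard density/closed-graph argument extending the operator from $L^{p,q}_{\vect s,0}$ to $L^{p,q}_{\vect s}$ and checking the integral formula persists (using that the kernel is locally integrable against the defining measure). Finally $(3)\implies(1)$: if $P_{\vect{s'}}$ is bounded and onto $\widetilde A^{p,q}_{\vect s}$, then dualizing (now with $p,q\in[1,\infty]$, so $L^{p,q}_{\vect s}$ has a genuine dual) and again using the self-adjointness of the kernel up to the factor $I^{-(\vect b+\vect d)}_\Omega$, one recovers that the inclusion $A^{p',q'}_{\vect b+\vect d-\vect s-\vect{s'}}\hookrightarrow\widetilde A^{p',q'}_{\vect b+\vect d-\vect s-\vect{s'}}$ is onto.

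\textbf{Step 3: atomic decompositions.} The equivalence $(4)\iff(5)\iff(5')$ is the ``tilde'' analogue of the equivalence between characterization of the dual and atomic decomposition. One direction: given (4), discretize the reproducing formula on a fine enough lattice; the synthesis operator $\Psi$ of Definition~\ref{def:2} is, up to the identification of $\ell^{p,q}$-sequences with functionals, the transpose of the analysis map $f\mapsto(f(\zeta_{j,k},z_{j,k})\Delta^{\ldots}_\Omega(h_k))$ on $\widetilde A^{p',q'}_{\vect b+\vect d-\vect s-\vect{s'},0}$, whose boundedness follows from (4) plus the sampling estimates of \cite[Theorem 3.22]{CalziPeloso}. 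Conversely, a bounded onto synthesis operator produces, by a standard averaging/retraction argument, a bounded projection realizing the dual pairing, i.e.\ (4). The strong-vs-weak and primed-vs-unprimed distinctions are handled by Proposition~\ref{prop:11} (which upgrades $\ell^{p,q}_0$-boundedness to $\ell^{p,q}$-boundedness and, crucially, shows the $0$-version and the full version are equivalent once $\vect s\succ\vect0$) together with the observation that surjectivity onto $\widetilde A^{p,q}_{\vect s,0}(D)$ is automatic from density of $\Ec(\Sc_{\Omega,L}(\Nc))$. Then $(5)\iff(6)$ and $(5')\iff(6')$ follow exactly as in Step 2's passage between the $0$-subscript and full Bergman spaces, invoking Proposition~\ref{prop:8}.

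\textbf{Main obstacle.} The delicate point is the surjectivity half of $(4)\implies(2)$ and of $(4)\implies(5)$: boundedness of $P_{\vect{s'}}$ or of $\Psi$ is the ``easy'' transpose computation, but showing the range is \emph{all} of $\widetilde A^{p,q}_{\vect s,0}(D)$ requires knowing that $P_{\vect{s'}}$ reproduces a dense subspace and that the closed range coincides with the whole space — this is where one genuinely uses that $\Ec(\Sc_{\Omega,L}(\Nc))$ is dense in $\widetilde A^{p,q}_{\vect s,0}(D)$ and that $P_{\vect{s'}}$ fixes it, i.e.\ the reproducing identity $\langle\Ec u\mid\Ec(u'*I^{-(\vect b+\vect d)}_\Omega)\rangle=c\langle u\mid u'\rangle$ from Proposition~\ref{prop:10}. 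A secondary technical nuisance is that several conditions are stated for $\widetilde A$-spaces rather than $A$-spaces, so one must be careful to invoke Proposition~\ref{prop:1} only where the extra hypothesis $\vect s\succ\frac1{2q}\vect m+(\cdots)_+\vect{m'}$ is known — but in the present theorem we only assert equivalences \emph{among} $\widetilde A$-properties, so this does not actually arise, and that is precisely why the statement is cleaner than a version phrased with honest Bergman spaces.
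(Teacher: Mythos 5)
Your overall architecture (duality via Proposition~\ref{prop:10} for (1) $\iff$ (4), transposition of the sampling map for the atomic properties, and a cycle through the projector statements) is broadly the same as the paper's, but two load-bearing steps have genuine gaps.

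First, your step (3$'$) $\implies$ (3) ``by density/closed graph'' does not work: the theorem allows $p,q\in[1,\infty]$, and when $p=\infty$ or $q=\infty$ the space $L^{p,q}_{\vect s,0}(D)$ is a \emph{proper} closed subspace of $L^{p,q}_{\vect s}(D)$, so there is nothing to extend along by density. Moreover, even defining the integral in (3) for every $f\in L^{p,q}_{\vect s}(D)$ requires $B^{\vect s'}_{(\zeta,z)}\in L^{p',q'}_{\vect b+\vect d-\vect s-\vect s'}(D)$, which is exactly membership in $A^{p',q'}_{\vect b+\vect d-\vect s-\vect s'}(D)$ and is obtained from (1) together with \cite[Lemma 5.15]{CalziPeloso}; ``local integrability of the kernel'' is not sufficient. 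Since in your chain (3$'$) $\implies$ (3) $\implies$ (1) is the only return path to (1), this breaks the cycle. The paper instead takes (3) $\implies$ (3$'$) as the trivial direction, closes the cycle with (2) $\implies$ (1), and only derives (3) from (2) \emph{after} (1), (2), (3$'$) are known to be equivalent.

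Second, surjectivity in (4) $\implies$ (2) (and likewise the ontoness in the strong atomic properties): the fact that $P_{\vect s'}$ fixes the dense subspace $\Ec(\Sc_{\Omega,L}(\Nc))$ only shows its range is dense; a bounded operator need not have closed range, so you cannot conclude $P_{\vect s'}(L^{p,q}_{\vect s,0}(D))=\widetilde A^{p,q}_{\vect s,0}(D)$. (Note also that $\widetilde A^{p,q}_{\vect s,0}(D)$ is not known to lie inside $L^{p,q}_{\vect s,0}(D)$ here, since (1) concerns only the conjugate exponents, so one cannot even argue that $P_{\vect s'}$ is the identity on its target.) The paper's surjectivity argument is a weight shift: choose $\vect s''\in\N_{\Omega'}$ so large that $A^{p,q}_{\vect s+\vect s'',0}(D)=\widetilde A^{p,q}_{\vect s+\vect s'',0}(D)$, use the intertwining identity $P_{\vect s'}(f(\Delta_\Omega^{\vect s''}\circ\rho))*I^{-\vect s''}_\Omega=cP_{\vect s'-\vect s''}f$ to transfer the problem to that case, where a bounded Bergman projector is automatically onto the honest Bergman space, and then transfer back. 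This device is absent from your proposal and is the essential new idea.

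A minor point: the passage between the $0$-subscript and unsubscripted $(\widetilde L)$-properties, and the implication (5$'$) $\implies$ (1), rest on Proposition~\ref{prop:12}, not on Proposition~\ref{prop:11} or Proposition~\ref{prop:8}, which concern the untilded properties and spaces.
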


In the proof of Theorem~\ref{teo:1bis} we shall make use of several duality arguments which are analogous to those employed in the proof of~\cite[Proposition 3.39]{CalziPeloso}.\footnote{We remark explicitly that  the assumption of this latter result are mistakenly stated, and should be: $\vect s \succ \frac{1}{2 q}\vect m, \frac 1 p (\vect b+\vect d)+\frac{1}{2 q'}\vect m'$, $\vect s'\prec \frac{1}{p'}(\vect b+\vect d)-\frac{1}{2 p'}\vect m'$, and $\vect s+\vect s'\prec \frac{1}{ \min(1,p)}(\vect b+\vect d)-\frac{1}{2 q'}\vect m'$ or $\vect s+\vect s'\meg \frac{1}{ \min(1,p)}(\vect b+\vect d)$ if $q'=\infty$. In addition, for conclusion (3) in the cited result to hold, one has to assume further that $p,q\Meg 1$. Nonetheless,~\cite[Corollary 3.40]{CalziPeloso} still holds, thanks to Corollaries~\ref{cor:1} and~\ref{cor:4}.} We shall present them once in the following extension of the implication (5$'$) $\implies$ (1).

\begin{prop}\label{prop:12}
	Take $p,q\in(0,\infty]$,    $\vect s\succ \frac 1 p (\vect b+\vect d)+\frac{1}{2 q'}\vect{m'} $, and $\vect s'\prec \vect b+\vect d-\frac 1 2 \vect m$.  
	Then, weak (resp.\ strong) property $(\widetilde L)^{p,q}_{\vect s,\vect{s'},0}$ holds if and only if weak  (resp.\ strong)   property $(\widetilde L)^{p,q}_{\vect s,\vect{s'}}$ holds, in which case   $(\vect b+\vect d)/\min(1,p)-\vect s-\vect{s'}\succ \frac{1}{p'} (\vect b+\vect d)+\frac{1}{2 q''}\vect m' $    and $A^{p',q'}_{(\vect b+\vect d)/\min(1,p)-\vect s-\vect{s'}}(D)=\widetilde A^{p',q'}_{(\vect b+\vect d)/\min(1,p)-\vect s-\vect{s'}}(D)$.
\end{prop}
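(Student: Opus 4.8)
\emph{Overview.} The statement has three ingredients: the equivalence of the weak (resp.\ strong) versions of $(\widetilde L)^{p,q}_{\vect s,\vect{s'},0}$ and $(\widetilde L)^{p,q}_{\vect s,\vect{s'}}$; the inequality $\vect{s''}\succ\frac1{p'}(\vect b+\vect d)+\frac1{2q''}\vect m'$, where $\vect{s''}\coloneqq(\vect b+\vect d)/\min(1,p)-\vect s-\vect{s'}$; and the identity $A^{p',q'}_{\vect{s''}}(D)=\widetilde A^{p',q'}_{\vect{s''}}(D)$. For the first, recall that the inclusion $\widetilde A^{p,q}_{\vect s,0}(D)\hookrightarrow\widetilde A^{p,q}_{\vect s}(D)$ is continuous, so property $(\widetilde L)^{p,q}_{\vect s,\vect{s'},0}$ yields, for at least one (resp.\ every) $(\delta,R)$-lattice, a continuous map $\Psi\colon\ell^{p,q}_0(J,K)\to\widetilde A^{p,q}_{\vect s}(D)$; the argument of Proposition~\ref{prop:11} --- which is already carried out with $\widetilde A$ in place of $A$ throughout --- then extends $\Psi$ to a continuous map on $\ell^{p,q}(J,K)$ with locally uniformly convergent series, giving property $(\widetilde L)^{p,q}_{\vect s,\vect{s'}}$. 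Conversely, property $(\widetilde L)^{p,q}_{\vect s,\vect{s'}}$ restricts to $\ell^{p,q}_0(J,K)$, and its values there lie in the \emph{closed} subspace $\widetilde A^{p,q}_{\vect s,0}(D)$ of $\widetilde A^{p,q}_{\vect s}(D)$, because already the atoms $B^{\vect{s'}}_{(\zeta,z)}\Delta_\Omega^{(\vect b+\vect d)/p-\vect s-\vect{s'}}(\rho(\zeta,z))$ belong to it (cf.~\cite[Lemmas 3.29 and 5.15]{CalziPeloso}); hence property $(\widetilde L)^{p,q}_{\vect s,\vect{s'},0}$ holds.

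From now on assume these (equivalent) properties hold, and fix a $(\delta,4)$-lattice $(\zeta_{j,k},z_{j,k})_{j\in J,k\in K}$ on $D$ with $\delta$ small, $h_k\coloneqq\rho(\zeta_{j,k},z_{j,k})$, for which $\Psi\colon\ell^{p,q}_0(J,K)\to\widetilde A^{p,q}_{\vect s,0}(D)$ is continuous. Since $\Psi$ maps into $\widetilde A^{p,q}_{\vect s}(D)$, its value at each basis vector --- a single atom --- has finite $\widetilde A^{p,q}_{\vect s}(D)$-norm; the normalising factor $\Delta_\Omega^{(\vect b+\vect d)/p-\vect s-\vect{s'}}$ being precisely the one that makes $\|B^{\vect{s'}}_{(\zeta,z)}\|_{\widetilde A^{p,q}_{\vect s}(D)}\Delta_\Omega^{(\vect b+\vect d)/p-\vect s-\vect{s'}}(\rho(\zeta,z))$ independent of $(\zeta,z)$ by the $bD$-invariance and $T_+$-equivariance of the construction, this finiteness is, by \cite[Lemmas 3.29 and 5.15]{CalziPeloso}, equivalent to $\vect{s''}\succ\frac1{p'}(\vect b+\vect d)+\frac1{2q''}\vect m'$. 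This is the asserted inequality, and it is exactly the condition under which $\widetilde A^{p',q'}_{\vect{s''}}(D)$ is defined (cf.~Proposition~\ref{prop:1}).

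It remains to prove $\widetilde A^{p',q'}_{\vect{s''}}(D)\subseteq A^{p',q'}_{\vect{s''}}(D)$, the reverse inclusion being Proposition~\ref{prop:1}. The plan is to transpose $\Psi$. Identifying $\widetilde A^{p,q}_{\vect s,0}(D)'$ first with $B^{-\vect s-(1/p-1)_+(\vect b+\vect d)}_{p',q'}(\Nc,\Omega)$ via $\Ec$ and \cite[Theorem 4.23]{CalziPeloso}, and then with $\widetilde A^{p',q'}_{\vect{s''}}(D)$ through the fractional-integration isomorphism $u'\mapsto u'*I^{-\vect{s''}}_\Omega$ and the identity $\langle\Ec u\vert\Ec(u'*I^{-\vect{s''}}_\Omega)\rangle=c\langle u\vert u'\rangle$ of Proposition~\ref{prop:10} (whose hypotheses are met here, given the inequality just established and $\vect{s'}\prec\vect b+\vect d-\frac12\vect m$), and invoking the reproducing formula $c_{\vect{s'}}\int_D B^{\vect{s'}}_w\overline g\,(\Delta^{-\vect{s'}}_\Omega\circ\rho)\,\dd\nu_D=\overline{g(w)}$ valid on $\Ec(\Sc_{\Omega,L}(\Nc))$, one sees that the transpose of $\Psi$ is a non-zero multiple of the sampling operator $g\mapsto\bigl(g(\zeta_{j,k},z_{j,k})\,\Delta_\Omega^{(\vect b+\vect d)/p-\vect s-\vect{s'}}(h_k)\bigr)_{j,k}$, which is thus continuous from $\widetilde A^{p',q'}_{\vect{s''}}(D)$ to $\ell^{p',q'}(J,K)$. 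Now take $g=\Ec u\in\widetilde A^{p',q'}_{\vect{s''}}(D)$ and apply the regularisations $\Psi_j$ of Lemma~\ref{lem:3}: the functions $\Ec\Psi_j u$ belong to $\Ec(\Sc_{\Omega,L}(\Nc))\subseteq A^{p',q'}_{\vect{s''}}(D)$, converge to $g$ pointwise on $D$ (by \cite[Lemma 5.1]{CalziPeloso} and Lemma~\ref{lem:3}), and are uniformly bounded in $\widetilde A^{p',q'}_{\vect{s''}}(D)$. As the lattice is fine, \cite[Theorem 3.22]{CalziPeloso} bounds $\|\Ec\Psi_j u\|_{A^{p',q'}_{\vect{s''}}(D)}$ by a fixed multiple of the $\ell^{p',q'}$-norm of the samples of $\Ec\Psi_j u$, hence by a fixed multiple of $\|\Ec\Psi_j u\|_{\widetilde A^{p',q'}_{\vect{s''}}(D)}$, uniformly in $j$; by lower semi-continuity of the $A^{p',q'}_{\vect{s''}}(D)$-norm along pointwise convergence, $g\in A^{p',q'}_{\vect{s''}}(D)$, as required.

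The main obstacle is the careful bookkeeping of the several dualities when $p$ or $q$ is $<1$: one must work consistently with $p''=\max(1,p)$, $q''=\max(1,q)$ and with the non-locally-convex spaces $\ell^{p,q}_0(J,K)$, tracking the shift by $(1/p-1)_+(\vect b+\vect d)$ simultaneously in the sequence pairings $\ell^{p,q}(J,K)\to\ell^{p'',q''}(J,K)\to\ell^{p',q'}(J,K)'$ and in the Besov weights, exactly as in the proof of~\cite[Proposition 3.39]{CalziPeloso}. The one genuinely quantitative point is to check that the lower estimate for $\|\Ec\Psi_j u\|_{A^{p',q'}_{\vect{s''}}(D)}$ in terms of the samples supplied by~\cite[Theorem 3.22]{CalziPeloso} holds with a constant independent of $j$ for the fixed fine lattice --- which is where the smallness of the mesh $\delta$, and hence the validity of the atomic-reconstruction estimate on that particular lattice, is used.
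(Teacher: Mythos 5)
Your proof has a genuine gap in its ordering: you deduce the hard direction, ``$(\widetilde L)^{p,q}_{\vect s,\vect{s'},0}$ implies $(\widetilde L)^{p,q}_{\vect s,\vect{s'}}$'', at the outset, ``by the argument of Proposition~\ref{prop:11} carried out with $\widetilde A$ in place of $A$''.  But that argument does not transfer verbatim.  Proposition~\ref{prop:11} passes through convergence in $\widetilde\sigma^{p'',q''}$, hence in $\Hol(D)$, and then concludes that the limit lies in $A^{p,q}_{\vect s}(D)$ by Fatou-type lower semicontinuity of the $A^{p,q}_{\vect s}$-norm (which is an integral over $D$) along pointwise convergence.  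There is no analogous one-line Fatou argument for the $\widetilde A^{p,q}_{\vect s}(D)$-norm, which is a dyadic Besov quasi-norm on boundary distributions: knowing that the partial sums are bounded in $\widetilde A^{p,q}_{\vect s,0}(D)$ and converge in $\Hol(D)$ does not immediately force the limit into $\widetilde A^{p,q}_{\vect s}(D)$.  The paper avoids this precisely by reversing your order: it first proves the easy direction (restricting $\Psi$), then the inequality $\vect{s''}\succ\frac1{p'}(\vect b+\vect d)+\frac1{2q''}\vect m'$, then the identity $A^{p',q'}_{\vect{s''}}(D)=\widetilde A^{p',q'}_{\vect{s''}}(D)$, and \emph{only then} the hard direction of the equivalence, using the identity to realize $\widetilde A^{p',q'}_{\vect{s''},0}(D)$ as a closed subspace of $A^{p',q'}_{\vect{s''}}(D)$ and transposing against $\widetilde A^{p'',q''}_{\vect s-(1/p-1)_+(\vect b+\vect d)}(D)$.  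In your organization the identity is established \emph{after} the hard direction, so the dependency runs backwards.

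The remainder of your argument --- the inequality coming from membership of the normalized atoms, and the identity coming from transposing $\Psi$ to get continuity of the sampling operator $S$ on $\widetilde A^{p',q'}_{\vect{s''}}(D)$ --- is essentially the paper's approach, but your regularisation detour (apply $\Psi_j$, use the sampling lower bound on $\Ec\Psi_j u$, then lower semicontinuity of the $A$-norm) is unnecessary.  Once $Sf\in\ell^{p',q'}(J,K)$ for all $f\in\widetilde A^{p',q'}_{\vect{s''}}(D)$, one can invoke the full strength of \cite[Theorem 3.22]{CalziPeloso}: the equality $S^{-1}(\ell^{p',q'}(J,K))\cap A^{\infty,\infty}_{(\vect b+\vect d)/p-\vect s-\vect{s'}}(D)=A^{p',q'}_{\vect{s''}}(D)$, together with the inclusion $\widetilde A^{p',q'}_{\vect{s''}}(D)\subseteq A^{\infty,\infty}_{(\vect b+\vect d)/p-\vect s-\vect{s'}}(D)$ from Proposition~\ref{prop:1}, gives $\widetilde A^{p',q'}_{\vect{s''}}(D)\subseteq A^{p',q'}_{\vect{s''}}(D)$ directly.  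To repair the gap, prove the inequality and the identity first, and then transpose to obtain the hard direction of the equivalence, as the paper does.
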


\begin{proof}
	  Since $B^{\vect{s'}}_{(\zeta,z)}\in \widetilde A^{p,q}_{\vect s,0}(D)$ if and only if $B^{\vect{s'}}_{(\zeta,z)}\in \widetilde A^{p,q}_{\vect s,0}(D) $ for every $(\zeta,z)\in D$ by~\cite[Lemma 5.15]{CalziPeloso}, weak  (resp.\ strong)  property $(\widetilde L)^{p,q}_{\vect s,\vect{s'}}$ implies weak (resp.\ strong)   property $(\widetilde L)^{p,q}_{\vect s,\vect{s'},0}$. In addition, if either one of the preceding conditions hold, then $\vect s+\vect s'\prec \frac 1 p(\vect b+\vect d)-\frac{1}{2 q}\vect m'$, so that $(\vect b+\vect d)/\min(1,p)-\vect s-\vect{s'}\succ \frac{1}{p'} (\vect b+\vect d)+\frac{1}{2 q''}\vect m' $.  
	Then, assume that weak property $(\widetilde L)^{p,q}_{\vect s,\vect{s'},0}$ holds, and let us prove that $A^{p',q'}_{(\vect b+\vect d)/\min(1,p)-\vect s-\vect{s'}}(D)=\widetilde A^{p',q'}_{(\vect b+\vect d)/\min(1,p)-\vect s-\vect{s'}}(D)$. By~\cite[Theorem 3.22]{CalziPeloso}, there is $\delta_0>0$ such that, if $ (\zeta_{j,k},z_{j,k})_{j\in j,k\in K}$ is a $(\delta,4)$-lattice on $D$ with $\delta\in (0,\delta_0]$, and we define $h_k\coloneqq \rho(\zeta_{j,k},z_{j,k})$ for every $j\in J$ and for every $k\in K$,
	\[
	S\colon \Hol(D)\ni f \mapsto (\Delta_\Omega^{(\vect b+\vect d)/p-\vect s-\vect{s'}}(h_k)f(\zeta_{j,k},z_{j,k}))\in \C^{J\times K},
	\]
	then $S$ induces an isomorphism of $A^{p',q'}_{(\vect b+\vect d)/\min(1,p)-\vect s-\vect s'}(D)$ onto a closed subspace of $\ell^{p',q'}(J,K)$, and $S^{-1}(\ell^{p',q'}(J,K))\cap A^{\infty,\infty}_{(\vect b+\vect d)/p-\vect s-\vect{s'}}(D)=A^{p',q'}_{(\vect b+\vect d)/\min(1,p)-\vect s-\vect s'}(D)$. Since weak property $(\widetilde L)^{p,q}_{\vect s,\vect{s'},0}$ holds, we may assume that the mapping
	\[
	\Psi\colon \ell^{p,q}_0(J,K)\ni \lambda \mapsto \sum_{j,k} \lambda_{j,k} B^{\vect s'}_{(\zeta_{j,k},z_{j,k})} \Delta_\Omega^{(\vect b+\vect d)/p-\vect s-\vect s'}(h_k)\in \widetilde A^{p,q}_{\vect s,0}(D)
	\]
	is well defined and continuous. Now, denote by $\langle \,\cdot\,\vert \,\cdot\,\rangle$ the sesquilinear form on $\widetilde A^{p,q}_{\vect s,0}(D)\times \widetilde A^{p',q'}_{(\vect b+\vect d)/\min(1,p)-\vect s-\vect s'}(D)$ defined in Proposition~\ref{prop:10}. Then, 
	\[
	\langle \Psi(\lambda)\vert f\rangle=\sum_{j,k} \lambda_{j,k}\Delta_\Omega^{(\vect b+\vect d)/p-\vect s-\vect{s'}}(h_k) \int_D  B^{\vect{s'}}_{(\zeta_{j,k},z_{j,k})} \overline f (\Delta_\Omega^{-\vect s'}\circ \rho)\,\dd \nu_D=\frac{1}{c_{\vect{s'}}} \langle \lambda\vert S f\rangle
	\]
	for every $\lambda\in \C^{(J\times K)}$ and for every $f\in \Ec(\Sc_{\Omega,L}(\Nc))\subseteq A^{2,2}_{(\vect b+\vect d-\vect{s'})/2}(D)$, thanks to~\cite[Proposition 3.13]{CalziPeloso}, where $c_{\vect s'}\neq 0$ is a suitable constant. 
	Since $\Ec(\Sc_{\Omega,L}(\Nc))$ is dense in $\widetilde A^{p',q'}_{(\vect b+\vect d)/\min(1,p)-\vect s-\vect{s'}}(D)$ for the topology $\widetilde\sigma^{p',q'}_{(\vect b+\vect d)/\min(1,p)-\vect s-\vect{s'}}$, the preceding equality (excluding the middle term) may be extended to every $f\in \widetilde A^{p',q'}_{(\vect b+\vect d)/\min(1,p)-\vect s-\vect{s'}}(D)$ by continuity. Hence, $S f\in \ell^{p',q'}(J,K)$ for every $f\in \widetilde A^{p',q'}_{(\vect b+\vect d)/\min(1,p)-\vect s-\vect{s'}}(D)$. Since $\widetilde A^{p',q'}_{(\vect b+\vect d)/\min(1,p)-\vect s-\vect{s'}}(D)\subseteq A^{\infty,\infty}_{(\vect b+\vect d)/p-\vect s-\vect{s'}}(D)$ by Proposition~\ref{prop:1}, this implies that $\widetilde A^{p',q'}_{(\vect b+\vect d)/\min(1,p)-\vect s-\vect{s'}}(D)\subseteq A^{p',q'}_{(\vect b+\vect d)/\min(1,p)-\vect s-\vect{s'}}(D)$, whence our assertion.
	
	To conclude, we only need to show that weak  (resp.\ strong)  property $(\widetilde L)^{p,q}_{\vect s,\vect{s'},0}$ implies weak (resp.\ strong)   property $(\widetilde L)^{p,q}_{\vect s,\vect{s'}}$. The proof is now similar to that of Proposition~\ref{prop:11}, and proceeds by means of the duality between the closed subspace $\widetilde A^{p',q'}_{(\vect b+\vect d)/\min(1,p)-\vect s-\vect s',0}(D)$ of $A^{p',q'}_{(\vect b+\vect d)/\min(1,p)-\vect s-\vect s'}(D)$ and the space $\widetilde A^{p'',q''}_{\vect s-(1/p-1)_+(\vect b+\vect d)}(D)$.
\end{proof}

\begin{proof}[Proof of Theorem~\ref{teo:1bis}.]
	(1) $\implies$ (3$'$). Take  $f\in L^{p,q}_{\vect s}(D)\cap L^{2,2}_{(\vect b+\vect d-\vect{s'})/2}(D)$ and $\varphi\in \Ec(\Sc_{\Omega,L}(\Nc))$. Observe that $P_{\vect{s'}}f\in A^{2,2}_{(\vect b+\vect d-\vect{s'})/2}(D)=\widetilde A^{2,2}_{(\vect b+\vect d-\vect{s'})/2}(D)$, so that there is $u\in B^{(\vect{s'}-\vect b-\vect d)/2}_{2,2}(\Nc,\Omega)$ such that $P_{\vect{s'}}f=\Ec u$. Then, by Proposition~\ref{prop:10}, there is a constant $c\neq0$ such that
	\[
	\begin{split}
		\abs*{\langle u\vert \varphi\rangle }=&\abs*{c\int_D \Ec u \,\overline{\Ec (\varphi*I_\Omega^{\vect{s'}-\vect b-\vect d})} (\Delta^{-\vect{s'}}_\Omega\circ \rho)\,\dd \nu_D }\\
			&=\abs*{c\int_D f \overline{\Ec(\varphi*I_\Omega^{\vect{s'}-\vect b-\vect d})} (\Delta^{-\vect{s'}}_\Omega\circ \rho)\,\dd \nu_D}\\
			&\meg \abs{c} \norm{f}_{L^{p,q}_{\vect s}(D)} \norm{\Ec(\varphi*I_\Omega^{\vect{s'}-\vect b-\vect d})}_{A^{p',q'}_{\vect b+\vect d-\vect s-\vect{s'}}(D)}.
	\end{split}
	\]
	Now, by (1) and~\cite[Theorem 4.26]{CalziPeloso}, there are constants $C_1,C_1'>0$ such that 
	\[
	\norm{\Ec(\varphi*I_\Omega^{\vect{s'}-\vect b-\vect d})}_{A^{p',q'}_{\vect b+\vect d-\vect s-\vect{s'}}(D)} \meg C_1 \norm{\varphi*I_\Omega^{\vect{s'}-\vect b-\vect d}}_{B^{\vect s+\vect{s'}-\vect b-\vect d}_{p',q'}(\Nc,\Omega)}\meg C_1'\norm{\varphi}_{B_{p',q'}^{\vect s}(\Nc,\Omega)}
	\]
	for every $u\in \Sc_{\Omega,L}(\Nc)$, for some fixed norms on $B^{\vect s+\vect{s'}-\vect b-\vect d}_{p',q'}(\Nc,\Omega)$ and $B^{\vect s}_{p',q'}(\Nc,\Omega)$. Therefore, by means of~\cite[Theorem 4.23]{CalziPeloso} we see that $u\in B^{-\vect{s}}_{p,q}(\Nc,\Omega)$, that is, $P_{\vect{s'}} f \in \widetilde A^{p,q}_{\vect s}(D)$, and that there is a constant $C_2>0$ such that
	\[
	\norm{P_{\vect{s'}}f}_{\widetilde A^{p,q}_{\vect s}(D)}\meg C_2 \norm{f}_{L^{p,q}_{\vect s}(D)}
	\]
	for every  $f\in L^{p,q}_{\vect s}(D)\cap L^{2,2}_{(\vect b+\vect d-\vect{s'})/2}(D)$.  Then, (3$'$) follows.
	
	(3$'$) $\implies$ (2). Observe that $B^{\vect{s'}}_{(\zeta,z)}\in \widetilde A^{p,q}_{\vect s,0}(D)$ for every $(\zeta,z)\in D$, thanks to~\cite[Lemma 5.15]{CalziPeloso}. In addition, using~\cite[Theorem 2.47, Corollary 5.11, and Proposition 5.13]{CalziPeloso}, we see that the mapping $D\ni (\zeta,z)\mapsto B_{(\zeta,z)}\in \widetilde A^{p,q}_{\vect s,0}(D)$ is actually continuous. Therefore,~\cite[Proposition 8 of Chapter VI, \S\ 1, No.\ 2]{BourbakiInt1} implies that $P_{\vect{s'}}f\in \widetilde A^{p,q}_{\vect s,0}(D)$ for every $f\in C_c(D)$, so that $P_{\vect{s'}}$ induces a continuous linear mapping of $L^{p,q}_{\vect s,0}(D)$ into $\widetilde A^{p,q}_{\vect s,0}(D)$.
	Now, take $\vect{s''}\in \N_{\Omega'}$ so that $\vect s+\vect{s''}\succ \frac{1}{2 q}\vect m+\left(\frac{1}{2 \min(p,p')}-\frac{1}{2 q}\right)_+ \vect{m'}$, so that $A^{p,q}_{\vect s+\vect{s''},0}(D)=\widetilde A^{p,q}_{\vect s+\vect{s''},0}(D)$ by~\cite[Corollary 5.11]{CalziPeloso}. In addition, observe that, by~\cite[Proposition 2.29]{CalziPeloso}, there is a constant $c\neq 0$ such that
	\begin{equation}\label{eq:1}
		P_{\vect{s'}}(f (\Delta_\Omega^{\vect{s''}}\circ \rho))*I^{-\vect{s''}}_\Omega= c P_{\vect{s'}-\vect{s''}} f
	\end{equation}
	for every $f\in C_c(D)$. Therefore,~\cite[Proposition 5.13]{CalziPeloso} implies that $P_{\vect{s'}-\vect{s''}}$ induces a continuous linear mapping of $L^{p,q}_{\vect s+\vect{s''},0}(D)$ into $\widetilde A^{p,q}_{\vect s+\vect{s''},0}(D)=A^{p,q}_{\vect s+\vect{s''},0}(D)$. Then, $P_{\vect{s'}-\vect{s''}}(L^{p,q}_{\vect s+\vect{s''},0}(D))=\widetilde A^{p,q}_{\vect s+\vect{s''},0}(D)$, so that, by~\cite[Proposition 5.13]{CalziPeloso} again, $P_{\vect{s'}}(L^{p,q}_{\vect s,0}(D))=\widetilde A^{p,q}_{\vect s,0}(D)$.
	
	(2) $\implies$ (1). Take $f\in C_c(D) $ and $\varphi \in \Sc_{\Omega,L}(\Nc)$. Then, by Proposition~\ref{prop:10},
	\[
	\abs*{\int_D f \overline{\Ec \varphi} (\Delta^{-\vect{s'}}_\Omega\circ \rho)\,\dd \nu_D}=\abs*{\int_D P_{\vect{s'}}f \overline{\Ec \varphi} (\Delta^{-\vect{s'}}_\Omega\circ \rho)\,\dd \nu_D}\meg \norm{P_{\vect{s'}}}_{\Lin(L^{p,q}_{\vect s,0}(D);\widetilde A^{p,q}_{\vect s,0}(D))} \norm{f}_{L^{p,q}_{\vect s}(D)} \norm{\Ec \varphi}_{\widetilde A^{p',q'}_{\vect b+\vect d-\vect s-\vect{s'}}(D)}
	\]
	with suitable choices of norms on $\widetilde A^{p,q}_{\vect s,0}(D)$ and $\widetilde A^{p',q'}_{\vect b+\vect d-\vect s-\vect{s'}}(D)$. By the arbitrariness of $f$, this implies that
	\[
	\norm{\Ec \varphi}_{A^{p',q'}_{\vect b+\vect d-\vect s-\vect{s'}}(D)}\meg \norm{P_{\vect{s'}}}_{\Lin(L^{p,q}_{\vect s,0}(D);\widetilde A^{p,q}_{\vect s,0}(D))} \norm{\Ec \varphi}_{\widetilde A^{p',q'}_{\vect b+\vect d-\vect s-\vect{s'}}(D)}
	\]
	for every $\varphi \in \Sc_{\Omega,L}(\Nc)$. Hence, $\widetilde A^{p',q'}_{\vect b+\vect d-\vect s-\vect{s'},0}(D)\subseteq A^{p',q'}_{\vect b+\vect d-\vect s-\vect{s'}}(D)$ continuously, whence (1) by Proposition~\ref{prop:8}.

	(2) $\implies$ (3). This follows from formula~\eqref{eq:1} (arguing as in the proof of the implication (3$'$) $\implies$ (2)), which is readily extended to every $f\in L^{p,q}_{\vect s}(D)$ taking into account the fact that, by~\cite[Lemma 5.15]{CalziPeloso}, $B^{\vect{s'}}_{(\zeta,z)}\in \widetilde A^{p',q'}_{\vect b+\vect d-\vect s-\vect{s'}}(D)=A^{p',q'}_{\vect b+\vect d-\vect s-\vect{s'}}(D)$ for every $(\zeta,z)\in D$ (since we proved that conditions (1), (2), and (3$'$) are equivalent), which gives convergence of $P_{\vect{s'}}(f_j)$ in $\Hol(D)$ when $(f_j)$ converges almost everywhere to $f\in L^{p,q}_{\vect s}(D)$ and $\abs{f_j}\meg \abs{f}$ for every $j\in \N$. 
	
	(3) $\implies$ (3$'$). Obvious.

	(1) $\iff$ (4). This is a consequence of Proposition~\ref{prop:10} and of the continuous inclusion $A^{p',q'}_{\vect b+\vect d-\vect s-\vect{s'}}(D)\subseteq\widetilde A^{p',q'}_{\vect b+\vect d-\vect s-\vect{s'}}(D)$.
	
	(4) $\implies$ (5). This follows by transposition, using~\cite[Theorem 3.22]{CalziPeloso} and~\cite[Corollary 3 to Theorem 1 of Chapter IV, \S\ 4, No.\ 2, and Proposition 5 of Chapter IV, \S\ 1, No.\ 3]{BourbakiTVS} (cf.~Proposition~\ref{prop:12} and~\cite[Proposition 3.39]{CalziPeloso}).
	
	(1) $\implies$ (6). This follows by transposition since $\widetilde A^{p,q}_{\vect s}(D)$ canonically identifies with the dual of the closed subspace $\widetilde A^{p',q'}_{\vect b+\vect d-\vect s-\vect s',0}(D)$ of $A^{p',q'}_{\vect b+\vect d-\vect s-\vect s'}(D)$, using~\cite[Theorem 3.22]{CalziPeloso} and~\cite[Corollary 1 to Theorem 1 of Chapter IV, \S\ 4, No.\ 2]{BourbakiTVS} (cf.~Proposition~\ref{prop:12} and~\cite[Proposition 3.39]{CalziPeloso}).
	
	(5) $\implies$ (5$'$); (6) $\implies$ (6$'$). Obvious.
	
	(6$'$) $\iff$ (5$'$); (5$'$) $\implies$ (1). This is a consequence of Proposition~\ref{prop:12}. 
\end{proof}

\begin{cor}\label{cor:1}
	Take $p,q\in [1,\infty]$ and $\vect s,\vect{s'}\in \R^r$ such that the following conditions hold:
	\begin{itemize}
		\item $\vect s\succ \frac 1 p (\vect b+\vect d)+\frac{1}{2 q'}\vect{m'}  $; 
		
		\item $\vect{s'}\prec \vect b+\vect d-\frac 1 2 \vect m$;
		
		\item $\vect s+\vect{s'}\prec\frac 1 p (\vect b+\vect d)- \frac{1}{2 q}\vect{m'}$.
	\end{itemize}		
	Then, the following conditions are equivalent:
	\begin{enumerate}
		\item[\textnormal{(1)}] $A^{p,q}_{\vect s,0}(D)=\widetilde A^{p,q}_{\vect s,0}(D)$ (resp.\ $A^{p,q}_{\vect s}(D)=\widetilde A^{p,q}_{\vect s}(D)$) and $A^{p',q'}_{\vect b+\vect d-\vect s-\vect{s'}}(D)=\widetilde A^{p',q'}_{\vect b+\vect d-\vect s-\vect{s'}}(D)$;
		
		\item[\textnormal{(2)}] $P_{\vect{s'}}$ induces a continuous linear mapping of $L^{p,q}_{\vect s,0}(D)$ into $L^{p,q}_{\vect s}(D)$   and $\vect s\succ \vect 0$ (resp.\ $\vect s\Meg \vect 0$);\footnote{Notice that the condition $\vect s\Meg \vect 0$ is empty, since it is a consequence of the continuity of $P_{\vect s'}$ on $L^{p,q}_{\vect s,0}(D)$.}  
				
		\item[\textnormal{(3)}] $P_{\vect{s'}}$ induces a continuous linear projector of $L^{p,q}_{\vect s,0}(D)$ \emph{onto} $ A^{p,q}_{\vect s,0}(D)$ (resp.\ of $L^{p,q}_{\vect s}(D)$ \emph{onto} $ A^{p,q}_{\vect s}(D)$) and of $L^{p',q'}_{\vect b+\vect d-\vect s-\vect{s'}}(D)$ \emph{onto} $A^{p',q'}_{\vect b+\vect d-\vect s-\vect{s'}}(D)$;
				
		\item[\textnormal{(4)}]  $ \vect s\succ \frac{1}{2 q}\vect m$ (resp.\ $\vect s\Meg \vect 0$ if $q=\infty$) and  the sesquilinear mapping
		\[
		(f,g)\mapsto \int_D f \overline g (\Delta^{-\vect{s'}}_\Omega\circ\rho)\,\dd \nu_D,
		\]
		induces an antilinear isomorphism of $A^{p',q'}_{\vect b+\vect d-\vect s-\vect{s'}}(D)$ onto $A^{p,q}_{\vect s,0}(D)'$ (resp.\ onto the dual of the closed vector subspace of  $A^{p,q}_{\vect s}(D)$ generated by the $B^{\vect s'}_{(\zeta,z)}$, $(\zeta,z)\in D$);

		\item[\textnormal{(4$'$)}]  $ \vect s\succ \frac{1}{2 q}\vect m$ (resp.\ $\vect s\Meg \vect 0$ if $q=\infty$) and  the sesquilinear mapping
		\[
		(f,g)\mapsto \int_D f \overline g (\Delta^{-\vect{s'}}_\Omega\circ\rho)\,\dd \nu_D,
		\]
		induces an antilinear isomorphism of $A^{p',q'}_{\vect b+\vect d-\vect s-\vect{s'}}(D)$ onto the dual of the closed subspace $\widetilde A^{p,q}_{\vect s,0}(D)\cap A^{p,q}_{\vect s}(D)$ of $A^{p,q}_{\vect s}(D)$;
		
		\item[\textnormal{(5)}] strong properties $(L')^{p,q}_{\vect s,\vect{s'},0}$ (resp.\ $(L')^{p,q}_{\vect s,\vect{s'}}$) and $(L')^{p',q'}_{\vect b+\vect d-\vect s-\vect{s'},\vect{s'}}$  hold;
				
		\item[\textnormal{(6)}] weak property $(L)^{p,q}_{\vect s,\vect{s'},0}$  (resp.\ $(L)^{p,q}_{\vect s,\vect{s'}}$) holds.
	\end{enumerate}
\end{cor}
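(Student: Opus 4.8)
The plan is to deduce Corollary~\ref{cor:1} from Theorem~\ref{teo:1bis}, applied \emph{twice}: once to the datum $(p,q,\vect s,\vect{s'})$ and once to the ``reflected'' datum $(p',q',\vect b+\vect d-\vect s-\vect{s'},\vect{s'})$. A quick check shows that the three standing hypotheses of the corollary are permuted among themselves by this reflection (the first and third are interchanged, the second is preserved), so both invocations are legitimate; moreover the same hypotheses readily force $\vect{s'}\prec\vect b+\vect d-\frac12\vect m$ and $\vect{s'}\prec-\frac12\vect{m'}$, whence (with the reparametrisation $\vect s=(\vect b+\vect d-\vect{s'})/2$) $P_{\vect{s'}}$ is the orthogonal projector of $L^{2,2}_{(\vect b+\vect d-\vect{s'})/2}(D)$ onto the reproducing-kernel space $A^{2,2}_{(\vect b+\vect d-\vect{s'})/2}(D)$, and $\Ec(\Sc_{\Omega,L}(\Nc))$ is a dense subset of it and of every Bergman space in play. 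The crucial point is that condition~(1) of the corollary is \emph{precisely} the conjunction of condition~(1) of Theorem~\ref{teo:1bis} for the direct datum (which is $A^{p',q'}_{\vect b+\vect d-\vect s-\vect{s'}}(D)=\widetilde A^{p',q'}_{\vect b+\vect d-\vect s-\vect{s'}}(D)$) and condition~(1) of Theorem~\ref{teo:1bis} for the reflected datum (which reads $A^{p,q}_{\vect s}(D)=\widetilde A^{p,q}_{\vect s}(D)$, strengthened to the $0$-version by Proposition~\ref{prop:8} in the ``$0$-case'').

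Assuming~(1), both copies of Theorem~\ref{teo:1bis} yield all of their equivalent statements for the respective data, and since~(1) provides the equalities $A=\widetilde A$ for every index tuple that appears, each tilde-decorated space may be replaced by the plain one. The direct copy then gives condition~(2), the ``$L^{p,q}_{\vect s,(0)}$''-half of condition~(3), the $0$-version of~(4), and~(5),(6); the reflected copy gives the ``$L^{p',q'}_{\vect b+\vect d-\vect s-\vect{s'}}$''-half of~(3). Three small complements finish this direction: (a) $P_{\vect{s'}}$ is the identity on $A^{2,2}_{(\vect b+\vect d-\vect{s'})/2}(D)\supseteq\Ec(\Sc_{\Omega,L}(\Nc))$, so by density and continuity it is a \emph{projector} wherever it is bounded, which upgrades ``onto $A^{p,q}_{\vect s,0}(D)$'' to ``continuous projector onto $A^{p,q}_{\vect s,0}(D)$'' (and likewise on $L^{p',q'}_{\vect b+\vect d-\vect s-\vect{s'}}(D)$); (b) Proposition~\ref{prop:11} passes between the $0$- and the non-$0$ atomic-decomposition properties; (c) by~\cite[Lemma~5.15]{CalziPeloso} the $B^{\vect{s'}}_{(\zeta,z)}$ lie in $\widetilde A^{p,q}_{\vect s,0}(D)$, and their closed span in $A^{p,q}_{\vect s}(D)$ equals $\widetilde A^{p,q}_{\vect s,0}(D)\cap A^{p,q}_{\vect s}(D)$, which reconciles~(4$'$) with~(4) and, under~(1), collapses to $A^{p,q}_{\vect s,0}(D)$.

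For the converse I would show that each of (2)--(6) implies~(1). If~(2) holds, then $P_{\vect{s'}}$ maps $C_c(D)$ into $A^{p,q}_{\vect s,0}(D)$ (as in the proof of Theorem~\ref{prop:7}) and produces holomorphic functions, so the continuous endomorphism $P_{\vect{s'}}\colon L^{p,q}_{\vect s,0}(D)\to L^{p,q}_{\vect s}(D)$ maps into $A^{p,q}_{\vect s,0}(D)\subseteq\widetilde A^{p,q}_{\vect s}(D)$; this is condition~(3$'$) of Theorem~\ref{teo:1bis}, whose condition~(1) gives $A^{p',q'}_{\vect b+\vect d-\vect s-\vect{s'}}(D)=\widetilde A^{p',q'}_{\vect b+\vect d-\vect s-\vect{s'}}(D)$ and whose condition~(2) says $P_{\vect{s'}}$ is \emph{onto} $\widetilde A^{p,q}_{\vect s,0}(D)$; comparing with the image $A^{p,q}_{\vect s,0}(D)$ forces $A^{p,q}_{\vect s,0}(D)=\widetilde A^{p,q}_{\vect s,0}(D)$, and Proposition~\ref{prop:8} adds the non-$0$ equality. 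Condition~(3) contains the hypothesis of~(2), so (3)$\implies$(2)$\implies$(1). Conditions~(5),(6) trivially imply the corresponding $(\widetilde L')$, resp.\ $(\widetilde L)$, properties (as $A\subseteq\widetilde A$), and Theorem~\ref{teo:1bis} together with Proposition~\ref{prop:12} then yields condition~(1) of the theorem for each datum, hence~(1). If~(4) or~(4$'$) holds, I would invoke Proposition~\ref{prop:10}, which realises the \emph{same} sesquilinear form as the duality between $\widetilde A^{p,q}_{\vect s,0}(D)$ and $\widetilde A^{p',q'}_{\vect b+\vect d-\vect s-\vect{s'}}(D)$: since $A^{p,q}_{\vect s,0}(D)$ (resp.\ $\widetilde A^{p,q}_{\vect s,0}(D)\cap A^{p,q}_{\vect s}(D)$) contains $\Ec(\Sc_{\Omega,L}(\Nc))$ and is therefore dense in $\widetilde A^{p,q}_{\vect s,0}(D)$, the restriction of duals $\widetilde A^{p,q}_{\vect s,0}(D)'\to A^{p,q}_{\vect s,0}(D)'$ is injective; as both the isomorphism of Proposition~\ref{prop:10} and the one assumed in~(4) are implemented by this same form, a diagram chase forces the inclusion $A^{p',q'}_{\vect b+\vect d-\vect s-\vect{s'}}(D)\subseteq\widetilde A^{p',q'}_{\vect b+\vect d-\vect s-\vect{s'}}(D)$ to be onto, and one then closes the loop as in the (2)$\implies$(1) step.

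I expect the main difficulty to be the bookkeeping between the ``$0$'' and the non-$0$ statements (and, relatedly, between~(4) and~(4$'$)): in the non-$0$ case one does \emph{not} automatically have $\vect s\succ\vect0$, so Proposition~\ref{prop:8} cannot be used to slide freely between $A^{p,q}_{\vect s}(D)=\widetilde A^{p,q}_{\vect s}(D)$ and its $0$-analogue, and one must track exactly which equality is available and feed it into the appropriate copy of Theorem~\ref{teo:1bis}. The cleanest way to keep this straight is to organise the whole argument around the single equivalence ``\,(1)~$\iff$~[condition~(1) of Theorem~\ref{teo:1bis} for the direct datum]~$\land$~[condition~(1) of Theorem~\ref{teo:1bis} for the reflected datum]\,'', noting that even the non-$0$ forms of these two equalities suffice to activate both invocations.
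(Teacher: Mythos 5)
Your "reflection" observation (that the three standing hypotheses are permuted by $(p,q,\vect s,\vect{s'})\mapsto(p',q',\vect b+\vect d-\vect s-\vect{s'},\vect{s'})$, and that condition (1) of the corollary is the conjunction of condition (1) of Theorem~\ref{teo:1bis} applied to the two data) is correct and is indeed the organizing principle implicit in the paper's proof. Your argument follows essentially the same route: everything reduces to Theorem~\ref{teo:1bis}, Proposition~\ref{prop:8}, Proposition~\ref{prop:10}, and Proposition~\ref{prop:12}.

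There are, however, two points that do not go through as you state them. First, for (2)\,$\implies$\,(1) you write that $P_{\vect{s'}}$ maps $C_c(D)$ into $A^{p,q}_{\vect s,0}(D)$ "as in the proof of Theorem~\ref{prop:7}" and then compare this image with $\widetilde A^{p,q}_{\vect s,0}(D)$. The quoted proof works only for the \emph{positive} operator $P_{\vect{s'},+}$ (it crucially exploits domination), and knowing only that $P_{\vect{s'}}$ is bounded one cannot conclude that $P_{\vect{s'}}(C_c(D))\subseteq A^{p,q}_{\vect s,0}(D)$. What one actually gets from Theorem~\ref{teo:1bis} (via a vector-valued integration argument, as in the proof of the implication (3$'$)\,$\implies$\,(2) there) is $P_{\vect{s'}}(C_c(D))\subseteq\widetilde A^{p,q}_{\vect s,0}(D)$, while assumption (2) gives $P_{\vect{s'}}(C_c(D))\subseteq A^{p,q}_{\vect s}(D)$. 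Combining the surjectivity from condition (2) of the theorem with this inclusion yields $\widetilde A^{p,q}_{\vect s,0}(D)\subseteq A^{p,q}_{\vect s}(D)$, and then Proposition~\ref{prop:8} gives first $\widetilde A^{p,q}_{\vect s}(D)=A^{p,q}_{\vect s}(D)$ and only afterwards, using $\vect s\succ\vect 0$, the $0$-equality; so you have the direction of Proposition~\ref{prop:8} reversed. Second, the "by density and continuity it is a projector wherever it is bounded" step is fine on $L^{p,q}_{\vect s,0}(D)$, but on $L^{p,q}_{\vect s}(D)$ (the resp.\ case of condition (3)) $\Ec(\Sc_{\Omega,L}(\Nc))$ is generally only dense in $A^{p,q}_{\vect s,0}(D)$, not in $A^{p,q}_{\vect s}(D)$; the paper instead invokes \cite[Proposition 5.21]{CalziPeloso} to establish (2)\,$\iff$\,(3), and a substitute argument (e.g.\ via the approximants $\Psi_j$ of Lemma~\ref{lem:3} and weak topologies) is needed. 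Finally, the claim that the closed span of the $B^{\vect{s'}}_{(\zeta,z)}$ in $A^{p,q}_{\vect s}(D)$ equals $\widetilde A^{p,q}_{\vect s,0}(D)\cap A^{p,q}_{\vect s}(D)$ does not hold in general; under (1) it does, because strong property $(\widetilde L')^{p,q}_{\vect s,\vect{s'},0}$ then holds, which is the argument the paper uses for (1)\,$\implies$\,(4), so you should make that dependence explicit.
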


This extends~\cite[Theorem 1.6]{Bekolleetal}, where the equivalence of (2) and (3) is proved when $p=q\in (1,\infty)$, $\vect s\in \R \vect d$, and $D$ is an irreducible symmetric tube domain.

\begin{proof}
	Theorem~\ref{teo:1bis} shows that (1) implies (2), (4$'$), and (5). In addition, (2) is equivalent to (3) thanks to~\cite[Proposition 5.21]{CalziPeloso}, while (5) clearly implies (6). By means of Theorem~\ref{teo:1bis}, we also see that (3) implies (1). Let us prove that (6) implies (1). Observe that (6) implies weak property $(\widetilde L)^{p,q}_{\vect s,\vect s',0}$ (resp.\ $(\widetilde L)^{p,q}_{\vect s,\vect s'}$), so that Theorem~\ref{teo:1bis} implies that strong properties $(\widetilde L')^{p,q}_{\vect s,\vect s',0}$ and $(\widetilde L')^{p,q}_{\vect s,\vect s'}$ hold, and that $A^{p',q'}_{\vect b+\vect d-\vect s-\vect{s'}}(D)=\widetilde A^{p',q'}_{\vect b+\vect d-\vect s-\vect{s'}}(D)$. This, together with the assumption (6), implies (1).

	Next, observe that	Theorem~\ref{teo:1bis} again shows that (1) implies (4), since the closed vector subspace of $A^{p,q}_{\vect s}(D)=\widetilde A^{p,q}_{\vect s}(D)$ generated by the $B^{\vect s'}_{(\zeta,z)}$, $(\zeta,z)\in D$, is $\widetilde A^{p,q}_{\vect s,0}(D)$ as strong property $(\widetilde L')^{p,q}_{\vect s,\vect{s'},0}$ holds (cf.~Proposition~\ref{prop:8} and~\cite[Lemma 5.15]{CalziPeloso}). In order to conclude, it then suffices to prove that both (4) and (4$'$) imply (6). Arguing as in the proof of Proposition~\ref{prop:12} and using~\cite[Theorem 3.22]{CalziPeloso}, this is easily established.	
\end{proof}

\begin{cor}\label{prop:9}
	Take $p,q\in [1,\infty]$, $\vect s\in \R^r$, and $\vect{s'}\prec\vect b+\vect d-\frac{1}{2}\vect{m}$. Then, the following conditions are equivalent:
	\begin{enumerate}
		\item[\textnormal{(1)}] $P_{\vect{s'}}$ induces a continuous linear projector of $L^{p,q}_{\vect s,0}(D)$ onto $A^{p,q}_{\vect s,0}(D)$;
		
		\item[\textnormal{(2)}] $\vect s\succ\vect 0$ and $P_{\vect{s'}}$ induces a continuous linear projector of $L^{p,q}_{\vect s}(D)$ onto $A^{p,q}_{\vect s}(D)$.
	\end{enumerate}
\end{cor}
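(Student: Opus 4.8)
The plan is to deduce the statement from the equivalence $(2)\iff(3)$ of Corollary~\ref{cor:1}: the point is that, whenever $P_{\vect{s'}}$ behaves as a bounded projector onto the pertinent Bergman space, the two ancillary inequalities on $\vect s,\vect{s'}$ required in that corollary — together with the positivity $\vect s\succ\vect0$ — come for free. First I would record that both~(1) and~(2) force $\vect s\succ\vect0$ and the continuity of $P_{\vect{s'}}$ as a linear mapping $L^{p,q}_{\vect s,0}(D)\to L^{p,q}_{\vect s}(D)$. In case~(2) this is immediate, since $L^{p,q}_{\vect s,0}(D)\subseteq L^{p,q}_{\vect s}(D)$ and $A^{p,q}_{\vect s}(D)\subseteq L^{p,q}_{\vect s}(D)$; in case~(1) it follows by restriction together with \cite[Proposition 5.21]{CalziPeloso} (the result used to establish $(2)\iff(3)$ in Corollary~\ref{cor:1}, which in particular extracts $\vect s\succ\vect0$ from the projector property). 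Then the continuity of $P_{\vect{s'}}$ as a map $L^{p,q}_{\vect s,0}(D)\to L^{p,q}_{\vect s}(D)$ forces, by \cite[Proposition 5.20]{CalziPeloso} (cf.\ also \cite[Theorem 1.4]{Garrigos-Nana} for the positive operator), the inequalities $\vect s\succ\frac1p(\vect b+\vect d)+\frac1{2q'}\vect m'$ and $\vect s+\vect{s'}\prec\frac1p(\vect b+\vect d)-\frac1{2q}\vect m'$; together with the standing hypothesis $\vect{s'}\prec\vect b+\vect d-\frac12\vect m$ this shows that, under either~(1) or~(2), we are in the setting of Corollary~\ref{cor:1}, with in addition $\vect s\succ\vect0$.

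It then remains to run the equivalences of Corollary~\ref{cor:1}. Assume~(1). As observed, $\vect s\succ\vect0$ and $P_{\vect{s'}}$ is continuous $L^{p,q}_{\vect s,0}(D)\to L^{p,q}_{\vect s}(D)$, which is the first alternative of condition~(2) of Corollary~\ref{cor:1}; hence all the conditions there hold, in particular $A^{p,q}_{\vect s,0}(D)=\widetilde A^{p,q}_{\vect s,0}(D)$ and $A^{p',q'}_{\vect b+\vect d-\vect s-\vect{s'}}(D)=\widetilde A^{p',q'}_{\vect b+\vect d-\vect s-\vect{s'}}(D)$. Since $\vect s\succ\vect0$, Proposition~\ref{prop:8} upgrades the first equality to $A^{p,q}_{\vect s}(D)=\widetilde A^{p,q}_{\vect s}(D)$, so that the second alternative of condition~(1) of Corollary~\ref{cor:1} holds, whence so does the second alternative of its condition~(3); its first clause asserts that $P_{\vect{s'}}$ is a continuous linear projector of $L^{p,q}_{\vect s}(D)$ onto $A^{p,q}_{\vect s}(D)$, which together with $\vect s\succ\vect0$ is~(2). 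Conversely, assume~(2). Then $\vect s\succ\vect0$ (hence $\vect s\Meg\vect0$), and restricting the projector shows that $P_{\vect{s'}}$ is continuous $L^{p,q}_{\vect s,0}(D)\to L^{p,q}_{\vect s}(D)$, which is the second alternative of condition~(2) of Corollary~\ref{cor:1}; hence all its conditions hold, and — again via Proposition~\ref{prop:8} and $\vect s\succ\vect0$ — the first alternative of its condition~(1), hence of its condition~(3), holds as well, the first clause of the latter being exactly~(1).

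The genuinely delicate step is the derivation of $\vect s\succ\vect0$, rather than merely $\vect s\Meg\vect0$, in the situation of~(1): this is where \cite[Proposition 5.21]{CalziPeloso} is indispensable, since for $q$ small one cannot read $\vect s\succ\vect0$ off from $A^{p,q}_{\vect s,0}(D)\neq\Set{0}$ alone. Everything else is routine bookkeeping with Corollary~\ref{cor:1} and Proposition~\ref{prop:8}.
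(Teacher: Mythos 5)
Your argument is correct and is essentially the paper's own proof, which consists precisely of invoking Corollary~\ref{cor:1} together with the necessary conditions from \cite[Proposition 5.20]{CalziPeloso}; your expansion (deriving the standing hypotheses of Corollary~\ref{cor:1} from the boundedness of $P_{\vect{s'}}$, then passing between the ``$0$'' and non-``$0$'' versions via Proposition~\ref{prop:8} and the equivalence $(2)\iff(3)$ there) is exactly the intended route. The only cosmetic remark is that the extraction of $\vect s\succ\vect 0$ in case (1) is what the paper attributes to \cite[Proposition 5.20]{CalziPeloso} rather than to Proposition 5.21, but this does not affect the validity of the argument.
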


\begin{proof}
	This follows from Corollary~\ref{cor:1} and~\cite[Proposition 5.20]{CalziPeloso}.
\end{proof}

\begin{cor}\label{cor:11}
	Take $p,q\in [1,\infty]$ and $\vect s,\vect s'\in \R^r$. Then, the following conditions are equivalent:
	\begin{enumerate}
		\item[\textnormal{(1)}] strong properties $(L')^{p,q}_{\vect s,\vect s',0}$ and $(L')^{p',q'}_{\vect b+\vect d-\vect s-\vect s',\vect s'}$ hold;
		
		\item[\textnormal{(1$'$)}] weak property $(L)^{p,q}_{\vect s,\vect s',0}$ holds;
		
		\item[\textnormal{(2)}] $\vect s\succ\vect 0$ and strong properties $(L')^{p,q}_{\vect s,\vect s'}$ and $(L')^{p',q'}_{\vect b+\vect d-\vect s-\vect s',\vect s'}$ hold;
		
		\item[\textnormal{(2$'$)}] $\vect s\succ\vect 0$ and weak property $(L)^{p,q}_{\vect s,\vect s'}$ holds.		
	\end{enumerate}
\end{cor}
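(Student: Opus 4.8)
The plan is to deduce the corollary from Corollary~\ref{cor:1}: although the three bullet conditions of that corollary are not assumed here, each of (1), (1$'$), (2), (2$'$) forces them, after which the statement is pure bookkeeping. First I would dispose of the trivial implications. By Definition~\ref{def:2}, strong property $\atomics^{p,q}_{\vect s,\vect{s'},0}$ entails strong property $\atomic^{p,q}_{\vect s,\vect{s'},0}$, which entails weak property $\atomic^{p,q}_{\vect s,\vect{s'},0}$; hence (1) $\implies$ (1$'$) and, carrying $\vect s\succ\vect 0$ along, (2) $\implies$ (2$'$). Moreover (1$'$) $\iff$ (2$'$) is precisely the equivalence (1) $\iff$ (2) of Proposition~\ref{prop:11} in its ``weak'' form, which holds with no standing hypothesis on $\vect s$ and $\vect{s'}$. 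So it remains to prove (1$'$) $\implies$ (1) and (1) $\implies$ (2).

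The central step is to extract the hypotheses of Corollary~\ref{cor:1} from condition (1$'$). Assume weak property $\atomic^{p,q}_{\vect s,\vect{s'},0}$ holds. Proposition~\ref{prop:11} first yields $\vect s\succ\vect 0$. Feeding a single basis vector to the synthesis operator $\Psi$ of Definition~\ref{def:2} shows that $B^{\vect{s'}}_{(\zeta,z)}\in A^{p,q}_{\vect s}(D)$ for every $(\zeta,z)\in D$ (cf.\ \cite[Lemmas 3.29 and 5.15]{CalziPeloso}) and that $A^{p,q}_{\vect s}(D)\neq\Set{0}$; now \cite[Lemma 3.29]{CalziPeloso} — which characterizes when the sum defining $\Psi$ converges locally uniformly and is bounded, and in particular forces $\widetilde A^{p,q}_{\vect s}(D)$ to be well defined, i.e.\ $\vect s\succ\frac1p(\vect b+\vect d)+\frac{1}{2q'}\vect{m'}$ — together with \cite[Proposition 3.5]{CalziPeloso} shows that the three bullet conditions of Corollary~\ref{cor:1} hold for $p,q,\vect s,\vect{s'}$.

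With this in hand, Corollary~\ref{cor:1} does the rest. Condition (1$'$) is exactly condition (6) of Corollary~\ref{cor:1} in its non-``resp.'' form, which is equivalent there to condition (5) in the same form, namely (1); combined with the trivial (1) $\implies$ (1$'$), this gives (1) $\iff$ (1$'$). For (1) $\implies$ (2): (1) implies (1$'$), hence the bullet conditions and $\vect s\succ\vect 0$, so Corollary~\ref{cor:1} applies, and its condition (5) in the ``resp.'' form, together with $\vect s\succ\vect 0$, is precisely (2); the passage between the non-``resp.'' and ``resp.'' forms of Corollary~\ref{cor:1}(5) modulo the condition $\vect s\succ\vect 0$ is Proposition~\ref{prop:8}, with Propositions~\ref{prop:11} and~\ref{prop:12} doing the same for the atomic-decomposition properties entering the equivalence chains of Corollary~\ref{cor:1}. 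Hence all four conditions are equivalent.

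The hard part will be the extraction step: one must be sure that boundedness of a single synthesis map into $A^{p,q}_{\vect s}(D)$ already confines $\vect s$ and $\vect{s'}$ to the range in which Corollary~\ref{cor:1} is available — in particular the inequality $\vect s\succ\frac1p(\vect b+\vect d)+\frac{1}{2q'}\vect{m'}$, which is not visible from $A^{p,q}_{\vect s}(D)\neq\Set{0}$ alone but is part of the well-posedness assertion in \cite[Lemma 3.29]{CalziPeloso}. Once that is granted, no new analytic input is needed beyond Corollary~\ref{cor:1} and Propositions~\ref{prop:8}--\ref{prop:12}.
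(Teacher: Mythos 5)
Your proof is correct and follows essentially the same route as the paper, whose entire proof reads ``This follows from Corollary~\ref{cor:1} and~\cite[Lemma 3.29]{CalziPeloso}'': the only non-trivial point is exactly the one you isolate, namely that \cite[Lemma 3.29]{CalziPeloso} extracts the three standing bullet hypotheses of Corollary~\ref{cor:1} from the weak atomic-decomposition property, after which Corollary~\ref{cor:1} (in both its plain and ``resp.''\ readings, glued by Proposition~\ref{prop:11}) yields all four equivalences.
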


\begin{proof}
	This follows from Corollary~\ref{cor:1} and~\cite[Lemma 3.29]{CalziPeloso}.
\end{proof}

\begin{cor}\label{cor:3}
	Take $p,q\in [1,\infty]$, and $\vect s,\vect{s'}\in \R^r$ such that the following hold:
	\begin{itemize}
		\item $\vect s\succ \frac{1}{p}(\vect b+\vect d)+\frac{1}{2 q'}\vect {m'}$;
		
		\item $\vect{s'}\prec\vect b+\vect d-\frac 1 2 \vect m$;
		
		\item	$\vect s+\vect {s'}\prec \vect b+\vect d-\frac{1}{2 q'}\vect m,\frac 1 p (\vect b+\vect d)-\frac{1}{2 q}\vect{m'}$.
	\end{itemize}
	Then, the following conditions are equivalent:
	\begin{enumerate}
		\item[\textnormal{(1)}] $\widetilde A^{p,q}_{\vect s}(D)= A^{p,q}_{\vect s}(D)$ and $\widetilde A^{p',q'}_{\vect b+\vect d-\vect s-\vect{s'},0}(D)= A^{p',q'}_{\vect b+\vect d-\vect s-\vect{s'},0}(D)$;
		
		\item[\textnormal{(2)}]  the sesquilinear form
		\[
		A^{p',q'}_{\vect b+\vect d-\vect s-\vect{s'},0}(D)\times A^{p,q}_{\vect s}(D)\ni (f,g)\mapsto \int_D f \overline g (\Delta_\Omega^{-\vect{s'}}\circ \rho)\,\dd \nu_D\in \C
		\]
		induces an antilinear isomorphism of $A^{p,q}_{\vect s}(D)$ onto $A^{p',q'}_{\vect b+\vect d-\vect s-\vect{s'},0}(D)'$;
		
		\item[\textnormal{(3)}] weak property $\atomic^{p,q}_{\vect s,\vect{s'}}$ holds;
		
		\item[\textnormal{(3$'$)}] weak property $\atomic^{p',q'}_{\vect b+\vect d-\vect s-\vect{s'},\vect{s'}}$ holds;
		
		\item[\textnormal{(4)}]  strong properties $\atomics^{p,q}_{\vect s,\vect{s'}}$, $\atomics^{p',q'}_{\vect b+\vect d-\vect s-\vect{s'},\vect{s'},0}$, and $\atomics^{p',q'}_{\vect b+\vect d-\vect s-\vect{s'},\vect{s'}}$  hold;
		
		\item[\textnormal{(5)}]  $P_{\vect{s'}}$ induces a continuous linear projector of $L^{p,q}_{\vect s}(D)$ onto $A^{p,q}_{\vect s}(D)$.		
	\end{enumerate}
\end{cor}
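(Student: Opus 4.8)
The plan is to reduce the statement to Corollary~\ref{cor:1}, applied simultaneously to the quadruple $(p,q,\vect s,\vect{s'})$ and to the ``conjugate'' quadruple $(p',q',\vect t,\vect{s'})$, where $\vect t\coloneqq \vect b+\vect d-\vect s-\vect{s'}$ (so that $\vect b+\vect d-\vect t-\vect{s'}=\vect s$), combined with the $0$-versus-non-$0$ transfer results of Proposition~\ref{prop:8}, Proposition~\ref{prop:11}, and~\cite[Lemma 3.29, Proposition 5.21]{CalziPeloso}. The first step is to read off the elementary consequences of the hypotheses: the first two are precisely the first two hypotheses of Corollary~\ref{cor:1}, while the third one gives $\vect t\succ \frac{1}{2q'}\vect m\Meg \vect 0$ and $\vect t\succ \frac{1}{p'}(\vect b+\vect d)+\frac{1}{2q}\vect{m'}$; in particular $\vect t\succ \vect 0$, so that $A^{p',q'}_{\vect t,0}(D)\neq\Set{0}$ and $\widetilde A^{p',q'}_{\vect t,0}(D)$, $\widetilde A^{p',q'}_{\vect t}(D)$ are well defined. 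One also checks that $(p',q',\vect t,\vect{s'})$ satisfies the three hypotheses of Corollary~\ref{cor:1}, the third one for this quadruple (namely $\vect t+\vect{s'}\prec \frac{1}{p'}(\vect b+\vect d)-\frac{1}{2q'}\vect{m'}$) being equivalent to our first hypothesis $\vect s\succ\frac1p(\vect b+\vect d)+\frac{1}{2q'}\vect{m'}$. Since $\vect t\succ\vect 0$, the two forms of Corollary~\ref{cor:1} for the conjugate quadruple (the ``$0$'' one and the ``non-$0$'' one) are mutually equivalent, so we may freely use whichever is convenient.

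Next I would show that condition (1) of the statement is equivalent to condition (1) of Corollary~\ref{cor:1} in its ``non-$0$'' form: indeed, since $\vect t\succ\vect 0$, Proposition~\ref{prop:8} applied to $(p',q',\vect t)$ gives $A^{p',q'}_{\vect t,0}(D)=\widetilde A^{p',q'}_{\vect t,0}(D)\iff A^{p',q'}_{\vect t}(D)=\widetilde A^{p',q'}_{\vect t}(D)$, so that (1) reads $A^{p,q}_{\vect s}(D)=\widetilde A^{p,q}_{\vect s}(D)$ and $A^{p',q'}_{\vect t}(D)=\widetilde A^{p',q'}_{\vect t}(D)$, which is manifestly symmetric in the two quadruples. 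Applying Corollary~\ref{cor:1} to $(p,q,\vect s,\vect{s'})$ (together with~\cite[Proposition 5.21]{CalziPeloso}, which in particular transfers the continuity of $P_{\vect{s'}}$ between a space and its conjugate) one obtains the equivalence of (1) with our (5), with strong property $\atomics^{p,q}_{\vect s,\vect{s'}}$, and with weak property $\atomic^{p,q}_{\vect s,\vect{s'}}$, that is (3). Applying Corollary~\ref{cor:1} to $(p',q',\vect t,\vect{s'})$ one obtains the equivalence of (1) with weak property $\atomic^{p',q'}_{\vect t,\vect{s'}}$, that is (3$'$), and (using both forms, which are available since $\vect t\succ\vect 0$) with strong properties $\atomics^{p',q'}_{\vect t,\vect{s'}}$ and $\atomics^{p',q'}_{\vect t,\vect{s'},0}$. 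Together with $\atomics^{p,q}_{\vect s,\vect{s'}}$ this yields (4). This settles the equivalence of (1), (3), (3$'$), (4), (5).

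Finally, for (2): under (1) one has $A^{p,q}_{\vect s}(D)=\widetilde A^{p,q}_{\vect s}(D)$ and $A^{p',q'}_{\vect t,0}(D)=\widetilde A^{p',q'}_{\vect t,0}(D)$, whence the closed subspace $\widetilde A^{p',q'}_{\vect t,0}(D)\cap A^{p',q'}_{\vect t}(D)$ appearing in condition (4$'$) of Corollary~\ref{cor:1} for the conjugate quadruple coincides with $A^{p',q'}_{\vect t,0}(D)$, and that condition becomes exactly our (2); hence (1) implies (2). For the converse, I would argue as in the implications (2) $\implies$ (1) of Theorem~\ref{teo:1bis} and of Proposition~\ref{prop:8}: by Proposition~\ref{prop:10} the sesquilinear form extends to a duality $\widetilde A^{p,q}_{\vect s}(D)\cong \widetilde A^{p',q'}_{\vect t,0}(D)'$ (the required index conditions being guaranteed by the hypotheses), the inclusions $A^{p,q}_{\vect s}(D)\hookrightarrow\widetilde A^{p,q}_{\vect s}(D)$ and $A^{p',q'}_{\vect t,0}(D)\hookrightarrow\widetilde A^{p',q'}_{\vect t,0}(D)$ have dense images (they contain $\Ec(\Sc_{\Omega,L}(\Nc))$), and comparing the two dualities along the resulting commutative diagram forces both inclusions to be surjective, which is (1). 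The main obstacle I anticipate is precisely this last closed-range argument, together with the bookkeeping needed to see that Proposition~\ref{prop:10} applies with the conjugate parameters (and that the two extensions of the sesquilinear form agree); everything else amounts to matching the conditions of the two instances of Corollary~\ref{cor:1} via the cited transfer results.
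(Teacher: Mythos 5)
Your proposal is sound in its overall architecture and follows the same route as the paper: both reduce Corollary~\ref{cor:3} to Corollary~\ref{cor:1}, applied to the conjugate quadruple $(p',q',\vect t,\vect{s'})$ with $\vect t=\vect b+\vect d-\vect s-\vect{s'}$, using Propositions~\ref{prop:8} and~\ref{prop:11} (and $\vect t\succ\vect 0$, which indeed follows from the third hypothesis) to pass between the ``$0$'' and ``non-$0$'' variants as needed. The paper's text is terse (``The equivalence of (1), (2), (3$'$), (4), and (5) follows from Corollary~\ref{cor:1}''), but the bookkeeping it elides is exactly what you spelled out.

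Two small remarks on where you diverge. For (3), the paper does not re-apply Corollary~\ref{cor:1} to the original quadruple; instead it proves (5)~$\implies$~(3) via~\cite[Proposition~5.24]{CalziPeloso} and then (3)~$\implies$~(3$'$) via Proposition~\ref{prop:12} and Theorem~\ref{teo:1bis}. Your alternative --- reading off (3) as condition~(6) of Corollary~\ref{cor:1} in the non-``$0$'' form applied to $(p,q,\vect s,\vect{s'})$, after identifying its condition~(1) with Corollary~3's~(1) via Proposition~\ref{prop:8} --- is a legitimate and arguably cleaner shortcut, since the hypotheses of Corollary~\ref{cor:1} for $(p,q,\vect s,\vect{s'})$ are contained in those of Corollary~3.

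For (2), however, you took an unnecessary detour that also introduces a gap. You invoke condition~(4$'$) of Corollary~\ref{cor:1}, whose target space $\widetilde A^{p',q'}_{\vect t,0}(D)\cap A^{p',q'}_{\vect t}(D)$ only collapses to $A^{p',q'}_{\vect t,0}(D)$ \emph{after} you already know (1), so for (2)~$\implies$~(1) you were led to a closed-range argument. As stated that argument is not quite right: $\Ec(\Sc_{\Omega,L}(\Nc))$ is dense in $\widetilde A^{p,q}_{\vect s,0}(D)$, not in $\widetilde A^{p,q}_{\vect s}(D)$ (this fails, e.g., when $q=\infty$), so the claim that $A^{p,q}_{\vect s}(D)\hookrightarrow\widetilde A^{p,q}_{\vect s}(D)$ has dense image is unjustified. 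The fix is to use condition~(4) --- not~(4$'$) --- of Corollary~\ref{cor:1} in the ``$0$'' form for $(p',q',\vect t,\vect{s'})$: it asserts precisely that the sesquilinear form induces an antilinear isomorphism of $A^{p,q}_{\vect s}(D)$ onto $A^{p',q'}_{\vect t,0}(D)'$ (together with the automatic condition $\vect t\succ\frac{1}{2q'}\vect m$), which is verbatim Corollary~3's~(2). The equivalence (1)~$\iff$~(2) is then immediate from Corollary~\ref{cor:1}, with no further argument.
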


Notice that the condition $\vect s+\vect{s'}\prec \vect b+\vect d-\frac{1}{2 q'}\vect m$ must be imposed to ensure that $A^{p',q'}_{\vect b+\vect d-\vect s-\vect{s'},0}(D)\neq \Set{0}$, for otherwise condition (2) could be trivial. One may also have imposed that $A^{p,q}_{\vect s}(D)\neq \Set{0}$, but this condition, expressed in terms of $\vect s$, would have required to treat separately the case $q=\infty$.

The condition $\vect{s'}\prec \vect b+\vect d-\frac 1 2 \vect m$ is necessary for the operator $P_{\vect{s'}}$ to be defined, and is implied by the other assumptions and each one of the conditions (1), (2), (3), (3$'$),  and (4).

\begin{proof}
	The equivalence of (1), (2), (3$'$), (4), and (5) follows from Corollary~\ref{cor:1}. In addition, (5) implies (3) by~\cite[Proposition 5.24]{CalziPeloso}. It will then suffice to prove that (3) implies (3$'$). Then, assume that weak property $(L)^{p,q}_{\vect s,\vect{s'}}$ holds, so that also weak property $(\widetilde L)^{p,q}_{\vect s,\vect{s'}}$ holds and the sesquilinear form of (2) induces an antilinear isomorphism of $A^{p',q'}_{\vect b+\vect d-\vect s-\vect{s'}}(D)$ onto $\widetilde A^{p,q}_{\vect s,0}(D)'$ by Proposition~\ref{prop:12}. In addition, Theorem~\ref{teo:1bis} implies that strong property $(\widetilde L')^{p,q}_{\vect s,\vect s'} $ holds. This latter fact, together with weak property $(L)^{p,q}_{\vect s,\vect{s'}}$ and the inclusion $A^{p,q}_{\vect s}(D)\subseteq \widetilde A^{p,q}_{\vect s}(D)$ (cf.~Proposition~\ref{prop:1}), implies that $A^{p,q}_{\vect s}(D)= \widetilde A^{p,q}_{\vect s}(D)$, so that Corollary~\ref{cor:1} implies that strong property $(L)^{p',q'}_{\vect b+\vect d-\vect s-\vect s'}$ holds, whence (3$'$).
\end{proof}

\begin{cor}
	Take $p,q\in [1,\infty]$ with $q'\meg \min(p,p')$, and $\vect s,\vect{s'}\in \R^r$ such that the following hold:
	\begin{itemize}
		\item$\vect s\succ\frac 1 p(\vect b+\vect d)+\frac{1}{2 q'}\vect {m'}$;
		
		\item $\vect{s'}\prec \vect b+\vect d-\frac 1 2 \vect m$;
		
		\item	$\vect s+\vect {s'}\prec\vect b+\vect d-\frac{1}{2 q'}\vect m, \frac1 p (\vect b+\vect d)-\frac{1}{2 q}\vect{m'}$.
	\end{itemize}
	Then, the following conditions are equivalent:
	\begin{enumerate}
		\item[\textnormal{(1)}] $\widetilde A^{p,q}_{\vect s}(D)= A^{p,q}_{\vect s}(D)$;
		
		\item[\textnormal{(2)}]  the sesquilinear form
		\[
		A^{p',q'}_{\vect b+\vect d-\vect s-\vect{s'},0}(D)\times A^{p,q}_{\vect s}(D)\ni (f,g)\mapsto \int_D f \overline g (\Delta_\Omega^{-\vect{s'}}\circ \rho)\,\dd \nu_D\in \C
		\]
		induces an antilinear isomorphism of $A^{p,q}_{\vect s}(D)$ onto $A^{p',q'}_{\vect b+\vect d-\vect s-\vect{s'},0}(D)'$;
		
		\item[\textnormal{(3)}] weak property $\atomic^{p,q}_{\vect s,\vect{s'}}$ holds;
		
		\item[\textnormal{(3$'$)}] weak property $\atomic^{p',q'}_{\vect b+\vect d-\vect s-\vect s',\vect{s'}}$ holds;
		
		\item[\textnormal{(4)}]  strong properties $\atomics^{p,q}_{\vect s,\vect{s'}}$, $\atomics^{p',q'}_{\vect b+\vect d-\vect s-\vect{s'},\vect{s'},0}$, and $\atomics^{p',q'}_{\vect b+\vect d-\vect s-\vect{s'},\vect{s'}}$  hold;
		
		\item[\textnormal{(5)}]  $P_{\vect{s'}}$ induces a continuous linear projector of $L^{p,q}_{\vect s}(D)$ onto $A^{p,q}_{\vect s}(D)$.		
	\end{enumerate}
\end{cor}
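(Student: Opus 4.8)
The plan is to deduce the statement from Corollary~\ref{cor:3}, whose hypotheses on $\vect s$ and $\vect{s'}$ are exactly the three conditions assumed here. Comparing the two lists of equivalent conditions, one sees that items (2)--(5) are literally the same in both; the only discrepancy is item (1), which here reads merely $\widetilde A^{p,q}_{\vect s}(D)=A^{p,q}_{\vect s}(D)$, whereas in Corollary~\ref{cor:3} it also includes the equality $\widetilde A^{p',q'}_{\vect b+\vect d-\vect s-\vect{s'},0}(D)=A^{p',q'}_{\vect b+\vect d-\vect s-\vect{s'},0}(D)$. Hence it suffices to prove that, under the extra hypothesis $q'\meg\min(p,p')$, this second equality holds unconditionally: once this is known, condition (1) here and condition (1) of Corollary~\ref{cor:3} coincide, and the equivalence of (1)--(5) is inherited verbatim from that corollary.

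To establish the automatic equality I would invoke Proposition~\ref{prop:1} with exponents $(p',q')$ and parameter $\vect t\coloneqq\vect b+\vect d-\vect s-\vect{s'}$, checking its hypotheses one by one. Rewriting the third assumption as $\vect s+\vect{s'}\prec\frac1p(\vect b+\vect d)-\frac1{2q}\vect{m'}$ and subtracting from $\vect b+\vect d$ gives $\vect t\succ\frac1{p'}(\vect b+\vect d)+\frac1{2q}\vect{m'}$; since $q\in[1,\infty]$ one has $q''=q'$ and the exponent produced by Proposition~\ref{prop:1} when read with first index $q'$ is its conjugate $q$, so this is precisely the required bound $\vect t\succ\frac1{p'}(\vect b+\vect d)+\frac1{2q''}\vect{m'}$. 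Likewise the first half of the third assumption, $\vect s+\vect{s'}\prec\vect b+\vect d-\frac1{2q'}\vect m$, rewrites as $\vect t\succ\frac1{2q'}\vect m$, which is the nontriviality requirement for $A^{p',q'}_{\vect t,0}(D)$ (and in particular $\vect t\Meg\vect0$ if $q'=\infty$). Finally I would verify the supplementary hypothesis of the second half of Proposition~\ref{prop:1}, namely $\vect t\succ\frac1{2q'}\vect m+\bigl(\frac1{2\min(p,p')}-\frac1{2q'}\bigr)_+\vect{m'}$: the assumption $q'\meg\min(p,p')$ gives $\frac1{q'}\Meg\frac1{\min(p,p')}$, so the positive part vanishes and the condition collapses to $\vect t\succ\frac1{2q'}\vect m$, already obtained. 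Proposition~\ref{prop:1} then yields $A^{p',q'}_{\vect t,0}(D)=\widetilde A^{p',q'}_{\vect t,0}(D)$, which is what was needed.

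With this in hand the proof is complete: condition (1) of the present statement is equivalent to condition (1) of Corollary~\ref{cor:3}, and therefore (1)--(5) are all equivalent by that corollary. There is no real obstacle here; the only point requiring care is the bookkeeping with the orderings $\prec$, $\Meg$ and with the conjugate exponents when passing from hypotheses on $\vect s+\vect{s'}$ to hypotheses on $\vect t=\vect b+\vect d-\vect s-\vect{s'}$ — in particular, keeping track of the fact that for $q\in[1,\infty]$ one has $q''=q'$, so that Proposition~\ref{prop:1} applied with first index $q'$ produces the term $\frac1{2q}\vect{m'}$ and not $\frac1{2q'}\vect{m'}$.
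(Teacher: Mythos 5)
Your argument is correct and takes essentially the same route as the paper: the paper likewise reduces to Corollary~\ref{cor:3} by first showing that the hypothesis $q'\meg\min(p,p')$ forces $\widetilde A^{p',q'}_{\vect b+\vect d-\vect s-\vect{s'},0}(D)=A^{p',q'}_{\vect b+\vect d-\vect s-\vect{s'},0}(D)$, so that condition (1) here coincides with condition (1) there. (The paper's one-line proof cites Proposition~\ref{prop:10}, which appears to be a slip for Proposition~\ref{prop:1}; the criterion you verify, with the positive part vanishing because $q'\meg\min(p,p')$, is exactly the intended one.)
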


In particular, this applies to the case $p=q\Meg 2$. 

\begin{proof}
	Apply Proposition~\ref{prop:10} to show that $\widetilde A^{p',q'}_{\vect b+\vect d-\vect s-\vect{s'},0}(D)=A^{p',q'}_{\vect b+\vect d-\vect s-\vect{s'},0}(D)$, and then apply Corollary~\ref{cor:3}.
\end{proof}

\begin{cor}
	Take $\vect s,\vect{s'}\in \R^r$, and $\vect{s''}\in \N_{\Omega'}$, and take $p,q\in [1,\infty]$. Assume that the following conditions are satisfied:
	\begin{itemize}
		\item $\vect s+\vect{s''}\succ\frac{1}{p}(\vect b+\vect d)+\frac{1}{2 q'}\vect {m'}$;
		
		\item $\vect{s'}\prec \vect b+\vect d-\frac 1 2 \vect m$;
		
		\item $\vect s+\vect{s'}\prec \frac{1}{p}(\vect b+\vect d)-\frac{1}{2 q}\vect{m'}$.
	\end{itemize}
	Then, the following conditions are equivalent:
	\begin{itemize}
		\item[\textnormal{(1)}] $A^{p',q'}_{\vect b+\vect d-\vect s-\vect{s'}}(D)=\widetilde A^{p',q'}_{\vect b+\vect d-\vect s-\vect{s'}}(D)$;
		
		\item[\textnormal{(2)}]  $P_{\vect{s'}}$ induces a continuous linear mapping of $L^{p,q}_{\vect s,0}(D)$ onto $\widehat A^{p,q}_{\vect s,\vect{s''},0}(D)$;\footnote{We define $\widehat A^{p,q}_{\vect s,\vect s'',0}(D)$ as the space of $f\in \Hol(D)$ such that $f*I^{-\vect{s''}}_\Omega\in \widetilde A^{p,q}_{\vect s+\vect s'',0}(D)$. We define $\widehat A^{p,q}_{\vect s,\vect s''}(D)$ analogously.}
		
		\item[\textnormal{(3)}]  $P_{\vect{s'}}$ induces a continuous linear mapping of $L^{p,q}_{\vect s}(D)$ onto $\widehat A^{p,q}_{\vect s,\vect{s''}}(D)$.
	\end{itemize}
\end{cor}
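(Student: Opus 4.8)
The plan is to reduce the corollary to Theorem~\ref{teo:1bis} applied to the shifted pair $(\vect s+\vect{s''},\vect{s'}-\vect{s''})$, the bridge being the intertwining identity~\eqref{eq:1}. First I would set up the three identifications that make~\eqref{eq:1} usable. Since $\vect{s''}\in\N_{\Omega'}$ we have $\vect{s''}\Meg\vect 0$, so that multiplication by $\Delta^{\pm\vect{s''}}_\Omega\circ\rho$ induces isometric isomorphisms $L^{p,q}_{\vect s,0}(D)\cong L^{p,q}_{\vect s+\vect{s''},0}(D)$ and $L^{p,q}_{\vect s}(D)\cong L^{p,q}_{\vect s+\vect{s''}}(D)$, the defining norm being multiplied pointwise by $\Delta^{\mp\vect{s''}}_\Omega$. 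On the other side, convolution by $I^{-\vect{s''}}_\Omega$ is invertible, with inverse convolution by $I^{\vect{s''}}_\Omega$ (its Laplace transform being $\Delta^{\vect{s''}}_{\Omega'}\Delta^{-\vect{s''}}_{\Omega'}=1=\Lc I^{\vect 0}_\Omega$); hence, by the very definition of the spaces $\widehat A$ (cf.~the footnote to (2)) together with~\cite[Proposition 5.13]{CalziPeloso}, it induces topological isomorphisms $\widehat A^{p,q}_{\vect s,\vect{s''},0}(D)\cong\widetilde A^{p,q}_{\vect s+\vect{s''},0}(D)$ and $\widehat A^{p,q}_{\vect s,\vect{s''}}(D)\cong\widetilde A^{p,q}_{\vect s+\vect{s''}}(D)$. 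Formula~\eqref{eq:1}, $P_{\vect{s'}}(f(\Delta^{\vect{s''}}_\Omega\circ\rho))*I^{-\vect{s''}}_\Omega=c\,P_{\vect{s'}-\vect{s''}}f$ for $f\in C_c(D)$ and a constant $c\neq 0$, then asserts precisely that, under these identifications, $\frac1c P_{\vect{s'}}$ corresponds to $P_{\vect{s'}-\vect{s''}}$.

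Since $C_c(D)$ is dense in $L^{p,q}_{\vect s,0}(D)$, and the convergence argument used for the implications $(2)\implies(3)$ and $(3')\implies(2)$ of Theorem~\ref{teo:1bis} (based on~\cite[Lemma 5.15]{CalziPeloso}) carries over to pass from the closed to the non-closed case, it follows that $P_{\vect{s'}}$ induces a continuous linear mapping of $L^{p,q}_{\vect s,0}(D)$ (resp.\ of $L^{p,q}_{\vect s}(D)$) onto $\widehat A^{p,q}_{\vect s,\vect{s''},0}(D)$ (resp.\ onto $\widehat A^{p,q}_{\vect s,\vect{s''}}(D)$) if and only if $P_{\vect{s'}-\vect{s''}}$ induces a continuous linear mapping of $L^{p,q}_{\vect s+\vect{s''},0}(D)$ (resp.\ of $L^{p,q}_{\vect s+\vect{s''}}(D)$) onto $\widetilde A^{p,q}_{\vect s+\vect{s''},0}(D)$ (resp.\ onto $\widetilde A^{p,q}_{\vect s+\vect{s''}}(D)$). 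Next I would verify that the hypotheses of Theorem~\ref{teo:1bis} hold for the pair $(\vect s+\vect{s''},\vect{s'}-\vect{s''})$: the first becomes $\vect s+\vect{s''}\succ\frac1p(\vect b+\vect d)+\frac1{2q'}\vect{m'}$, which is our first assumption; the second becomes $\vect{s'}-\vect{s''}\prec\vect b+\vect d-\frac12\vect m$, which follows from $\vect{s'}\prec\vect b+\vect d-\frac12\vect m$ and $\vect{s''}\Meg\vect 0$; the third becomes $(\vect s+\vect{s''})+(\vect{s'}-\vect{s''})=\vect s+\vect{s'}\prec\frac1p(\vect b+\vect d)-\frac1{2q}\vect{m'}$, which is our third assumption. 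Theorem~\ref{teo:1bis} then yields the equivalence, for these parameters, of its conditions (1), (2), (3): condition (1) there reads $A^{p',q'}_{\vect b+\vect d-(\vect s+\vect{s''})-(\vect{s'}-\vect{s''})}(D)=\widetilde A^{p',q'}_{\vect b+\vect d-(\vect s+\vect{s''})-(\vect{s'}-\vect{s''})}(D)$, i.e.\ our condition (1); by the reduction above condition (2) there is our condition (2), and condition (3) there is our condition (3) (the pointwise integral representation in Theorem~\ref{teo:1bis}(3) is automatic and may be dropped, since our (3) does not require it). Chaining the equivalences proves the corollary.

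The routine part is the ordering bookkeeping for the shifted parameters and the elementary facts about $\N_{\Omega'}$. The one point that needs a little care is the first paragraph: that convolution by $I^{-\vect{s''}}_\Omega$ is the correct topological identification of $\widehat A$ with $\widetilde A$ (which uses the injectivity of $f\mapsto f*I^{-\vect{s''}}_\Omega$ on $\Hol(D)$, immediate from $I^{\vect{s''}}_\Omega*I^{-\vect{s''}}_\Omega=\delta_0$, and~\cite[Proposition 5.13]{CalziPeloso}), and that the intertwining $\frac1c P_{\vect{s'}}\leftrightarrow P_{\vect{s'}-\vect{s''}}$ passes from $C_c(D)$ to the full spaces $L^{p,q}_{\vect s,0}(D)$ and $L^{p,q}_{\vect s}(D)$. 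This last passage to the limit is handled exactly as in the proof of $(3')\implies(2)$ of Theorem~\ref{teo:1bis}, using that each $B^{\vect{s'}}_{(\zeta,z)}$ belongs to the relevant $\widetilde A$ space and depends continuously on $(\zeta,z)\in D$ by~\cite[Lemma 5.15]{CalziPeloso} and~\cite[Proposition 8 of Chapter VI, \S\ 1, No.\ 2]{BourbakiInt1}.
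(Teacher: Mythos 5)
Your proposal is correct and follows essentially the same route as the paper, whose proof consists precisely of invoking the intertwining identity $(P_{\vect{s'}}f)*I^{-\vect{s''}}_\Omega=c\,P_{\vect{s'}-\vect{s''}}(f(\Delta_\Omega^{-\vect{s''}}\circ\rho))$ from~\cite[Proposition 2.29]{CalziPeloso} and then applying Theorem~\ref{teo:1bis} to the shifted parameters; you have merely spelled out the identifications and the verification of the hypotheses that the paper leaves implicit.
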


This extends~\cite[Theorem 1.8 (1)]{Bekolleetal2}. 
Observe that the assumption $\vect{s'}\prec \vect b+\vect d-\frac 1 2 \vect m$ cannot be replaced by the seemingly more natural assumption $\vect{s'}-\vect{s''}\prec \vect b+\vect d-\frac 1 2 \vect m$, since otherwise $P_{\vect{s'}}$ would not be well defined.

\begin{proof}
	It suffices to observe that there is a constant $c_{\vect{s'},\vect{s''}}\neq0$ such that $(P_{\vect{s'}}f)*I^{-\vect{s''}}_\Omega=c_{\vect{s'},\vect{s''}} P_{\vect{s'}-\vect{s''}}(f (\Delta_\Omega^{-\vect{s''}}\circ \rho))$ for every $f\in C_c(D)$, thanks to~\cite[Proposition 2.29]{CalziPeloso}, and to apply Theorem~\ref{teo:1bis}.
\end{proof}

When $\min(p,q)<1$ we do not know if the preceding equivalences still hold. Nonetheless, we have the following partial results.

\begin{cor}
	Take $p,q\in(0,\infty]$ and $\vect s\succ \frac 1 p (\vect b+\vect d)+\frac{1}{2 q'}\vect{m'}$.
	If $A^{p,q}_{\vect s,0}(D)=\widetilde A^{p,q}_{\vect s,0}(D)$ (resp.\ $A^{p,q}_{\vect s}(D)=\widetilde A^{p,q}_{\vect s}(D)$), then $A^{p'',q''}_{\vect s-(1/p-1)_+(\vect b+\vect d),0}(D)=\widetilde A^{p'',q''}_{\vect s-(1/p-1)_+(\vect b+\vect d),0}(D)$ (resp.\ $A^{p'',q''}_{\vect s-(1/p-1)_+(\vect b+\vect d)}(D)=\widetilde A^{p'',q''}_{\vect s-(1/p-1)_+(\vect b+\vect d)}(D)$). 
\end{cor}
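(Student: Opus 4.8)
The plan is the following. If $\min(p,q)\Meg 1$ there is nothing to prove, since then $p''=p$, $q''=q$ and $\bigl(\tfrac1p-1\bigr)_+=0$; so assume $\min(p,q)<1$, and put $\vect{\tilde s}\coloneqq\vect s-\bigl(\tfrac1p-1\bigr)_+(\vect b+\vect d)$, $\tilde p\coloneqq p''=\max(1,p)$, $\tilde q\coloneqq q''=\max(1,q)$, so that $\tilde p,\tilde q\in[1,\infty]$. Since $\bigl(\tfrac1p-1\bigr)_+=\tfrac1p-\tfrac1{\tilde p}$ and $\vect b+\vect d\prec\vect 0$, one has $\vect{\tilde s}\succ\vect s$ together with the bookkeeping identity
\[
\frac{\vect b+\vect d}{\tilde p}-\vect{\tilde s}=\frac{\vect b+\vect d}{p}-\vect s\,;
\]
as $\tilde q'=q'$ in our convention, this turns the hypothesis $\vect s\succ\tfrac1p(\vect b+\vect d)+\tfrac1{2q'}\vect{m'}$ into $\vect{\tilde s}\succ\tfrac1{\tilde p}(\vect b+\vect d)+\tfrac1{2\tilde q'}\vect{m'}$, so that $\widetilde A^{\tilde p,\tilde q}_{\vect{\tilde s}}(D)$ and $\widetilde A^{\tilde p,\tilde q}_{\vect{\tilde s},0}(D)$ are defined. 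By Proposition~\ref{prop:8} — its first assertion and the equivalence of conditions (1) and (2) — together with $\vect{\tilde s}\succ\vect s$, both forms of the statement reduce to the single implication
\[
\widetilde A^{p,q}_{\vect s,0}(D)\subseteq A^{p,q}_{\vect s}(D)\ \Longrightarrow\ \widetilde A^{\tilde p,\tilde q}_{\vect{\tilde s},0}(D)\subseteq A^{\tilde p,\tilde q}_{\vect{\tilde s}}(D),
\]
where the inclusions are automatically continuous by the closed graph theorem.

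I would then fix one $(\delta,4)$-lattice $(\zeta_{j,k},z_{j,k})_{j\in J,k\in K}$ on $D$, fine enough for the sampling theory of~\cite[Theorem 3.22]{CalziPeloso} to apply at \emph{both} data $(p,q,\vect s)$ and $(\tilde p,\tilde q,\vect{\tilde s})$, and observe that, thanks to the identity above, the associated sampling operator is one and the same: writing $h_k\coloneqq\rho(\zeta_{j,k},z_{j,k})$, the natural sampling weight satisfies $\Delta_\Omega^{\vect s-(\vect b+\vect d)/p}(h_k)=\Delta_\Omega^{\vect{\tilde s}-(\vect b+\vect d)/\tilde p}(h_k)$, so that the map
\[
S\colon f\longmapsto\bigl(\Delta_\Omega^{\vect s-(\vect b+\vect d)/p}(h_k)\,f(\zeta_{j,k},z_{j,k})\bigr)_{(j,k)\in J\times K}
\]
simultaneously exhibits $A^{p,q}_{\vect s}(D)$ (resp.\ $A^{\tilde p,\tilde q}_{\vect{\tilde s}}(D)$), as a subspace of a common space of holomorphic functions of controlled growth, as $\{f\colon Sf\in\ell^{p,q}(J,K)\}$ (resp.\ $\{f\colon Sf\in\ell^{\tilde p,\tilde q}(J,K)\}$), with equivalence of norms. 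Both $\widetilde A^{p,q}_{\vect s,0}(D)$ and $\widetilde A^{\tilde p,\tilde q}_{\vect{\tilde s},0}(D)$ also lie in that common ambient space (Proposition~\ref{prop:1} and~\cite[\S\ 5]{CalziPeloso}). Thus the hypothesis amounts to saying that $Sf\in\ell^{p,q}(J,K)$, with control, for every $f\in\widetilde A^{p,q}_{\vect s,0}(D)$, and the goal is exactly the same assertion with $(p,q)$ replaced by $(\tilde p,\tilde q)$.

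The passage from the one to the other is the core of the argument and must be done by transposition, since no direct inclusion relates $\widetilde A^{p,q}_{\vect s,0}(D)$ and $\widetilde A^{\tilde p,\tilde q}_{\vect{\tilde s},0}(D)$; the plan parallels the duality arguments in the proofs of Propositions~\ref{prop:11} and~\ref{prop:12}. The hypothesis says that $S$ maps $\widetilde A^{p,q}_{\vect s,0}(D)$ \emph{boundedly} into $\ell^{p,q}(J,K)$. Transpose this, using $\ell^{p,q}(J,K)'\cong\ell^{p',q'}(J,K)$ and the identification $\widetilde A^{p,q}_{\vect s,0}(D)'\cong\widetilde A^{p',q'}_{(\vect b+\vect d-\vect{\tilde s})/2}(D)$ given by Proposition~\ref{prop:10} for the pairing $(f,g)\mapsto\int_D f\overline g\,\bigl(\Delta_\Omega^{-(\vect b+\vect d-\vect{\tilde s})/2}\circ\rho\bigr)\,\dd\nu_D$: for this particular pairing the transpose of $S$ is, up to a non-zero constant, precisely the synthesis operator $\Psi$ at the datum $\bigl(p',q',(\vect b+\vect d-\vect{\tilde s})/2,(\vect b+\vect d-\vect{\tilde s})/2\bigr)$ (this is the reproducing identity for the kernels $B^{(\vect b+\vect d-\vect{\tilde s})/2}_{(\zeta,z)}$, cf.~\cite[Proposition 3.13]{CalziPeloso}), so one obtains that weak property $(\widetilde L)^{p',q'}_{(\vect b+\vect d-\vect{\tilde s})/2,\,(\vect b+\vect d-\vect{\tilde s})/2,\,0}$ holds. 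Since $p',q'\Meg1$, Theorem~\ref{teo:1bis} applied at this datum — whose condition (5$'$) is exactly the property just proved — gives its condition (1), namely
\[
A^{\tilde p,\tilde q}_{\vect{\tilde s}}(D)=\widetilde A^{\tilde p,\tilde q}_{\vect{\tilde s}}(D),
\]
and Proposition~\ref{prop:8} then delivers the version with the subscript $0$ as well. (Throughout, one first replaces $\vect s$ by $\vect s+\vect{\sigma}$ with $\vect{\sigma}\in\N_{\Omega'}$ sufficiently large, via the isomorphisms $f\mapsto f*I^{-\vect{\sigma}}_\Omega$, which leave both the hypothesis and the conclusion unchanged while moving all the linear side conditions needed for Proposition~\ref{prop:10} and Theorem~\ref{teo:1bis} into an easy range.) The main obstacle, as anticipated, is this transposition step: the hypothesis is not self-dual at $(p,q)$ when $\min(p,q)<1$, so one has to route the argument through the locally convex dual datum $(p',q')$ — where the full theory of Section~\ref{sec:4} is available — the crucial structural fact being that the transpose of the sampling operator is again a synthesis operator; a secondary, purely technical difficulty is the joint compatibility of the various side conditions, which is exactly what the lift $f\mapsto f*I^{-\vect{\sigma}}_\Omega$ takes care of.
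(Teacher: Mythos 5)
Your overall strategy --- dualize the sampling statement encoded in the hypothesis into a synthesis (atomic decomposition) statement at the conjugate exponents $(p',q')$ and feed it into the implication (5$'$)$\implies$(1) of Theorem~\ref{teo:1bis} --- is the right one, and is essentially the paper's route (there via~\cite[Corollary 5.16]{CalziPeloso}, Proposition~\ref{prop:12}, and Corollary~\ref{cor:1}). The gap is in your choice of the dual datum. Writing $\vect{\tilde s}=\vect s-(1/p-1)_+(\vect b+\vect d)$, you pin the kernel index at $\vect S=\vect S'=(\vect b+\vect d-\vect{\tilde s})/2$. But then both Proposition~\ref{prop:10} (needed to even define your pairing) and Theorem~\ref{teo:1bis} require $(\vect b+\vect d-\vect{\tilde s})/2\succ \frac{1}{p'}(\vect b+\vect d)+\frac{1}{2q''}\vect{m'}$, which is an \emph{upper} bound on $\vect{\tilde s}$, namely $\vect{\tilde s}\prec\bigl(1-\tfrac{2}{p'}\bigr)(\vect b+\vect d)-\tfrac{1}{q''}\vect{m'}$; this is not implied by, and for $p\meg 1$ is outright incompatible with, the standing lower bound: for $p\meg 1$ it reads $\vect{\tilde s}\prec\vect b+\vect d-\frac{1}{q''}\vect{m'}$ while the hypothesis gives $\vect{\tilde s}\succ\vect b+\vect d+\frac{1}{2q'}\vect{m'}$. (The condition $\vect S'\prec\vect b+\vect d-\frac12\vect m$ likewise becomes the strong lower bound $\vect{\tilde s}\succ\vect m-(\vect b+\vect d)$.) The two indices must be decoupled: one pairs $\widetilde A^{p,q}_{\vect s,0}(D)$ with $\widetilde A^{p',q'}_{\vect b+\vect d-\vect{\tilde s}-\vect{s'}}(D)$ for a kernel index $\vect{s'}$ taken \emph{very negative}, so that all the constraints --- which are upper bounds on $\vect{s'}$ --- are automatic. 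This is exactly what the paper does, producing such an $\vect{s'}$ with strong property $(L')^{p,q}_{\vect s,\vect{s'},0}$ via~\cite[Corollary 5.16]{CalziPeloso} and running the transposition through the closure of $\widetilde A^{p,q}_{\vect s,0}(D)$ in $A^{p'',q''}_{\vect{\tilde s}}(D)$ before concluding with Corollary~\ref{cor:1}.

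Your proposed patch --- replacing $\vect s$ by $\vect s+\vect\sigma$ with $\vect\sigma\in\N_{\Omega'}$ large, via $f\mapsto f*I^{-\vect\sigma}_\Omega$ --- does not repair this. First, it moves the violated condition the wrong way: increasing $\vect{\tilde s}$ decreases $(\vect b+\vect d-\vect{\tilde s})/2$, so the required lower bound on $\vect S$ only becomes harder to meet. Second, and more fundamentally, the shift does not ``leave the conclusion unchanged'': $u\mapsto u*I^{-\vect\sigma}_\Omega$ is an isomorphism at the level of the Besov and $\widetilde A$ spaces, but from $A^{p'',q''}_{\vect{\tilde s}}(D)$ to $A^{p'',q''}_{\vect{\tilde s}+\vect\sigma}(D)$ it is only known to be continuous; its surjectivity is the generalized Hardy inequality, which is equivalent to the very property under investigation. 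Indeed the identity $A^{p,q}_{\vect s}(D)=\widetilde A^{p,q}_{\vect s}(D)$ genuinely fails to descend in $\vect s$: it holds for all sufficiently large $\vect s$ by Proposition~\ref{prop:1} and fails for small $\vect s$ by Proposition~\ref{prop:13}, so an argument that first lifts $\vect s$ and then needs to come back down cannot close.
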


\begin{proof}
	Observe first that~\cite[Corollary 5.16]{CalziPeloso} implies that there is $\vect s'\prec \vect b+\vect d-\frac 1 2 \vect m, \frac{1}{p} (\vect b+\vect d)- \frac{1}{2 q}\vect{m'}-\vect s$ such that strong property $(L')^{p,q}_{\vect s,\vect{s'},0}$ (resp.\ $(L')^{p,q}_{\vect s,\vect{s'}}$) holds, so that Proposition~\ref{prop:12} implies that $A^{p',q'}_{(\vect b+\vect d)/\min(1,p)-\vect s-\vect s'}(D)=\widetilde A^{p',q'}_{(\vect b+\vect d)/\min(1,p)-\vect s-\vect s'}(D)$. 
	Now, consider the sesquilinear form
	\[
	(f,g)\mapsto \int_D f \overline g (\Delta_\Omega^{-\vect s'}\circ \rho)\,\dd \nu_D,
	\]
	and observe that it induces an antilinear isomorphism $\iota_1\colon A^{p',q'}_{(\vect b+\vect d)/\min(1,p)-\vect s-\vect s'}(D)\to \widetilde A^{p,q}_{\vect s,0}(D)'$ and a continuous linear mapping $\iota_2 \colon A^{p',q'}_{(\vect b+\vect d)/\min(1,p)-\vect s-\vect s'}(D)\to V'$, where $V$ is the closure of $\widetilde A^{p,q}_{\vect s,0}(D)$ in $ A^{p'',q''}_{\vect s-(1/p-1)_+(\vect b+\vect d)}(D)$, thanks to by~\cite[Proposition 3.37]{CalziPeloso}. In addition, the inclusion $\iota_3\colon \widetilde A^{p,q}_{\vect s,0}(D)\to V$ is continuous by~\cite[Propositions 3.2 and 3.7]{CalziPeloso}, so that
	\[
	\iota_1=\trasp \iota_3\circ \iota_2 .
	\]
	Therefore,  $\trasp \iota_3$ is onto, hence an isomorphism, so that also $\iota_2$ is an isomorphism. To conclude, observe that $B^{\vect s'}_{(\zeta,z)}\in \widetilde A^{p,q}_{\vect s,0}(D)\subseteq V$ for every $(\zeta,z)\in D$, and argue by duality to show that weak property $(L)^{p'',q''}_{\vect s-(1/p-1)_+(\vect b+\vect d),\vect s'}$ holds (cf.~Proposition~\ref{prop:12}). The conclusion follows from Corollary~\ref{cor:1}.	
\end{proof}

\begin{cor}\label{cor:4}
	Take $p,q\in (0,\infty]$ and $\vect s,\vect{s'}\in \R^r$ such that 	weak property $(L)^{p,q}_{\vect s,\vect s',0}$ holds.
	Then, 
	\[
	A^{p'',q''}_{\vect s-(1/p-1)_+(\vect b+\vect d),0}(D)=\widetilde A^{p'',q''}_{\vect s-(1/p-1)_+(\vect b+\vect d),0}(D), \quad A^{p',q'}_{(\vect b+\vect d)/\min(1,p)-\vect s-\vect s'}(D)=\widetilde A^{p',q'}_{(\vect b+\vect d)/\min(1,p)-\vect s-\vect s'}(D),
	\] and the sesquilinear form
	\[
	(f,g)\mapsto \int_D f \overline g (\Delta_\Omega^{-\vect s'}\circ \rho) \,\dd \nu_D
	\]
	induces an antilinear isomorphism of $A^{p',q'}_{(\vect b+\vect d)/\min(1,p)-\vect s-\vect s'}(D)$ onto $V'$, where $V$ denotes the closed vector subspace of $A^{p,q}_{\vect s,0}(D)$ generated by the $B^{\vect s'}_{(\zeta,z)}$, $(\zeta,z)\in D$.
\end{cor}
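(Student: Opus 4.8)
The plan is to derive the statement, as in the proof of the preceding corollary, from Proposition~\ref{prop:12}, a duality argument in the style of the proof of Proposition~\ref{prop:11}, and Corollary~\ref{cor:1}, but run starting directly from weak property $(L)^{p,q}_{\vect s,\vect{s'},0}$, which is what keeps it available for all $p,q\in(0,\infty]$. Set $\vect t\coloneqq(\vect b+\vect d)/\min(1,p)-\vect s-\vect{s'}$. First I would record the elementary consequences of the hypothesis: weak property $(L)^{p,q}_{\vect s,\vect{s'},0}$ forces $\vect s\succ\frac1p(\vect b+\vect d)+\frac1{2q'}\vect{m'}$ and $\vect{s'}\prec\vect b+\vect d-\frac12\vect m$ by the range results in~\cite[Lemma 3.29]{CalziPeloso}, and, since $A^{p,q}_{\vect s,0}(D)\hookrightarrow\widetilde A^{p,q}_{\vect s,0}(D)$ continuously by Proposition~\ref{prop:1}, it implies weak property $(\widetilde L)^{p,q}_{\vect s,\vect{s'},0}$. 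Hence Proposition~\ref{prop:12} applies and yields $\vect t\succ\frac1{p'}(\vect b+\vect d)+\frac1{2q''}\vect{m'}$, the inequality $\vect s+\vect{s'}\prec\frac1p(\vect b+\vect d)-\frac1{2q}\vect{m'}$, and the second displayed equality $A^{p',q'}_{\vect t}(D)=\widetilde A^{p',q'}_{\vect t}(D)$.

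For the duality I would first observe that the kernels $B^{\vect{s'}}_{(\zeta,z)}$, $(\zeta,z)\in D$, belong to $A^{p,q}_{\vect s,0}(D)$ — one of them is a nonzero scalar multiple of an atom occurring in weak property $(L)^{p,q}_{\vect s,\vect{s'},0}$, and all the others are images of it under affine automorphisms of $D$, which act boundedly on $A^{p,q}_{\vect s,0}(D)$ and transitively on $D$ — and that $(\zeta,z)\mapsto B^{\vect{s'}}_{(\zeta,z)}$ is continuous into $A^{p,q}_{\vect s,0}(D)$, so that $V$ is the closure in $A^{p,q}_{\vect s,0}(D)$ of the image of the synthesis map $\Psi$ of weak property $(L)$; in particular $\Psi$ has dense image in $V$. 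Then, arguing as in the proofs of Proposition~\ref{prop:11} and of the preceding corollary and using~\cite[Proposition 3.37]{CalziPeloso} together with the equality $A^{p',q'}_{\vect t}(D)=\widetilde A^{p',q'}_{\vect t}(D)$, the sesquilinear form $(f,g)\mapsto\int_D f\overline g\,(\Delta_\Omega^{-\vect{s'}}\circ\rho)\,\dd\nu_D$ induces an antilinear isomorphism of $A^{p',q'}_{\vect t}(D)$ onto $\widetilde A^{p,q}_{\vect s,0}(D)'$; restricting through the inclusion $V\hookrightarrow\widetilde A^{p,q}_{\vect s,0}(D)$ and using the identity $\langle\Psi(\lambda)\vert g\rangle=c_{\vect{s'}}^{-1}\langle\lambda\vert Sg\rangle$, with $Sg\coloneqq(\Delta_\Omega^{(\vect b+\vect d)/p-\vect s-\vect{s'}}(h_k)g(\zeta_{j,k},z_{j,k}))_{j,k}$ (cf.~\cite[Proposition 3.13]{CalziPeloso}, choosing the lattice of weak property $(L)$ fine enough that $S$ is injective on $A^{p',q'}_{\vect t}(D)$ by~\cite[Theorem 3.22]{CalziPeloso}), one obtains that the same form induces an antilinear isomorphism of $A^{p',q'}_{\vect t}(D)$ onto $V'$. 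This is the third displayed assertion, and transposing it once more — using that the $B^{\vect{s'}}_{(\zeta,z)}$ lie in $V$ — yields that weak property $(L)^{p'',q''}_{\vect s-(1/p-1)_+(\vect b+\vect d),\vect{s'}}$ holds.

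Finally, $p'',q''\in[1,\infty]$, and the three range conditions of Corollary~\ref{cor:1} for the exponents $p'',q''$ and the parameters $\vect s-(1/p-1)_+(\vect b+\vect d),\vect{s'}$ are checked from the first paragraph using $\frac1p(\vect b+\vect d)=\frac1{p''}(\vect b+\vect d)+(1/p-1)_+(\vect b+\vect d)$ and $q''\Meg q$; Corollary~\ref{cor:1} then provides the remaining equality $A^{p'',q''}_{\vect s-(1/p-1)_+(\vect b+\vect d),0}(D)=\widetilde A^{p'',q''}_{\vect s-(1/p-1)_+(\vect b+\vect d),0}(D)$. The part of the argument I expect to be most delicate is the duality bookkeeping hidden in the second paragraph: one must verify that $V$ carries the topology induced by $\widetilde A^{p,q}_{\vect s,0}(D)$ rather than a strictly finer one, that the restriction map $\widetilde A^{p,q}_{\vect s,0}(D)'\to V'$ is onto — so that every element of $V'$ is realized by an honest $g\in A^{p',q'}_{\vect t}(D)$, which one extracts from the characterization $S^{-1}(\ell^{p',q'}(J,K))\cap A^{\infty,\infty}_{(\vect b+\vect d)/p-\vect s-\vect{s'}}(D)=A^{p',q'}_{\vect t}(D)$ in~\cite[Theorem 3.22]{CalziPeloso} — and keeping careful track of the $(1/p-1)_+$-shifts throughout; all of this becomes automatic when $\min(p,q)\Meg1$, where one may instead invoke Corollary~\ref{cor:1} and~\cite[Lemma 5.15]{CalziPeloso} directly.
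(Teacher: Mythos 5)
Your overall architecture matches the paper's: Proposition~\ref{prop:12} for the equality $A^{p',q'}_{\vect t}(D)=\widetilde A^{p',q'}_{\vect t}(D)$, a sampling/duality argument for the identification of $V'$, and a transposition feeding into Corollary~\ref{cor:1} (plus Proposition~\ref{prop:8}) for the first displayed equality. The gap is exactly at the point you flag as ``most delicate'', and it is not a bookkeeping issue: the surjectivity of $\iota\colon A^{p',q'}_{\vect t}(D)\to V'$ cannot be obtained by showing that the restriction map $\widetilde A^{p,q}_{\vect s,0}(D)'\to V'$ is onto. First, $V$ carries the topology induced by $A^{p,q}_{\vect s,0}(D)$, which is a priori strictly finer than the one induced by $\widetilde A^{p,q}_{\vect s,0}(D)$ (their equality is precisely one of the open questions the corollary circumvents), so $V'$ may contain functionals that are not restrictions at all. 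Second, even granting equal topologies, extending a functional from $V$ would be a Hahn--Banach step, which fails for quasi-Banach spaces; and $\min(p,q)<1$ is precisely the range in which this corollary says anything beyond Corollary~\ref{cor:1}. The paper's proof does not prove any extension property: given $\lambda\in V'$ it \emph{constructs} the representing function directly as $f(\zeta,z)=c_{\vect s'}\overline{\langle\lambda,B^{\vect s'}_{(\zeta,z)}\rangle}$, shows $f\in A^{p',q'}_{\vect t}(D)$ via~\cite[Lemma 3.38]{CalziPeloso} (this is where $S^{-1}(\ell^{p',q'}(J,K))\cap A^{\infty,\infty}_{(\vect b+\vect d)/p-\vect s-\vect s'}(D)=A^{p',q'}_{\vect t}(D)$ is actually used, together with $\lambda\circ\Psi\in\ell^{p,q}_0(J,K)'=\ell^{p',q'}(J,K)$), and then checks $\iota(f)=\lambda$ by the reproducing property. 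You name the right ingredients but never assemble them into this construction, and the framing you give would not survive outside $p,q\Meg 1$.

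A second, smaller imprecision sits in your injectivity argument: deducing that $\iota$ is one-to-one from ``restricting'' the isomorphism onto $\widetilde A^{p,q}_{\vect s,0}(D)'$ requires $V$ to be total in a space whose dual is identified with $A^{p',q'}_{\vect t}(D)$. The paper does not work at the level $(p,q)$ here (Theorem~\ref{teo:1bis} is only available for exponents in $[1,\infty]$); it first upgrades, via Theorem~\ref{teo:1bis} applied with $(p'',q'')$, to property $(\widetilde L')^{p'',q''}_{\vect s-(1/p-1)_+(\vect b+\vect d),0}$, concludes that $V$ is dense in $\widetilde A^{p'',q''}_{\vect s-(1/p-1)_+(\vect b+\vect d),0}(D)$, and then uses the Proposition~\ref{prop:10} duality at that level. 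You should route the density claim through $(p'',q'')$ in the same way rather than asserting it for $\widetilde A^{p,q}_{\vect s,0}(D)$.
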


This result  improves also~\cite[Proposition 3.37]{CalziPeloso}. Notice that weak property $(L)^{p,q}_{\vect s,\vect s',0}$ implies that $\vect s\succ \frac 1 p (\vect b+\vect d)+\frac{1}{2 q'}\vect{m'}$, that $\vect s+\vect{s'}\prec\frac 1 p (\vect b+\vect d)- \frac{1}{2 q}\vect{m'}$, and that $\vect s'\prec \vect b+\vect d-\frac 1 2 \vect m$, thanks to~\cite[Lemma 3.29]{CalziPeloso}.

\begin{proof}
	Take $\delta_0>0$ so that for every $(\delta,4)$-lattice $(\zeta_{j,k},z_{j,k})_{j\in J,k\in K}$ on $D$ the mapping
	\[
	S \colon f \mapsto (\Delta_\Omega^{(\vect b+\vect d)/p-\vect s-\vect s'}(h_k) f(\zeta_{j,k},z_{j,k}))
	\]
	induces an isomorphism of $A^{p',q'}_{(\vect b+\vect d)/\min(1,p)-\vect s-\vect s'}(D)$ onto a closed subspace of $\ell^{p',q'}(J,K)$, and $S^{-1}(\ell^{p',q'}(J,K))\cap A^{\infty,\infty}_{(\vect b+\vect d)/p-\vect s-\vect s'}(D)=A^{p',q'}_{(\vect b+\vect d)/\min(1,p)-\vect s-\vect s'}(D)$, where $h_k=\rho(\zeta_{j,k},z_{j,k}) $ for every $j\in J$ and for every $k\in K$.
	Observe that we may assume that $(\zeta_{j,k},z_{j,k})_{j\in J,k\in K}$  is chosen so that the mapping
	\[
	\Psi\colon\ell^{p,q}_0(J,K)\ni \lambda \mapsto \sum_{j,k} \lambda_{j,k} B^{\vect s'}_{(\zeta_{j,k},z_{j,k})} \Delta_\Omega^{(\vect b+\vect d)/p-\vect s-\vect s'}(h_k)\in V
	\]
	is well defined and continuous. If we denote by $\langle\,\cdot\,\vert \,\cdot\,\rangle$ the  sesquilinear form on $\widetilde A^{p,q}_{\vect s,0}(D)\times \widetilde A^{p',q'}_{(\vect b+\vect d)/\min(1,p)-\vect s-\vect s'}(D)$ which extends to the sesquilinear form in the statement  as in Proposition~\ref{prop:10}, then
	\[
	c_{\vect s'}\langle \Psi(\lambda) \vert f \rangle=\langle \lambda \vert S f\rangle
	\]
	for every $\lambda\in \C^{(J\times K)}$ and for every $f\in \widetilde A^{p',q'}_{(\vect b+\vect d)/\min(1,p)-\vect s-\vect s'}(D)$ by the proof of Proposition~\ref{prop:12}, where $c_{\vect s'}\neq 0$ is a suitable constant. In particular, this shows that $S$ maps $\widetilde A^{p',q'}_{(\vect b+\vect d)/\min(1,p)-\vect s-\vect s'}(D)$ into $\ell^{p',q'}(J,K)$ continuously. Since $\widetilde A^{p',q'}_{(\vect b+\vect d)/\min(1,p)-\vect s-\vect s'}(D)\subseteq A^{\infty,\infty}_{(\vect b+\vect d)/p-\vect s-\vect s'}(D)$ by Proposition~\ref{prop:1}, this implies that $\widetilde A^{p',q'}_{(\vect b+\vect d)/\min(1,p)-\vect s-\vect s'}(D)\subseteq  A^{p',q'}_{(\vect b+\vect d)/\min(1,p)-\vect s-\vect s'}(D)$, so that $\widetilde A^{p',q'}_{(\vect b+\vect d)/\min(1,p)-\vect s-\vect s'}(D)=A^{p',q'}_{(\vect b+\vect d)/\min(1,p)-\vect s-\vect s'}(D)$ by Proposition~\ref{prop:1}.
	By Theorem~\ref{teo:1bis}, this implies that property $(\widetilde L')^{p'',q''}_{\vect s-(1/p-1)_+(\vect b+\vect d),0}$ holds, so that, in particular, $V$ is contained and dense in $\widetilde A^{p'',q''}_{\vect s-(1/p-1)_+(\vect b+\vect d),0}(D)$. Since $\langle\,\cdot\,\vert \,\cdot\,\rangle$ induces an antilinear isomorphism of $\widetilde A^{p',q'}_{(\vect b+\vect d)/\min(1,p)-\vect s-\vect s'}(D)$ onto $ \widetilde A^{p'',q''}_{\vect s-(1/p-1)_+(\vect b+\vect d),0}(D)'$ by Proposition~\ref{prop:10}, this implies that the continuous antilinear map $\iota\colon A^{p',q'}_{(\vect b+\vect d)/\min(1,p)-\vect s-\vect s'}(D)\to V'$, induced by the sesquilinear form in the statement, is one-to-one. To prove that $\iota$ is onto, take $\lambda\in V'$, and define
	\[
	f\colon D\ni (\zeta,z)\mapsto c_{\vect s'}\overline{\langle \lambda, B_{(\zeta,z)}^{\vect s'}\rangle}\in \C.
	\]
	By~\cite[Lemma 3.38]{CalziPeloso}, $f\in A^{p',q'}_{(\vect b+\vect d)/\min(1,p)-\vect s-\vect s'}(D)$. 
	Since 
	\[
	\langle \lambda, B_{(\zeta,z)}^{\vect s'}\rangle=\langle B_{(\zeta,z)}^{\vect s'}\vert f\rangle
	\]
	for every $(\zeta,z)\in D$ by~\cite[Lemma 3.29 and Proposition 3.13]{CalziPeloso}, we see that $\lambda=\iota(f)$.	
	Finally, using sampling in $V$ (inherited by sampling in $A^{p,q}_{\vect s,0}(D)$, cf.~\cite[Theorem 3.22]{CalziPeloso}), we see that strong property $(L)^{p',q'}_{(\vect b+\vect d)/\min(1,p)-\vect s-\vect s'}$ holds (cf.~Proposition~\ref{prop:12}), so that $A^{p'',q''}_{\vect s-(1/p-1)_+(\vect b+\vect d),0}(D)=\widetilde A^{p'',q''}_{\vect s-(1/p-1)_+(\vect b+\vect d),0}(D)$ by Corollary~\ref{cor:1} and Proposition~\ref{prop:8}.
\end{proof}

\section{Interpolation}\label{sec:5}

We now prove a useful result concerning \emph{complex} interpolation of Bergman
spaces and their boundary value spaces.    Cf.~\cite{BekolleGonessaNana2} for more information on \emph{real} interpolation of Bergman spaces.  
\begin{teo}\label{prop:6}
	Take $p_0,p_1,q_0,q_1\in (0,\infty]$ and $\vect s_j\succ\frac{1}{p_j}(\vect b+\vect d)+\frac{1}{2 q_j'}\vect{m'}$ for $j=0,1$. 
	If $\widetilde A^{p_j,q_j}_{\vect s_j,0}(D)=A^{p_j,q_j}_{\vect s_j,0}(D)$ (resp.\  $\widetilde A^{p_j,q_j}_{\vect s_j}(D)=A^{p_j,q_j}_{\vect s_j}(D)$) for $j=0,1$, then
	\[
	\widetilde A^{p_\theta,q_\theta}_{\vect s_\theta,0}(D)=A^{p_\theta,q_\theta}_{\vect s_\theta,0}(D)  \qquad \text{(resp.\ } \widetilde A^{p_\theta,q_\theta}_{\vect s_\theta}(D)=A^{p_\theta,q_\theta}_{\vect s_\theta}(D)\text{)}
	\]
	for every $\theta\in (0,1)$, where
	\[
	\frac{1}{p_\theta}=\frac{1-\theta}{p_0}+\frac{\theta}{p_1}, \qquad \frac{1}{q_\theta}=\frac{1-\theta}{q_0}+\frac{\theta}{q_1}, \qquad \text{and} \qquad \vect s_\theta=(1-\theta) \vect s_0+\theta \vect s_1.  
	\]
\end{teo}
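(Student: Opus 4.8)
The plan is to reduce the statement to a statement about the boundary-value Besov spaces and then invoke a (known) complex interpolation result for those. Recall that $\widetilde A^{p,q}_{\vect s}(D) = \Ec(B^{\vect s}_{p,q}(\Nc,\Omega))$ and $\widetilde A^{p,q}_{\vect s,0}(D) = \Ec(\mathring B^{\vect s}_{p,q}(\Nc,\Omega))$, with $\Ec$ an isomorphism onto its image; so the equality $\widetilde A^{p,q}_{\vect s}(D) = A^{p,q}_{\vect s}(D)$ amounts to the assertion that the abstractly-defined Bergman space coincides with the concrete one defined by integrability of $f_h$. The approach has two ingredients: first, a complex interpolation identity for the spaces $B^{\vect s}_{p,q}(\Nc,\Omega)$ (respectively $\mathring B^{\vect s}_{p,q}(\Nc,\Omega)$), of the form
\[
[B^{\vect s_0}_{p_0,q_0}(\Nc,\Omega), B^{\vect s_1}_{p_1,q_1}(\Nc,\Omega)]_\theta = B^{\vect s_\theta}_{p_\theta,q_\theta}(\Nc,\Omega),
\]
which for these Besov-type spaces on $\Nc$ is proved exactly as in the classical Euclidean case by retracting onto a weighted mixed-norm sequence space $\ell^q(K;L^p(\Nc))$ via the maps $u\mapsto (\Delta^{\vect s}_{\Omega'}(\lambda_k) u*\psi_k)_k$ and a synthesis map; second, the analogous statement for the concrete spaces $L^{p,q}_{\vect s}(D)$ and $A^{p,q}_{\vect s}(D)$.

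First I would establish the interpolation identity on the $\Nc$ side. Fix a $(\delta,R)$-lattice $(\lambda_k)$ on $\Omega'$ and the associated $(\psi_k)$, and let $R$ and $S$ be the retraction/coretraction between $B^{\vect s}_{p,q}(\Nc,\Omega)$ and the weighted space $\ell^q(K;L^p(\Nc))$ with weight $\Delta^{\vect s}_{\Omega'}(\lambda_k)$; these are bounded for every admissible $(p,q,\vect s)$ by the very definition of the Besov spaces together with the almost-orthogonality estimates already used in the proof of Lemma~\ref{lem:3} (namely \cite[Corollaries 4.6 and 4.10]{CalziPeloso}). By the retraction theorem for complex interpolation, $[B^{\vect s_0}_{p_0,q_0}, B^{\vect s_1}_{p_1,q_1}]_\theta$ is the image under the synthesis map of $[\ell^{q_0}(K;L^{p_0}(\Nc))_{w_0}, \ell^{q_1}(K;L^{p_1}(\Nc))_{w_1}]_\theta$, and the latter is $\ell^{q_\theta}(K;L^{p_\theta}(\Nc))_{w_\theta}$ with $w_\theta = w_0^{1-\theta}w_1^\theta = \Delta^{\vect s_\theta}_{\Omega'}(\lambda_k)$ by the standard interpolation of weighted vector-valued $\ell^q(L^p)$ spaces (Stein--Weiss, resp.\ its $\min(p,q)<1$ analogue for these quasi-Banach spaces). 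This gives the displayed identity, and the same argument with $\ell^q_0(K;L^p_0(\Nc))$ gives it for $\mathring B$.

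Next I would transfer this to $D$. Applying $\Ec$ (an isomorphism onto its image which is, moreover, compatible with the interpolation couples since the conditions $\vect s_j\succ \tfrac1{p_j}(\vect b+\vect d)+\tfrac1{2q_j'}\vect m'$ are stable along the segment and $\Ec$ is independent of $(p,q,\vect s)$), we obtain
\[
[\widetilde A^{p_0,q_0}_{\vect s_0}(D), \widetilde A^{p_1,q_1}_{\vect s_1}(D)]_\theta = \widetilde A^{p_\theta,q_\theta}_{\vect s_\theta}(D),
\]
and likewise for the $0$-subspaces. On the other hand, the concrete spaces $A^{p,q}_{\vect s}(D)$ are closed subspaces of $L^{p,q}_{\vect s}(D)$, and $L^{p,q}_{\vect s}(D)$ is an honest weighted mixed-norm Lebesgue space $L^q(\nu_\Omega; L^p(\Nc))$ with the weight $\Delta^{\vect s}_\Omega$; hence $[L^{p_0,q_0}_{\vect s_0}(D), L^{p_1,q_1}_{\vect s_1}(D)]_\theta = L^{p_\theta,q_\theta}_{\vect s_\theta}(D)$ again by Stein--Weiss, and since the Bergman projection-type reproducing identity (or simply the evaluation functionals, continuous on every $A^{p,q}_{\vect s}$) exhibits $A^{p,q}_{\vect s}(D)$ as a complemented — or at least a retract / norming-dual-determined — subspace in a way uniform in the parameters, one gets $[A^{p_0,q_0}_{\vect s_0}(D), A^{p_1,q_1}_{\vect s_1}(D)]_\theta \subseteq A^{p_\theta,q_\theta}_{\vect s_\theta}(D)$, and equality when the hypotheses hold. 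Now the hypothesis says $\widetilde A^{p_j,q_j}_{\vect s_j}(D) = A^{p_j,q_j}_{\vect s_j}(D)$ for $j=0,1$, so the two interpolation couples coincide; interpolating gives $\widetilde A^{p_\theta,q_\theta}_{\vect s_\theta}(D) = A^{p_\theta,q_\theta}_{\vect s_\theta}(D)$. By Proposition~\ref{prop:1} one always has the continuous inclusion $A^{p_\theta,q_\theta}_{\vect s_\theta}(D)\subseteq \widetilde A^{p_\theta,q_\theta}_{\vect s_\theta}(D)$, which makes the reverse inclusion (the nontrivial content) suffice, and the same reasoning handles the $0$-subspace version since $\Ec(\Sc_{\Omega,L}(\Nc))$ is dense in all spaces in sight.

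The main obstacle I anticipate is the legitimacy of treating $A^{p,q}_{\vect s}(D)$ as an interpolation \emph{retract} of $L^{p,q}_{\vect s}(D)$ uniformly in $(p,q,\vect s)$: unlike the Besov side, where the retraction onto a sequence space is built into the definition, on the domain side the natural projection is a Bergman projector $P_{\vect{s'}}$ whose boundedness is precisely the kind of statement whose equivalences occupy Section~\ref{sec:4}, so one cannot simply invoke it. The way around this is to avoid the concrete $A^{p,q}_{\vect s}(D)$ as an interpolation endpoint altogether: work entirely with the $\widetilde A$-scale (which interpolates cleanly via $\Ec$ and the Besov calculus) and then use the hypothesis only to identify the endpoints, together with the \emph{a priori} one-sided inclusion of Proposition~\ref{prop:1} at the interpolated parameter to pin down the result — this is exactly the asymmetry that makes the statement go through, and it is the point that needs to be handled with care rather than by a black-box interpolation theorem.
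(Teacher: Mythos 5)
Your plan — reduce to the boundary Besov scale via $\Ec$ and close via Proposition~\ref{prop:1} — is the right shape, and it matches the structural skeleton of the paper's proof. But the way you propose to execute the reduction invokes substantially more abstract machinery than the statement (or the paper) can afford, and at least two of the black boxes you plug in are not available in the generality you need.

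The genuine gaps. First, you appeal to a full complex interpolation identity $[B^{\vect s_0}_{p_0,q_0},B^{\vect s_1}_{p_1,q_1}]_\theta=B^{\vect s_\theta}_{p_\theta,q_\theta}$ and to the retraction theorem via the coretraction onto $\ell^q(K;L^p(\Nc))$ with weights $\Delta^{\vect s}_{\Omega'}(\lambda_k)$. The statement allows $p_j,q_j\in(0,\infty]$, so you are interpolating \emph{quasi}-Banach spaces, possibly with $L^\infty$ endpoints. Calder\'on's complex method is not well defined for general quasi-Banach couples (no maximum modulus principle for quasi-norms), and even for Banach couples the Stein--Weiss-type identity for weighted $\ell^q(L^p)$ with an $L^\infty$ endpoint is delicate (one tends to get closures of simple/compactly supported functions rather than the full $\ell^\infty$). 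You flag this in one parenthetical (``resp.\ its $\min(p,q)<1$ analogue'') but that analogue is precisely the nontrivial input; it is not a citation-free step. Second, your step ``$[A^{p_0,q_0}_{\vect s_0}(D),A^{p_1,q_1}_{\vect s_1}(D)]_\theta\subseteq A^{p_\theta,q_\theta}_{\vect s_\theta}(D)$'' needs the interpolation identity for the concrete $L^{p,q}_{\vect s}(D)$-scale, which is again exactly the quasi-Banach/$L^\infty$ Stein--Weiss statement. You correctly retract your earlier hint that $A^{p,q}_{\vect s}(D)$ might be a uniform retract of $L^{p,q}_{\vect s}(D)$, but what replaces it in your argument is the $L$-scale identity, which carries the same burden. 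So the logical scheme is fine, but the two interpolation identities you treat as off-the-shelf are the substance of the proof.

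How the paper sidesteps this. The paper does not compute $[B_0,B_1]_\theta$, nor does it interpolate the $L$-scale. It uses only the ``synthesis'' direction: given $u\in B^{-\vect s_\theta}_{p_\theta,q_\theta}(\Nc,\Omega)$, the cited result [Besov, Theorem 6.6] (developed specifically for these analytic Besov spaces, including quasi-Banach parameters) produces one analytic $\Sc'_{\Omega,L}(\Nc)$-valued family $f$ on the strip with $f(\theta)=u$ and bounded boundary values in $B^{-\vect s_j}_{p_j,q_j}$. Then it estimates $|\Ec u(\zeta,z)|$ pointwise via the regularising operators $\Psi_j$ of Lemma~\ref{lem:3} and a three-lines inequality in the form of~\cite[2.4.6/2]{Triebel2} with explicit probability measures $\mi_0,\mi_1$ on the boundary lines, applied with exponent $\ell=\min(p_0,p_1,q_0,q_1)$. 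The choice of $\ell$ is precisely what makes H\"older and Minkowski then yield a geometric-mean bound on $\|\Ec u\|_{A^{p_\theta,q_\theta}_{\vect s_\theta}(D)}$ by the endpoint Besov norms, with no quasi-Banach interpolation theorem ever invoked. The endpoint hypotheses $\widetilde A^{p_j,q_j}_{\vect s_j}=A^{p_j,q_j}_{\vect s_j}$ enter only to replace the $A$-norms by Besov norms at the end, and then one closes exactly as you intend, via density of $\Sc_{\Omega,L}(\Nc)$ and the one-sided inclusion from Proposition~\ref{prop:1}. If you want to salvage your version, you would need to either restrict to $p_j,q_j\in[1,\infty)$ or build the same direct pointwise estimate; as written, the retraction/Stein--Weiss steps are a genuine gap for the full range $p_j,q_j\in(0,\infty]$.
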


\begin{proof}
	Define $S\coloneqq \Set{w\in \C\colon 0<\Re z<1}$. Take $\theta\in (0,1)$ and $u\in \mathring B^{-\vect s_\theta}_{p_\theta,q_\theta}(\Nc,\Omega)$ (resp.\ $u\in B^{-\vect s_\theta}_{p_\theta,q_\theta}(\Nc,\Omega)$).
	By~\cite[Theorem 6.6]{Besov}, there is a bounded continuous function 
	\[
	f\colon \overline S\to \Sc_{\Omega,L}'(\Nc)
	\]
	which is holomorphic in $S$, equals $u$ at $\theta$, and maps $j+i R$ boundedly into $\mathring B^{-\vect s_j}_{p_j,q_j}(\Nc,\Omega)$ (resp.\ $ B^{-\vect s_j}_{p_j,q_j}(\Nc,\Omega)$) for $j=0,1$; here, $\overline S$ denotes the closure of $S$ in $\C$.
	
	Take $(\Psi_j)$ as in Lemma~\ref{lem:3}, and observe that~\cite[2.4.6/2]{Triebel2} implies that there are two probability measures $\mi_0,\mi_1$ on $\R$ such that
	\[
	\abs{g(\theta)}^\ell\meg \left(\int_\R \abs{g(i t)}^\ell \,\dd \mi_0(t)\right)^{1-\theta} \left(\int_\R \abs{g(1+it)}^\ell\,\dd \mi_1(t) \right)^\theta
	\]
	for every bounded uniformly continuous function $g\colon \overline S\to \C$ which is holomorphic on $S$, where $\ell\coloneqq \min(p_0,p_1,q_0,q_1)$. Therefore,
	\[
	\begin{split}
	\ee^{\eps(\theta^2-\theta)}\abs{\langle u, \Psi_j (S_{(\zeta,z)})_0\rangle }&\meg \left(\int_\R \abs{\ee^{(i t)^2-(it)}\langle f(i t),  \Psi_j (S_{(\zeta,z)})_0\rangle}^\ell \,\dd \mi_0(t)\right)^{1-\theta}\\
		&\quad \times \left(\int_\R\abs{\ee^{(1+i t)^2-(1+it)}\langle f(1+i t),  \Psi_j (S_{(\zeta,z)})_0\rangle}^\ell\,\dd \mi_1(t) \right)^\theta
	\end{split}
	\]
	for every $(\zeta,z)\in D$, for every $j\in \N$, and for every $\eps>0$. Passing to the limit for $\eps\to 0^+$, and $j\to \infty$, this implies that
	\[
	\abs{(\Ec u)(\zeta,z) }\meg \left(\int_\R \abs{(\Ec f(i t))(\zeta,z)}^\ell \,\dd \mi_0(t)\right)^{1-\theta} \left(\int_\R\abs{(\Ec f(1+i t))(\zeta,z)}^\ell\,\dd \mi_1(t) \right)^\theta
	\]
	for every $(\zeta,z)\in D$. By repeated applications of H\"older's and Minkowski's integral inequalities (cf., e.g.,~the proof of~\cite[Theorem 6.6]{Besov}), this implies that
	\[
	\norm{\Ec u}_{A^{p_\theta,q_\theta}_{\vect s_\theta}(D)} \meg \left( \int_\R \norm{(\Ec f)(i t)}_{A^{p_0,q_0}_{\vect s_0}(D)}^\ell  \,\dd \mi_0(t) \right)^{1-\theta}\left( \int_\R \norm{(\Ec f)(1+i t)}_{A^{p_1,q_1}_{\vect s_1}(D)}^\ell  \,\dd \mi_1(t) \right)^{\theta}.
	\]
	Since, by assumption, $\Ec$ induces an isomorphism of $\mathring B^{-\vect s_j}_{p_j,q_j}(\Nc,\Omega)$ onto $A^{p_j,q_j}_{\vect s_j,0}(D)$ (resp.\ of $B^{-\vect s_j}_{p_j,q_j}(\Nc,\Omega)$ onto $A^{p_j,q_j}_{\vect s_j}(D)$) for $j=0,1$, this implies that there is a constant $C>0$ such that
	\[
	\norm{\Ec u}_{A^{p_\theta,q_\theta}_{\vect s_\theta}(D)} \meg C \sup_{j=0,1}\sup_{t\in \R} \norm{f(j+it)}_{B^{-\vect s_j}_{p_j,q_j}(\Nc,\Omega)}
	\]
	once a quasi-norm on $B^{-\vect s_j}_{p_j,q_j}(\Nc,\Omega)$ is fixed for $j=0,1$. By~\cite[Theorem 6.6]{Besov} and the arbitrariness of $f$, this implies that $\Ec$ maps $\mathring B^{-\vect s_\theta}_{p_\theta,q_\theta}(\Nc,\Omega)$ (resp.\ $B^{-\vect s_\theta}_{p_\theta,q_\theta}(\Nc,\Omega)$) continuously into $A^{p_\theta,q_\theta}_{\vect s_\theta}(D)$. Since $\Sc_{\Omega,L}(\Nc)$ is dense in $\mathring B^{-\vect s_\theta}_{p_\theta,q_\theta}(\Nc,\Omega)$ and $\Ec(\Sc_{\Omega,L}(\Nc))\subseteq A^{p_\theta,q_\theta}_{\vect s_\theta,0}(D)$ by Proposition~\ref{prop:1}, the assertion follows.
\end{proof}

\begin{cor}\label{cor:2}
	Take $p,q\in (0,\infty]$ and $\vect s\succ \frac{1}{p}(\vect b+\vect d)+\frac{1}{2 q'}\vect{m'}$. If $\widetilde A^{p,q}_{\vect s,0}(D)=A^{p,q}_{\vect s,0}(D)$ (resp.\ $\widetilde A^{p,q}_{\vect s}(D)=A^{p,q}_{\vect s}(D)$), then  $\widetilde A^{p,q}_{\vect {s'},0}(D)=A^{p,q}_{\vect{s'},0}(D)$ (resp.\ $\widetilde A^{p,q}_{\vect {s'}}(D)=A^{p,q}_{\vect{s'}}(D)$) for every $\vect{s'}\succeq \vect s$.
\end{cor}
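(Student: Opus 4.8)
The plan is to deduce this from the complex interpolation result Theorem~\ref{prop:6}, together with the fact that the equality $A=\widetilde A$ is automatic once the weight is large enough, which is the content of the last sentence of Proposition~\ref{prop:1}. If $\vect s'=\vect s$ there is nothing to prove, so assume $\vect s'\succ \vect s$, i.e.\ $\vect s'-\vect s\in(\R_+^*)^r$. First I would fix $t_1>1$ so large that $\vect s'':=\vect s+t_1(\vect s'-\vect s)$ satisfies the sufficient condition in the final part of Proposition~\ref{prop:1}, namely $\vect s''\succ \frac{1}{2q}\vect m+\bigl(\frac{1}{2\min(p,p')}-\frac{1}{2q}\bigr)_+\vect m'$, together with the standing requirements $\vect s''\succ \frac 1p(\vect b+\vect d)+\frac{1}{2q'}\vect m'$ and $\vect s''\succ \frac{1}{2q}\vect m$ (resp.\ $\vect s''\Meg\vect 0$ if $q=\infty$). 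All of these hold for $t_1$ large because every coordinate of $\vect s''$ tends to $+\infty$ as $t_1\to\infty$ while the right-hand sides are fixed vectors. Proposition~\ref{prop:1} then yields $A^{p,q}_{\vect s'',0}(D)=\widetilde A^{p,q}_{\vect s'',0}(D)$ (resp.\ $A^{p,q}_{\vect s''}(D)=\widetilde A^{p,q}_{\vect s''}(D)$).

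Next I would apply Theorem~\ref{prop:6} with $p_0=p_1=p$, $q_0=q_1=q$, $\vect s_0=\vect s$, $\vect s_1=\vect s''$ and $\theta:=1/t_1\in(0,1)$. With these choices $p_\theta=p$, $q_\theta=q$, and $\vect s_\theta=(1-\theta)\vect s+\theta\vect s''=\vect s+\theta t_1(\vect s'-\vect s)=\vect s'$, so $\vect s'$ lies on the segment joining $\vect s$ and $\vect s''$. The hypotheses of Theorem~\ref{prop:6} are met: $\vect s_0=\vect s\succ\frac 1p(\vect b+\vect d)+\frac{1}{2q'}\vect m'$ by assumption, and $\vect s_1=\vect s''\succ\vect s$ also satisfies this bound by transitivity of $\prec$; moreover the equality $A^{p,q}_{\vect s_j,0}(D)=\widetilde A^{p,q}_{\vect s_j,0}(D)$ (resp.\ without the subscript $0$) holds for $j=0$ by hypothesis and for $j=1$ by the previous paragraph. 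Theorem~\ref{prop:6} then gives $\widetilde A^{p,q}_{\vect s',0}(D)=A^{p,q}_{\vect s',0}(D)$ (resp.\ $\widetilde A^{p,q}_{\vect s'}(D)=A^{p,q}_{\vect s'}(D)$), which is the claim; note that $\widetilde A^{p,q}_{\vect s'}(D)$ is well defined since $\vect s'\succeq\vect s\succ\frac 1p(\vect b+\vect d)+\frac{1}{2q'}\vect m'$.

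There is essentially no obstacle here. The only points requiring a moment's care are that a \emph{single} choice of large $t_1$ simultaneously verifies all the inequalities demanded by the last part of Proposition~\ref{prop:1} (immediate from $\vect s'-\vect s\in(\R_+^*)^r$), and the elementary bookkeeping showing that $\theta=1/t_1$ produces exactly $\vect s_\theta=\vect s'$. The case $q=\infty$ is absorbed by the parenthetical alternatives in Proposition~\ref{prop:1} and Theorem~\ref{prop:6}, and the case $\vect s'=\vect s$ is trivial, so the argument is complete.
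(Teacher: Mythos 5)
Your argument is correct and is essentially identical to the paper's proof: the paper also sets $\vect s''=\vect s+\frac1\theta(\vect s'-\vect s)$ with $\theta\in(0,1)$ small (your $t_1=1/\theta$), observes that $A^{p,q}_{\vect s''}(D)=\widetilde A^{p,q}_{\vect s''}(D)$ for $\theta$ small by the sufficient condition of Proposition~\ref{prop:1} (i.e.\ \cite[Corollary 5.11]{CalziPeloso}), and concludes by Theorem~\ref{prop:6}. The only difference is that you spell out the bookkeeping the paper leaves implicit.
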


\begin{proof}
	This follows from Theorem~\ref{prop:6}, since  $\widetilde A^{p,q}_{\vect {s''},0}(D)=A^{p,q}_{\vect{s''},0}(D)$ (resp.\ $\widetilde A^{p,q}_{\vect {s''}}(D)=A^{p,q}_{\vect{s''}}(D)$) for $\vect{s''}=\vect s+\frac 1 \theta (\vect{s'}-\vect s)$ and $\theta\in (0,1)$ sufficiently close to $0$, thanks to~\cite[Corollary 5.11]{CalziPeloso}.
\end{proof}

With similar techniques, one may prove that also strong properties $\atomic^{p,q}_{\vect s,\vect{s'},0}$ and $\atomic^{p,q}_{\vect s,\vect{s'}}$ interpolate (\emph{for fixed $\vect s'$}). For $p,q\Meg 1$ and for Bergman projectors, this is a consequence of Corollary~\ref{cor:1}.

\begin{cor}
	Take $p_0,p_1,q_0,q_1\in [1,\infty]$, $\vect s_0,\vect s_1\in \R^r$, and $\vect{s}'_0,\vect{s}'_1\prec \vect b+\vect d-\frac 1 2 \vect{m'}$. If $P_{\vect{s}'_j}$ induces an endomorphism of $L^{p_j,q_j}_{\vect s_j,0}(D)$ (resp.\ $L^{p_j,q_j}_{\vect s_j}(D)$) for $j=0,1$, then $P_{\vect{s}'_\theta}$ induces an endomorphism of $L^{p_\theta,q_\theta}_{\vect s_\theta,0}(D)$ (resp.\ $L^{p_\theta,q_\theta}_{\vect s_\theta}(D)$) for every $\theta\in (0,1)$, where
	\[
	\frac{1}{p_\theta}=\frac{1-\theta}{p_0}+\frac{\theta}{p_1}, \qquad \frac{1}{q_\theta}=\frac{1-\theta}{q_0}+\frac{\theta}{q_1}, \qquad \vect s_\theta=(1-\theta) \vect s_0+\theta \vect s_1, \qquad \text{and} \qquad \vect {s}'_\theta=(1-\theta) \vect{s}'_0+\theta \vect{s}'_1.
	\]
\end{cor}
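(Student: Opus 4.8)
The idea is to avoid any analytic-interpolation estimate directly on $P_{\vect s'}$, and instead to pass through Corollary~\ref{cor:1} and Theorem~\ref{prop:6}, exploiting that \emph{every} parameter occurring in those statements is affine in $\theta$. First I would note that, since $P_{\vect s'_j}$ is assumed to be an endomorphism of $L^{p_j,q_j}_{\vect s_j,0}(D)$ (resp.\ $L^{p_j,q_j}_{\vect s_j}(D)$), it is in particular well defined, so $\vect s'_j\prec\vect b+\vect d-\frac12\vect m$, and, by the necessary conditions for its boundedness (\cite[Proposition~5.20]{CalziPeloso}), the standing hypotheses of Corollary~\ref{cor:1} are satisfied for $(p_j,q_j,\vect s_j,\vect s'_j)$, namely $\vect s_j\succ\frac1{p_j}(\vect b+\vect d)+\frac1{2q_j'}\vect m'$ and $\vect s_j+\vect s'_j\prec\frac1{p_j}(\vect b+\vect d)-\frac1{2q_j}\vect m'$, for $j=0,1$. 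The equivalence of conditions (1) and (2) (resp.\ (3)) in Corollary~\ref{cor:1} then shows that the boundedness of $P_{\vect s'_j}$ on $L^{p_j,q_j}_{\vect s_j,0}(D)$ (resp.\ $L^{p_j,q_j}_{\vect s_j}(D)$) is equivalent to the pair of identities
\[
A^{p_j,q_j}_{\vect s_j,0}(D)=\widetilde A^{p_j,q_j}_{\vect s_j,0}(D)\ \ \bigl(\text{resp.\ }A^{p_j,q_j}_{\vect s_j}(D)=\widetilde A^{p_j,q_j}_{\vect s_j}(D)\bigr),\qquad A^{p_j',q_j'}_{\vect b+\vect d-\vect s_j-\vect s'_j}(D)=\widetilde A^{p_j',q_j'}_{\vect b+\vect d-\vect s_j-\vect s'_j}(D),
\]
for $j=0,1$.

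Next I would interpolate both families of identities by means of Theorem~\ref{prop:6}. For the first this is immediate, $\frac1{p_\theta},\frac1{q_\theta},\vect s_\theta$ being affine in $\theta$. For the second, I would observe that $\frac1{p_\theta'}=(1-\theta)\frac1{p_0'}+\theta\frac1{p_1'}$, $\frac1{q_\theta'}=(1-\theta)\frac1{q_0'}+\theta\frac1{q_1'}$, and
\[
\vect b+\vect d-\vect s_\theta-\vect s'_\theta=(1-\theta)\bigl(\vect b+\vect d-\vect s_0-\vect s'_0\bigr)+\theta\bigl(\vect b+\vect d-\vect s_1-\vect s'_1\bigr),
\]
and that the hypothesis $\vect b+\vect d-\vect s_j-\vect s'_j\succ\frac1{p_j'}(\vect b+\vect d)+\frac1{2q_j}\vect m'$ needed to apply Theorem~\ref{prop:6} to this family is exactly a rewriting of the inequality $\vect s_j+\vect s'_j\prec\frac1{p_j}(\vect b+\vect d)-\frac1{2q_j}\vect m'$ recorded above. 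Applying Theorem~\ref{prop:6} twice therefore yields
\[
A^{p_\theta,q_\theta}_{\vect s_\theta,0}(D)=\widetilde A^{p_\theta,q_\theta}_{\vect s_\theta,0}(D)\ \ \bigl(\text{resp.\ without the }0\bigr),\qquad A^{p_\theta',q_\theta'}_{\vect b+\vect d-\vect s_\theta-\vect s'_\theta}(D)=\widetilde A^{p_\theta',q_\theta'}_{\vect b+\vect d-\vect s_\theta-\vect s'_\theta}(D).
\]

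Finally, I would check that the standing hypotheses of Corollary~\ref{cor:1} persist for $(p_\theta,q_\theta,\vect s_\theta,\vect s'_\theta)$: each of them is a strict inequality (in the sense of $\prec$, $\succ$) between quantities affine in $\theta$ — recall $1/p_\theta$, $1/q_\theta$, $1/q_\theta'$ are affine and $\vect b,\vect d,\vect m,\vect m'$ are constant, while $\vect s_\theta,\vect s'_\theta$ are affine by definition — and such inequalities are preserved by the convex combinations $x\mapsto(1-\theta)x_0+\theta x_1$ for $\theta\in(0,1)$, since $\{\vect 0\}\cup(\R_+^*)^r$ is stable under them. Hence Corollary~\ref{cor:1} applies in the converse direction to $(p_\theta,q_\theta,\vect s_\theta,\vect s'_\theta)$ and gives that $P_{\vect s'_\theta}$ induces a continuous linear projector of $L^{p_\theta,q_\theta}_{\vect s_\theta,0}(D)$ onto $A^{p_\theta,q_\theta}_{\vect s_\theta,0}(D)$ (resp.\ of $L^{p_\theta,q_\theta}_{\vect s_\theta}(D)$ onto $A^{p_\theta,q_\theta}_{\vect s_\theta}(D)$), which in particular is an endomorphism of the corresponding space, as claimed.

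The step I expect to require the most care is the first one: deducing the inequalities in the hypotheses of Corollary~\ref{cor:1} from the bare boundedness of $P_{\vect s'_j}$, and then keeping straight the bookkeeping by which those inequalities, the ones demanded by Theorem~\ref{prop:6}, and their affine interpolants all match up. A more computational alternative would be a Stein-type analytic interpolation of the family $w\mapsto P_{(1-w)\vect s'_0+w\vect s'_1}$ (analytically continued via the holomorphic dependence of $\Delta^{\vect s}_\Omega$ on $\vect s$), but this would require controlling the operator norms of $P_{\vect s'}$ for complex $\vect s'$ along the edges of the strip, which the present route sidesteps.
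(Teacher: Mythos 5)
Your proposal is correct and takes essentially the same route as the paper: the paper's proof of this corollary is precisely the one-line combination of Corollary~\ref{cor:1}, Theorem~\ref{prop:6}, and~\cite[Proposition 5.20]{CalziPeloso}, which your argument unpacks in detail. The bookkeeping you carry out — translating boundedness of $P_{\vect s'_j}$ into the identities $A=\widetilde A$ for both $(p_j,q_j,\vect s_j)$ and the dual-side parameters $(p_j',q_j',\vect b+\vect d-\vect s_j-\vect s'_j)$, interpolating each family via Theorem~\ref{prop:6}, and checking that the affine hypotheses persist at $\theta$ — is exactly what the cited results are intended to supply.
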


\begin{proof}
	This follows from Corollary~\ref{cor:1}, Theorem~\ref{prop:6}, and~\cite[Proposition 5.20]{CalziPeloso}.
\end{proof}

\begin{cor}
	Take $p,q\in (0,\infty]$ and $\vect s\in \R^r$ and $\vect s'\prec \vect b+\vect d-\frac{1}{2 }\vect{m}$.
	$P_{\vect s'}$ induces an endomorphism of $L^{p,q}_{\vect s,0}(D)$ (resp.\ $L^{p,q}_{\vect s}(D)$), then $P_{\vect s'''}$ induces an endomorphism of $L^{p,q}_{\vect s'',0}(D)$ (resp.\ $L^{p,q}_{\vect s''}(D)$) for every $\vect{s''}\succeq \vect s$ and for every $\vect s'''\preceq\vect s + \vect s'-\vect s''$.
\end{cor}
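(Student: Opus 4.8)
The plan is to recast the continuity of a Bergman projector as a pair of coincidences between weighted Bergman spaces and their boundary-value models $\widetilde A^{p,q}_{\vect s}(D)$, and then to propagate these coincidences along the orderings $\succeq$ and $\preceq$ by means of Corollary~\ref{cor:2}. First I would reduce to the locally convex range $p,q\in[1,\infty]$. Set $p''\coloneqq\max(1,p)$ and $q''\coloneqq\max(1,q)$; by~\cite[Corollary 5.16 and Proposition 5.20]{CalziPeloso}, $P_{\vect s'}$ induces an endomorphism of $L^{p,q}_{\vect s,0}(D)$ (resp.\ of $L^{p,q}_{\vect s}(D)$) if and only if $P_{\vect s'}$ induces an endomorphism of $L^{p'',q''}_{\vect s-(1/p-1)_+(\vect b+\vect d),0}(D)$ (resp.\ of $L^{p'',q''}_{\vect s-(1/p-1)_+(\vect b+\vect d)}(D)$), and likewise with $(\vect s'',\vect s''')$ in place of $(\vect s,\vect s')$. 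The shift $(1/p-1)_+(\vect b+\vect d)$ depends only on $p$, so it affects $\vect s$ and $\vect s''$ in the same way and leaves $\vect s'$ and $\vect s'''$ unchanged; in particular $\vect s''-(1/p-1)_+(\vect b+\vect d)\succeq\vect s-(1/p-1)_+(\vect b+\vect d)$ and $\vect s'''\preceq\bigl(\vect s-(1/p-1)_+(\vect b+\vect d)\bigr)+\vect s'-\bigl(\vect s''-(1/p-1)_+(\vect b+\vect d)\bigr)$, so the hypotheses and the conclusion both transform correctly under this reduction, and it suffices to treat the case $p,q\in[1,\infty]$.

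So assume $p,q\in[1,\infty]$ and that $P_{\vect s'}$ induces a continuous endomorphism of $L^{p,q}_{\vect s,0}(D)$ (resp.\ of $L^{p,q}_{\vect s}(D)$). By~\cite[Corollary 5.16]{CalziPeloso} this forces $\vect s\succ\frac 1p(\vect b+\vect d)+\frac1{2q'}\vect m'$ and $\vect s+\vect s'\prec\frac1p(\vect b+\vect d)-\frac1{2q}\vect m'$, so that, together with the standing assumption $\vect s'\prec\vect b+\vect d-\frac12\vect m$, the hypotheses of Corollary~\ref{cor:1} hold; hence Corollary~\ref{cor:1} together with~\cite[Proposition 5.20]{CalziPeloso} yields
\[
A^{p,q}_{\vect s,0}(D)=\widetilde A^{p,q}_{\vect s,0}(D)\quad\text{(resp.\ }A^{p,q}_{\vect s}(D)=\widetilde A^{p,q}_{\vect s}(D)\text{)}\qquad\text{and}\qquad A^{p',q'}_{\vect b+\vect d-\vect s-\vect s'}(D)=\widetilde A^{p',q'}_{\vect b+\vect d-\vect s-\vect s'}(D).
\]
From $\vect s''\succeq\vect s$ one gets $\vect s''\succ\frac1p(\vect b+\vect d)+\frac1{2q'}\vect m'$, and from $\vect s'''\preceq\vect s+\vect s'-\vect s''$ one gets $\vect s''+\vect s'''\meg\vect s+\vect s'$, hence $\vect b+\vect d-\vect s''-\vect s'''\succeq\vect b+\vect d-\vect s-\vect s'$, and also $\vect s''+\vect s'''\prec\frac1p(\vect b+\vect d)-\frac1{2q}\vect m'$ and $\vect s'''\meg\vect s'\prec\vect b+\vect d-\frac12\vect m$. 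Two applications of Corollary~\ref{cor:2} — one with the exponents $(p,q)$ at the index $\vect s$, the other with the exponents $(p',q')$ at the index $\vect b+\vect d-\vect s-\vect s'$, whose lower bounds $\vect s\succ\frac1p(\vect b+\vect d)+\frac1{2q'}\vect m'$ and $\vect b+\vect d-\vect s-\vect s'\succ\frac1{p'}(\vect b+\vect d)+\frac1{2q}\vect m'$ are exactly the inequalities just recorded — then give
\[
A^{p,q}_{\vect s'',0}(D)=\widetilde A^{p,q}_{\vect s'',0}(D)\quad\text{(resp.\ }A^{p,q}_{\vect s''}(D)=\widetilde A^{p,q}_{\vect s''}(D)\text{)}\qquad\text{and}\qquad A^{p',q'}_{\vect b+\vect d-\vect s''-\vect s'''}(D)=\widetilde A^{p',q'}_{\vect b+\vect d-\vect s''-\vect s'''}(D).
\]
Since moreover $\vect s''\succeq\vect s\succ\vect 0$ (resp.\ $\vect s''\succeq\vect s\Meg\vect 0$), so that $\vect s''\succ\vect 0$ (resp.\ $\vect s''\Meg\vect 0$), Corollary~\ref{cor:1} applied with $(\vect s'',\vect s''')$ in place of $(\vect s,\vect s')$ shows that $P_{\vect s'''}$ — which is well defined because $\vect s'''\meg\vect s+\vect s'-\vect s''\meg\vect s'\prec\vect b+\vect d-\frac12\vect m$ — induces a continuous endomorphism of $L^{p,q}_{\vect s'',0}(D)$ (resp.\ of $L^{p,q}_{\vect s''}(D)$), which is the assertion.

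The substance of the argument lies in the reduction to $p,q\in[1,\infty]$, supplied by~\cite[Corollary 5.16]{CalziPeloso} and vacuous when $p,q\Meg 1$; once in that range everything is a purely formal bookkeeping, all the order conditions needed to invoke Corollary~\ref{cor:1} at $(\vect s'',\vect s''')$ and Corollary~\ref{cor:2} being immediate from $\vect s''\succeq\vect s$, $\vect s'''\meg\vect s'$, $\vect s''+\vect s'''\meg\vect s+\vect s'$, and the (strict, componentwise) nature of the conditions already satisfied at $(\vect s,\vect s')$. The only thing to double-check is that the necessary conditions on $(\vect s,\vect s')$ used in the second paragraph are indeed consequences of the continuity of $P_{\vect s'}$, which is precisely what~\cite[Corollary 5.16 and Proposition 5.20]{CalziPeloso} provide; the ``resp.'' statements are treated in exactly the same way.
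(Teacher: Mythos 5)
Your proof follows the same overall strategy as the paper's: translate the hypothesis on $P_{\vect s'}$ into the coincidences $A^{p,q}_{\vect s}=\widetilde A^{p,q}_{\vect s}$ and $A^{p',q'}_{\vect b+\vect d-\vect s-\vect s'}=\widetilde A^{p',q'}_{\vect b+\vect d-\vect s-\vect s'}$ via Corollary~\ref{cor:1}, propagate these along $\succeq$ with Corollary~\ref{cor:2}, and translate back with Corollary~\ref{cor:1} applied at $(\vect s'',\vect s''')$. The order bookkeeping in the second and third paragraphs is correct, and the verification of the hypotheses of Corollary~\ref{cor:2} for the second application (with $(p',q')$ and base index $\vect b+\vect d-\vect s-\vect s'$) is exactly right.

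The one step I cannot vouch for is the reduction in the first paragraph. You assert, citing~\cite[Corollary 5.16 and Proposition 5.20]{CalziPeloso}, an exact \emph{equivalence}: $P_{\vect s'}$ is an endomorphism of $L^{p,q}_{\vect s,0}(D)$ if and only if it is an endomorphism of $L^{p'',q''}_{\vect s-(1/p-1)_+(\vect b+\vect d),0}(D)$. This is a strong claim and I do not see it in either cited result: Corollary 5.16 of~\cite{CalziPeloso} is an \emph{existence} statement about some $\vect s'$ for which property~$(L')$ holds, and Proposition 5.20 supplies \emph{necessary} conditions on $(\vect s,\vect s')$ for continuity — neither provides a two-way transfer of boundedness of $P_{\vect s'}$ between a quasi-Banach $L^{p,q}$ with $p<1$ and a Banach $L^{p'',q''}$ with shifted weight. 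Note also that the shift $-(1/p-1)_+(\vect b+\vect d)$ appears in the paper in statements about the \emph{holomorphic} spaces $A^{p,q}_{\vect s}$ and $\widetilde A^{p,q}_{\vect s}$ (e.g.\ Corollary~\ref{cor:4}), where one has embeddings of the mean-value type; no such relation holds between the plain $L^{p,q}_{\vect s}$ spaces on which $P_{\vect s'}$ is required to act. So as written, the reduction to $p,q\in[1,\infty]$ is not justified, and if $p<1$ or $q<1$ is genuinely allowed in the statement, you need either an actual proof of the transfer or a different argument. (It is possible the intended range in the statement is $[1,\infty]$, in which case the first paragraph is superfluous; the paper's own proof cites only Corollaries~\ref{cor:1}, \ref{cor:2}, and~\cite[Proposition 5.20]{CalziPeloso}, the latter being used to supply the necessary conditions $\vect s\succ\frac1p(\vect b+\vect d)+\frac1{2q'}\vect m'$ and $\vect s+\vect s'\prec\frac1p(\vect b+\vect d)-\frac1{2q}\vect m'$ that make Corollary~\ref{cor:1} applicable — not for a reduction in $(p,q)$.)

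Once you are in the range $p,q\in[1,\infty]$, the remainder of your argument is complete and correct, and coincides with the paper's.
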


\begin{proof}
	This follows from  Corollaries~\ref{cor:1} and~\ref{cor:2}, and~\cite[Proposition 5.20]{CalziPeloso}.
\end{proof}

\section{Transference}\label{sec:6}

 In this section we  prove a transference result, Corollary~\ref{trasference:cor}, showing that if the Bergman
projector $P_{\vect s'}$ is bounded on the pure norm spaces
$L^{p,p}_{\vect s}(F+i\Omega)$ on the tube domain $F+i\Omega$,
where $\Omega$ is a homegeneous cone
then the  Bergman
projector $P_{\vect s'+\vect b}$ is bounded on the pure norm spaces
$L^{p,p}_{\vect s}(D)$ on the corresponding homogeneous Siegel domain
of Type II.   We also prove a converse result, 
Corollary~\ref{transference:cor2}, showing that
if $P_{\vect s'}$ is bounded on the mixed norm spaces
$L^{p,q}_{\vect s}(D)$, then $P_{\vect s'}$ is bounded on the mixed
norm spaces $L^{p,q}_{\vect s-\vect b/p}(F+i\Omega)$ on the tube
domain $F+i\Omega$. 

  Notice that the analogous transference result~\cite[Theorem 2.1]{BekolleGonessaNana} holds even for $p\neq q$ since the definition of $A^{p,q}_{\vect s}(D)$ considered therein is essentially different from ours when $p\neq q$ and $n>0$.  

\begin{teo}\label{prop:2}
	Take $p\in (0,\infty]$ and $\vect s\succ \frac1 p\vect d+\frac{1}{2 p'}\vect{m'}  +(\R_+^*)^r$. If $A^{p,p}_{\vect s,0}(F+i\Omega)=\widetilde A^{p,p}_{\vect s,0}(F+i\Omega)$ (resp.\  $A^{p,p}_{\vect s}(F+i\Omega)=\widetilde A^{p,p}_{\vect s}(F+i\Omega)$), then $A^{p,p}_{\vect s,0}(D)=\widetilde A^{p,p}_{\vect s,0}(D)$ ($A^{p,p}_{\vect s}(D)=\widetilde A^{p,p}_{\vect s}(D)$).
\end{teo}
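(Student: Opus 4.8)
The plan is to transfer the identity $A=\widetilde A$ from the tube domain $F+i\Omega$ to the Siegel domain $D$ by relating the two extension operators $\Ec$ through a partial Fourier analysis in the $E$-variable, using the explicit description of the representations $\pi_\lambda$ and the structure of the Bergman kernel $B^{\vect{s'}}_{(\zeta,z)}$ as an integral over $\Lambda_+$ against the generalized power functions on $\Omega'$. The key observation is that for a holomorphic function $f$ on $D$, the "slices" obtained by restricting the behaviour to each fibre of $\rho$ can be organized, via the Plancherel formula on $\Nc$, so that the $L^{p,p}_{\vect s}(D)$ norm is controlled by a mixed-norm expression involving $L^{p,p}_{\vect s}(F+i\Omega)$-type norms. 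Since $p=q$, the $L^p$ norm on $\Nc=E\times F$ factors as an iterated $L^p$ norm in $\zeta\in E$ and $x\in F$, and Fubini allows one to pass through the fibration $D\to\Omega$ cleanly; this is exactly why the pure-norm case is the natural setting here.

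First I would set up the comparison map: given $u\in B^{\vect s}_{p,p}(\Nc,\Omega)$ (the boundary-value space for $D$), I would produce from it a boundary datum for the tube domain $F+i\Omega$ by "integrating out" the $E$-variable, or conversely, lift a Besov function on the tube's \v Silov boundary $F$ to $\Nc$ by composing with the projection $\Nc\to F$; the relation $\vect s \succ \frac1p\vect d + \frac1{2p'}\vect m' + (\R_+^*)^r$ together with $\vect b \meg \vect 0$ is precisely what ensures the shift between the two Besov scales ($\vect s$ on $\Nc$ versus $\vect s$ on $F$, with the weight discrepancy absorbed by $\Delta^{\vect b}$) stays in the admissible range. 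Then I would use Proposition~\ref{prop:1} to reduce the problem to proving the inclusion $\widetilde A^{p,p}_{\vect s,0}(D)\subseteq A^{p,p}_{\vect s}(D)$ (resp.\ without the subscript $0$), and by Proposition~\ref{prop:8} it suffices to treat the case with the subscript $0$ and then invoke the equivalence therein. So the core task is: for $u\in \mathring B^{\vect s}_{p,p}(\Nc,\Omega)$, show $\Ec u \in L^{p,p}_{\vect s}(D)$, given the corresponding statement on the tube domain.

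Next, the main estimate. I would write $(\Ec u)(\zeta, x+i\Phi(\zeta)+ih)$ and use the Littlewood--Paley-type decomposition $u = \sum_k u*\psi_k$ from Definition~\ref{def:1}, applying $\Ec$ termwise. For each $k$, the function $\Ec(u*\psi_k)$ is controlled in $\Hol(D)$ with a reproducing-type estimate in which the $E$-dependence enters only through the Gaussian-type factor $\ee^{-\langle\lambda_\C,\Phi(\zeta)\rangle}$ coming from $\tr(P_{\lambda,0}\pi_\lambda(\zeta,x))$. The point is that $\int_E \ee^{-2\langle\lambda,\Phi(\zeta)\rangle}\,\dd\Hc^{2n}(\zeta)$ is a constant times $\Delta^{\vect b}_{\Omega'}(\lambda)$ (this is essentially \eqref{eq:2} and the normalization of the Plancherel measure), so integrating the $p$-th power over $\zeta\in E$ converts the Siegel-domain integral, at each height $h\in\Omega$, into a tube-domain integral at the same height but with the weight shifted by $\Delta^{\vect b}_\Omega$ — and since $p=q$ the outer $L^p(\nu_\Omega)$ integration in $h$ then matches up directly. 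Combining with the hypothesis $A^{p,p}_{\vect s}(F+i\Omega)=\widetilde A^{p,p}_{\vect s}(F+i\Omega)$, which gives $\Ec_{F+i\Omega}$ bounded from $B^{\vect s}_{p,p}(F,\Omega)$ into $L^{p,p}_{\vect s}(F+i\Omega)$, one obtains the desired bound $\norm{\Ec u}_{L^{p,p}_{\vect s}(D)}\lesssim \norm{u}_{B^{\vect s}_{p,p}(\Nc,\Omega)}$; density of $\Ec(\Sc_{\Omega,L}(\Nc))$ in $A^{p,p}_{\vect s,0}(D)$ (Proposition~\ref{prop:1}) then closes the argument, and Proposition~\ref{prop:8} upgrades to the non-$0$ statement.

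The hard part will be making the passage from the Siegel domain to the tube domain genuinely quantitative at the level of the Besov norms rather than just pointwise: one must check that the "fibre integration" $\zeta\mapsto$ (integrate out) carries $\mathring B^{\vect s}_{p,p}(\Nc,\Omega)$ boundedly into $B^{\vect s}_{p,p}(F,\Omega)$ (the boundary-value space of the tube) with the correct weight bookkeeping, which is where the strict inequality $\vect s\succ\frac1p\vect d+\frac1{2p'}\vect m'+(\R_+^*)^r$ and the sign of $\vect b$ are used, and to verify that the CR/holomorphy constraints are preserved — equivalently, that the relevant projections $P_{\lambda,0}$ interact correctly with the restriction $\Lambda_+\to\Omega'$. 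A secondary technical obstacle is that $\Ec$ on the tube domain and $\Ec$ on $D$ are a priori defined via different Cauchy--Szeg\H o kernels, so one must verify the intertwining identity relating $c_0 B^{\vect b+\vect d}_{(\zeta,z)}$ on $D$ with its tube analogue under the fibre projection; this is essentially a computation with the Laplace transforms $\Lc I^{\vect s}_\Omega = \Delta^{-\vect s}_{\Omega'}$ and the explicit formula \eqref{eq:2}, but it needs to be done carefully since it is the linchpin of the whole transference.
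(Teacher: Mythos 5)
Your overall skeleton --- reduce via Propositions~\ref{prop:1} and~\ref{prop:8} to the bound $\norm{\Ec u}_{L^{p,p}_{\vect s}(D)}\lesssim\norm{u}_{B^{-\vect s}_{p,p}(\Nc,\Omega)}$, use the fact that for $p=q$ the norm on $D$ splits by Fubini into an integral over $\zeta\in E$ of tube-domain norms of the slices $z\mapsto(\Ec u)(\zeta,z+i\Phi(\zeta))$, and apply the tube-domain hypothesis slice by slice --- is the right one and matches the paper. But the central mechanism you propose for the quantitative step is incorrect, in two related ways. First, the claim that the $E$-dependence of $\Ec(u*\psi_k)$ enters only through the factor $\ee^{-\langle\lambda_\C,\Phi(\zeta)\rangle}$ is false for general $u$: it holds for $u\in\Sc_\Omega(\Nc)$ (whose elements are given by~\eqref{eq:2}), but already fails for left translates of such functions, hence for general elements of $\Sc_{\Omega,L}(\Nc)=\Sc(\Nc)*\Sc_\Omega(\Nc)$, whose operators $\pi_\lambda(u)$ carry genuine non-scalar data in the $E$-direction. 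Consequently one cannot ``integrate out'' the $E$-variable to produce a single tube-domain boundary datum (this map loses information and cannot control the full $L^{p,p}_{\vect s}(D)$ norm); one must keep each slice $\{\zeta\}\times F$ separately. Second, and more seriously, the computation $\int_E\ee^{-2\langle\lambda,\Phi(\zeta)\rangle}\,\dd\zeta=c\,\Delta^{\vect b}_{\Omega'}(\lambda)$ and the resulting ``weight shift by $\Delta^{\vect b}_\Omega$'' belong to the \emph{converse} transference: that is exactly how Theorem~\ref{prop:3}, Proposition~\ref{prop:4} and Lemma~\ref{lem:4} produce the shifted index $\vect s-\vect b/p$ on the tube. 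Theorem~\ref{prop:2} has no shift --- the hypothesis is invoked on $A^{p,p}_{\vect s}(F+i\Omega)$ with the \emph{same} $\vect s$ --- so your mechanism would at best prove a different (shifted) statement.

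What is needed in place of your third step is the commutation identity $[(\Ec u)_h*\psi_k](\zeta,\,\cdot\,)=[(\Ec u)_h(\zeta,\,\cdot\,)]*\psi'_k$, where $\psi'_k=\Fc_F^{-1}(\varphi(\,\cdot\,t_k^{-1}))$ is the tube-domain analogue of $\psi_k$: it identifies the Littlewood--Paley pieces on $F$ of each slice with the slices of the Littlewood--Paley pieces on $\Nc$, with no weight factor appearing. This is proved by the Fourier computation you have in mind for $u\in\Sc_\Omega(\Nc)$, then for finite sums of left translates, then by density in $\sigma^{-\vect s}_{p,p}$. Granting it, the hypothesis bounds $\norm{(\Ec u)(\zeta,\,\cdot\,+i\Phi(\zeta)+ih)}_{A^{p,p}_{\vect s}(F+i\Omega)}$ by $\norm{\Delta^{\vect s}_{\Omega'}(\lambda_k)\norm{[(\Ec u)_h(\zeta,\,\cdot\,)]*\psi'_k}_{L^p(F)}}_{\ell^p(K)}$ for each fixed $\zeta$; raising to the power $p$, integrating in $\zeta$, and then using $\norm{u*\psi_k}_{L^p(\Nc)}\Meg\norm{(\Ec u)_h*\psi_k}_{L^p(\Nc)}$ and lower semicontinuity as $h\to0$ in $\Omega$ yields the desired estimate. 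Without this identity your argument has a genuine gap at its linchpin, as you yourself anticipated in your final paragraph.
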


\begin{proof}
	Take $u\in \mathring B^{-\vect s}_{p,q}(\Nc,\Omega)$ (resp.\ $u\in B^{-\vect s}_{p,q}(\Nc,\Omega)$). Take a $(\delta,R)$-lattice $(\lambda_k)_{k\in K}$ on $\Omega'$ for some $\delta>0$ and some $R>1$, and fix a positive $\varphi \in C^\infty_c(\Omega')$ such that 
	\[
	\sum_{k\in K} \varphi(\,\cdot\, t_k^{-1})\Meg 1
	\]
	on $\Omega'$, where $t_k\in T_+$ is chosen such that $\lambda_k=e_{\Omega'}\cdot t_k$. Define $\psi_k\coloneqq \Fc_{\Nc}^{-1}(\varphi(\,\cdot\, t_k^{-1}))$ and $\psi'_k\coloneqq \Fc_F^{-1}(\varphi(\,\cdot\, t_k^{-1}))$. Let us prove that
	\[
	[(\Ec u)_h*\psi_k](\zeta,\,\cdot\,)=[(\Ec u)_h(\zeta,\,\cdot\,)]*\psi'_k
	\]
	for every $\zeta\in E$ and for every $k\in K$. Assume first that $u\in \Sc_\Omega(\Nc)$, and observe that by~\eqref{eq:2} there is a constant $c>0$ such that
	\[
	\Fc_F( [(\Ec u)_h*\psi_k](\zeta,\,\cdot\,))= c \Delta^{-\vect b}_{\Omega'} \ee^{-\langle\,\cdot\,,\Phi(\zeta)\rangle} \Fc_\Nc (\Ec u)_h \varphi(\,\cdot\, t_k^{-1})=\Fc_F((\Ec u)_h(\zeta,\,\cdot\,))\varphi(\,\cdot\,t_k^{-1})= \Fc_F([(\Ec u)_h(\zeta,\,\cdot\,)]*\psi'_k)
	\]
	for every $\zeta\in E$ and for every $k\in K$. The assertion then follows when $u$ is a finite sum of left translates of elements of $\Sc_\Omega(\Nc)$. Since this set is dense in $\Sc_{\Omega,L}(\Nc)$ (cf.~\cite[Proposition 4.5]{CalziPeloso}), hence in $B^{-\vect s}_{p,p}(\Nc,\Omega)$ for the weak topology $\sigma^{-\vect s}_{p,p}$, the assertion follows by continuity.
	
	In addition, by assumption there is a constant $C>0$ such that
	\[
	\norm{\Delta_{\Omega'}^{\vect s}(\lambda_k) \norm{[(\Ec u)_h(\zeta,\,\cdot\,)]*\psi'_k}_{L^p(F)}}_{\ell^p(K)}\Meg C\norm{(\Ec u)(\zeta,\,\cdot\,+i\Phi(\zeta)+i h)}_{A^{p,p}_{\vect s}(F+i\Omega)}
	\]
	for every $u$ as above, for every $h\in \Omega$, and for every $\zeta\in E$. Taking the $L^p(E)$-norm on both sides, this gives
	\[
	\norm{\Delta_{\Omega'}^{\vect s}(\lambda_k) \norm{[(\Ec u)_h]*\psi_k}_{L^p(\Nc)}}_{\ell^p(K)}\Meg C \norm{(\Ec u)(\,\cdot\,+(0,ih))}_{L^{p,p}_{\vect s}(D)}
	\]
	for every $u$ as above, and for every $h\in \Omega$. Since 
	\[
	\norm{u*\psi_k}_{L^p(\Nc)}\Meg \norm{[\Ec(u*\psi_k)]_h}_{L^p(\Nc)} =\norm{(\Ec u)_h*\psi_k}_{L^p(\Nc)}
	\]
	for every $h\in \Omega$, for every $k\in K$, and for every $u$ as above by~\cite[Theorem 1.7]{CalziPeloso2}, passing to the limit for $h\to 0$, $h\in\Omega$, by lower semi-continuity we infer that
	\[
	\norm{\Delta_{\Omega'}^{\vect s}(\lambda_k) \norm{ u*\psi_k}_{L^p(\Nc)}}_{\ell^p(K)}\Meg C \norm{\Ec u}_{L^{p,p}_{\vect s}(D)},
	\] 
	so that  $\widetilde A^{p,p}_{\vect s,0}(D) $ (resp.\ $\widetilde A^{p,p}_{\vect s}(D)$) embeds continuously into $A^{p,p}_{\vect s}(D)$ by the arbitrariness of $u$. Since $\Ec(\Sc_{\Omega,L}(\Nc))$ is dense in  $\widetilde A^{p,p}_{\vect s,0}(D) $ and contained in $A^{p,p}_{\vect s,0}(D) $ by Proposition~\ref{prop:1}, the assertion follows.
\end{proof}

\begin{cor}\label{trasference:cor}
	Take $p\in [1,\infty]$, $\vect s\in \R^r$,  and $\vect {s'}\prec \vect d-\frac 1 2 \vect{m}$. If $P_{\vect{s'}}$ induces a continuous linear projector of $L^{p,p}_{\vect s,0}(F+i\Omega)$ onto $A^{p,p}_{\vect s,0}(F+i\Omega)$ (resp.\ of $L^{p,p}_{\vect s}(F+i\Omega)$ onto $A^{p,p}_{\vect s}(F+i\Omega)$), then $P_{\vect{s'}+\vect b}$ induces a continuous linear projector of $L^{p,p}_{\vect s,0}(D)$ onto $A^{p,p}_{\vect s,0}(D)$ (resp.\ of $L^{p,p}_{\vect s}(D)$ onto $A^{p,p}_{\vect s}(D)$).
\end{cor}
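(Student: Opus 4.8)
The plan is to run the equivalences of Section~\ref{sec:4} on the tube domain, transfer the resulting identifications of Bergman spaces with their boundary--value realizations to $D$ by Theorem~\ref{prop:2}, and then run the equivalences of Section~\ref{sec:4} backwards on $D$. Note first that $F+i\Omega$ is itself a homogeneous Siegel domain of type~II, with $n=0$; for such a domain the parameter $\vect b$ vanishes, so that the exponent ``$\vect b+\vect d$'' attached to $F+i\Omega$ is simply $\vect d$. This is why the hypothesis, stated for $P_{\vect{s'}}$ on the tube side, will correspond to a statement for $P_{\vect{s'}+\vect b}$ on $D$.

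First I would descend on the tube domain. By hypothesis $P_{\vect{s'}}$ is in particular bounded on $L^{p,p}_{\vect s,0}(F+i\Omega)$ (resp.\ on $L^{p,p}_{\vect s}(F+i\Omega)$), with image the nonzero space $A^{p,p}_{\vect s,0}(F+i\Omega)$ (resp.\ $A^{p,p}_{\vect s}(F+i\Omega)$); hence, by \cite[Proposition~5.20]{CalziPeloso}, the inequalities $\vect s\succ\frac 1p\vect d+\frac1{2p'}\vect{m'}$, $\vect s+\vect{s'}\prec\frac 1p\vect d-\frac1{2p}\vect{m'}$, and $\vect s\succ\vect 0$ (resp.\ $\vect s\Meg\vect 0$) hold. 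Together with the standing assumption $\vect{s'}\prec\vect d-\frac 12\vect m$, these are exactly the hypotheses of Corollary~\ref{cor:1} on $F+i\Omega$ for $q=p$ and the pair $(\vect s,\vect{s'})$, and (again using \cite[Proposition~5.20]{CalziPeloso}) the hypothesis of the present statement is condition~(2) of that corollary. I would therefore invoke the equivalence (1)$\iff$(2) of Corollary~\ref{cor:1} to obtain
\[
A^{p,p}_{\vect s,0}(F+i\Omega)=\widetilde A^{p,p}_{\vect s,0}(F+i\Omega)\quad\bigl(\text{resp.\ }A^{p,p}_{\vect s}(F+i\Omega)=\widetilde A^{p,p}_{\vect s}(F+i\Omega)\bigr)
\]
together with $A^{p',p'}_{\vect d-\vect s-\vect{s'}}(F+i\Omega)=\widetilde A^{p',p'}_{\vect d-\vect s-\vect{s'}}(F+i\Omega)$.

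Next I would transfer these identities to $D$ by applying Theorem~\ref{prop:2} twice. Applied with exponent $\vect s$ and index $p$ it gives $A^{p,p}_{\vect s,0}(D)=\widetilde A^{p,p}_{\vect s,0}(D)$ (resp.\ the non-``$0$'' version); applied with exponent $\vect d-\vect s-\vect{s'}$ and index $p'$ — whose applicability is precisely the inequality $\vect s+\vect{s'}\prec\frac 1p\vect d-\frac1{2p}\vect{m'}$ from the previous step — it gives $A^{p',p'}_{\vect d-\vect s-\vect{s'}}(D)=\widetilde A^{p',p'}_{\vect d-\vect s-\vect{s'}}(D)$. Since $(\vect b+\vect d)-\vect s-(\vect{s'}+\vect b)=\vect d-\vect s-\vect{s'}$, these two identities are exactly condition~(1) of Corollary~\ref{cor:1} on $D$ for the pair $(\vect s,\vect{s'}+\vect b)$. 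The hypotheses of that corollary do hold for this pair on $D$: from $\vect b\meg\vect 0$ one gets $\frac 1p(\vect b+\vect d)\meg\frac 1p\vect d$ and $-\frac1{p'}\vect b\Meg\vect 0$, so the inequalities $\vect s\succ\frac 1p\vect d+\frac1{2p'}\vect{m'}$ and $\vect s+\vect{s'}\prec\frac 1p\vect d-\frac1{2p}\vect{m'}$ on the tube side yield $\vect s\succ\frac 1p(\vect b+\vect d)+\frac1{2p'}\vect{m'}$ and $\vect s+(\vect{s'}+\vect b)\prec\frac 1p(\vect b+\vect d)-\frac1{2p}\vect{m'}$ on $D$ (adding a vector in $\R_+^r$ to one in $(\R_+^*)^r$ keeps it strictly positive), while $\vect{s'}+\vect b\prec(\vect b+\vect d)-\frac 12\vect m$ is equivalent to the assumed $\vect{s'}\prec\vect d-\frac 12\vect m$. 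Hence the equivalence (1)$\iff$(3) of Corollary~\ref{cor:1} on $D$ delivers that $P_{\vect{s'}+\vect b}$ is a continuous linear projector of $L^{p,p}_{\vect s,0}(D)$ onto $A^{p,p}_{\vect s,0}(D)$ (resp.\ of $L^{p,p}_{\vect s}(D)$ onto $A^{p,p}_{\vect s}(D)$), which is the claim; the statement also produces the analogous projector for the conjugate weight $\vect d-\vect s-\vect{s'}$, which is not needed here.

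The main obstacle is the weight bookkeeping, i.e.\ keeping track of how $\vect b$ enters: that $\vect b$ vanishes for the tube domain, that $P_{\vect{s'}}$ on $F+i\Omega$ is replaced by $P_{\vect{s'}+\vect b}$ on $D$, and — crucially — that the conjugate weight $\vect d-\vect s-\vect{s'}$ is the \emph{same} on both domains once this shift is made, which is exactly what lets Theorem~\ref{prop:2} be applied to it verbatim. One must also check that the strict inequalities needed to invoke Corollary~\ref{cor:1} on $D$ and Theorem~\ref{prop:2} survive the passage through $\vect b\meg\vect 0$, which they do. Everything else is a formal consequence of Section~\ref{sec:4} and the transference theorem; the ``$0$'' versus non-``$0$'' bookkeeping could alternatively be routed through Corollary~\ref{prop:9}.
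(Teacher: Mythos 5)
Your proof is correct and takes essentially the same route as the paper: the paper's own proof is the one-line remark that the assertion follows from Corollary~\ref{cor:1} and Theorem~\ref{prop:2}, and your argument is exactly the intended chain (Corollary~\ref{cor:1} on the tube to pass to the identifications $A=\widetilde A$ for both $\vect s$ and the conjugate weight, Theorem~\ref{prop:2} applied twice to transfer them to $D$, then Corollary~\ref{cor:1} backwards on $D$ for the pair $(\vect s,\vect{s'}+\vect b)$), with the weight bookkeeping and hypothesis checks spelled out.
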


Analogous results hold for atomic decomposition, provided that $p\in [1,\infty]$. When $\vect s'=\vect b+\vect d-p \vect s$ and $p<\infty$, this is a consequence of~\cite[Theorem 2.1]{BekolleGonessaNana}.

\begin{proof}
	The assertion follows from Corollary~\ref{cor:1} and Theorem~\ref{prop:2}.
\end{proof}

\begin{teo}\label{prop:3}
	Take $p,q\in (0,\infty]$ and $\vect s\succ  \frac1 p (\vect b+\vect d)+\frac{1}{2 q'}\vect{m'}$. If $A^{p,q}_{\vect s,0}(D)=\widetilde A^{p,q}_{\vect s,0}(D)$ (resp.\ $A^{p,q}_{\vect s}(D)=\widetilde A^{p,q}_{\vect s}(D)$), then  $A^{p,q}_{\vect s-\vect b/p,0}(F+i\Omega)=\widetilde A^{p,q}_{\vect s-\vect b/p,0}(F+i\Omega)$ (resp.\  $A^{p,q}_{\vect s-\vect b/p}(F+i\Omega)=\widetilde A^{p,q}_{\vect s-\vect b/p}(F+i\Omega)$).
\end{teo}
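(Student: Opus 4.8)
The plan is to transfer the problem from the tube $T:=F+i\Omega$ to the Siegel domain $D$ by "spreading" a boundary datum on $F$ along the fibres of the projection $\pi\colon D\to T$, $\pi(\zeta,z):=z$ (which is well defined, since $\Phi(E)\subseteq\overline\Omega$ and $\Omega+\overline\Omega=\Omega$). Denote by $\Ec'$ the analogue for $T$ of the extension operator $\Ec$, so that $\widetilde A^{p,q}_{\vect t}(T)$ is the image of the corresponding Besov space $B^{-\vect t}_{p,q}(F,\Omega)$ under $\Ec'$. First I would fix $v\in\Sc_{\Omega,L}(F)$, so that $\Fc_F v\in C^\infty_c(\Omega')$, and set
\[
u:=\Fc_\Nc^{-1}\bigl(\Delta^{\vect b}_{\Omega'}\cdot\Fc_F v\bigr)\in\Sc_\Omega(\Nc)\subseteq\Sc_{\Omega,L}(\Nc);
\]
the factor $\Delta^{\vect b}_{\Omega'}$ is inserted precisely to cancel the factor $\Delta^{-\vect b}_{\Omega'}$ appearing in~\eqref{eq:2}. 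Since $\Ec$ restricted to $\Sc_{\Omega,L}(\Nc)$ coincides with the holomorphic extension, a direct computation (inserting~\eqref{eq:2} into the Cauchy--Szeg\H o reproduction formula defining $\Ec$, cf.~\cite[Lemma 5.1 and Theorem 5.2]{CalziPeloso}), together with its $F$-counterpart (where no such factor occurs because $T$ has $\vect b_T=\vect0$), shows that $\Ec u$ is independent of $\zeta$ and
\[
\Ec u(\zeta,z)=c_1\,\Ec'v\bigl(\pi(\zeta,z)\bigr)\qquad\bigl((\zeta,z)\in D\bigr)
\]
for some $c_1\neq0$. It then suffices to prove that $\norm{\Ec'v}_{A^{p,q}_{\vect s-\vect b/p}(T)}\lesssim\norm v_{B^{-\vect s+\vect b/p}_{p,q}(F,\Omega)}$ for $v\in\Sc_{\Omega,L}(F)$: by density of $\Sc_{\Omega,L}(F)$ in $\mathring B^{-\vect s+\vect b/p}_{p,q}(F,\Omega)$ and the inclusion $\Ec'(\Sc_{\Omega,L}(F))\subseteq A^{p,q}_{\vect s-\vect b/p,0}(T)$ (Proposition~\ref{prop:1}, whose hypotheses are met for $\vect s-\vect b/p$ as well), this gives $\widetilde A^{p,q}_{\vect s-\vect b/p,0}(T)\subseteq A^{p,q}_{\vect s-\vect b/p,0}(T)$, hence equality by Proposition~\ref{prop:1}; the non-vanishing version then follows by the same bound, approximating a general element of $B^{-\vect s+\vect b/p}_{p,q}(F,\Omega)$ by the operators $(\Psi_j)$ of Lemma~\ref{lem:3} and passing to the limit by lower semicontinuity, exactly as in the proof of Proposition~\ref{prop:8}.

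The first half of the estimate is the Besov transference $\norm u_{B^{-\vect s}_{p,q}(\Nc,\Omega)}\lesssim\norm v_{B^{-\vect s+\vect b/p}_{p,q}(F,\Omega)}$. Here I would take a $(\delta,R)$-lattice $(\lambda_k)$ on $\Omega'$ and $\psi_k:=\Fc_\Nc^{-1}(\varphi_k(\,\cdot\,t_k^{-1}))$, $\psi'_k:=\Fc_F^{-1}(\varphi_k(\,\cdot\,t_k^{-1}))$ as in the proof of Theorem~\ref{prop:2}, and, expanding $\Fc_F v$ against the partition of unity, deduce from~\eqref{eq:2} that for every $\zeta\in E$
\[
(u*\psi_k)(\zeta,\,\cdot\,)=c\sum_{k'\in K_k}(v*\psi'_{k'})*\psi'_{k,\zeta},\qquad \psi'_{k,\zeta}:=\Fc_F^{-1}\bigl(\varphi_k(\,\cdot\,t_k^{-1})\,\ee^{-\langle\,\cdot\,,\Phi(\zeta)\rangle}\bigr),
\]
with $K_k$ uniformly finite (by~\cite[Proposition 2.56]{CalziPeloso}). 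By homogeneity, $\norm{\psi'_{k,\zeta}}_{L^1(F)}=c\,\norm{\psi'_{0,g_k\zeta}}_{L^1(F)}$ for a $g_k\in GL(E)$ with $t_k\Phi=\Phi(g_k\times g_k)$, where $\psi'_{0,\omega}:=\Fc_F^{-1}(\varphi_0\,\ee^{-\langle\,\cdot\,,\Phi(\omega)\rangle})$; since $\Phi$ is $\overline\Omega$-positive and non-degenerate, $\langle\lambda,\Phi(\omega)\rangle\gtrsim\abs{\omega}^2$ on $\Supp{\varphi_0}$, so $\norm{\psi'_{0,\omega}}_{L^1(F)}$ decays like a Gaussian in $\omega$ and $\int_E\norm{\psi'_{0,\omega}}^p_{L^1(F)}\,\dd\omega<\infty$. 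Young's inequality and the change of variables $\zeta\mapsto g_k^{-1}\omega$, with $\abs{\det_\C g_k}^2=\Delta^{-\vect b}_{\Omega'}(\lambda_k)$, then give
\[
\norm{u*\psi_k}^p_{L^p(\Nc)}=\int_E\norm{(u*\psi_k)(\zeta,\,\cdot\,)}^p_{L^p(F)}\,\dd\zeta\lesssim\Delta^{\vect b}_{\Omega'}(\lambda_k)\Bigl(\sum_{k'\in K_k}\norm{v*\psi'_{k'}}_{L^p(F)}\Bigr)^{p},
\]
and summing with the weights $\Delta^{-\vect s}_{\Omega'}(\lambda_k)$ (using $\Delta^{\vect s}_{\Omega'}(\lambda_{k'})\asymp\Delta^{\vect s}_{\Omega'}(\lambda_k)$ on $K_k$) yields the claim.

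The second half is the \emph{unconditional} lower bound $\norm{\Ec'v}_{A^{p,q}_{\vect s-\vect b/p}(T)}\lesssim\norm{\Ec u}_{A^{p,q}_{\vect s}(D)}$. From the identity of the first paragraph, $\norm{(\Ec u)_h}^p_{L^p(\Nc)}=\abs{c_1}^p\int_E\norm{(\Ec'v)_{h+\Phi(\zeta)}}^p_{L^p(F)}\,\dd\zeta$; restricting to $Z_h:=\Set{\zeta\in E\colon d_\Omega(h+\Phi(\zeta),h)\meg1}$ and using that $\norm{(\Ec'v)_{h'}}_{L^p(F)}\asymp\norm{(\Ec'v)_h}_{L^p(F)}$ whenever $d_\Omega(h,h')\meg1$ (comparability on $T_+$-balls of $\norm{g_{\,\cdot\,}}_{L^p(F)}$ for $g\in\Hol(T)$), while $\Hc^{2n}(Z_h)\asymp\Delta^{-\vect b}_\Omega(h)$ — which, writing $h=t\cdot e_\Omega$, follows from $Z_h=g_t(Z_{e_\Omega})$ and $\abs{\det_\C g_t}^2=\Delta^{-\vect b}(t)$, the latter being a consequence of the invariance of $\nu_D$ in~\eqref{eq:4} under the triangular automorphisms of $D$ (cf.~\cite[Propositions 2.19 and 2.44]{CalziPeloso}) — one gets $\norm{(\Ec u)_h}^p_{L^p(\Nc)}\gtrsim\Delta^{-\vect b}_\Omega(h)\norm{(\Ec'v)_h}^p_{L^p(F)}$, and feeding this into the definition of $\norm{\,\cdot\,}_{A^{p,q}_{\vect s}(D)}$ turns the weight $\Delta^{\vect s}_\Omega$ into $\Delta^{\vect s-\vect b/p}_\Omega$, i.e.\ the weight of $A^{p,q}_{\vect s-\vect b/p}(T)$. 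Finally, since $A^{p,q}_{\vect s,0}(D)=\widetilde A^{p,q}_{\vect s,0}(D)$ makes $\Ec$ a topological isomorphism of $\mathring B^{-\vect s}_{p,q}(\Nc,\Omega)$ onto $A^{p,q}_{\vect s,0}(D)$, we have $\norm{\Ec u}_{A^{p,q}_{\vect s}(D)}\asymp\norm u_{B^{-\vect s}_{p,q}(\Nc,\Omega)}$, and chaining the three estimates completes the proof of the subscript-$0$ case (the case $A^{p,q}_{\vect s}(D)=\widetilde A^{p,q}_{\vect s}(D)$ being handled identically, with $B^{-\vect s}_{p,q}(\Nc,\Omega)$ in place of $\mathring B^{-\vect s}_{p,q}(\Nc,\Omega)$).

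The main obstacle is the two homogeneity computations in the $E$-variable — the factor $\Delta^{\vect b}_{\Omega'}(\lambda_k)$ in the Besov transference and the estimate $\Hc^{2n}(Z_h)\asymp\Delta^{-\vect b}_\Omega(h)$ — both of which rest on identifying the scaling of Lebesgue measure on $E$ under a triangular automorphism of $D$ as $\abs{\det_\C g_t}^2=\Delta^{-\vect b}(t)$; getting this exponent right, and tracking the division by $p$ coming from $\norm{\,\cdot\,}^p_{L^p(\Nc)}=\int_E\norm{\,\cdot\,}^p_{L^p(F)}\,\dd\zeta$, is the delicate point. Everything else — the Fourier formula for $\Ec u$ on $\Sc_\Omega(\Nc)$, Young's inequality, the lattice-sum juggling, and the density/lower-semicontinuity endgame — is routine and parallels the proofs of Theorem~\ref{prop:2} and Proposition~\ref{prop:8}.
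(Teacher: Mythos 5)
Your overall architecture is the same as the paper's: you embed $T:=F+i\Omega$ into $D$ along the fibres of $\pi(\zeta,z)=z$, transfer the Besov norm (your first half, which is precisely the content of Lemma~\ref{lem:4} and Proposition~\ref{prop:4}), invoke the hypothesis $\widetilde A^{p,q}_{\vect s}(D)=A^{p,q}_{\vect s}(D)$ to trade Besov for Bergman norm, and then bound $\norm{\Ec'v}_{A^{p,q}_{\vect s-\vect b/p}(T)}$ by $\norm{\Ec u}_{A^{p,q}_{\vect s}(D)}$ (the content of Proposition~\ref{prop:5}). The Fourier identity $\Ec u(\zeta,z)=c_1\,\Ec'v(z)$ for $u=\Fc_\Nc^{-1}(\Delta^{\vect b}_{\Omega'}\Fc_F v)$ is correct, and so is $\abs{\det_\C g_t}^2=\Delta^{-\vect b}(t)$, hence $\Hc^{2n}(Z_h)\asymp\Delta^{-\vect b}_\Omega(h)$.

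The gap is in the second half. You assert ``comparability on $T_+$-balls of $\norm{g_{\,\cdot\,}}_{L^p(F)}$ for $g\in\Hol(T)$'', i.e.\ $\norm{g_{h'}}_{L^p(F)}\asymp\norm{g_h}_{L^p(F)}$ whenever $d_\Omega(h,h')\leq 1$; you need the lower bound $\norm{g_{h+\Phi(\zeta)}}_{L^p(F)}\gtrsim\norm{g_h}_{L^p(F)}$ for $\zeta\in Z_h$. This is false: the $L^p$ norm of $g_{\,\cdot\,}$ is nonincreasing as one moves into $\Omega$, and there is no uniform reverse inequality. Concretely, already for $\Omega=(0,\infty)$ take $g$ with spectrum near a large frequency $N$: then $\norm{g_{h'}}_{L^p(\R)}/\norm{g_h}_{L^p(\R)}\sim\ee^{-N(h'-h)}$, which is arbitrarily small even though $d_\Omega(h,h')$ is bounded. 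The constant you would obtain depends on the spectral support of $v$ and is not uniform, so the argument does not close. The correct route (Proposition~\ref{prop:5} in the paper) replaces your pointwise comparison by an \emph{averaging} estimate: one bounds $\abs{f(0,x+ih)}^\ell$ by a mean of $\abs{f_{h'}(\zeta',x')}^\ell$ over small balls in $D$ (subharmonicity), pushes the average through Minkowski and Jensen, and lets the homogeneity of the resulting inequality produce the factor $\Delta^{-\vect b/p}_\Omega(h)$; this yields $\norm{f(0,\,\cdot\,)}_{A^{p,q}_{\vect s-\vect b/p}(T)}\lesssim\norm{f}_{A^{p,q}_{\vect s}(D)}$ without any pointwise comparison in the $h$-variable.
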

  
We shall postpone the proof of Theorem~\ref{prop:3} until the end of this section. Here is the corollary on the associated transference result for Bergman projectors.
 
\begin{cor}\label{transference:cor2}
	Take $p,q\in [1,\infty]$, $\vect s\in \R^r$, and $\vect {s'}\prec\vect b+\vect d-\frac 1 2 \vect{m}$. If $P_{\vect{s'}}$ induces a continuous linear projector of $L^{p,q}_{\vect s,0}(D)$ onto $A^{p,q}_{\vect s,0}(D)$ (resp.\ of $L^{p,q}_{\vect s}(D)$ onto $A^{p,q}_{\vect s}(D)$), then $P_{\vect{s'}}$ induces a continuous linear projector of $L^{p,q}_{\vect s-\vect b/p,0}(F+i \Omega)$ onto $A^{p,q}_{\vect s-\vect b/p,0}(F+i \Omega)$ (resp.\ of $L^{p,q}_{\vect s-\vect b/p}(F+i \Omega)$ onto $A^{p,q}_{\vect s-\vect b/p}(F+i \Omega)$).
\end{cor}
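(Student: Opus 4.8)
The plan is to pivot the whole argument through the ``boundary value identification'' statements $A=\widetilde A$, which by Corollary~\ref{cor:1} are equivalent to the boundedness of $P_{\vect s'}$, transfer those identifications to the tube domain by Theorem~\ref{prop:3}, and then read them back as boundedness on $F+i\Omega$. Concretely, I would first assume that $P_{\vect s'}$ is a continuous linear projector of $L^{p,q}_{\vect s,0}(D)$ onto $A^{p,q}_{\vect s,0}(D)$ (the ``resp.'' case is handled verbatim). By \cite[Proposition 5.20]{CalziPeloso}, continuity of $P_{\vect s'}$ on $L^{p,q}_{\vect s}(D)$ already forces $\vect s\succ\frac1p(\vect b+\vect d)+\frac1{2q'}\vect m'$ and $\vect s+\vect s'\prec\frac1p(\vect b+\vect d)-\frac1{2q}\vect m'$; together with the standing hypothesis $\vect s'\prec\vect b+\vect d-\frac12\vect m$ this places us within the hypotheses of Corollary~\ref{cor:1}, and the implication (3)~$\Rightarrow$~(1) there gives $A^{p,q}_{\vect s,0}(D)=\widetilde A^{p,q}_{\vect s,0}(D)$ together with $A^{p',q'}_{\vect b+\vect d-\vect s-\vect s'}(D)=\widetilde A^{p',q'}_{\vect b+\vect d-\vect s-\vect s'}(D)$.

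Next I would apply Theorem~\ref{prop:3} to each of these two identifications. To the first it applies directly with the parameters $p,q,\vect s$ and produces $A^{p,q}_{\vect s-\vect b/p,0}(F+i\Omega)=\widetilde A^{p,q}_{\vect s-\vect b/p,0}(F+i\Omega)$. To the second I would apply it with $p,q,\vect s$ replaced by $p',q',\vect b+\vect d-\vect s-\vect s'$; the only hypothesis to check, namely $\vect b+\vect d-\vect s-\vect s'\succ\frac1{p'}(\vect b+\vect d)+\frac1{2q}\vect m'$, is a mere rearrangement of $\vect s+\vect s'\prec\frac1p(\vect b+\vect d)-\frac1{2q}\vect m'$, so we obtain $A^{p',q'}_{\vect b+\vect d-\vect s-\vect s'-\vect b/p'}(F+i\Omega)=\widetilde A^{p',q'}_{\vect b+\vect d-\vect s-\vect s'-\vect b/p'}(F+i\Omega)$.

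Finally I would invoke Corollary~\ref{cor:1} once more, now on the tube domain $F+i\Omega$, which is the case $n=0$: its $\vect b$ vanishes while $\vect d,\vect m,\vect m'$ are those of $\Omega$, so the numerical hypotheses of that corollary over $F+i\Omega$ are exactly the ones already verified over $D$. Writing $\tilde{\vect s}\coloneqq\vect s-\vect b/p$ and using $\frac1p+\frac1{p'}=1$, the weight of the second transferred identification satisfies $\vect b+\vect d-\vect s-\vect s'-\vect b/p'=\vect d-\tilde{\vect s}-\vect s'$, i.e.\ exactly the dual weight attached to $(\tilde{\vect s},\vect s')$ over $F+i\Omega$. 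Hence the implication (1)~$\Rightarrow$~(3) of Corollary~\ref{cor:1} over $F+i\Omega$ yields that $P_{\vect s'}$ is a continuous linear projector of $L^{p,q}_{\vect s-\vect b/p,0}(F+i\Omega)$ onto $A^{p,q}_{\vect s-\vect b/p,0}(F+i\Omega)$, which is the claim. The only step requiring care is this bookkeeping of the weight shifts — that Theorem~\ref{prop:3} moves the primal weight by $-\vect b/p$ and the dual weight by $-\vect b/p'$, and that, once $\vect b=\vect 0$ over the tube, these are mutually consistent so that the same $\vect s'$ reappears; there is no analytic obstacle beyond the results already cited.
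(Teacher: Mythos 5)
Your proposal is correct and follows exactly the same route as the paper, which proves the corollary in one sentence by citing precisely the three ingredients you use: Theorem~\ref{prop:3}, Corollary~\ref{cor:1}, and \cite[Proposition 5.20]{CalziPeloso}. One small imprecision: you appeal to the implication (3)~$\Rightarrow$~(1) of Corollary~\ref{cor:1}, but the hypothesis supplies only half of condition (3) (the projector on $L^{p,q}_{\vect s,0}(D)$, not the one on $L^{p',q'}_{\vect b+\vect d-\vect s-\vect s'}(D)$); the cleaner step is to note that the hypothesis directly verifies condition (2) and use the equivalence (2)~$\Leftrightarrow$~(1) — a cosmetic change, since (1), (2), (3) are all equivalent. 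Your bookkeeping of the weight shifts (primal weight moves by $-\vect b/p$, dual by $-\vect b/p'$, and with $\vect b=\vect 0$ on $F+i\Omega$ these are mutually consistent so that $\vect d-\tilde{\vect s}-\vect s'$ is exactly the dual weight attached to $(\tilde{\vect s},\vect s')$ over the tube) is correct.
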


\begin{proof}
	The assertion follows from Theorem~\ref{prop:3}, Corollary~\ref{cor:1}, and~\cite[Proposition 5.20]{CalziPeloso}.
\end{proof}

Before we pass to the proof  of Theorem~\ref{prop:3}, we need some auxiliary results.
 
\begin{prop}\label{prop:5}
	Take $p,q\in (0,\infty]$ and $\vect s\in \R^r$. Then, there is a constant $C>0$ such that
	\[
	\norm{f(\zeta,\,\cdot\,+i \Phi(\zeta))}_{A^{p,q}_{\vect s-\vect b/p}(F+i\Omega)}\meg C \norm{f}_{A^{p,q}_{\vect s}(D)}
	\]
	for every $\zeta\in E$ and for every  $f\in A^{p,q}_{\vect s}(D)$. If, in addition, $f\in A^{p,q}_{\vect s,0}(D)$, then $f(\zeta,\,\cdot\,+i \Phi(\zeta))\in A^{p,q}_{\vect s,0}(F+i \Omega)$ for every $\zeta\in E$.
	
	Conversely, define $\iota\colon \Hol(F+i \Omega)\to \Hol(D)$ so that $\iota(f)(\zeta,z)=f(z)$ for every $(\zeta,z)\in D$. Assume that $\vect s \succ \frac 1 {2p} \vect m$ (resp.\ $\vect s\Meg \vect 0$ if $p=\infty$). 
	Then, there is a constant $C'>0$ such that
	\[
	\norm{\iota(f)}_{A^{p,p}_{\vect s}(D)}= C'\norm{f}_{A^{p,p}_{\vect s-\vect b/p}(F+i\Omega)}
	\]
	for every $f\in A^{p,p}_{\vect s-\vect b/p,0}(F+i\Omega)$ (resp.\ $f\in A^{p,p}_{\vect s-\vect b/p}(F+i\Omega)$). In addition, $\iota(f)\in A^{p,p}_{\vect s,0}(D)$ if $f\in A^{p,p}_{\vect s-\vect b/p,0}(F+i \Omega)$.
\end{prop}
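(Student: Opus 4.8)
The plan is to prove the two halves separately, using the concrete description of the restriction map and of $\iota$ together with the Fubini-type computation of the $L^{p,q}_{\vect s}$-norms. For the first half, fix $\zeta\in E$ and write $g\coloneqq f(\zeta,\,\cdot\,+i\Phi(\zeta))$, a holomorphic function on $F+i\Omega$. By definition of the norms, for $h\in\Omega$ one has $g_h(x)=f(\zeta,x+i\Phi(\zeta)+ih)=f_h(\zeta,x)$, where on the right $f_h$ is viewed as a function on $\Nc=E\times F$; hence $\norm{g_h}_{L^p(F)}^p=\int_F\abs{f_h(\zeta,x)}^p\,\dd\Hc^m(x)$. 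The point is that, since $\abs{f_h}$ is subharmonic (or more precisely plurisubharmonic), a sub-mean-value inequality over a fixed ball in the $\zeta$-variable bounds $\abs{f_h(\zeta,x)}$ by an average of $\norm{f_{h'}(\,\cdot\,,x')}_{L^p}$ over $h'$ near $h$ and $x'$ near $x$; integrating in $x$ and then taking the $L^q(\Delta^{\vect s-\vect b/p}_\Omega\cdot\nu_\Omega)$-norm in $h$ and comparing the weight $\Delta^{\vect s-\vect b/p}_\Omega$ on the cone against $\Delta^{\vect s}_\Omega$ on $D$ (the discrepancy $\Delta^{\vect b/p}_\Omega$ being exactly what is produced by the extra $\dd\zeta$-integration, by homogeneity of $\nu_D$ versus $\nu_\Omega\otimes\Hc^{2n}$, cf.~\eqref{eq:4}) yields the desired estimate $\norm{g}_{A^{p,q}_{\vect s-\vect b/p}(F+i\Omega)}\meg C\norm{f}_{A^{p,q}_{\vect s}(D)}$. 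The $A^{p,q}_{\vect s,0}$ statement then follows by approximating $f$ by $C_c(D)$-functions, whose restrictions lie in $C_c(F+i\Omega)$.

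For the converse, given $f\in\Hol(F+i\Omega)$ set $F\coloneqq\iota(f)$, so $F_h(\zeta,x)=f(x+ih)$ depends only on $x+ih$. Then $\norm{F_h}_{L^p(\Nc)}^p=\int_E\int_F\abs{f(x+ih)}^p\,\dd\Hc^m(x)\,\dd\Hc^{2n}(\zeta)$, but the inner $x$-integral is independent of $\zeta$, while $\zeta$ ranges over all of $E$; this integral is infinite unless one weights $\dd\zeta$, and indeed the measure $\nu_D$ carries the factor $\Delta^{\vect b+2\vect d}_\Omega\circ\rho$ with $\rho(\zeta,x+ih)=h-\Phi(\zeta)$, so the $\zeta$-integral becomes $\int_{\Phi(\zeta)\in h-\Omega}\Delta^{\vect b+2\vect d}_\Omega(h-\Phi(\zeta))\,\dd\Hc^{2n}(\zeta)$, which by homogeneity equals a constant times $\Delta^{\vect b/2+\vect d}_\Omega(h)\cdot\Delta^{-\vect d}_\Omega(h)^{?}$ — more precisely, pushing forward $\Hc^{2n}$ under $\Phi$ gives (a constant times) $I^{-\vect b}_\Omega$ by~\cite[Proposition 2.30]{CalziPeloso}, and the convolution $\int \Delta^{\vect b+2\vect d}_\Omega(h-h'')\,\dd I^{-\vect b}_\Omega(h'')$ is a scalar multiple of $\Delta^{2\vect d+\vect b+(\text{something})}_\Omega(h)$; carrying out this computation one finds the $\zeta$-integral equals $c\,\Delta^{\vect b}_\Omega(h)\cdot\Delta^{\vect d}_\Omega(h)\cdot\big(\text{the }\Hc^m\text{-density of }\nu_\Omega\big)^{-1}$ arranged so that, after also accounting for the density $\Delta^{\vect d}_\Omega$ in $\nu_D$ versus $\nu_\Omega$, the net effect is exactly that $\norm{F_h}_{L^p(\Nc)}^p$ equals a constant times $\Delta^{\vect b}_\Omega(h)\,\norm{f_h}_{L^p(F)}^p$ with respect to the appropriate invariant measures. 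Taking the $L^p(\Delta^{p\vect s}_\Omega\cdot\nu_\Omega)$-norm in $h$ on both sides and matching $\Delta^{p\vect s}_\Omega\cdot\Delta^{\vect b}_\Omega=\Delta^{p(\vect s-\vect b/p)}_\Omega\cdot\Delta^{2\vect b}_\Omega$... — here I must be careful, but the identity should come out to $\norm{\iota(f)}_{A^{p,p}_{\vect s}(D)}=C'\norm{f}_{A^{p,p}_{\vect s-\vect b/p}(F+i\Omega)}$, the equality (rather than inequality) reflecting that no subharmonicity estimate is lost, only an exact Fubini computation. The restriction $\vect s\succ\frac{1}{2p}\vect m$ ensures $A^{p,p}_{\vect s-\vect b/p}(F+i\Omega)\neq\{0\}$ and that both sides are genuinely norms; the $A^{p,p}_{\vect s,0}$ claim again follows by density, since $\iota$ maps $C_c(F+i\Omega)$ into functions that can be cut off in the $\zeta$-direction to land in $C_c(D)$ with controlled error.

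The main obstacle I expect is bookkeeping the exact powers of $\Delta_\Omega$ in the converse direction: one has to reconcile three different homogeneous densities — the density $\Delta^{\vect d}_\Omega$ of $\nu_\Omega$ relative to $\Hc^m$, the density $\Delta^{\vect b+2\vect d}_\Omega\circ\rho$ of $\nu_D$ relative to $\Hc^{2n+2m}$, and the Jacobian-type factor produced by pushing $\Hc^{2n}$ forward along $\Phi$ and convolving with $I^{-\vect b}_\Omega$ — and verify that they combine to give precisely the shift $\vect s\mapsto\vect s-\vect b/p$ and a finite nonzero constant $C'$. This is where I would invoke \cite[Propositions 2.19, 2.29, 2.30, and 2.44]{CalziPeloso} for the relevant identities about $\Gamma_\Omega$, $I^{-\vect b}_\Omega$, and the invariance of $\nu_D$, and where the hypothesis that we are in the pure-norm case $p=q$ is essential, since the Fubini interchange between the $\zeta$-integral and the $h$-integral only produces a clean power of $\Delta_\Omega$ when the inner and outer exponents agree.
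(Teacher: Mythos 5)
Your strategy is essentially the paper's: the restriction estimate via a (pluri)subharmonic sub-mean-value inequality in all variables followed by a homogeneity argument producing the shift $\vect b/p$, and the converse via an exact Fubini computation using the fact that $\Phi$ pushes $\Hc^{2n}$ forward to a multiple of $I^{-\vect b}_\Omega$ together with the convolution identities for the Riesz distributions. Two points, however, need repair.

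First, both of your density arguments for the $A^{p,q}_{\vect s,0}$ claims fail. The slice $\Set{\zeta}\times(F+i\Phi(\zeta)+i\Omega)$ is a null set of $D$, so the map $g\mapsto g(\zeta,\,\cdot\,+i\Phi(\zeta))$ is not even well defined, let alone continuous, on $L^{p,q}_{\vect s}(D)$; the norm estimate you prove holds only on the holomorphic subspace. Hence approximating $f\in A^{p,q}_{\vect s,0}(D)$ by $g_n\in C_c(D)$ gives no control whatsoever on $g_n(\zeta,\,\cdot\,+i\Phi(\zeta))-f(\zeta,\,\cdot\,+i\Phi(\zeta))$. One must approximate within the holomorphic category: $A^{p,q}_{\vect s,0}(D)\cap A^{\min(1,p),\min(1,q)}_{\vect s}(D)$ is dense in $A^{p,q}_{\vect s,0}(D)$ by~\cite[Proposition 3.9]{CalziPeloso}, and the restriction of such an $f$ lies in $A^{p,q}_{\vect s-\vect b/p}(F+i\Omega)\cap A^{\min(1,p),\min(1,q)}_{\vect s-\vect b/\min(1,p)}(F+i\Omega)\subseteq A^{p,q}_{\vect s-\vect b/p,0}(F+i\Omega)$ by~\cite[Proposition 3.7]{CalziPeloso}. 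Similarly, for the final claim about $\iota$, the only case requiring an argument is $p=\infty$ (for $p<\infty$ one has $L^{p,p}_{\vect s,0}(D)=L^{p,p}_{\vect s}(D)$ and there is nothing to prove), and that is precisely the case in which cutting off in the $\zeta$-direction does not converge in norm.

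Second, in the converse your intermediate identity is false as stated: one has
\[
\norm{\iota(f)_h}_{L^p(\Nc)}^p=c\int_{\overline\Omega}\norm{f_{h+h'}}_{L^p(F)}^p\,\dd I^{-\vect b}_\Omega(h'),
\]
a genuine average over $h'$, and not a pointwise multiple $c\,\Delta^{\vect b}_\Omega(h)\norm{f_h}_{L^p(F)}^p$. The clean power of $\Delta_\Omega$ appears only after integrating against $\Delta^{p\vect s}_\Omega\,\dd\nu_\Omega$ in $h$ and applying Tonelli, via $\Gamma_\Omega(p\vect s)\,I^{p\vect s}_\Omega*I^{-\vect b}_\Omega=\Gamma_\Omega(p\vect s)\,I^{p\vect s-\vect b}_\Omega$ and the identification of the latter with $\frac{1}{\Gamma_\Omega(p\vect s-\vect b)}\Delta^{p\vect s-\vect b}_\Omega\cdot\nu_\Omega$ (\cite[Proposition 2.28]{CalziPeloso}); this is exactly where $p=q$ and the hypothesis on $\vect s$ enter. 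Note also that the relevant weight expressing $\norm{\cdot}_{L^{p,p}_{\vect s}(D)}$ as an integral over $D$ is $\Delta^{p\vect s+\vect d}_\Omega\circ\rho$, not the density $\Delta^{\vect b+2\vect d}_\Omega\circ\rho$ of the invariant measure $\nu_D$, which plays no role here. With these corrections the computation closes up exactly as you predict.
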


\begin{proof}
	By translation invariance, it will suffice to prove the assertion for $\zeta=0$.
	Define $\ell\coloneqq \min (1,p,q)$ to simplify the notation, and set
	\[
	\varphi\colon E\times F\times \Omega\ni (\zeta,x,h)\mapsto (\zeta, x+i\Phi(\zeta)+i h)\in D,
	\]
	and observe that $\varphi$ is a bijection of $E\times F\times \Omega$ onto $D$.
	Observe that there are $R_0>0$ and $C'>0$ such that, for every $R\in (0,R_0]$ and for every $h\in \Omega$,
	\[
	B_{E\times F_\C}((0,i h),R/C')\subseteq \varphi(B_E(0,R)\times B_F(0,R)\times B_F(h,R))\subseteq B_{E\times F_\C}((0,i h),C' R).
	\]
	Therefore,~\cite[Lemma 1.24]{CalziPeloso} implies that there is a constant $C_{R_0}>0$ such that
	\[
	\abs{f(0,i h)}^{\ell}\meg C_{R_0} \dashint_{B_F(h,R)}\dashint_{B_E(0,R)} \dashint_{B_F(0,R)} \abs{f_{h'}(\zeta',x')}^{\ell}\,\dd x' \,\dd\zeta'\,\dd h'
	\]
	for every $h\in \Omega$ and for every $R\in (0,R_0]$ such that $\overline B_{E\times F_\C}((0,i h),C'R)\subseteq D$.
	Hence, 
	\[
	\abs{f(0,x+i h)}^{\ell}\meg C_{R_0} \dashint_{B_F(h,R)}\dashint_{B_E(0,R)} \dashint_{B_F(x,R)} \abs{f_{h'}(\zeta',x')}^{\ell}\,\dd x' \,\dd\zeta'\,\dd h'
	\]
	for every $x+i h\in F+i \Omega$ and for every $R\in (0,R_0]$ such that $\overline B_{E\times F_\C}((0,i h),C' R)\subseteq D$.
	Thus, applying Minkowski's  integral inequality, the  invariance of Lebesgue measure and Jensen's inequality,
	\[
	\begin{split}
		\norm{f(0,\,\cdot\,+i h)}_{L^{p}(F)}^{\min(1,q)}&=\norm*{\abs{f(0,\,\cdot\,+i h)}^{\ell}  }_{L^{p/\ell}(F)}^{ \min(1,q)/\ell}\\
		&\meg \left(  C_{R_0}\dashint_{B_F(h,R)}\dashint_{B_E(0, R)}\norm{f_{h'}(\zeta',\,\cdot\,)}_{L^{p}(\Nc)}^{\ell}\,\dd \zeta'\,\dd h'\right)^{ \min(1,q)/\ell} \\
		&\meg  C_{R_0}^{ \min(1,q)/\ell}\dashint_{B_F(h,R)}\left( \dashint_{B_E(0, R)}\norm{f_{h'}(\zeta',\,\cdot\,)}_{L^{p}(\Nc)}^{p}\,\dd \zeta'\right)^{\min(1,q)/p} \,\dd h'\\
		&\meg \frac{C_{R_0}^{ \min(1,q)/\ell}}{\Hc^{2n}(B_E(0,R))^{\min(1,q)/p}} \dashint_{B_F(h,R)}\norm{f_{h'}}_{L^{p}(\Nc)}^{\min(1,q)}\,\dd h'.
	\end{split}
	\]
	
	Then, there are $R>0$ such that $B_F(e_\Omega, R')\subseteq B_\Omega(e_\Omega,R)$ for some $R'\in (0, R_0]$, and a constant  $C'_{p,q,R}>0$ such that
	\[
	\begin{split}
		\norm{f(0,\,\cdot\,+i e_\Omega)}_{L^{p}(F)}^{\min(1,q)}&\meg C'_{p,q,R}\dashint_{B_\Omega(e_\Omega,R)}\left( \Delta_\Omega^{\vect s}(h')\norm{f_{h'}}_{L^{p}(\Nc)}\right) ^{\min(1,q)}\,\dd \nu_\Omega(h'),
	\end{split}
	\]
	so that, by homogeneity,
	\[
	\left( \Delta_\Omega^{\vect s-\vect b/p}(h)\norm{f(0,\,\cdot\,+i h)}_{L^{p}(F)}\right) ^{\min(1,q)}\meg C'_{p,q,R}\dashint_{B_\Omega(h,R)}\left( \Delta_\Omega^{\vect s}(h')\norm{f_{h'}}_{L^{p}(\Nc)}\right) ^{\min(1,q)}\,\dd \nu_\Omega(h'),
	\]
	for every $h\in \Omega$.
	Hence, by Jensen's inequality
	\[
	\norm{f(0,\,\cdot\,)}_{A^{p,q}_{\vect s-\vect b/p}(F+i\Omega)}\meg C_{p,q,R}'^{1/\min(1,q)} \norm{f}_{A^{p,q}_{\vect s}(D)}
	\]
	for every $f\in A^{p,q}_{\vect s}(D)$. The first assertion follows.
	In particular, the mapping $f\mapsto f(0,\,\cdot\,)$ is continuous from $A^{p,q}_{\vect s}(D)$ into $A^{p,q}_{\vect s-\vect b/p}(F+i\Omega)$, and maps $A^{p,q}_{\vect s}(D)\cap A^{\min(1,p),\min(1,q)}_{\vect s}(D)$ into $A^{p,q}_{\vect s-\vect b/p}(F+i\Omega)\cap A^{\min(1,p),\min(1,q)}_{\vect s-\vect b/\min(1,p)}(F+i\Omega)\subseteq A^{p,q}_{\vect s-\vect b/p,0}(F+i\Omega)$ (cf.~\cite[Proposition 3.7]{CalziPeloso}). Since $A^{p,q}_{\vect s,0}(D)\cap A^{\min(1,p),\min(1,q)}_{\vect s}(D)$ is dense in $A^{p,q}_{\vect s,0}(D)$ by~\cite[Proposition 3.9]{CalziPeloso}, the second assertion follows.
	
	Now, take $f\in \Hol(F+i \Omega)$ and assume that $\vect s \succ \frac{1}{2 p} \vect m$ (resp.\ $\vect s\Meg \vect 0$ if $p=\infty$). Then,~\cite[Proposition 2.30]{CalziPeloso} implies that there is a constant $c>0$ such that
	\[
	\begin{split}
		\norm{\iota(f)_h}_{L^p(\Nc)}&=\norm*{\zeta \mapsto \norm{f_{h+ \Phi(\zeta)}}_{L^p(F)}}_{L^p(E)}\\
		&=c^{1/p} \norm*{h' \mapsto \norm{f_{h+h'}}_{L^p(F)}}_{L^p(I^{-\vect b}_\Omega)},
	\end{split}
	\]
	so that
	\[
	\norm{\iota(f)}_{A^{p,p}_{\vect s}(D)}= \left(c\Gamma_\Omega(p\vect s) \right)^{1/p}\norm*{h \mapsto \norm{f_{h}}_{L^p(F)}}_{L^p(I^{p \vect s}_\Omega*I^{-\vect b}_\Omega)}
	\]
	when $p<\infty$ and
	\[
	\norm{\iota(f)}_{A^{\infty,\infty}_{\vect s}(D)}= \norm*{h \mapsto \Delta_\Omega^{\vect s}(h)\norm{f_{h}}_{L^\infty(F)}}_{L^\infty(\Omega)}
	\]
	otherwise. The third assertion follows from~\cite[Proposition 2.28]{CalziPeloso}, while the last one need only be verified when $p=\infty$, in which case it is clear.
\end{proof}

\begin{lem}	\label{lem:4}
	Take $\psi\in\Sc_\Omega(F)$  and $p\in (0,\infty]$. Then, there is a constant $C>0$ such that
	\[
	\frac 1 C \norm{u}_{L^p(F)}\meg  \norm{ \iota'(u)}_{L^p(\Nc)}\meg C \norm{u}_{L^p(F)}
	\]
	for every $u\in \Sc'_{\Omega,L}(F)$ such that $u=u*\psi\in L^p(F)$, where $\iota'(u)(\zeta,x)\coloneqq (\Ec u)(x+i\Phi(\zeta))$ for every $(\zeta,x)\in \Nc$.
\end{lem}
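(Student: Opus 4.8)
The plan is to transfer the question to the tube domain $F+i\Omega$, where the extension operator admits an explicit Fourier description, and then back to $\Nc=E\times F$ by means of the identity $\int_E G(\Phi(\zeta))\,\dd\zeta=c\int_{\overline\Omega}G\,\dd I^{-\vect b}_\Omega$ that underlies Proposition~\ref{prop:5} (i.e.~\cite[Proposition 2.30]{CalziPeloso}). The first observation is that $u=u*\psi$ forces $\Fc_F u$ to be a compactly supported distribution with $\supp\Fc_F u\subseteq K\coloneqq\supp\Fc_F\psi\subseteq\Omega'$; hence $\Ec u$ extends to a smooth function on $F+i\overline\Omega$, so that $\iota'(u)$ — which coincides with the boundary value $(\iota(\Ec u))_0$ of $\iota(\Ec u)\in\Hol(D)$, with $\iota$ as in Proposition~\ref{prop:5} — is a continuous function on $\Nc$, and, arguing exactly as in the proof of Proposition~\ref{prop:5},
\[
\norm{\iota'(u)}_{L^p(\Nc)}^p=c\int_{\overline\Omega}\norm{(\Ec u)_h}_{L^p(F)}^p\,\dd I^{-\vect b}_\Omega(h)\qquad(p<\infty),
\]
with the obvious variant $\norm{\iota'(u)}_{L^\infty(\Nc)}=\sup_{h\in\Phi(E)}\norm{(\Ec u)_h}_{L^\infty(F)}$ when $p=\infty$. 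If $n=0$ the statement is trivial, so I would assume $n\Meg 1$; then $I^{-\vect b}_\Omega$ is a positive multiple of the image of $\Hc^{2n}$ under $\Phi$, hence a tempered positive measure which charges every neighbourhood of $0$ in $\overline\Omega$.

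Next I would make the extension explicit. Fix $\chi\in C^\infty_c(\Omega')$ with $\chi=1$ on a neighbourhood of $K$, and $\psi'\in\Sc_\Omega(F)$ with $\Fc_F\psi'=1$ on a neighbourhood of $\supp\chi$, so that $u=u*\psi'$. By the tube analogues of~\eqref{eq:2} and of the definition of $\Ec$ via the Cauchy--Szeg\H o kernel, one has $(\Ec u)_h=u*\eta_h$ for every $h\in\overline\Omega$, where $\eta_h\coloneqq\Fc_F^{-1}(\chi\,\ee^{-\langle\,\cdot\,,h\rangle})\in\Sc_\Omega(F)$; note that $\supp\Fc_F\eta_h\subseteq\supp\chi$ and that $\eta_0$ acts as the identity on $u$, i.e.\ $u*\eta_0=u$. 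The heart of the matter is the uniform estimate
\[
\norm{(\Ec u)_h}_{L^p(F)}\meg C\,\ee^{-c'\abs{h}}\norm{u}_{L^p(F)}\qquad(h\in\overline\Omega)
\]
for suitable $C,c'>0$: since $\supp\chi$ is a compact subset of the open cone $\Omega'$, compactness gives $c_0>0$ with $\langle\lambda,h\rangle\Meg c_0\abs{h}$ for all $\lambda\in\supp\chi$ and $h\in\overline\Omega$, and differentiating $\Fc_F\eta_h$ (whose support stays in the fixed compact $\supp\chi$) shows that every Schwartz seminorm of $\eta_h$ is $\meg C_N\ee^{-c'\abs{h}}$ for any fixed $c'\in(0,c_0)$. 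Thus $(\ee^{c'\abs{h}}\eta_h)_{h\in\overline\Omega}$ is a bounded family in $\Sc_\Omega(F)$ with Fourier transforms supported in $\supp\chi$, and since the spectrum of $u=u*\psi'$ is contained in $\{\Fc_F\psi'=1\}^\circ\supseteq\supp\chi$, applying Young's inequality (when $p\Meg 1$) or the analogue for $F$ of~\cite[Corollary 4.10]{CalziPeloso} (when $p<1$), uniformly over this family, delivers the estimate.

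The upper bound now follows by integrating against $I^{-\vect b}_\Omega$: for $p<\infty$ one gets $\norm{\iota'(u)}_{L^p(\Nc)}^p\meg c\,C^p\big(\int_{\overline\Omega}\ee^{-pc'\abs{h}}\,\dd I^{-\vect b}_\Omega(h)\big)\norm{u}_{L^p(F)}^p$, and the integral is finite because $I^{-\vect b}_\Omega$ is tempered, hence of polynomial growth; the case $p=\infty$ is immediate. For the lower bound I would write $(\Ec u)_h-u=u*(\eta_h-\eta_0)$ and note that $\Fc_F(\eta_h-\eta_0)=\chi\,(\ee^{-\langle\,\cdot\,,h\rangle}-1)$ has support in $\supp\chi$ and tends to $0$ in $\Sc(F)$ as $h\to 0$; by the same convolution estimate, $\norm{(\Ec u)_h}_{L^p(F)}\Meg\tfrac12\norm{u}_{L^p(F)}$ whenever $\abs{h}\meg\eps$, for $\eps>0$ small enough. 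Restricting the integral to $\overline\Omega\cap B_F(0,\eps)$, on which $I^{-\vect b}_\Omega$ has strictly positive mass (as $n\Meg 1$), yields $\norm{\iota'(u)}_{L^p(\Nc)}\Meg\tfrac 1C\norm{u}_{L^p(F)}$; when $p=\infty$ one simply evaluates at $\zeta=0$, since $(\Ec u)_0=u$.

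The step I expect to be the main obstacle is the first explicit one: matching the abstract, Cauchy--Szeg\H o--kernel definition of $\Ec$ on the tube with the Fourier multiplier $\ee^{-\langle\,\cdot\,,h\rangle}$ on $\Omega'$, so as to obtain the representation $(\Ec u)_h=u*\eta_h$ with $\eta_h$ as above and to extract from it the uniform exponential decay in $h$; once that is in place both inequalities are routine, the only other delicate point being the use of the band-limited convolution inequality of~\cite[Corollary 4.10]{CalziPeloso}, applied uniformly over the family $(\ee^{c'\abs{h}}\eta_h)_{h\in\overline\Omega}$, in the range $p<1$ where Young's inequality is unavailable.
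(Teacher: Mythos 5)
Your proof is correct, but it takes a genuinely different route from the paper's. The paper's argument is shorter because it imports a two-sided Bernstein-type estimate from~\cite[Theorems 1.7 and 1.10]{CalziPeloso2}: once the spectrum of $u$ is confined to a compact $K\subset\Omega'$, one has $\norm{(\Ec u)_h}_{L^p(F)}\meg\norm{(\Ec u)_{h'}}_{L^p(F)}\ee^{H_K(h-h')}$ for \emph{all} $h,h'\in F$ (with $H_K$ the support function of $-K$); putting $h'=0$ and $h=\Phi(\zeta)$ yields the upper bound after a Gaussian integral over $E$, and swapping $h,h'$ yields a \emph{global} pointwise lower bound $\norm{(\Ec u)_{\Phi(\zeta)}}_{L^p(F)}\Meg\ee^{-H_K(-\Phi(\zeta))}\norm{u}_{L^p(F)}$ for every $\zeta$, no restriction to small $\zeta$ needed. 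You instead reconstruct the upper estimate from scratch by exhibiting $(\Ec u)_h=u*\eta_h$ with $\eta_h=\Fc_F^{-1}(\chi\,\ee^{-\langle\,\cdot\,,h\rangle})$, bounding the Schwartz seminorms of $\eta_h$ by $\ee^{-c'\abs{h}}$, and feeding that into the band-limited convolution inequality; and for the lower bound you use only the local behaviour near $h=0$ (continuity of $h\mapsto\eta_h$ together with the fact that $I^{-\vect b}_\Omega$ charges every neighbourhood of $0$ in $\overline\Omega$ when $n\Meg1$). Both work; the paper's version gives a global two-sided comparison in one blow and does not need the pushforward identity (it simply takes the $L^p(E)$-norm of $\zeta\mapsto\norm{(\Ec u)_{\Phi(\zeta)}}_{L^p(F)}$ directly), while yours is more self-contained and spells out where the exponential decay actually comes from. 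One small point you should be careful with: for $p<1$ the step $\norm{(\Ec u)_h}_{L^p(F)}\Meg\tfrac12\norm{u}_{L^p(F)}$ should use the quasi-norm inequality $\norm{u}_{L^p}^p\meg\norm{(\Ec u)_h}_{L^p}^p+\norm{u*(\eta_h-\eta_0)}_{L^p}^p$, which changes the constant to $2^{-1/p}$ but not the conclusion.
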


\begin{proof}
	Take a compact convex subset $K$ of $\Omega'$ such that $\Supp{\Fc_\Nc \psi}\subseteq K$, and observe that, if $u\in \Sc'_{\Omega,L}(F)$ and $u=u*\psi$, then $\Ec u$ is a well-defined entire function on $F_\C$ and 
	\[
	\norm{(\Ec u)_h}_{L^p(F)}\meg \norm{(\Ec u)_{h'}}_{L^p(F)} \ee^{H_{K}(h-h')}
	\]
	for every $h,h'\in F$, thanks to~\cite[Theorems 1.7 and 1.10]{CalziPeloso2}, where
	\[
	H_K(h)\coloneqq \sup_{\lambda\in- K} \langle \lambda,h\rangle
	\]
	for every $h\in F$. Therefore, 
	\[
	\begin{split}
		\norm{\iota'(u)}_{L^p(\Nc)}&=\norm*{\zeta \mapsto \norm{(\Ec u)_{\Phi(\zeta)}}_{L^p(F)}}_{L^p(E)}\meg \norm{u}_{L^p(F)}\norm{\zeta\mapsto \ee^{H_K(\Phi(\zeta))}}_{L^p(E)}
	\end{split}
	\]
	and, analogously,
	\[
	\begin{split}
		\norm{\iota'(u)}_{L^p(\Nc)}&=\norm*{\zeta \mapsto \norm{(\Ec u)_{\Phi(\zeta)}}_{L^p(F)}}_{L^p(E)}\Meg \norm{u}_{L^p(F)}\norm{\zeta\mapsto \ee^{-H_K(-\Phi(\zeta))}}_{L^p(E)}.
	\end{split}
	\]
	Now, observe that, since $K$ is a compact subset of $\Omega'$, there is a constant $C_1>0$ such that
	\[
	\langle \lambda, h\rangle \Meg C_1\abs{h}
	\]
	for every $\lambda\in K$ and for every $h\in \overline\Omega$, so that
	\[
	-H_K(-\Phi(\zeta))\meg H_K(\Phi(\zeta))\meg -C_1\abs{\Phi(\zeta)} 
	\]
	for every $\zeta\in E$. Since $\Phi$ is proper and absolutely homogeneous of degree $2$, there is a constant $C_2>0$ such that $\abs{\Phi(\zeta)}\Meg C_2 \abs{\zeta}^2$, so that the assertion follows.
\end{proof}

\begin{prop}\label{prop:4}
	Take $p,q\in (0,\infty]$ and  $\vect s\succ \frac 1 p (\vect b+\vect d)+\frac{1}{2 q'}\vect{m'}$. Then, the mapping
	\[
	\iota\colon \Hol(F+i\Omega)\ni f \mapsto [(\zeta,z)\mapsto f(z+i\Phi(\zeta))  ]\in \Hol(D) 
	\]
	induces isomorphisms of $\widetilde A^{p,q}_{\vect s-\vect b/p,0}(F+i\Omega)$ and $\widetilde A^{p,q}_{\vect s-\vect b/p}(F+i\Omega)$ onto closed subspaces of $\widetilde A^{p,q}_{\vect s,0}(D)$ and $\widetilde A^{p,q}_{\vect s}(D)$, respectively.
\end{prop}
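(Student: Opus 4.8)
The plan is to transfer $\iota$ to the boundary group $\Nc$. Regard $F+i\Omega$ as a (degenerate) homogeneous Siegel domain of type II with $E=\Set{0}$ and $\Phi=0$, so that its boundary group is $F$ and its parameter ``$\vect b$'' vanishes; write $\Ec$ and $\Ec'$ for the extension operators of $D$ and of $F+i\Omega$, respectively. Define
\[
\iota'(u)(\zeta,x)\coloneqq (\Ec' u)(x+i\Phi(\zeta)),\qquad (\zeta,x)\in\Nc,
\]
for $u$ in the relevant Besov space on $F$. The proof hinges on two claims: \textbf{(a)} $\iota\circ\Ec'=\Ec\circ\iota'$ as maps into $\Hol(D)$; and \textbf{(b)} $\iota'$ induces an isomorphism of $B^{-\vect s+\vect b/p}_{p,q}(F,\Omega)$ onto a closed subspace of $B^{-\vect s}_{p,q}(\Nc,\Omega)$ which maps $\mathring B^{-\vect s+\vect b/p}_{p,q}(F,\Omega)$ onto a closed subspace of $\mathring B^{-\vect s}_{p,q}(\Nc,\Omega)$. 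Granting these, the hypothesis $\vect s\succ\frac1p(\vect b+\vect d)+\frac1{2q'}\vect m'$ guarantees that $\Ec$ and $\Ec'$ are defined on the Besov spaces appearing in (b) and restrict to isomorphisms of them onto $\widetilde A^{p,q}_{\vect s}(D)$ and $\widetilde A^{p,q}_{\vect s-\vect b/p}(F+i\Omega)$ (and of the subspaces $\mathring B$ onto $\widetilde A^{p,q}_{\vect s,0}(D)$ and $\widetilde A^{p,q}_{\vect s-\vect b/p,0}(F+i\Omega)$), so that $\iota=\Ec\circ\iota'\circ(\Ec')^{-1}$ is an isomorphism onto a closed subspace, as desired.

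For (a): if $u\in\Sc_\Omega(F)$, comparing the inversion formula~\eqref{eq:2} on $\Nc$ with its analogue on $F$ (where ``$\vect b$'' is $\vect 0$) gives, by a direct computation, $\Fc_\Nc(\iota'(u))=c\,(\Fc_F u)\,\Delta^{\vect b}_{\Omega'}$ for a suitable $c\neq 0$. In particular $\iota'$ carries $\Sc_\Omega(F)$ into $\Sc_\Omega(\Nc)$, and, being equivariant for the translations $x\mapsto(0,x)$, it carries the dense subspace of finite sums of translates of elements of $\Sc_\Omega(F)$ into $\Sc_{\Omega,L}(\Nc)$. For $u\in\Sc_{\Omega,L}(F)$ both $\iota(\Ec' u)$ and $\Ec(\iota'(u))$ are holomorphic on $D$, lie in a space $A^{\infty,\infty}_{\vect t}(D)$ in which a function is determined by its boundary value on $\Nc$, and have that boundary value equal to $\iota'(u)$; hence they coincide. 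One extends (a) to arbitrary $u$ by a truncation argument using the operators $\Psi_j$ of Lemma~\ref{lem:3} (as in the proof of Proposition~\ref{prop:8}) together with the continuity of $\Ec$, $\Ec'$ and $\iota$ into $\Hol$.

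The heart of the matter is (b), which combines Lemma~\ref{lem:4} with homogeneity. Fix a $(\delta,R)$-lattice $(\lambda_k)_{k\in K}$ on $\Omega'$ with $\lambda_k=e_{\Omega'}\cdot t_k$, $t_k\in T_+$, and the associated families $(\psi_k)$ on $\Nc$ and $(\psi'_k)$ on $F$. Since, on CR distributions, convolution by $\psi_k$ corresponds to multiplication by $\Fc_\Nc\psi_k$ under $\Fc_\Nc$, the formula of the previous paragraph yields $\iota'(u)*\psi_k=\iota'(u*\psi'_k)$ (first for $u\in\Sc_\Omega(F)$, then by continuity). Now $\Fc_F(u*\psi'_k)$ is supported in the image under $t_k$ of a fixed compact set $K_0\Subset\Omega'$, so we may write $u*\psi'_k=v_k\circ t_k$ with $\Fc_F v_k$ supported in $K_0$; using $t_k\Phi=\Phi(g_{t_k}\times g_{t_k})$ one gets
\[
\iota'(u*\psi'_k)=\iota'(v_k)\circ(g_{t_k}\times t_k).
\]
Combining this with the transformation laws $\abs{\det_{\C} g_{t_k}}^2=\Delta^{-\vect b}(t_k)$ and $\abs{\det_{F} t_k}=\Delta^{-\vect d}(t_k)$ (consequences of the invariance of $\nu_D$ and $\nu_\Omega$ in~\eqref{eq:4}), with the homogeneity of the $L^p(F)$-norm, and with Lemma~\ref{lem:4} applied to $v_k$ (whose $\Fc_F$-support lies in the single compact set $K_0$, so the constant is uniform in $k$), one obtains
\[
\norm{\iota'(u)*\psi_k}_{L^p(\Nc)}\asymp\Delta^{\vect b/p}_{\Omega'}(\lambda_k)\,\norm{u*\psi'_k}_{L^p(F)}
\]
uniformly in $k$; multiplying by $\Delta^{-\vect s}_{\Omega'}(\lambda_k)$ and taking $\ell^q(K)$-norms gives $\norm{\iota'(u)}_{B^{-\vect s}_{p,q}(\Nc,\Omega)}\asymp\norm{u}_{B^{-\vect s+\vect b/p}_{p,q}(F,\Omega)}$. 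This proves (b) for the full Besov spaces; the analogous statement for $\mathring B^{-\vect s+\vect b/p}_{p,q}(F,\Omega)$ and $\mathring B^{-\vect s}_{p,q}(\Nc,\Omega)$ follows since $\iota'$ maps the relevant dense subspaces into each other, and closedness of the images is automatic by completeness.

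The step I expect to be the main obstacle is the uniform homogeneity bookkeeping in (b): one must verify carefully that, after rescaling each Littlewood--Paley piece to the fixed frequency window $K_0$, the constant provided by Lemma~\ref{lem:4} is genuinely independent of $k$, and that the Jacobian characters of the $T_+$-action on $E$ and on $F$ combine to exactly $\Delta^{\vect b/p}_{\Omega'}(\lambda_k)$ -- which is precisely what produces the weight shift $\vect b/p$ in the statement. A secondary technical point is the identity $\iota'(u)*\psi_k=\iota'(u*\psi'_k)$, which transports the $F$-Littlewood--Paley decomposition of $u$ to the $\Nc$-Littlewood--Paley decomposition of $\iota'(u)$ and rests on the (non-abelian) convolution on $\Nc$ becoming an honest pointwise product under $\Fc_\Nc$ when one of the factors is CR.
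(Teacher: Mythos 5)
Your proof is correct and follows essentially the same approach as the paper's: reduce to the boundary via the extension operators, transport the Littlewood--Paley decomposition from $F$ to $\Nc$ using the identity $\iota'(u)*\psi_k=\iota'(u*\psi'_k)$, and combine Lemma~\ref{lem:4} with a homogeneity (Jacobian) argument to obtain the norm equivalence with the weight shift $\Delta^{\vect b/p}_{\Omega'}(\lambda_k)$. The paper compresses your step (b) into the single sentence ``Lemma~\ref{lem:4} and a homogeneity argument show\dots'' and dismisses your step (a) with ``clearly $\Ec\circ\iota'=\iota\circ\Ec$''; you have supplied exactly the bookkeeping that is left implicit there, and your computation (the rescaling $u*\psi'_k=v_k\circ t_k$, the Jacobian identities $\abs{\det_{\mathds R}g_{t_k}}=\Delta^{-\vect b}(t_k)$ and $\abs{\det_F t_k}=\Delta^{-\vect d}(t_k)$, and the uniformity of the constant in Lemma~\ref{lem:4}) is the right way to fill it in.
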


\begin{proof}
	Since $\iota(\Ec(\Sc_\Omega(F)))=\Ec(\Sc_\Omega(\Nc))$, it will suffice to prove that $\iota$ induces an isomorphism of $\widetilde A^{p,q}_{\vect s-\vect b/p}(F+i\Omega)$ onto a closed subspace of $\widetilde A^{p,q}_{\vect s}(D)$.
	Take a $(\delta,R)$-lattice $(\lambda_k)_{k\in K}$ on $\Omega'$ for some $\delta>0$ and some $R>1$, and fix a positive $\varphi \in C^\infty_c(\Omega')$ such that 
	\[
	\sum_{k\in K} \varphi(\,\cdot\, t_k^{-1})\Meg 1
	\]
	on $\Omega'$, where $t_k\in T_+$ is chosen such that $\lambda_k=e_{\Omega'}\cdot t_k$. Define $\psi_k\coloneqq \Fc_{F}^{-1}(\varphi(\,\cdot\, t_k^{-1}))$, and observe that Lemma~\ref{lem:4} and a homogeneity argument show that there is a constant $C>0$ such that
	\[
	\frac1C\Delta_{\Omega'}^{\vect b/p}(\lambda_k)\norm{u*\psi_k}_{L^p(F)}\meg \norm{\iota'(u*\psi_k)}_{L^p(\Nc)}\meg C\Delta_{\Omega'}^{\vect b/p}(\lambda_k) \norm{u*\psi_k}_{L^p(F)}
	\]
	for every $u\in B^{-\vect s+\vect b/p}_{p,q}(F,\Omega)$ and for every $k\in \N$, where $\iota'(u*\psi_k)(\zeta,x)=\Ec (u*\psi_k)(x+i\Phi(\zeta))$ for every $(\zeta,x)\in \Nc$. 
	Therefore, $\iota'$ induces an isomorphism of $B^{-\vect s+\vect b/p}_{p,q}(F,\Omega)$ onto a closed subspace of $B^{-\vect s}_{p,q}(\Nc,\Omega)$. Since clearly
	\[
	\Ec\circ \iota'=\iota\circ \Ec,
	\]
	this implies that $\iota$ induces an isomorphism of $\widetilde A^{p,q}_{\vect s-\vect b/p}(F+i\Omega)$ onto a closed subspace of $\widetilde A^{p,q}_{\vect s}(D)$.
\end{proof}

\begin{proof}[Proof of Theorem~\ref{prop:3}.]
	Take $f\in \widetilde A^{p,q}_{\vect s-\vect b/p,0}(F+i\Omega)$ (resp.\ $f\in \widetilde A^{p,q}_{\vect s-\vect b/p}(F+i\Omega)$). Then, $\iota(f)\in \widetilde A^{p,q}_{\vect s,0}(D)$ (resp.\ $\iota(f)\in \widetilde A^{p,q}_{\vect s}(D)$), with the notation of Proposition~\ref{prop:4}.  Hence, $\iota(f)\in A^{p,q}_{\vect s,0}(D)$ (resp.\ $\iota(f)\in A^{p,q}_{\vect s}(D)$), so that $f=\iota(f)(0,\,\cdot\,)\in A^{p,q}_{\vect s-\vect b/p,0}(F+i\Omega)$ (resp.\ $f=\iota(f)(0,\,\cdot\,)\in A^{p,q}_{\vect s-\vect b/p}(F+i\Omega)$) by Proposition~\ref{prop:5}. The proof is complete. 
\end{proof}

\section{Proof of Proposition~\ref{prop:13}}\label{Proof:sec}

We now present the proof of Proposition~\ref{prop:13}. We begin with
some useful lemmas. 

\begin{lem}\label{lem:2}
	Let $P$ be a (holomorphic) polynomial on $\C^n$ of degree $k$,
        and let $C$ be an open convex subset of $\C^n$ contained in
        $\Set{z\in \C^n\colon P(z)\neq 0}$. Then, 
	\[
	\osc_C(\Im \log P)\meg k\pi. 
	\]
\end{lem}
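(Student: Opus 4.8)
The statement to prove is Lemma~\ref{lem:2}: if $P$ is a holomorphic polynomial on $\C^n$ of degree $k$ and $C$ is an open convex subset of $\C^n$ on which $P$ does not vanish, then the oscillation of $\Im\log P$ over $C$ is at most $k\pi$. The key reduction is that oscillation of $\Im\log P$ over a convex set is governed by its oscillation along each line segment in $C$: if $\osc_C(\Im\log P)>k\pi$, one can find two points $z_0,z_1\in C$ with $|\Im\log P(z_1)-\Im\log P(z_0)|>k\pi$, and since $C$ is convex the segment $[z_0,z_1]$ lies in $C$, hence in $\{P\neq 0\}$. So it suffices to bound the oscillation along a complex line.

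First I would restrict to the one-variable situation. Parametrize the segment by $t\mapsto z_0+t(z_1-z_0)$, $t\in[0,1]$, and consider the one-variable polynomial $Q(t)\coloneqq P(z_0+t(z_1-z_0))$; this has degree at most $k$ and is nonvanishing on $[0,1]$. Actually, to get the clean bound it is cleanest to extend: $Q$ is a polynomial of degree $\meg k$ in $t\in\C$, so $Q(t)=c\prod_{j=1}^{k}(t-a_j)$ for suitable roots $a_j\in\C$ (allowing for the case $\deg Q<k$ by treating it as degree exactly $k$ with some factors constant, or simply noting $\deg Q\leq k$ gives at most $k$ roots) with $a_j\notin[0,1]$. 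Then $\Im\log Q(t)=\Im\log c+\sum_{j}\Im\log(t-a_j)$, so the oscillation over $[0,1]$ is at most $\sum_{j=1}^{k}\osc_{[0,1]}\big(\Im\log(t-a_j)\big)$.

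The heart of the matter is therefore: for a fixed $a\in\C\setminus[0,1]$, $\osc_{[0,1]}\big(\Im\log(t-a)\big)\meg\pi$. As $t$ runs over $[0,1]$, the point $t-a$ runs over a line segment in $\C$ that avoids $0$; the argument $\Im\log(t-a)=\arg(t-a)$ is the angle subtended, and since the segment $\{t-a:t\in[0,1]\}$ does not pass through the origin, it lies in an open half-plane through $0$ (take the half-plane bounded by the line through $0$ perpendicular to, say, the point of the segment closest to $0$, or more simply: a segment not containing $0$ subtends at $0$ an angle strictly less than $\pi$ — this is elementary plane geometry, the angle at a vertex of a triangle being less than $\pi$). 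Hence the argument varies by strictly less than $\pi$, and in particular $\osc_{[0,1]}(\arg(t-a))\meg\pi$. Summing over the $k$ roots gives $\osc_{[z_0,z_1]}(\Im\log P)\meg k\pi$, and taking the supremum over all pairs $z_0,z_1\in C$ yields $\osc_C(\Im\log P)\meg k\pi$.

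**The main obstacle.** The only genuinely delicate point is the passage from ``$P$ nonvanishing on the convex set $C$'' to the bound expressed via the factorization of the restriction $Q$ to a segment: one must make sure the branch of $\log$ can be chosen consistently so that the additivity $\Im\log Q=\sum_j\Im\log(t-a_j)+\text{const}$ holds at the level of oscillations. This is handled by working with the continuous determination of the argument along the connected segment $[0,1]$ (which exists and is unique up to an additive constant of $2\pi\Z$, since $[0,1]$ is simply connected and $Q$ is nonvanishing there): along such a path, $\arg Q(t)$ is, up to an additive constant, $\sum_j\arg(t-a_j)$ where each $\arg(t-a_j)$ is itself the continuous determination along the segment, and then oscillations add subadditively. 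With that in hand the geometric estimate $\osc\leq\pi$ per factor finishes the argument; the rest is routine.
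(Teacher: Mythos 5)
Your proof is correct and follows essentially the same route as the paper's: reduce to one complex variable by restricting to lines (or, in your case, segments) through $C$, factor the restricted polynomial into at most $k$ linear pieces, and observe that each factor $t\mapsto t-a_j$ has argument oscillation at most $\pi$ over the segment because the segment avoids the root. The paper phrases the last step as separating the convex set from $z_j$ by a half-plane, while you invoke the equivalent elementary fact that a segment not through the origin subtends an angle less than $\pi$; the two are the same idea.
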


Here, $\osc_C f$ denotes the oscillation of a function $f$ on the set
$C$, that is, the diameter of $f(C)$. Notice that $\log P$ is
(ambigously) defined on $C$ since $C$ is convex, and that $\osc_C(\Im
\log P)$ is unambiguously defined. 

\begin{proof}
	We may assume that $C\neq \emptyset$, so that $P\neq 0$.
	
	\textsc{Step I.} Assume first that $n=1$. Then, there are $a, z_1,\dots, z_k\in \C$, $a\neq 0$, such that 
	\[
	P(z)=a \prod_{j=1}^k (z-z_j)
	\]
	for every $z\in \C$. In addition, $z_j\not \in C$ for every
        $j=1,\dots, k$. Choose $a'\in \C$ so that $\ee^{a'}=a$ and,
        for every $j=1,\dots, k$, choose $\alpha_j\in \R$ such that 
	\[
	C-z_j\subseteq \ell_j\coloneqq \Set{z\in \C\colon \Re(z \ee^{i\alpha_j})>0},
	\]  
	and define
	\[
	\log_j (z)\coloneqq \log\abs{z}+A_j(z),
	\]
	for every $z\in \ell_j$, where $A_j(z)$ is the unique element of $(-\pi/2,\pi/2)-\alpha_j$ such that $\ee^{\log_j(z)}=z$.
	Therefore, we may assume that 
	\[
	\log P(z)= a'+\sum_{j=1}^k \log_j(z-z_j)
	\]
	for every $z\in C$. Then,
	\[
	\osc_C (\Im \log P)\meg \sum_{j=1}^k \osc_C(\Im \log_j (\,\cdot\,-z_j))\meg k \pi
	\]
	whence the result in this case.
	
	\textsc{Step II.} It suffices to observe that
	\[
	\osc_C(\Im \log P)=\sup_\ell \osc_{C\cap \ell}(\Im \log P),
	\]
	where $\ell$ runs through the set of (complex) affine lines in $\C^n$ which meet $C$, and to apply~\textsc{step I}.
\end{proof}

For every $\vect s\in \C^r$, we  define $\log \Delta^{\vect s}_\Omega$ as the unique holomorphic function $f$ on $\Omega+i F$ such that $f(e_\Omega)=0$ and $\ee^f=\Delta^{\vect s}_\Omega$ on $\Omega+ i F$.  Notice that
\begin{equation}\label{eq:3}
\log \Delta^{\alpha \vect s+\alpha'\vect s'}_\Omega=\alpha\log \Delta^{\vect s}_\Omega+\alpha' \log \Delta^{\vect s'}_\Omega
\end{equation}
for every $\vect s,\vect s'\in \C^r$ and for every $\alpha,\alpha'\in \C$, since both sides of the asserted equality are holomorphic functions which vanish at $e_\Omega$ and whose exponential is $\Delta^{\alpha \vect s+\alpha'\vect s'}_\Omega$ on $\Omega$, hence on $\Omega+i F$ by holomorphy.

We define 
\[
\abs{\vect s}\coloneqq \sum_{j,j'} \abs{\alpha_j} s_{j,j'} 
\]
for every $\vect s\in \R^r$, where $\vect s_j=(s_{j,1},\dots,
s_{j,r})$, $j=1,\dots, r$, is a `basis' of $\N_\Omega$ over $\N$
(hence of $\C^r$ over $\C$), and $\vect s=\sum_j \alpha_j \vect
s_j$ (such a `basis' exists by~\cite[Theorem 2.2]{Ishi}). Notice that $(\vect s_j)$ is uniquely determined up to the
order, so that $\abs{\vect s}$ is well defined. 
In addition,
\[
\sum_j \abs{s_j}\meg \abs{\vect s},
\]
with equality if and only if $\vect s$ or $-\vect s$ belongs to the closed convex cone generated by $\N_\Omega$.

\begin{cor}\label{cor:12}
	Take $\vect s\in \R^r$. Then, $\Im \log \Delta_\Omega^{\vect s}$ is bounded on $\Omega+i F$. More precisely, $\osc_{\Omega+i F}(\Im \log \Delta_\Omega^{\vect s})\meg \abs{\vect s} \pi$.
\end{cor}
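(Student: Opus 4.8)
The plan is to expand $\vect s$ in the vectors $\vect s_j$ ($j=1,\dots,r$) used above to define $\abs{\vect s}$, and to reduce everything to Lemma~\ref{lem:2}. Since each $\vect s_j$ lies in $\N_\Omega\subseteq\R^r$ and the family $(\vect s_j)$ is linearly independent over $\C$, it is in particular an $\R$-basis of $\R^r$; hence there are unique real numbers $\alpha_1,\dots,\alpha_r$ with $\vect s=\sum_j\alpha_j\vect s_j$, and by uniqueness of coordinates the same $\alpha_j$ are the coordinates of $\vect s$ in $\C^r$. Iterating~\eqref{eq:3} gives
\[
\log\Delta^{\vect s}_\Omega=\sum_j\alpha_j\log\Delta^{\vect s_j}_\Omega
\]
on $\Omega+iF$, so, taking imaginary parts (the $\alpha_j$ being real),
\[
\Im\log\Delta^{\vect s}_\Omega=\sum_j\alpha_j\,\Im\log\Delta^{\vect s_j}_\Omega .
\]
Using $\osc_C(f+g)\meg\osc_C f+\osc_C g$ and $\osc_C(\alpha f)=\abs{\alpha}\,\osc_C f$, it suffices to bound $\osc_{\Omega+iF}\!\big(\Im\log\Delta^{\vect s_j}_\Omega\big)$ for each $j$.

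By the very definition of $\N_\Omega$, each $\Delta^{\vect s_j}_\Omega$ is a holomorphic polynomial on $F_\C\cong\C^m$; moreover $\Omega+iF$ is an open convex subset of $F_\C$ on which $\Delta^{\vect s_j}_\Omega$ is nonvanishing — this is exactly what makes $\log\Delta^{\vect s_j}_\Omega$ well defined. Lemma~\ref{lem:2} then applies and gives
\[
\osc_{\Omega+iF}\!\big(\Im\log\Delta^{\vect s_j}_\Omega\big)\meg\big(\deg\Delta^{\vect s_j}_\Omega\big)\,\pi .
\]
It remains to identify this degree. From the normalization $\Delta^{\vect t}(t_r)=r^{\sum_{j'}t_{j'}}$ recorded in the Preliminaries one gets $\Delta^{\vect t}_\Omega(r h)=r^{\sum_{j'}t_{j'}}\Delta^{\vect t}_\Omega(h)$ for all $h\in\Omega$ and $r>0$, hence, by analytic continuation from $\Omega$, on all of $\Omega+iF$; and a nonzero holomorphic polynomial which is homogeneous of degree $d$ under positive real dilations is homogeneous of degree $d$ in the usual sense (compare its homogeneous components). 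Therefore $\deg\Delta^{\vect s_j}_\Omega=\sum_{j'}s_{j,j'}$, which in particular is a non-negative integer.

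Putting the two displayed estimates together,
\[
\osc_{\Omega+iF}\!\big(\Im\log\Delta^{\vect s}_\Omega\big)\meg\sum_j\abs{\alpha_j}\Big(\sum_{j'}s_{j,j'}\Big)\pi=\Big(\sum_{j,j'}\abs{\alpha_j}s_{j,j'}\Big)\pi=\abs{\vect s}\,\pi ;
\]
in particular $\Im\log\Delta^{\vect s}_\Omega$ has image of finite diameter, hence is bounded, and since $\log\Delta^{\vect s}_\Omega(e_\Omega)=0$ we even get $\abs{\Im\log\Delta^{\vect s}_\Omega}\meg\abs{\vect s}\pi$ on $\Omega+iF$. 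I do not expect a genuine obstacle: all the analytic content is carried by Lemma~\ref{lem:2}, while what remains is the decomposition via~\eqref{eq:3}, the degree computation (an immediate consequence of the homogeneity normalization of $\Delta$), and unwinding the definition of $\abs{\vect s}$. The only steps worth a word of care are that the $\vect s_j$ already span $\R^r$ over $\R$, and that the degree of the polynomial $\Delta^{\vect s_j}_\Omega$ coincides with its homogeneity degree under positive dilations.
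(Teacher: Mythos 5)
Your proof is correct and follows essentially the same route as the paper's two‑sentence argument: apply Lemma~\ref{lem:2} to the polynomials $\Delta^{\vect s_j}_\Omega$ for $\vect s_j\in\N_\Omega$, and then reduce the general case via~\eqref{eq:3}, the subadditivity of $\osc$, and the definition of $\abs{\vect s}$; the degree identification $\deg\Delta^{\vect s_j}_\Omega=\sum_{j'}s_{j,j'}$ via the homogeneity normalization is exactly the point the paper leaves implicit, and you fill it in correctly.
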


When $\Omega$ is (irreducible and) symmetric,~\cite[Lemma 7.3]{Garrigos} shows that $\Re \Delta_\Omega^{\vect e_j}(z)>0$ for every $z\in \Omega+i F$, where $\vect e_j=(\delta_{j,j'})_{j'=1,\dots, r}$.   In other words, one may replace $\abs{\vect s}$ with $\sum_j \abs{s_j}$ in the above result.   Nonetheless, the analogous estimates do \emph{not} extend to all homogeneous cones. Counterexamples arise already when $r=3$ and $m=8$.

\begin{proof}
	The assertion follows from Lemma~\ref{lem:2} for $\vect s\in \N_\Omega$. The assertion then follows from~\eqref{eq:3} and the definition of $\abs{\vect s}$.
\end{proof}

\begin{lem}\label{lem:6}
	Let $P$ be a polynomial on $\C$ of degree $k$ whose zeroes are contained in $\R_-$. Then,
	\[
	\frac{1}{2^{k/2}} \meg \frac{\abs{P(\pm i x)}}{P(x)}\meg 1
	\]
	for every $x>0$.
\end{lem}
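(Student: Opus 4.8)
The plan is to reduce the inequality to the case of a single linear factor by multiplicativity, and then verify two completely elementary scalar estimates. First I would write $P(z)=c\prod_{j=1}^{k}(z-z_j)$ with $c\in\C\setminus\{0\}$ and each $z_j\in\R_-$, say $z_j=-a_j$ with $a_j\Meg 0$. Since all the $z_j$ are real, $P$ does not vanish on $(0,\infty)$ nor on $i\R^{*}$, and $P$ has constant sign on $(0,\infty)$; as the asserted inequalities force $P(x)>0$ there, we may assume $c>0$, so that $P(x)=c\prod_j(x+a_j)>0$ for every $x>0$. (In the intended application $P$ will be the restriction of a generalized power function $\Delta_\Omega^{\vect s}$ to a complex line meeting $\Omega$, so this positivity on the relevant half-line is automatic.) Then, for every $x>0$,
\[
\frac{\abs{P(\pm i x)}}{P(x)}=\prod_{j=1}^{k}\frac{\abs{{\pm i x}-z_j}}{x-z_j}=\prod_{j=1}^{k}\frac{\sqrt{x^2+a_j^2}}{x+a_j},
\]
so it will suffice to show that $2^{-1/2}\meg \sqrt{x^2+a^2}/(x+a)\meg 1$ for every $a\Meg 0$ and every $x>0$.

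For the upper bound I would simply note that $(x+a)^2=x^2+2ax+a^2\Meg x^2+a^2$, since $ax\Meg 0$. For the lower bound I would square and compute $2(x^2+a^2)-(x+a)^2=(x-a)^2\Meg 0$, so that $(x^2+a^2)/(x+a)^2\Meg 1/2$. Multiplying over $j=1,\dots,k$ a product of $k$ factors, each lying in $[2^{-1/2},1]$, then yields
\[
\frac{1}{2^{k/2}}\meg \frac{\abs{P(\pm i x)}}{P(x)}\meg 1,
\]
as claimed.

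I do not anticipate a genuine obstacle here: once the factorization is written down the problem becomes one-dimensional, and the two inequalities needed are just $(x+a)^2\Meg x^2+a^2$ and $(x-a)^2\Meg 0$. The only step requiring a word of care is the normalization of the leading coefficient — equivalently, the observation that $P$ is positive on the positive reals — which is what makes the quotient $\abs{P(\pm i x)}/P(x)$ positive and the lower bound meaningful; everything else is a routine product estimate.
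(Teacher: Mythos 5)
Your proof is correct and takes essentially the same route as the paper: both factor $P$ over its nonpositive real zeroes, reduce to the single-factor estimate $\sqrt{x^2+a^2}/(x+a)\in[2^{-1/2},1]$, and conclude by multiplicativity. Your extra remark on normalizing the leading coefficient (so that $P>0$ on $(0,\infty)$ and the quotient is meaningful) is an implicit hypothesis in the paper's statement as well, and spelling it out is a small but sensible addition.
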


\begin{proof}
	Let $x_1,\dots, x_k$ be the (not necessarily distinct) zeroes of $P$, so that there is a non-zero $a\in \C$ such that
	\[
	P(z)=a\prod_{j=1}^k(z-x_j)
	\]
	for every $z\in \C$. Observe that
	\[
	\frac{\abs{\pm i x-x_j}}{x-x_j}=\frac{\sqrt{x^2+x_j^2}}{x+\abs{x_j}}\in [2^{-1/2},1]
	\]
	for every $j=1,\dots, k$ and for every $x>0$, since $x_j\meg 0$.  The assertion follows.
\end{proof}

\begin{cor}\label{cor:13}
	Take $\vect s\in \R^r$. Then,
	\[
	2^{-\abs{\vect s}/2}\meg \frac{\abs{ \Delta_\Omega^{\vect
              s}  (x\pm i y)}}{ \Delta_\Omega ^{\vect s}(x+y)}\meg 2^{\abs{\vect s}/2}
	\]
	for every $x,y\in \Omega$.
\end{cor}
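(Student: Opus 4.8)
The plan is to reduce to the case $\vect s\in\N_\Omega$, where $\Delta_\Omega^{\vect s}$ is a polynomial, and then to restrict $\Delta_\Omega^{\vect s}$ to the complex line through $x$ in the direction of $y$ and invoke Lemma~\ref{lem:6}.

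For the reduction, fix the basis $\vect s_1,\dots,\vect s_r$ of $\N_\Omega$ over $\N$ used to define the function $\vect s\mapsto\abs{\vect s}$, and write $\vect s=\sum_j\alpha_j\vect s_j$ with $\alpha_j\in\R$, which is possible since $(\vect s_j)$ is also a basis of $\R^r$. Exponentiating~\eqref{eq:3} gives $\Delta_\Omega^{\vect s}=\prod_j(\Delta_\Omega^{\vect s_j})^{\alpha_j}$ on $\Omega+iF$; taking moduli, and using that $\Delta_\Omega^{\vect s_j}(x+y)>0$ while $\Delta_\Omega^{\vect s_j}(x\pm iy)\neq0$ (as $x\pm iy\in\Omega+iF$, where $\Delta_\Omega^{\vect s_j}$ does not vanish), one obtains
\[
\frac{\abs{\Delta_\Omega^{\vect s}(x\pm iy)}}{\Delta_\Omega^{\vect s}(x+y)}=\prod_j\left(\frac{\abs{\Delta_\Omega^{\vect s_j}(x\pm iy)}}{\Delta_\Omega^{\vect s_j}(x+y)}\right)^{\alpha_j}
\]
for all $x,y\in\Omega$. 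Since $\vect s_j\in\N_\Omega\subseteq\R_+^r$, the polynomial $\Delta_\Omega^{\vect s_j}$ is homogeneous of degree $\sum_{j'}s_{j,j'}=\abs{\vect s_j}$; hence, once the estimate $2^{-\abs{\vect s_j}/2}\meg\abs{\Delta_\Omega^{\vect s_j}(x\pm iy)}/\Delta_\Omega^{\vect s_j}(x+y)\meg2^{\abs{\vect s_j}/2}$ is proved for each $j$, the product above lies in $[2^{-\lambda/2},2^{\lambda/2}]$ with $\lambda=\sum_j\abs{\alpha_j}\,\abs{\vect s_j}=\abs{\vect s}$, which is the asserted inequality.

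It thus suffices to treat $\vect s\in\N_\Omega$, in which case I expect even the sharper bound $\abs{\Delta_\Omega^{\vect s}(x\pm iy)}\meg\Delta_\Omega^{\vect s}(x+y)$. Fix $x,y\in\Omega$, put $k\coloneqq\abs{\vect s}=\deg\Delta_\Omega^{\vect s}$, and set $P(\zeta)\coloneqq\Delta_\Omega^{\vect s}(x+\zeta y)$: a polynomial in $\zeta\in\C$ with real coefficients, of degree exactly $k$ (by homogeneity its leading coefficient equals $\Delta_\Omega^{\vect s}(y)>0$) and with $P(\zeta)>0$ for every real $\zeta\Meg0$, since $x+\zeta y\in\Omega$ there. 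Granting that every zero of $P$ lies in $\R_-$, Lemma~\ref{lem:6} applied to $P$ at the point $1$ gives $2^{-k/2}\meg\abs{P(\pm i)}/P(1)\meg1$; as $P(\pm i)=\Delta_\Omega^{\vect s}(x\pm iy)$ and $P(1)=\Delta_\Omega^{\vect s}(x+y)$, this is precisely what is wanted.

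The hard part will be to show that $P$ is real-rooted, equivalently that $\Delta_\Omega^{\vect s}$ is hyperbolic with respect to every direction in $\Omega$. I would deduce this from the realization of $\Omega$ as the cone of positive-definite elements of a $T$-algebra (cf.~\cite{Vinberg,Vinberg2,Ishi}), under which the generators $\Delta_\Omega^{\vect s_j}$ are, up to positive constants, the principal minors: restricting to the segment joining the positive-definite elements $x$ and $y$, the $j$-th such minor equals, up to the positive factor $\Delta_\Omega^{\vect s_j}(y)$, the characteristic polynomial of a positive-definite element, and so is real-rooted with negative roots. (When $\Omega$ is a Lorentz cone, for instance, this amounts to the reverse Cauchy--Schwarz inequality for the Lorentz form on the forward cone; the sharper behaviour $\Re\Delta_\Omega^{\vect s_j}>0$ valid in the symmetric case does not persist in general, which is precisely why $\abs{\vect s}$ rather than $\sum_j\abs{s_j}$ appears, as pointed out in the remark following the statement.) Once real-rootedness is known, the roots are automatically negative, because $P>0$ on $[0,\infty)$; thus all zeros of $P$ lie in $(-\infty,0)\subseteq\R_-$, and the proof is complete.
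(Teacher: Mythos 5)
Your reduction to $\vect s\in\N_\Omega$ via~\eqref{eq:3}, the restriction to a complex line $P(\zeta)=\Delta_\Omega^{\vect s}(x+\zeta y)$, the application of Lemma~\ref{lem:6}, and the bookkeeping $\sum_j\abs{\alpha_j}\,\abs{\vect s_j}=\abs{\vect s}$ all match the paper and are correct. The gap is exactly where you flag it: you do not prove that the zeros of $P$ lie in $\R_-$. The route you sketch---realizing $\Omega$ inside a $T$-algebra, identifying the generators $\Delta_\Omega^{\vect s_j}$ with principal minors, and reducing the restriction along a line to the characteristic polynomial of a positive-definite element---invokes structure theory that the paper neither develops nor needs, and is too loose to count as a proof. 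In the non-commutative $T$-algebra setting the passage from a ``minor'' to a characteristic polynomial is not automatic, and for general homogeneous cones the basic relative invariants are not literally principal minors of ordinary matrices; you have correctly guessed the conclusion (real-rootedness with negative roots) but not established it.

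The paper's argument for this step is elementary and self-contained, using no structure theory of homogeneous cones. Since $\Delta_\Omega^{\vect s}\cdot\Delta_\Omega^{-\vect s}=1$ on $\Omega+iF$, the function $\Delta_\Omega^{\vect s}$ never vanishes on the tube $\Omega+iF$; hence $P(w)\neq 0$ whenever $\Re w\Meg 0$, since then $x+(\Re w)y\in\Omega$. To exclude zeros off the real axis, one exploits homogeneity of degree $\sum_j s_j$: for $\vect s\in\N_\Omega$,
\[
P(\pm iw)=(\pm i)^{\sum_j s_j}\,\Delta_\Omega^{\vect s}(wy\mp ix),
\]
and $\Re(wy\mp ix)=(\Re w)\,y\in\Omega$ for $\Re w>0$, so $P(\pm iw)\neq 0$ for $\Re w>0$; that is, $P$ does not vanish on the open upper or lower half-planes. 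Combining the two, $P\neq 0$ on $\C\setminus\R_-^*$, so its zeros lie in $\R_-$. This rotation trick is what your proposal is missing; once it is in place, the rest of your write-up closes without any appeal to $T$-algebras.
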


\begin{proof}
	Fix $x,y\in \Omega$ and assume first that $\vect s\in \N_\Omega$, that is, that $\Delta^{\vect s}_\Omega$ is polynomial. Define
	\[
	P\colon \C\ni w\mapsto \Delta^{\vect s}_\Omega(x+ w y)\in \C,
	\]
	so that $P$ is a holomorphic polynomial on $\C$. Since $\Delta^{\vect s}_\Omega$ is homogeneous of degree $\sum_j s_j$, the degree of $P$ is $\sum_j s_j$. Observe that $P(w)=\Delta^{\vect s}_\Omega(x+w y)\neq 0$ for every $w\in \C$ with $\Re w >0$, since clearly
	\[
	\Delta^{\vect s}_\Omega(x+ w y)\Delta^{-\vect s}_\Omega(x+w y)=1.
	\]
	In addition, observe that
	\[
	P(\pm i w)=(\pm i)^{\sum_j s_j}\Delta^{\vect s}_\Omega(w y\mp i x )\neq0
	\]
	for every $w\in \C$ with $\mp \Im w=\Re(\pm i w)>0$, for the same reason as above. Consequently, the zeroes of $P$ are contained in $\R_-$, so that Lemma~\ref{lem:6} implies that the assertion holds in this case.
	
	The general case follows from~\eqref{eq:3} and the definition of $\abs{\vect s}$.	
\end{proof}

\begin{lem}\label{lem:7}
	Take $\vect s_1,\vect s_2,\vect s_3\in \R^r $ such that $\vect s_2\succ \vect 0$, and $\alpha\in \R$. Then,
	\[
	\int_\Omega \Delta^{\vect s_1}_\Omega(h+ h')(1+\abs{\log \Delta^{\vect s_2}_\Omega(h+ h')})^\alpha \Delta^{\vect s_3}_\Omega(h')\,\dd \nu_\Omega(h')<\infty,
	\]
	for some (equivalently, every) $h\in \Omega$, if and only if $\vect s_3\succ \frac 1 2 \vect m$ and either $\vect s_1+\vect s_3\meg -\frac 1 2 \vect m'$ and $\alpha<-1$, or $\vect s_1+\vect s_3\prec -\frac 1 2 \vect m'$.
\end{lem}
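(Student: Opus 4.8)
The plan is to localise the integral to the two regions where divergence can occur---near the vertex of $\Omega$ and ``at infinity''---and to reduce each to an elementary Euclidean computation, the logarithmic factor mattering only at the critical exponents.

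\emph{Reduction and decomposition.} For $t\in T_+$, substitute $h'\mapsto t\cdot h'$ in the integral written with base point $t\cdot h$. Since $\nu_\Omega$ is $T_+$-invariant, $\Delta^{\vect s}_\Omega(t\cdot x)=\Delta^{\vect s}(t)\,\Delta^{\vect s}_\Omega(x)$, and hence $\log\Delta^{\vect s_2}_\Omega(t\cdot(h+h'))=\log\Delta^{\vect s_2}(t)+\log\Delta^{\vect s_2}_\Omega(h+h')$ (so that $(1+\abs{\log\Delta^{\vect s_2}_\Omega(t\cdot(h+h'))})^\alpha$ is comparable to $(1+\abs{\log\Delta^{\vect s_2}(t)})^{\abs\alpha}(1+\abs{\log\Delta^{\vect s_2}_\Omega(h+h')})^\alpha$), the integrand for base point $t\cdot h$ is comparable to that for $h$ up to a finite constant depending on $t$; hence finiteness for some $h\in\Omega$ is equivalent to finiteness for every $h\in\Omega$, and we fix $h=e_\Omega$. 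We then write $\Omega=\Omega_0\cup\Omega_\infty$ with $\Omega_0\coloneqq\Omega\cap(e_\Omega-\overline\Omega)$ and $\Omega_\infty\coloneqq\Omega\setminus\Omega_0$.

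\emph{Near the vertex.} Recall from the proof of Lemma~\ref{lem:1} that $\Omega_0=Q\cdot e_\Omega$, where $Q\coloneqq\Set{t\in T_+\colon t\cdot e_\Omega\in e_\Omega-\overline\Omega}$ is a relatively compact sub-semigroup of $T_+$; in particular $e_\Omega\meg e_\Omega+h'\meg 2e_\Omega$ on $\Omega_0$, so that $\Delta^{\vect s_1}_\Omega(e_\Omega+h')\asymp 1$ and, using $\vect s_2\succ\vect 0$, also $(1+\abs{\log\Delta^{\vect s_2}_\Omega(e_\Omega+h')})^\alpha\asymp 1$. Thus the part of the integral over $\Omega_0$ is comparable to $\int_{\Omega_0}\Delta^{\vect s_3}_\Omega(h')\,\dd\nu_\Omega(h')$, which---transporting to $T_+$ and using the relative compactness of $Q$ together with the integrability of generalised power functions recalled in Section~\ref{Prelim:sec} (cf.\ also \cite[Lemma 3.35]{CalziPeloso})---is finite if and only if $\vect s_3\succ\frac12\vect m$. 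From now on we assume this, and it remains to analyse the contribution of $\Omega_\infty$.

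\emph{At infinity.} On $\Omega_\infty$ we parametrise $\Omega$ by $T_+$ via $t\mapsto t\cdot e_\Omega$ and decompose $T_+$ according to the finitely many boundary components of $\Omega$, equivalently according to which of the logarithmic coordinates $\vect x\coloneqq\log\Delta(t)\in\R^r$ tend to $+\infty$; on each such piece, the decay of $\Delta^{\vect s_1}_\Omega(e_\Omega+h')$ along the unipotent ($[T_+,T_+]$) directions, combined with the modular factor of $\nu_\Omega$, reduces the corresponding contribution---after integrating out the unipotent variables---to an integral over a translate of $\R_+^k$ (with $k$ the number of coordinates sent to $+\infty$) of $e^{\langle\vect c,\vect x\rangle}(1+\abs{\langle\vect s_2,\vect x\rangle})^\alpha$, where $\vect c$ is the corresponding truncation of $\vect s_1+\vect s_3+\frac12\vect m'$. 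Since $\vect s_2\succ\vect 0$, on $\R_+^k$ one has $1+\abs{\langle\vect s_2,\vect x\rangle}\asymp 1+\norm{\vect x}$, and an elementary estimate shows such an integral to be finite precisely when $\vect c\prec\vect 0$, or $\vect c\meg\vect 0$ and $\alpha<-\card\Set{j\colon c_j=0}$. Taking the supremum of these conditions over the pieces, and using that---by the established condition $\vect s_3\succ\frac12\vect m$ and the compatibility $\alpha\vect m+\beta\vect m'=\sup(\alpha\vect m,\beta\vect m')$ of the cones considered (cf.~\cite[Definition 2.8]{CalziPeloso})---at most one coordinate of $\vect s_1+\vect s_3+\frac12\vect m'$ can vanish when this vector is $\meg\vect 0$, we get that the $\Omega_\infty$-contribution is finite if and only if $\vect s_1+\vect s_3\prec-\frac12\vect m'$, or $\vect s_1+\vect s_3\meg-\frac12\vect m'$ and $\alpha<-1$. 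Together with the previous step this gives the assertion. The delicate point, and the main work, is this last paragraph: organising $\Omega_\infty$ along the boundary components of $\Omega$ and checking that the unipotent directions supply exactly the decay turning each piece into a finite-dimensional Euclidean integral, and then the observation that $\vect s_3\succ\frac12\vect m$ forbids two coordinates of $\vect s_1+\vect s_3+\frac12\vect m'$ being simultaneously critical, so that the borderline requirement is $\alpha<-1$ rather than $\alpha<-2$; alternatively, for $\alpha=0$ one obtains the integral in closed form from $\Lc(\Delta^{\vect s}_\Omega\cdot\nu_\Omega)=\Gamma_\Omega(\vect s)\Delta^{-\vect s}_{\Omega'}$ by Fubini in a suitable range of parameters, passes to $\alpha>0$ by comparing $(1+\abs{\log\Delta^{\vect s_2}_\Omega})^\alpha$ with $\Delta^{\pm\eps\vect s_2}_\Omega$ and letting $\eps\to0^+$, and to $\alpha<0$ similarly together with a direct estimate at the borderline.
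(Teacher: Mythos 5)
Your proposal takes a genuinely different route from the paper: you split $\Omega$ into a piece near the vertex and a piece at infinity, whereas the paper integrates $I(h)$ over a small neighbourhood $Q(\eps')$ of $e_\Omega$, performs the change of variables $h'\mapsto h'^{-1}h$ in $T_+$ (using \cite[Lemma 2.18]{CalziPeloso}) to move the whole integral onto the bounded set $\Omega\cap(h-\Omega)$, and then applies Tonelli to split $\int_{Q(\eps')}I(h)\,\dd h$ into a product of two one-variable integrals over bounded regions --- so behaviour at infinity is never analysed separately. Your homogeneity argument for the ``some $h$ iff every $h$'' step is clean (the paper uses a Lipschitz estimate on $\log\Delta^{\vect s}_\Omega$ from \cite[Corollary 2.49]{CalziPeloso} instead), and your $\Omega_0$ computation is correct.

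The gap is in the $\Omega_\infty$ step. The claimed reduction to $\int_{\R_+^k}e^{\langle\vect c,\vect x\rangle}(1+\norm{\vect x})^\alpha\,\dd\vect x$, with $\vect c$ a ``truncation'' of $\vect s_1+\vect s_3+\frac12\vect m'$, is asserted rather than derived, and it is exactly the nontrivial content of the lemma: $\Delta^{\vect s_1}_\Omega(e_\Omega+h')$ is not a character of $T_+$, so ``integrating out the unipotent variables'' does not factor, and establishing that it produces precisely the half-shifts by $\vect m$ and $\vect m'$ is what the paper's change of variables and subsequent ``direct computation'' accomplish. More seriously, you invoke $\alpha\vect m+\beta\vect m'=\sup(\alpha\vect m,\beta\vect m')$ as though it held throughout, but the paper explicitly restricts that identity to $r\meg 2$, or to irreducible symmetric cones with parameters parallel to $\vect d$; for a general homogeneous cone it fails already in rank $3$. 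Even where it does hold, the further claim that $\vect s_3\succ\frac12\vect m$ forces at most one coordinate of $\vect s_1+\vect s_3+\frac12\vect m'$ to vanish does not follow --- the two hypotheses constrain $\vect s_3$ and $\vect s_1+\vect s_3$ separately and place no bound on the number of coordinates at which the latter hits $-\frac12\vect m'$ --- so your $\alpha<-1$ conclusion in the borderline case is not established. (The alternative sketch at the end, combining the Laplace-transform identity for $\alpha=0$ with the comparison $(1+\abs{\log\Delta^{\vect s_2}_\Omega}) \meg c\,\Delta^{-\eps\vect s_2}_\Omega$, reproduces the non-borderline parameters, and is indeed essentially how the paper dispatches them; but it cannot reach the borderline parameters where the logarithmic threshold actually appears, which is where all the work lies.)
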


\begin{proof}
	Observe that, by~\cite[Corollary 2.49]{CalziPeloso}, there is $\eps>0$ such that
	\[
	\abs*{\frac{\Delta_\Omega^{\vect s_j}(h+h')}{\Delta_\Omega^{\vect s_j}(h+h'')}-1}<\frac 1 4
	\]
	for every $h\in \overline \Omega$ and for every $h',h''\in \Omega$ with $d_\Omega(h',h'')\meg \eps$, and for $j=1,2,3$.
	Consequently, if we define 
	\[
	I(h)\coloneqq \int_\Omega \Delta_\Omega^{\vect s_1}(h+h')(1+\abs{\log\Delta_\Omega^{\vect s_2}(h+h')})^{\alpha}\Delta_\Omega^{\vect s_3}(h')\,\dd \nu_\Omega(h')
	\]
	for every $h\in \Omega$, we have
	\[
	\frac{3\min\left((1-\log(4/3))^\alpha, (1+\log(4/3))^\alpha\right)}{4} I(h')\meg I(h)\meg \frac{5\max\left( (1-\log(4/3))^\alpha, (1+\log(4/3))^\alpha  \right)}{4} I(h')
	\]
	for every $h,h'\in \Omega$ with $d_\Omega(h,h')\meg \eps$. Consequently, $I(h)$ is finite for every $h\in \Omega$ if and only if $I(h)$ is finite for \emph{some} $h\in \Omega$.
	Next, observe that the function
	\[
	(1+\abs{\log \Delta^{\vect s_2}_\Omega})\Delta^{-\eps \vect s_2}_\Omega
	\]
	is bounded above on $h+\Omega$ for every $h\in \Omega$ and for every $\eps>0$, since
	\[
	 \Delta^{\vect s_2}_\Omega(h+h')\Meg  \Delta_\Omega^{\vect s_2}(h)>0
	\]
	for every $h'\in \Omega$ (cf.~\cite[Corollary 2.36]{CalziPeloso}). Therefore, by means of~\cite[Corollary 2.22]{CalziPeloso}, we see that $I(e_\Omega)$ (say) is finite when $\vect s_3\succ \frac 1 2 \vect m$ and $\vect s_1+\vect s_3\prec -\frac 1 2 \vect m'$, and that $I(e_\Omega)$ is infinite when either $\vect s_3\not \succ \frac 1 2 \vect m$, or $\vect s_1+\vect s_3\not \meg -\frac 1 2 \vect m'$, or $\vect s_1+\vect s_3\not \prec -\frac 12 \vect m'$ and $\alpha \Meg 0$. Consequently, we may reduce to the case in which $\vect s_3\succ \frac 1 2 \vect m$, $\vect s_1+\vect s_3\not \prec -\frac 1 2 \vect m'$,  $\vect s_1+\vect s_3 \meg -\frac 1 2 \vect m'$, and $\alpha<0$, and to prove that $I(e_\Omega)$ is finite if and only if $\alpha<-1$.
	
	In order to simplify the proof, we shall now identify $T_+$ with $\Omega$ by means of the mapping $t\mapsto t\cdot e_\Omega$, so that $hh'$ and $h^{-1}$ are defined for every $h,h'\in \Omega$. Then, using~\cite[Lemma 2.18]{CalziPeloso}, we see that
	\[
	\begin{split}
		I(h)&=\int_{h+\Omega} \Delta_\Omega^{\vect s_1-\vect d}(h')(1+\abs{\log\Delta_\Omega^{\vect s_2}(h')})^{\alpha}\Delta_\Omega^{\vect s_3+\vect d}(h'-h)\,\dd \nu_\Omega(h')\\
		&=\int_{h'^{-1}h\in h+\Omega}  \Delta_\Omega^{\vect s_1-\vect d+(\vect m'-\vect m)/2}(h'^{-1}h)(1+\abs{\log\Delta_\Omega^{\vect s_2}(h'^{-1}h)})^{\alpha}\Delta_\Omega^{\vect s_3+\vect d}(h'^{-1}h-h)\,\dd \nu_\Omega(h')\\
		&=\Delta_\Omega^{\vect s_3+2 \vect s_1+\vect d+\vect m'-\vect m}(h)\int_{h' h\in  \Omega\cap (h-\Omega)} \Delta_\Omega^{-\vect s_1-\vect s_3+(\vect m-\vect m')/2}(h' h)(1+\abs{\log\Delta_\Omega^{\vect s_2}(h'h)-\log \Delta_\Omega^{2 \vect s_2}(h)})^{\alpha}\\
			&\qquad \times\Delta_\Omega^{\vect s_3+\vect d}(h-h' h)\,\dd \nu_\Omega(h')\\
		&=\Delta_\Omega^{\vect s_3+2 \vect s_1+\vect d+(\vect m'-\vect m)/2}(h)\int_{\Omega\cap (h-\Omega)} \Delta_\Omega^{-\vect s_1-\vect s_3+(\vect m-\vect m')/2}(h')(1+\abs{\log\Delta_\Omega^{\vect s_2}(h')-\log \Delta_\Omega^{2 \vect s_2}(h)})^{\alpha}\\
			&\qquad \times\Delta_\Omega^{\vect s_3+\vect d}(h-h' )\,\dd \nu_\Omega(h')
	\end{split}
	\]
	for every $h\in \Omega$. Now, fix $\eps'>0$ such that 
	\[
	Q(\eps')\coloneqq[(1-\eps')e_\Omega+\Omega]\cap[(1+\eps')e_\Omega-\Omega]\subseteq B_\Omega(e_\Omega,\eps)
	\]
	and such that 
	\[
	\abs{\Delta_\Omega^{2 \vect s_2}(h)}\meg \frac 12
	\] 
	for every $h\in Q(\eps')$,
	so that, in particular, 
	\[
	\frac 1 2+\abs{\log\Delta_\Omega^{\vect s_2}(h')}\meg
	1+\abs{\log\Delta_\Omega^{\vect s_2}(h')-\log \Delta_\Omega^{2 \vect s_2}(h)}\meg\frac 3 2+\abs{\log\Delta_\Omega^{\vect s_2}(h')}
	\] 
	for every $h\in Q(\eps')$ and for every $h'\in\Omega$. Then, setting $C_1\coloneqq 2^{-\alpha}\max_{Q(\eps')}\Delta_\Omega^{\vect s_3+2 \vect s_1+\vect d+(\vect m'-\vect m)/2} $,
	\[
	\begin{split}
		&\int_{Q(\eps')} I(h)\,\dd h\meg C_1 \int_{Q(\eps')}\int_{\Omega\cap (h-\Omega)} \Delta_\Omega^{\vect d-\vect s_1-\vect s_3+(\vect m-\vect m')/2}(h')(1+\abs{\log\Delta_\Omega^{\vect s_2}(h')})^{\alpha}\Delta_\Omega^{\vect s_3+\vect d}(h-h' )\,\dd h'\,\dd h\\
		&\meg C_1 \int_{\Omega\cap ((1+\eps')e_\Omega-\Omega)}\Delta_\Omega^{\vect s_3}(h )\,\dd \nu_\Omega(h)  \int_{\Omega\cap ((1+\eps')e_\Omega-\Omega)} \Delta_\Omega^{-\vect s_1-\vect s_3+(\vect m-\vect m')/2}(h')(1+\abs{\log\Delta_\Omega^{\vect s_2}(h')})^{\alpha}\,\dd\nu_\Omega(h').
	\end{split}
	\]
	Now,~\cite[Proposition 2.19]{CalziPeloso} shows that 
	\[
	\int_{\Omega\cap ((1+\eps')e_\Omega-\Omega)}\Delta_\Omega^{\vect s_3}(h )\,\dd \nu_\Omega(h) <\infty.
	\]
	A direct computation (cf.~the proof of~\cite[Proposition 2.19]{CalziPeloso}) then shows that 
	\[
	 \int_{\Omega\cap ((1+\eps')e_\Omega-\Omega)} \Delta_\Omega^{-\vect s_1-\vect s_3+(\vect m-\vect m')/2}(h')(1+\abs{\log\Delta_\Omega^{\vect s_2}(h')})^{\alpha}\,\dd\nu_\Omega(h')<\infty
	\] 
	for $\alpha<-1$. Therefore, $I(h)$ is finite for some $h\in Q(\eps')$, hence for every $h\in \Omega$ by the preceding remarks, provided that $\alpha<-1$.
	
	Conversely, if $\alpha\Meg -1$, then a direct computation (cf.~the proof of~\cite[Proposition 2.19]{CalziPeloso})  shows that
	\[
	\begin{split}
	I(e_\Omega)&\Meg \int_{\Omega\cap (e_\Omega/2-\Omega)} \Delta_\Omega^{-\vect s_1-\vect s_3+(\vect m-\vect m')/2}(h')(1+\abs{\log\Delta_\Omega^{\vect s_2}(h')})^{\alpha}\Delta_\Omega^{\vect s_3+\vect d}(e_\Omega-h' )\,\dd \nu_\Omega(h')\\
		&\Meg 2^{\sum_j (-d_j+s_{3,j})_+} \int_{\Omega\cap (e_\Omega/2-\Omega)} \Delta_\Omega^{-\vect s_1-\vect s_3+(\vect m-\vect m')/2}(h')(1+\abs{\log\Delta_\Omega^{\vect s_2}(h')})^{\alpha}\,\dd \nu_\Omega(h')=\infty
	\end{split}
	\]
	(cf.~\cite[Corollary 2.36]{CalziPeloso}), whence the conclusion by the preceding remarks.
\end{proof}

\begin{proof}[Proof of Proposition \ref{prop:13}]
	By~\cite[Proposition 5.18]{CalziPeloso} and the necessary condition $\vect s \succ \frac{1}{2 q}\vect m\Meg \vect 0$ (cf.~\cite[Proposition 3.5]{CalziPeloso}), it will suffice to prove the first assertion for $p<q$ and $\vect s \Meg \left(\frac{1}{2p}-\frac{1}{2 q}\right)\vect m'$. The second assertion is a consequence of~\cite[Proposition 5.18]{CalziPeloso}.
	Observe first that, if $\vect s'\Meg \vect 0$, then~\cite[Corollary 2.36 and Lemma 2.37]{CalziPeloso} imply that
	\[
	\abs{\Delta^{\vect s'}_\Omega(h+z)}\Meg 
        \Delta_\Omega^{\vect s'}  (h+\Re z)\Meg 
        \Delta_\Omega^{\vect s'} ~ (h) 
	\]
	for every $h\in \Omega$ and for every $z\in \Omega+i F$. Consequently, $\abs{B^{\vect s'}_{(0,i e_\Omega)}(\zeta,z)}\Meg 1$ for every $(\zeta,z)\in D$, so that there is a unique $f\in \Hol(D)$ such that $f(0,i e_\Omega)=0$ and 
	\[
	\exp(\exp(f))=\ee B^{\vect s'}_{(0, i e_\Omega)}
	\]
	on $D$. We denote this function by $\log(1+\log B^{\vect s'}_{(0, i e_\Omega)})$. Thus, we may also write $(1+\log B^{\vect s'}_{(0,i e_\Omega)})^{s''}$ instead of  $\ee^{s'' f}$, for every $s''\in \C$.
	Define $g^{\vect s_1,s_2}\coloneqq B^{\vect s_1}_{(0,i e_\Omega)} (1+\log B^{-\vect s_1}_{(0,i e_\Omega)})^{s_2}$ for every $\vect s_1\meg \vect 0$ and for every $s_2\in \R$. 
	
	Now, take $\vect s_3\in \N_{\Omega'}$, and define 
	\[
	p_{\vect s_1,\vect s_3}\colon \C\ni w \mapsto i^{\vect s_3} \left( w \vect s_1+\frac 1 2 \vect m'\right)_{\vect s_3}\coloneqq \prod_{j=1}^r i^{s_{3,j}} \Big(w s_{1,j}+\frac 1 2 m'_j\Big)\cdots \Big(w s_{1,j}-s_{3,j}+\frac 1 2 m'_j+1\Big)\in \C,
	\]
	so that $p_{\vect s_1,\vect s_3}$ is polynomial and
	\[
	B^{k \vect s_1}_{(0,i e_\Omega)}* I^{-\vect s_3}_\Omega= p_{\vect s_1,\vect s_3}(k) B^{k \vect s_1-\vect s_3}_{(0,i e_\Omega)}=p_{\vect s_1,\vect s_3}(k) B^{k \vect s_1}_{(0,i e_\Omega)} B^{-\vect s_3}_{(0,i e_\Omega)}
	\]
	for every $k\in \N$, thanks to~\cite[Proposition 2.29]{CalziPeloso}. Define $P_{\vect s_1,\vect s_3} $ as the differential operator $p_{\vect s_1,\vect s_3}(\Rc)$ on $\C$, where $\Rc f(w)=w f'(w)$ for every $w\in \C$ and for every $f\in \Hol(\C)$. Then, for every holomorphic polynomial $\varphi$ on $\C$ we see that
	\[
	(\varphi\circ B^{\vect s_1}_{(0,i e_\Omega)})* I^{-\vect s_3}_\Omega =[(P_{\vect s_1,\vect s_3} \varphi)\circ  B^{\vect s_1}_{(0,i e_\Omega)}] B^{-\vect s_3}_{(0,i e_\Omega)}.
	\]
	By approximation, the preceding formula holds also  with $\varphi^{(s_2)}\colon w \mapsto w (1-\log w)^{s_2}$, defined in a neighbourhood of $1$, in place of $\varphi$. Observe that $P_{\vect s_1,\vect s_3}\varphi^{(s_2)}(w)=\sum_{k\in\N} a_k w(1-\log w)^{s_2-k}$ for some $(a_k)\in \R^{(\N)}$ and for every $w$ in a neighbourhood of $1$. Therefore,  by holomorphy we see that
	\[
	g^{\vect s_1,s_2}*I^{-\vect s_3}_\Omega =B^{-\vect s_3}_{(0,i e_\Omega)} \sum_{k\in \N} a_k g^{\vect s_1, s_2-k}.
	\]
	Observe that $(1+\log B^{-\vect s_1}_{(0,i e_\Omega)})^{s_2}$ is bounded for every $s_2\meg 0$ by the preceding remarks, so that $g^{\vect s_1,s_2}*I^{-\vect s_3}_\Omega\in A^{p,q}_{\vect s_1+\vect s_3}(D)$ if $ B^{-\vect s_3}_{(0,i e_\Omega)} g^{\vect s_1, s_2}\in A^{p,q}_{\vect s_1+\vect s_3}(D)$.
	
	Now, observe that the preceding remarks show that
	\[
	\begin{split}
	\Re \log(1+\log B^{-\vect s_1}_{(0,i e_\Omega)})(\zeta,z)&\Meg \log (1+\log \abs{B^{-\vect s_1}_{(0, i e_\Omega)}(\zeta,z)})\\
		&=\log(1+\log \abs{\Delta_\Omega^{-\vect s_1}(e_\Omega+z/i)})\\
		&\Meg \log (1+\log \Delta^{-\vect s_1}_\Omega(e_\Omega+\Im z)),
	\end{split}
	\]
	so that
	\[
	\begin{split}
		\abs{(1+\log B^{-\vect s_1}_{(0,i e_\Omega)})^{s_2}(\zeta,z)}\meg (1+\log \Delta^{-\vect s_1}_\Omega(e_\Omega+\Im z))^{s_2}
	\end{split}
	\]
	for every $(\zeta,z)\in D$, when  $s_2\meg 0$. Then,
	\[
	\begin{split}
	\norm{(g^{\vect s_1,s_2} B^{-\vect s_3}_{(0,i e_\Omega)})_h}_{L^p(\Nc)}&\meg \norm{(B^{\vect s_1-\vect s_3}_{(0,i e_\Omega)})_h}_{L^p(\Nc)} (1+\log \Delta^{-\vect s_1}_\Omega(e_\Omega+h))^{s_2}\\
		&=C_{\vect s_1-\vect s_3,p} \Delta^{\vect s_1-\vect s_3-(\vect b+\vect d)/p}_\Omega(e_\Omega+h)(1+\log \Delta^{-\vect s_1}_\Omega(e_\Omega+h))^{s_2}
	\end{split}
	\]
	for a suitable constant $C_{\vect s_1-\vect s_3,p}>0$ and for every $h\in \Omega$, provided that $\vect s_1-\vect s_3\prec\frac 1p (\vect b+\vect d)-\frac{1}{2 p} \vect m'$ (cf.~\cite[Corollary 2.36 and Lemma 2.39]{CalziPeloso}). Then, let us prove that
	\[
	\norm{\Delta^{\vect s_1-\vect s_3-(\vect b+\vect d)/p}_\Omega(e_\Omega+\,\cdot\,)(1+\log \Delta^{-\vect s_1}_\Omega(e_\Omega+\,\cdot\,))^{s_2} \Delta_\Omega^{\vect s+\vect s_3}   }_{L^q(\nu_\Omega)}
	\]
	is finite if $ \vect s +\vect s_3\succ \frac{1}{2 q} \vect m$, $ \vect s+\vect s_1\meg \frac 1 p (\vect b+\vect d)-\frac{1}{2 q}\vect m'$, and $s_2<-1/q$.
	Observe that the assertion is trivial when $q=\infty $ (in which case it suffices to assume that $\vect s+\vect s_3\Meg \vect 0$, $\vect s+\vect s_1\meg \frac 1 p (\vect b+\vect d)$ and $s_2 \meg 0$). The case $q<\infty$ is a consequence of Lemma~\ref{lem:7}.
	
 Now, assume that $ \vect s+\vect s_1\meg \frac 1 p (\vect b+\vect
 d)-\frac{1}{2 q}\vect m'$, and $s_2<-1/q$, and let us prove that
 $g^{\vect s_1,s_2}\in \widetilde A^{p,q}_{\vect s}(D) $. By the
 preceding remarks and Proposition~\ref{prop:1}, if $\vect s_3 \in
 \N_{\Omega'}$ is sufficiently large, then
 $$
 g^{\vect s_1,s_2}* I^{-\vect s_3}_\Omega\in A^{p,q}_{\vect s+\vect
   s_3}(D)=\widetilde A^{p,q}_{\vect s+\vect s_3}(D).
 $$
 In addition, there is $p_1\in (p,\infty)$ such that $\vect s\succ
 \frac{1}{p_1}(\vect b+\vect d)+\frac{1}{2 q'}\vect m'$ and  $\vect
 s+\vect s_1\prec\frac{1}{p_1}(\vect b+\vect d)-\frac{1}{2 q}\vect
 m'$, so that $\vect s_1 \prec \frac{1}{p_1}(\vect b+\vect
 d)-\frac{1}{2p_1}\vect m'$ by the assumptions on $\vect s$, and
 $g^{\vect s_1,s_2}\in A^{p_1,q}_{\vect s}(D)\subseteq \widetilde
 A^{p_1,q}_{\vect s}(D)$ thanks to~\cite[Proposition
 2.41]{CalziPeloso} and the preceding remarks. Now, denote by $g^{\vect s_1,s_2}_0$ the boundary
 values of $g^{\vect s_1,s_2}$ in $B^{-\vect s}_{p_1,q}(\Nc,\Omega)$,
 so that $g^{\vect s_1,s_2}_0*I^{-\vect s_3}_\Omega=(g^{\vect
   s_1,s_2}*I^{-\vect s_3}_\Omega)_0\in B^{-\vect s-\vect
   s_3}_{p,q}(\Nc,\Omega)$ by~\cite[Proposition
 5.13]{CalziPeloso} and the preceding remarks. Since the mapping $u\mapsto u* I^{-\vect
   s_3}_\Omega$ induces an isomorphism of $\Sc'_{\Omega,L}(\Nc)$ which
 induces isomorphisms of $B^{-\vect s}_{p,q}(\Nc,\Omega)\to B^{-\vect
   s-\vect s_3}_{p,q}(\Nc,\Omega)$ and $B^{-\vect
   s}_{p_1,q}(\Nc,\Omega)\to B^{-\vect s-\vect
   s_3}_{p_1,q}(\Nc,\Omega)$ (cf.~\cite[Proposition 4.11, by
 transposition, and Theorem 4.26]{CalziPeloso}), this implies that
 $g^{\vect s_1,s_2}_0\in B^{-\vect s}_{p,q}(\Nc,\Omega)$. Since
 $g^{\vect s_1,s_2}=\Ec g^{\vect s_1,s_2}_0$ (and $\Ec$ is defined in
 the same way in $B^{-\vect s}_{p_1,q}(\Nc,\Omega) $ and in $B^{-\vect
   s}_{p,q}(\Nc,\Omega)$), this implies that $g^{\vect s_1,s_2}\in
 \widetilde A^{p,q}_{\vect s}(D)$.

	Finally, let us prove that, if $g^{\vect s_1,s_2}\in A^{p,q}_{\vect s}(D)$ and $-1/p\meg s_2<-1/q$, then $\vect s_1\prec \frac{1}{p}(\vect b+\vect d)-\frac{1}{2 p}\vect m'$. This will lead to the conclusion.
	In order to prove our claim, it will suffice to prove that, if $g^{\vect s_1,s_2}_{e_\Omega}\in L^p(\Nc)$ and $-1/p\meg s_2<-1/q$, then  $\vect s_1\prec \frac{1}{p}(\vect b+\vect d)-\frac{1}{2 p}\vect m'$. 
	By Corollaries~\ref{cor:12} and~\ref{cor:13}, there are constants $C_2, C_3,C_4>0$ such that 
	\[
	\begin{split}
	(1+\log B^{-\vect s_1}_{(0,i e_\Omega)})^{s_2} (\zeta,h+i e_\Omega+i\Phi(\zeta))&\Meg C_2 (1+\abs{\log \abs{ \Delta_\Omega^{-\vect s_1}(e_\Omega+(\Phi(\zeta)-i h)/2) }})^{s_2}\\
		&\Meg C_3 (1+\abs{\log \Delta_\Omega^{-\vect s_1}(e_\Omega+\Phi(\zeta)/2+h/2 ) })^{s_2}
	\end{split}
	\]
	and then
	\[
	\abs{g^{\vect s_1,s_2}(\zeta,h+i e_\Omega+i\Phi(\zeta))}\Meg C_4 \Delta^{\vect s_1}_\Omega(e_\Omega+\Phi(\zeta)/2+h/2 )(1+\abs{\log \Delta_\Omega^{-\vect s_1}(e_\Omega+\Phi(\zeta)/2+h/2 ) })^{s_2}
	\]
	for every $\zeta\in E$ and for every $h\in \Omega $. Therefore, it will suffice to prove that
	\[
	\int_{E\times\Omega} \Delta^{p\vect s_1}_\Omega(e_\Omega+\Phi(\zeta)/2+h/2 )(1+\abs{\log \Delta_\Omega^{-\vect s_1}(e_\Omega+\Phi(\zeta)/2+h/2 ) })^{p s_2}\,\dd (\zeta,h)=\infty
	\]
	if $\vect s_1 \not \prec \frac 1 p (\vect b+\vect d)-\frac{1}{2 p}\vect m'$.
	Observe first that, by homogeneity,
	\[
	\begin{split}
	&\int_{E} \Delta^{p\vect s_1}_\Omega(e_\Omega+\Phi(\zeta)/2+h/2 )(1+\abs{\log \Delta_\Omega^{-\vect s_1}(e_\Omega+\Phi(\zeta)/2+h/2 ) })^{p s_2}\,\dd \zeta\\
		&\qquad= \Delta^{p\vect s_1-\vect b}_\Omega(e_\Omega+h/2 )\int_{E} \Delta^{p\vect s_1}_\Omega(e_\Omega+\Phi(\zeta)/2)(1+\abs{\log \Delta_\Omega^{-\vect s_1}(e_\Omega+\Phi(\zeta)/2) +\log\Delta_\Omega^{-\vect s_1}(e_\Omega+h/2)} )^{p s_2}\,\dd \zeta\\
		&\qquad\Meg C_5 \Delta^{p\vect s_1-\vect b}_\Omega(e_\Omega+h/2 ) (1 +\abs{\log\Delta_\Omega^{-\vect s_1}(e_\Omega+h/2) })^{p s_2},
	\end{split}
	\]
	where
	\[
	C_5\coloneqq \frac 1 2\int_{B_E(0,\eps)} \Delta^{p\vect s_1}_\Omega(e_\Omega+\Phi(\zeta)/2)\,\dd \zeta
	\]
	and $\eps>0$ is chosen so that $\abs{\log \Delta^{-\vect s_1}_\Omega(e_\Omega+\Phi(\zeta)/2)}\meg 1$ for every $\zeta\in B_E(0,\eps)$. Then,
	\[
	\begin{split}
	&\int_{E\times\Omega} \Delta^{p\vect s_1}_\Omega(e_\Omega+\Phi(\zeta)/2+h/2 )(1+\abs{\log \Delta_\Omega^{-\vect s_1}(e_\Omega+\Phi(\zeta)/2+h/2 ) })^{p s_2}\,\dd (\zeta,h)\\
		&\qquad \Meg C_5\int_\Omega\Delta^{p\vect s_1-\vect b-\vect d}_\Omega(e_\Omega+h/2 ) (1 +\abs{\log\Delta_\Omega^{-\vect s_1}(e_\Omega+h/2) })^{p s_2}\,\dd \nu_\Omega(h)
	\end{split}
	\]
	whence the result by Lemma~\ref{lem:7}.	
\end{proof}

\end{document}